\theoremstyle{plain}
\newtheorem{theorem}{Theorem}
\newtheorem{corollary}[theorem]{Corollary}
\newtheorem{lemma}[theorem]{Lemma}
\newtheorem{proposition}[theorem]{Proposition}
\theoremstyle{definition}
\newtheorem{definition}[theorem]{Definition}
\newtheorem{example}[theorem]{Example}
\newtheorem{remark}[theorem]{Remark}
\newtheorem{question}[theorem]{Question}
\def\vi{\varphi}
\newcommand{\barint}{
\rule[.036in]{.12in}{.009in}\kern-.16in \displaystyle\int }
\newcommand{\barcal}{\mbox{$ \rule[.036in]{.11in}{.007in}\kern-.128in\int $}}
\newcommand{\partl}[2]{\frac{\partial #1}{\partial{#2}}}
\newcommand{\norm}[1]{\left\vert #1 \right \vert}
\newcommand{\Norm}[1]{\left\Vert #1 \right \Vert}
\newcommand{\mst}{\;:\;}
\newcommand{\B}{\mathbb B}
\newcommand{\R}{\mathbb R}
\newcommand{\C}{\mathbb C}
\newcommand{\Z}{\mathbb Z}
\newcommand{\Heis}{\mathbb H}
\newcommand{\Sph}{\mathbb S}
\newcommand{\lip}{{\rm Lip}\, }
\newcommand{\hor}{{\rm H}\, }
\newcommand{\cH}{\mathcal H}
\newcommand{\cU}{\mathcal U}
\newcommand{\cJ}{\mathcal J}
\newcommand{\cM}{\mathcal M}
\newcommand{\eps}{\epsilon}
\newcommand{\boldg}{{\mathbf g}}
\newcommand{\boldi}{{\mathbf i}}
\DeclareMathOperator{\diam}{diam}
\DeclareMathOperator{\dist}{dist}
\DeclareMathOperator{\Imag}{Im}
\DeclareMathOperator{\Real}{Re}
\DeclareMathOperator{\rank}{rank}
\def\Ddots{\mathinner{\mkern1mu\raise\p@
\vbox{\kern7\p@\hbox{.}}\mkern2mu
\raise4\p@\hbox{.}\mkern2mu\raise7\p@\hbox{.}\mkern1mu}}
\numberwithin{theorem}{section} 
\numberwithin{equation}{section}
\title{On the lack of density of Lipschitz mappings in Sobolev spaces with Heisenberg
  target}
\author{Noel DeJarnette}
\address{Department of Mathematics, University of Illinois at
  Urbana-Champaign, 1409 West Green St., Urbana, IL 61801, {\tt ndejarne@illinois.edu}}
\author{Piotr Haj{\l}asz}
\address{Department of Mathematics, University of Pittsburgh, 301
  Thackeray Hall, Pittsburgh, PA 15260, USA, {\tt hajlasz@pitt.edu}}
\author{Anton Lukyanenko}
\address{Department of Mathematics, University of Illinois at
  Urbana-Champaign, 1409 West Green St., Urbana, IL 61801, {\tt lukyane2@illinois.edu}}
\author{Jeremy T. Tyson}
\address{Department of Mathematics, University of Illinois at
  Urbana-Champaign, 1409 West Green St., Urbana, IL 61801, {\tt tyson@math.uiuc.edu}}
\date{\today}
\thanks{
{\it Key words and phrases.} Heisenberg group, Sobolev mapping,
Lipschitz homotopy group, complex hyperbolic geometry, symplectic
geometry, sub-Riemannian manifold.
\\
{\bf 2010 Mathematics Subject Classification.} Primary: 46E35, 30L99;
Secondary: 46E40, 26B30, 53C17, 55Q40, 55Q70.
\\
{\bf Acknowledgements.} P.H. acknowledges support from NSF grant
DMS 0900871 ``Geometry and topology of weakly differentiable
mappings into Euclidean spaces, manifolds and metric spaces''.
J.T.T. acknowledges support from NSF grants DMS 0555869
``Nonsmooth methods in geometric function theory and geometric
measure theory on the Heisenberg group'' and DMS 0901620
``Geometric analysis in Carnot groups''. N.D. and A.L. acknowledge
support from NSF grant DMS 0838434 ``EMSW21-MCTP: Research
Experiences for Graduate Students''. N.D. acknowledges support
from NSF grants DMS 0901620 and DMS 0900871. N.D. also
acknowledges the Department of Mathematics at the University of
Pittsburgh for its hospitality during the academic year
2009--2010.}
\begin{document}

\sloppy


\begin{abstract}
We study the question: {\em when are Lipschitz mappings dense in the
  Sobolev space $W^{1,p}(M,\Heis^n)$?}
Here $M$ denotes a compact Riemannian manifold with or without
boundary, while $\Heis^n$ denotes the $n$th Heisenberg group equipped
with a sub-Riemannian metric. We show that Lipschitz maps are dense in
$W^{1,p}(M,\Heis^n)$ for all $1\le p<\infty$ if $\dim M \le n$, but
that Lipschitz maps are not dense in $W^{1,p}(M,\Heis^n)$ if $\dim M
\ge n+1$ and $n\le p<n+1$. The proofs rely on the construction of
smooth horizontal embeddings of the sphere $\Sph^n$ into $\Heis^n$. We
provide two such constructions, one arising from complex hyperbolic
geometry and the other arising from symplectic geometry. The
nondensity assertion can be interpreted as nontriviality of the $n$th
Lipschitz homotopy group of $\Heis^n$. We initiate a study of
Lipschitz homotopy groups for sub-Riemannian spaces.
\end{abstract}

\maketitle

\setcounter{tocdepth}{1}
\tableofcontents

\newpage

\section{Introduction}
\label{sec:intro}

In this paper we study Sobolev mappings from a compact Riemannian
manifold or from a domain in Euclidean space into the Heisenberg
group. The paper is motivated by recent developments in the theory of
Sobolev mappings into metric spaces and, in particular, by the work of
Capogna and Lin \cite{capognal} on harmonic mappings into the
Heisenberg group.

The main question which we investigate in this paper is the problem of
density of Lipschitz mappings. In a more classical setting the
question whether smooth mappings are dense in the space of Sobolev
mappings between manifolds was raised by Eells and Lemaire
\cite{eellsl}. In the case of manifolds, smooth mappings are dense
if and only if Lipschitz mappings are dense. For Heisenberg targets it
is more natural to ask about density of Lipschitz mappings.

Let $M$ and $N$ be compact Riemannian manifolds, $\partial
N=\emptyset$. We can always assume that $N$ is isometrically
embedded into a Euclidean space $\R^\nu$ (by the Nash theorem), and in
this case we define the class of Sobolev mappings $W^{1,p}(M,N)$,
$1\leq p<\infty$, as follows:
$$
W^{1,p}(M,N)=\{ u\in W^{1,p}(M,\R^\nu):\, u(x)\in N\ \mbox{a.e.}\}.
$$
The space $W^{1,p}(M,N)$ is equipped with a metric inherited from
the norm in $W^{1,p}(M,\R^\nu)$. Although every Sobolev mapping
$u\in W^{1,p}(M,N)$ can be approximated by smooth mappings with
values into $\R^\nu$, it is not always possible to find an
approximation by mappings from the space $C^\infty(M,N)$. A famous
example of Schoen and Uhlenbeck \cite{schoenu1}, \cite{schoenu2}
illustrates the issue. Consider the radial projection map
$u_0:\B^{n+1}\to\Sph^n$ from the closed unit ball in $\R^{n+1}$ onto its
boundary, given by
\begin{equation}
\label{cavitation} u_0(x)=\frac{x}{|x|}\, .
\end{equation}
The mapping $u_0$ has a singularity at the origin, but one can
easily prove that $u_0\in W^{1,p}(\B^{n+1},\Sph^n)$ for all $1\leq
p<n+1$. Schoen and Uhlenbeck proved that $u_0$ cannot be
approximated by $C^\infty(\B^{n+1},\Sph^n)$ mappings when $n\leq p<n+1$.

By using this construction one can prove that if $M$ is a compact
Riemannian manifold of dimension at least $n+1$, then
$C^\infty(M,\Sph^n)$ mappings are not dense in $W^{1,p}(M,\Sph^n)$
when $n\leq p<n+1$, see \cite{bethuelz}, \cite{hajlaszSobolev}. On
the other hand, Bethuel and Zheng \cite{bethuelz} proved that for
any manifold $M$, $C^\infty(M,\Sph^n)$ mappings are dense in
$W^{1,p}(M,\Sph^n)$ whenever $1\leq p<n$. Let us also mention that
Schoen and Uhlenbeck \cite{schoenu1}, \cite{schoenu2}, proved that
if $\infty>p\geq n=\dim M$, then $C^\infty(M,N)$ mappings are
dense in $W^{1,p}(M,N)$ for any target $N$.

Since in the case of manifold targets, density of smooth maps is equivalent to
density of Lipschitz maps, the above results imply the following well known
\begin{proposition}
\label{T0}
(a) If $M$ is a compact Riemannian manifold of dimension
$\dim M\geq n+1$, then Lipschitz mappings $\lip(M,\Sph^n)$ are not dense in
$W^{1,p}(M,\Sph^n)$ when $n\leq p<n+1$.

(b) If $M$ is a compact Riemannian manifold of dimension $\dim M\leq n$,
then Lipschitz mappings $\lip(M,\Sph^n)$ are dense in $W^{1,p}(M,\Sph^n)$
for all $1\leq p<\infty$.
\end{proposition}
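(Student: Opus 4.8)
The plan is to deduce the two parts of the proposition from the facts quoted above, by completely different mechanisms: part (b) is a packaging of known density theorems, while part (a) rests on a topological degree argument applied to restrictions of maps to concentric $n$-spheres.

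Begin with part (b). Write $m=\dim M\le n$. For exponents $1\le p<n$, density of $C^\infty(M,\Sph^n)$ maps --- hence of Lipschitz maps --- in $W^{1,p}(M,\Sph^n)$ is exactly the Bethuel--Zheng theorem, which holds for an arbitrary compact $M$. For exponents $n\le p<\infty$ we have $p\ge m=\dim M$, so the Schoen--Uhlenbeck theorem (density of $C^\infty(M,N)$ in $W^{1,p}(M,N)$ whenever $p\ge\dim M$, for any compact target $N$) applies with $N=\Sph^n$. These two regimes exhaust $[1,\infty)$, and since for manifold targets smooth density and Lipschitz density coincide, part (b) follows.

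For part (a) the steps would be as follows. \emph{Step 1: localization.} Since $\dim M\ge n+1$, fix a bi-Lipschitz embedding of the box $\overline{\B^{n+1}}\times[0,1]^{\dim M-n-1}$ into $M$, define $v(x,t):=u_0(x)=x/|x|$ on its image, and extend $v$ to all of $M$ by a standard coning construction (cf. \cite{bethuelz}, \cite{hajlaszSobolev}); the extension lands in $W^{1,p}(M,\Sph^n)$ because a cone map over an $n$-sphere into $\Sph^n$ is $W^{1,p}$ on a $(\dim M)$-ball exactly when $p<\dim M$, and here $p<n+1\le\dim M$. That $v$ restricted to the box is already $W^{1,p}$ is the computation $\int_{\B^{n+1}}|\nabla u_0|^p\sim\int_{\B^{n+1}}|x|^{-p}<\infty\iff p<n+1$. \emph{Step 2: slicing.} Suppose, for contradiction, that Lipschitz maps $u_j\to v$ in $W^{1,p}(M,\Sph^n)$. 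Using polar coordinates $x=r\omega$ on the $\B^{n+1}$ factor of the box, so that $v(r\omega,t)\equiv\omega$, Fubini's theorem applied to $\|u_j-v\|_{W^{1,p}}^p$ over the parameters $(r,t)$ yields, after passing to a subsequence, a pair $(r_0,t_0)$ for which $u_j(\cdot,r_0,t_0)\to\mathrm{id}_{\Sph^n}$ in $W^{1,p}(\Sph^n,\Sph^n)$. \emph{Step 3: the degree obstruction.} Since $p\ge n$, topological degree is well-defined and continuous on $W^{1,p}(\Sph^n,\Sph^n)$, so $\deg(u_j(\cdot,r_0,t_0))=\deg(\mathrm{id})=1$ for large $j$; but $\Sph^n_{r_0}\times\{t_0\}$ bounds the disk $\overline{\B^{n+1}_{r_0}}\times\{t_0\}$ on which the Lipschitz map $u_j$ is continuous, so $u_j(\cdot,r_0,t_0)$ is null-homotopic and has degree $0$ --- a contradiction.

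Two points need care, and the second is the main obstacle. The minor one is the global extension in Step 1: the identity $\Sph^n\to\Sph^n$ appearing on the boundary of the box does \emph{not} extend continuously to $M$ minus the box (it has degree one), but it does extend as a $W^{1,p}$ map by coning off with a point singularity, precisely because $p<n+1\le\dim M$; this is routine but should be written out. The genuine difficulty is the borderline exponent $p=n$: then $W^{1,n}(\Sph^n)$ does not embed in $C^0$, so ``degree'' in Step 3 must be read in the sense of Brezis--Nirenberg's VMO degree, and one must invoke the continuous embedding $W^{1,n}\hookrightarrow\mathrm{VMO}$, the continuity of the VMO degree under $W^{1,n}$-convergence, and its agreement with the classical degree on continuous maps (so that it still vanishes on $u_j(\cdot,r_0,t_0)$). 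For $p>n$ all of this is elementary via the Sobolev embedding into $C^0$; so an alternative is to avoid the VMO machinery altogether by a truncation argument reducing $p=n$ to $p>n$ on slightly contracted spheres. I expect essentially all of the work to be in this borderline case.
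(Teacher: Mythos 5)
Your part (b) coincides with the paper's own treatment: Proposition \ref{T0} is stated there as ``well known'' and is obtained exactly as you do, from Bethuel--Zheng (density for $1\le p<n$, arbitrary compact $M$), Schoen--Uhlenbeck (density for $p\ge\dim M$, arbitrary compact target), and the equivalence of smooth and Lipschitz density for manifold targets. For part (a) the paper gives no argument at all, citing \cite{bethuelz} and \cite{hajlaszSobolev}; your sketch reconstructs the standard proof, and it is the exact analogue of the paper's proof of the Heisenberg statements (Proposition \ref{T1} and Theorem \ref{T1.5}(a)): a model map with a point singularity placed on an embedded $(n+1)$-dimensional piece of $M$, Fubini slicing of a hypothetical approximating sequence, and a topological obstruction on $n$-sphere slices --- classical degree in your setting, versus the pullback of a volume form, the Lipschitz Stokes lemma (Lemma \ref{stokes}) and pure unrectifiability in the Heisenberg setting.

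Two comments on part (a). The real soft spot is Step 1, not the $p=n$ endpoint. When $\dim M>n+1$ the restriction of $v(x,t)=x/|x|$ to the boundary of your box is neither constant nor even continuous (the singular segment $\{0\}\times[0,1]^{\dim M-n-1}$ meets the faces $\overline{\B^{n+1}}\times\partial\bigl([0,1]^{\dim M-n-1}\bigr)$), and $M$ minus the box is not a ball, so ``extend by coning'' does not literally apply; even when $\dim M=n+1$ the statement you need is that the identity of $\Sph^n$ admits a $W^{1,p}$ null-homotopy across a collar, which forces you to insert a compensating degree $-1$ point singularity. The standard repair --- and what the paper does in its proof of Theorem \ref{T1.5}(a) --- is to start instead from a dipole map $f:\B^{n+1}\to\Sph^n$ with two point singularities of degrees $+1$ and $-1$ that is constant near $\partial\B^{n+1}$; then $g(b,s)=f(b)$ on an embedded $\B^{n+1}\times\Sph^{\dim M-n-1}$ extends by that constant to all of $M$ with no further argument, and your Steps 2--3 run unchanged on small spheres around the degree $+1$ singularity. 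Concerning $p=n$: the VMO route is correct, but you can bypass degree theory uniformly for $n\le p<n+1$, in the spirit of the paper's proof of Proposition \ref{T1}: with $\omega$ the volume form of $\Sph^n$ extended smoothly to $\R^{n+1}$, consider $u\mapsto\int u^*\omega$ over a sphere slice. For the limit map this integral equals $\cH^n(\Sph^n)>0$; it is stable under $W^{1,n}$ convergence of the slices by H\"older (the maps take values in the compact sphere); and for a Lipschitz competitor it vanishes, because the Lipschitz extension to the enclosed disc has values in $\Sph^n$, hence pulls back $d\omega$ to zero a.e., and Lemma \ref{stokes} applies. This yields the contradiction for all $n\le p<n+1$ at once.
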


In general, the answer to the question whether for given two
manifolds $M,N$ and $1\leq p<n=\dim M$, $C^\infty(M,N)$ mappings
are dense in $W^{1,p}(M,N)$ depends on the topological structure
of the manifolds. A complete solution to the problem has been
obtained by Hang and Lin \cite{hangl2} who corrected earlier
results of Bethuel \cite{bethuel1} and generalized results of
Haj\l{}asz \cite{hajlaszApproximation}. We refer the reader to the survey
\cite{hajlaszSobolev} for a detailed discussion and references.

There are several equivalent ways to define the space of Sobolev
mappings with target the Heisenberg group $\Heis^n$. We refer to
\cite{gromov-cc}, \cite{montgomery} or \cite{cdpt} for the basic
theory of the Heisenberg group and sub-Riemannian geometry; see
also section \ref{SR}. In this paper we define such mappings
using an isometric embedding of $\Heis^n$ into a Banach space. We
equip $\Heis^n$ with its standard Carnot-Carath\'eodory metric. As we
will see later on, the answer to the question of density of Lipschitz
maps in spaces of Sobolev maps with metric space targets can
potentially change under a bi-Lipschitz deformation of the target
space. Our results hold for a variety of metrics on $\Heis^n$ which
are bi-Lipschitz equivalent with the Carnot-Carath\'eodory metric.

Every separable metric space, in particular the Heisenberg group,
can be isometrically embedded into $\ell^\infty$ (e.g., via the Kuratowski
embedding \eqref{kuratowski}). Thus we may assume that
$\Heis^n\subset\ell^\infty$ and for a compact Riemannian manifold $M$
(with or without boundary) we define 
$$
W^{1,p}(M,\Heis^n)= \{ u\in W^{1,p}(M,\ell^\infty):\,
u(x)\in\Heis^n\ \mbox{a.e.}\}.
$$
The vector valued Sobolev space $W^{1,p}(M,\ell^\infty)$ is a
Banach space and $W^{1,p}(M,\Heis^n)$ is equipped with the metric
inherited from the norm. Our definition is different than that
provided by Capogna and Lin \cite{capognal}, but it is equivalent,
see Subsection~\ref{capogna-lin-remark} for details.

One may wonder if it would be possible to define the space
$W^{1,p}(M,\Heis^n)$ via an isometric embedding of $\Heis^n$ into a
Banach space different than $\ell^\infty$. Recently Cheeger and
Kleiner \cite[Theorem~1.6]{CK1}, \cite[Theorem~4.2]{CK2} proved that
$\Heis^n$ does not admit a bi-Lipschitz embedding into any Banach
space with the Radon-Nikodym property. Separable dual spaces and
reflexive spaces (separable or not) have the Radon-Nikodym property.
In view of this result there is little hope to replace $\ell^\infty$
by a better space in the definition of $W^{1,p}(M,\Heis^n)$.

As in the case of mappings between manifolds, Lipschitz mappings
$\lip(M,\ell^\infty)$ are dense in $W^{1,p}(M,\Heis^n)$, but the
question is whether we can take the approximating sequence from
$\lip(M,\Heis^n)$. It turns out that this is not always possible.

\

\begin{theorem}
\label{T1.5}
Equip $\Heis^n$ with the Carnot-Carath\'eodory metric.
\begin{itemize}
\item[(a)] If $M$ is a compact Riemannian manifold of dimension $\dim M\geq
  n+1$, then Lipschitz mappings $\lip(M,\Heis^n)$ are not dense in
  $W^{1,p}(M,\Heis^n)$ when $n\leq p<n+1$.
\item[(b)] If $M$ is a compact Riemannian manifold of dimension $\dim M\leq
  n$, then Lipschitz mappings $\lip(M,\Heis^n)$ are dense in
  $W^{1,p}(M,\Heis^n)$ for all $1\leq p<\infty$.
\end{itemize}
\end{theorem}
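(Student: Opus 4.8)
The plan is to prove the two parts by quite different mechanisms: part (b) by a direct approximation/truncation scheme, and part (a) by producing an explicit obstruction built from a horizontal embedding of $\Sph^n$ into $\Heis^n$.

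For part (b), suppose $\dim M = m \le n$. The key point is that $\Heis^n$ satisfies a Lipschitz-connectivity property in the relevant range of dimensions: every Lipschitz map $\Sph^{k-1}\to\Heis^n$ with $k\le n$ extends to a Lipschitz map $\B^k\to\Heis^n$ (this follows from the existence of smooth horizontal embeddings of low-dimensional disks, together with the fact that the first $n$ homotopy groups of $\Heis^n$ are those of $\R^{2n}$). Given this, one runs the standard Federer--Fleming / Bethuel--Zheng style argument: first approximate $u\in W^{1,p}(M,\Heis^n)$ in $W^{1,p}(M,\ell^\infty)$ by a map that is Lipschitz off a lower-dimensional skeleton of a fine triangulation of $M$, using that smooth (or Lipschitz) $\ell^\infty$-valued maps are dense and that $\Heis^n$ retracts Lipschitz-ly onto itself from a neighborhood in $\ell^\infty$ on the scale of the mesh; then, since $m\le n$, on each cell one can homogenize/extend the boundary values into $\Heis^n$ itself using the Lipschitz-connectivity, so that the resulting map takes values in $\Heis^n$ and is genuinely Lipschitz. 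The error is controlled by the usual $W^{1,p}$ estimates on the correction on each simplex. I would organize this as: (i) a retraction lemma near $\Heis^n\subset\ell^\infty$; (ii) a Lipschitz extension lemma for maps of $k$-disks, $k\le n$; (iii) the triangulation/good-grid selection; (iv) assembling the estimates.

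For part (a), the strategy is to exhibit a single map in $W^{1,p}(M,\Heis^n)$, $n\le p<n+1$, that cannot be $W^{1,p}$-approximated by $\lip(M,\Heis^n)$. Fix a smooth horizontal embedding $\iota\colon\Sph^n\hookrightarrow\Heis^n$ (provided by the constructions announced in the abstract). Because $M$ has dimension $\ge n+1$, it contains a bi-Lipschitz copy of $\B^{n+1}$, and one composes the Schoen--Uhlenbeck radial map $u_0(x)=x/|x|$ on $\B^{n+1}$ with $\iota$ to get $v=\iota\circ u_0\in W^{1,p}(\B^{n+1},\Heis^n)$ (the Sobolev membership uses $p<n+1$ and smoothness of $\iota$), then extends $v$ to all of $M$ by a Lipschitz map. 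Suppose for contradiction that $v_j\to v$ in $W^{1,p}(M,\Heis^n)$ with $v_j$ Lipschitz. A Lipschitz map into $\Heis^n$ is horizontal a.e., and restricted to a small sphere $\Sph^n_r$ around the singular point it is null-homotopic \emph{within $\Heis^n$} through Lipschitz maps — but because $\iota(\Sph^n)$ is a horizontal $n$-sphere, such a Lipschitz null-homotopy is obstructed. Concretely, I would pull back a closed $n$-form (or use a degree/linking argument via the contact structure) to show that the "horizontal degree" of $v_j|_{\Sph^n_r}$ over $\iota(\Sph^n)$ must vanish for a.e. small $r$, while for $v$ itself this degree equals $1$ on every sphere; Fubini plus $W^{1,p}$-convergence on a set of radii of positive measure then forces $1=0$, a contradiction. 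The nontriviality of the $n$th Lipschitz homotopy group $\pi_n^{\lip}(\Heis^n)$ — equivalently, that $\iota$ is not Lipschitz null-homotopic — is the crux; it is where the horizontality constraint genuinely enters and prevents the Euclidean contraction of $\Sph^n$ from being realized Lipschitz-ly in $\Heis^n$.

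The main obstacle is precisely this last point in part (a): showing that a horizontally embedded $\Sph^n$ in $\Heis^n$ cannot bound a Lipschitz $(n+1)$-chain, i.e., that the relevant degree is a genuine homotopy invariant in the Lipschitz category. The natural tool is a Stokes-type argument: if $F\colon\B^{n+1}\to\Heis^n$ were Lipschitz with $F|_{\Sph^n}=\iota$, then $F$ is horizontal a.e., so $F^*\theta=0$ where $\theta$ is the contact form; but $\iota^*(d\theta^{\,?})$ or an appropriate primitive detects a nonzero period on $\Sph^n$, and Stokes on the Lipschitz chain $F$ yields a contradiction — one must be careful that Stokes' theorem applies to merely Lipschitz maps and that the pulled-back forms are the right ones. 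I would expect to spend most of the effort making this invariant well-defined (choosing the correct differential form on $\Heis^n$ whose restriction to the horizontal $\Sph^n$ is a nonzero volume form while its exterior derivative vanishes on horizontal subspaces) and verifying the Fubini/convergence bookkeeping that transfers the contradiction from the idealized map $F$ to the approximating sequence $v_j$.
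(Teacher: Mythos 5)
Your high-level plan for (a) on the ball (cavitation map composed with a horizontal sphere, obstructed by nontriviality of $\pi_n^{\lip}(\Heis^n)$) matches the paper's, but two steps fail as written. First, the passage to general $M$: you cannot ``extend $v=\iota\circ u_0$ to all of $M$ by a Lipschitz map.'' For $M=\Sph^{n+1}$ the complement of the embedded ball is another $(n+1)$-ball, and the extension you need is exactly the one forbidden by Proposition~\ref{BaloghGeneralization}; and when $\dim M>n+1$ your $v$ is defined only on a measure-zero subset of $M$, so it is not yet a Sobolev map at all. The paper instead uses a dipole: a smooth map $\B^{n+1}\to\Sph^n$ with two singularities of degree $+1$ and $-1$, constant near $\partial\B^{n+1}$, crossed with $\Sph^{\dim M-n-1}$, embedded in $M$, extended by a constant, and composed with the horizontal embedding; Fubini on slices then reduces to Proposition~\ref{T1}. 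Second, your proposed crux---detecting the sphere by the contact form---cannot work: since $\iota$ is horizontal, $\iota^*\alpha=0$ and hence $\iota^*(d\alpha)=0$, so no form built from the contact structure has a nonzero period on $\iota(\Sph^n)$. In the paper the horizontality enters only through the Ambrosio--Kirchheim--Magnani pure $(n+1)$-unrectifiability (Theorem~\ref{AKM}): every Lipschitz $g_k:\B^{n+1}\to\Heis^n$ has $\cH^{n+1}_{cc}(g_k(\B^{n+1}))=0$, while a Stokes lemma for Lipschitz maps (Lemma~\ref{stokes}) applied to a smooth extension of the \emph{Euclidean volume form} of $\iota(\Sph^n)$, combined with the area formula, would force the image to have positive Euclidean $(n+1)$-measure. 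Moreover, transferring the period from $f$ to the approximants $g_k$ is not ``Fubini plus $W^{1,p}$-convergence'': convergence in $W^{1,p}(\B^{n+1},\Heis^n)$ is measured after the Kuratowski embedding and does not give Euclidean $W^{1,p}$-convergence unless the maps are uniformly bounded (which the $g_k$ need not be). One needs Theorem~\ref{T4} (the horizontal energies of $g_k$ and $f$ vanish asymptotically off the set where $g_k-f$ lies in the center $Z$) together with Lemma~\ref{lemma6-6} ($\nabla g_k=\nabla f$ a.e.\ on that set) before good radii can be selected.

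For (b) there is a more basic gap: your connectivity lemma, that Lipschitz maps $\Sph^{k-1}\to\Heis^n$, $k\le n$, extend Lipschitz-ly, does not ``follow from the fact that the first homotopy groups of $\Heis^n$ are trivial.'' Triviality of topological homotopy groups gives nothing in the Lipschitz category---the entire point of the paper is that $\pi_n^{\lip}(\Heis^n)\neq 0$ although $\pi_n(\Heis^n)=0$. What you need is the Gromov/Wenger--Young theorem (Theorem~\ref{WY}) that $(M,\Heis^n)$ has the quantitative Lipschitz extension property when $\dim M\le n$; this is a deep result, and your skeleton/homogenization scheme also requires the quantitative form ($CL$-Lipschitz extensions of $L$-Lipschitz maps) to obtain $W^{1,p}$ control on each cell. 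Granting that theorem, the paper's route is simpler than Federer--Fleming: truncate by the maximal function as in the proof of Proposition~\ref{SM-T3}, observe that $f$ restricted to the good set $E_t$ is Lipschitz into $\Heis^n$, and extend it by Theorem~\ref{WY} to a Lipschitz map $M\to\Heis^n$ rather than into $\ell^\infty$; your cell-by-cell argument could likely be completed too, but only with the same deep input, not with the justification you gave.
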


The proof of the failure of density in the case $n\leq p<n+1$ and $\dim M\geq
n+1$ is based on the following special case of Theorem~\ref{T1.5}(a).

\begin{proposition}\label{T1}
Lipschitz mappings $\lip(\B^{n+1},\Heis^n)$ are not dense in
$W^{1,p}(\B^{n+1},\Heis^n)$, when $n\leq p<n+1$.
\end{proposition}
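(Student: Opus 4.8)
The plan is to exhibit an explicit map $u_0 \in W^{1,p}(\B^{n+1},\Heis^n)$ that cannot be approximated in the $W^{1,p}$ norm by Lipschitz maps into $\Heis^n$, mimicking the Schoen--Uhlenbeck cavitation example. First I would invoke the paper's construction of a smooth horizontal embedding $\Phi\colon \Sph^n \hookrightarrow \Heis^n$ (advertised in the abstract); since $\Phi$ is smooth and horizontal on a compact manifold, it is Lipschitz with respect to the Carnot--Carathéodory metric. Then I would set $u_0 = \Phi \circ \pi$, where $\pi\colon \B^{n+1}\setminus\{0\} \to \Sph^n$, $\pi(x)=x/|x|$, is the radial projection. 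Because $\pi \in W^{1,p}(\B^{n+1},\Sph^n)$ for all $p<n+1$ and $\Phi$ is Lipschitz, the composition $u_0$ lies in $W^{1,p}(\B^{n+1},\Heis^n)$ for all $1\le p<n+1$; this is the easy half.

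The heart of the matter is to show $u_0$ is \emph{not} in the $W^{1,p}$-closure of $\lip(\B^{n+1},\Heis^n)$ when $n\le p<n+1$. The strategy is to argue by contradiction: suppose $u_j \to u_0$ in $W^{1,p}$ with each $u_j$ Lipschitz into $\Heis^n$. A Pansu/Fubini-type slicing argument on spheres $\Sph^n_r = \partial B(0,r) \subset \B^{n+1}$ should show that, after passing to a subsequence, for a.e.\ small $r$ the restrictions $u_j|_{\Sph^n_r}$ converge in $W^{1,p}(\Sph^n_r,\Heis^n)$ to $u_0|_{\Sph^n_r} = \Phi\circ\pi|_{\Sph^n_r}$, which (up to the diffeomorphism $\pi|_{\Sph^n_r}\colon \Sph^n_r \to \Sph^n$) is essentially the embedding $\Phi$ itself. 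Since $p\ge n = \dim \Sph^n_r$, the Sobolev space $W^{1,p}(\Sph^n_r,\Heis^n)$ embeds continuously into $C(\Sph^n_r,\Heis^n)$, so the convergence is uniform and $u_j|_{\Sph^n_r}$ is Lipschitz-homotopic to $\Phi$ for $j$ large. On the other hand, each $u_j$ is Lipschitz on all of $\B^{n+1}$, so $u_j|_{\Sph^n_r}$ extends to a Lipschitz map of the ball $B(0,r)$ and is therefore Lipschitz null-homotopic. The desired contradiction is then that $\Phi\colon\Sph^n\to\Heis^n$ is \emph{not} Lipschitz null-homotopic — equivalently, that it represents a nontrivial element of the $n$-th Lipschitz homotopy group $\pi_n^{\mathrm{Lip}}(\Heis^n)$.

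I expect the main obstacle to be precisely the nontriviality of $[\Phi]$ in $\pi_n^{\mathrm{Lip}}(\Heis^n)$: one must show that no Lipschitz map $\B^{n+1}\to\Heis^n$ restricts to (a reparametrization of) $\Phi$ on the boundary sphere. The natural tool is an isoperimetric/area obstruction: a Lipschitz extension $F\colon \B^{n+1}\to\Heis^n$ of $\Phi$ would have an image of finite $\mathcal H^{n+1}$-measure and would ``fill'' the horizontal $n$-sphere $\Phi(\Sph^n)$, but since $\Phi$ is horizontal, Stokes' theorem applied to the contact form $\alpha$ (and its exterior derivative, the symplectic form on the horizontal distribution) forces a topological invariant — an integral of a pullback form over $\Sph^n$ — to vanish, contradicting the construction of $\Phi$ (which is designed precisely so this invariant is nonzero). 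The remaining steps — justifying the slicing, the subsequence selection, the passage of $W^{1,p}$ convergence to slices, and the continuous Sobolev embedding at the critical exponent $p=n$ — are by now standard in this circle of ideas (cf.\ \cite{schoenu1}, \cite{bethuelz}, \cite{hajlaszSobolev}) and I would carry them out in that order, treating the homotopy-theoretic obstruction as the one genuinely new ingredient supplied by the horizontal embedding $\Phi$.
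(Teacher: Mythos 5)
Your first half (setting $u_0=\Phi\circ\pi$ with $\Phi$ a smooth horizontal, hence bi-Lipschitz, embedding of $\Sph^n$) is exactly the paper's starting point, but the reduction to Lipschitz homotopy groups has two genuine gaps. First, the claim that $W^{1,p}(\Sph^n_r,\Heis^n)$ embeds continuously into $C(\Sph^n_r,\Heis^n)$ when $p\ge n$ is false at the endpoint: Morrey's embedding needs $p>\dim\Sph^n_r=n$, so the critical case $p=n$, which is part of the statement, is not reached by your argument at all. Second, and more seriously, even for $p>n$ the step ``$u_j|_{\Sph^n_r}$ is uniformly close to (a reparametrization of) $\Phi$, hence Lipschitz homotopic to $\Phi$'' is unjustified. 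For Riemannian targets this is proved using a Lipschitz retraction of a tubular neighborhood onto the target (or geodesic homotopies), but no Lipschitz retraction of a neighborhood of the horizontal sphere $\Phi(\Sph^n)\subset\Heis^n$ onto $\Phi(\Sph^n)$ is available, and uniformly close Lipschitz maps into a metric space need not be Lipschitz homotopic; the very feature that makes $\pi_n^{\lip}(\Heis^n)\neq 0$ (pure $(n+1)$-unrectifiability) is what obstructs any naive construction of such a homotopy. Without this step, the nontriviality of $[\Phi]$ in $\pi_n^{\lip}(\Heis^n)$ (which the paper does prove, via Proposition \ref{BaloghGeneralization}) yields no contradiction with the assumed convergence $u_j\to u_0$.

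The paper bypasses homotopy entirely and runs a direct flux argument valid for all $n\le p<n+1$: take a compactly supported smooth $n$-form $\omega$ on $\R^{2n+1}$ restricting to the volume form on $\psi(\Sph^n)$, prove a Stokes theorem for Lipschitz maps (Lemma \ref{stokes}), and show that on almost every sphere $\Sph^n(r)$ one has $\int_{\Sph^n(r)}g_k^*\omega\to\int_{\Sph^n(r)}f^*\omega=\cH^n(\Sph^n)>0$; here only $L^p$ convergence of gradients on the $n$-dimensional slice with $p\ge n$ is needed (H\"older applied to a form of degree $n$), not continuity, which is exactly how the exponent $p\ge n$ enters and why $p=n$ is covered. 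The delicate point your sketch also glosses over is that convergence in $W^{1,p}(\B^{n+1},\Heis^n)$, defined through the Kuratowski embedding, controls the Euclidean gradients of $g_k$ only off the set where $g_k-f$ lies in the center $Z$ and only where $g_k$ stays bounded; the paper handles this with Theorem \ref{T4} and Lemma \ref{lemma6-6}, introducing $S_k=\{g_k-f\in Z\}$ and $E_k=S_k\cup g_k^{-1}({\rm supp}\,\omega)$ (note the $g_k$ need not be uniformly bounded, so Corollary \ref{T5} cannot be invoked). Once $\int_{\B^{n+1}(r)}g_k^*(d\omega)>0$ for large $k$, the area formula gives $\cH^{n+1}(g_k(\B^{n+1}))>0$, contradicting the pure $(n+1)$-unrectifiability of $\Heis^n$ (Theorem \ref{AKM}). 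Your ``contact form'' heuristic points in the right direction for the non-extendability of $\Phi$, but the extension obstruction alone does not give non-density without either the missing homotopy step or this direct argument.
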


Once this result is proved, the case of general $M$ follows from an elementary
surgery argument which allows us to build mappings from $M$ to $\Heis^n$ that
contain the mappings constructed in Proposition~\ref{T1} on
$(n+1)$-dimensional slices.

\begin{question}\label{Q1}
Are Lipschitz maps dense in $W^{1,p}(\B^{n+1},\Heis^n)$ when $p\geq n+1$
or $1\leq p<n$?
\end{question}

We have stated Theorem~\ref{T1.5} for the Carnot-Carath\'eodory metric
$d_{cc}$ on $\Heis^n$ because it is a natural metric associated to the
sub-Riemannian structure on $\Heis^n$, and also because---in some sense---it
is the most difficult metric on $\Heis^n$ to consider for this question. The
conclusion of Theorem~\ref{T1.5} holds also for other metrics on $\Heis^n$
which are bi-Lipschitz equivalent to $d_{cc}$, such as the Kor\'anyi metric
\eqref{dKor}. Note that results of Haj{\l}asz \cite{hajlaszGAFA},
\cite{hajlaszIsometric} imply that the answer to the Lipschitz density
question can depend on the choice of the metric in the target, even
within a bi-Lipschitz equivalence class.

Theorem~\ref{T1.5} and Proposition~\ref{T0} show an analogy between Sobolev
mappings into the Heisenberg groups and mappings into spheres. The analogy
would be even more complete, if we could answer Question \ref{Q1} in the
affirmative. In some sense $\Heis^n$ behaves like $\Sph^n$, but the analogy is
quite intricate.
Indeed,
smooth mappings are always dense in the space of Sobolev mappings into
a manifold which is diffeomorphic to $\R^{2n+1}$.
However, $\Heis^n$ is only homeomorphic to $\R^{2n+1}$. Although the
identity mapping from $\Heis^n$ to $\R^{2n+1}$ is locally Lipschitz,
the Heisenberg group is not even locally bi-Lipschitz homeomorphic to
$\R^{2n+1}$, as the Hausdorff dimension of $\Heis^n$ equals $2n+2$.
Thus one needs to look directly at the geometry of the Heisenberg
group which is very complicated.

In the case of Sobolev mappings into manifolds, the answer to the
density question depends, in particular, on homotopy groups of the
target. The homotopy groups of $\Heis^n$ are trivial, but a
more appropriate object to consider would be Lipschitz homotopy
groups. For example, the $n$th Lipschitz homotopy group of $\Heis^n$
is nontrivial \cite{BaloghF}, \cite{WengerY}, and this fact is
principally responsible for the lack of density. We discuss Lipschitz
homotopy groups and sub-Riemannian manifolds
in~Section~\ref{LH}.

\begin{remark}[Added in December 2012]
Wenger and Young \cite{wenger-young} further develop the theory
of Lipschitz homotopy groups of Heisenberg groups in relation to
Lipschitz extension problems. The related paper 
\cite{hajlasz-schikorra-tyson} contains an alternate proof of the 
nontriviality of the $n$th Lipschitz homotopy group of $\Heis^n$ and
other results of a similar nature, together with applications to
Lipschitz nondensity in Sobolev spaces. See also Remark
\ref{hst-remark}.
\end{remark}

In the proof of Proposition~\ref{T1} a Sobolev mapping which cannot be
approximated by Lipschitz mappings will be constructed as the
composition of the cavitation map \eqref{cavitation} with an explicit
bi-Lipschitz embedding of the sphere $\Sph^n$ into $\Heis^n$. The
existence of the latter is of independent interest.

\begin{theorem}\label{T2}
For any $n\geq 1$, there exists a bi-Lipschitz embedding of $\Sph^n$
into $\Heis^n$.
\end{theorem}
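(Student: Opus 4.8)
The plan is to reduce the statement to the construction of a \emph{smooth horizontal} embedding of $\Sph^n$ into $\Heis^n$, and then to produce such an embedding in two independent ways. By a horizontal (Legendrian) embedding I mean a smooth embedding $F\colon\Sph^n\to\Heis^n$ with $dF_p(T_p\Sph^n)\subset\cH_{F(p)}$ for every $p$, where $\cH$ is the horizontal distribution of $\Heis^n$. I first claim that any such $F$ is automatically bi-Lipschitz from $(\Sph^n,d_{\Sph^n})$ to $(\Heis^n,d_{cc})$. For the upper bound, let $\gamma$ be a minimizing geodesic in $\Sph^n$ from $p$ to $q$; then $F\circ\gamma$ is a horizontal curve, so its Carnot--Carath\'eodory length equals $\int\lVert dF_{\gamma(s)}(\gamma'(s))\rVert\,ds\le L\,\length(\gamma)=L\,d_{\Sph^n}(p,q)$, where $L=\sup\lVert dF_p\rVert<\infty$ by smoothness and compactness of $\Sph^n$; hence $d_{cc}(F(p),F(q))\le L\,d_{\Sph^n}(p,q)$. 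Here horizontality is essential: a non-horizontal smooth curve has infinite Carnot--Carath\'eodory length. For the lower bound one uses the standard fact that the identity map $(\Heis^n,d_{cc})\to(\R^{2n+1},|\cdot|)$ is locally Lipschitz (the horizontal vector fields have locally bounded Euclidean norm), together with the fact that $F$, being a smooth embedding of a compact manifold, is bi-Lipschitz onto its image in the Euclidean metric of $\R^{2n+1}$; combining these, and using injectivity plus compactness to handle pairs $p,q$ that are not close, gives $d_{cc}(F(p),F(q))\ge c\,d_{\Sph^n}(p,q)$. So it suffices to build one smooth horizontal embedding $\Sph^n\hookrightarrow\Heis^n$.

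\emph{Symplectic construction.} Write $\Heis^n=\R^{2n}_{x,y}\times\R_t$ with a contact form $\alpha=dt-\lambda$ whose kernel is $\cH$ and with $d\lambda=\omega$ the standard symplectic form on $\R^{2n}$. A map $F=(g,h)\colon\Sph^n\to\R^{2n}\times\R$ is horizontal precisely when $F^*\alpha=0$, i.e. $dh=g^*\lambda$; differentiating forces $g^*\omega=0$, so $g$ must be an isotropic immersion. Conversely, if $g$ is isotropic and $g^*\lambda$ is exact, any primitive $h$ of $g^*\lambda$ yields a horizontal immersion $F=(g,h)$. Take for $g$ the Whitney sphere, the Lagrangian immersion $\Sph^n\to\C^n$ with a single transverse double point. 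The form $g^*\lambda$ is exact automatically when $n\ge 2$ (since $H^1(\Sph^n;\R)=0$), and when $n=1$ because the Whitney immersion of $\Sph^1$ is a symmetric figure-eight enclosing zero signed area; choosing a primitive $h$ produces a horizontal immersion $F=(g,h)$ of $\Sph^n$ into $\Heis^n$. It is an embedding as soon as $h$ separates the two preimages of the double point of $g$, which holds by direct computation, or otherwise after a small perturbation of $g$ within the class of isotropic immersions. The reduction above then gives a bi-Lipschitz embedding.

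\emph{Complex hyperbolic construction.} Alternatively, realize $\Heis^n$ as the punctured ideal boundary $\partial_\infty\C\mathrm{H}^{n+1}\setminus\{\infty\}$ of complex hyperbolic space, with its natural CR/contact structure whose horizontal bundle is the maximal complex subbundle of $T\Sph^{2n+1}$; the associated Cygan (Kor\'anyi-type) metric is bi-Lipschitz to $d_{cc}$, so a smooth horizontal embedding for this structure suffices. Inside $\C\mathrm{H}^{n+1}$ choose a totally geodesic, totally real copy of $\R\mathrm{H}^{n+1}$ whose ideal boundary $\Sph^n$ avoids the removed point $\infty$. This boundary sphere is a smoothly embedded submanifold of $\Heis^n$, and because the tangent spaces to the boundary of a totally real totally geodesic subspace are totally real $n$-planes inside the maximal complex subbundle, the sphere is Legendrian, hence horizontal. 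Again the reduction yields the bi-Lipschitz conclusion.

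\emph{Main obstacle.} The bi-Lipschitz estimates are soft once a smooth horizontal embedding is in hand; the real content is the existence of such an embedding. Since $\dim\Sph^n=n$ is exactly the Lagrangian dimension in $\R^{2n}$ and, for $n\ge 2$, there is no \emph{embedded} Lagrangian $n$-sphere in $\C^n$, one cannot lay $\Sph^n$ flat in the horizontal $\R^{2n}$; the point of the lift $h$ is precisely that the extra Reeb direction of $\Heis^n$ resolves the self-intersection of a Lagrangian immersion such as the Whitney sphere. The delicate step in the symplectic route is thus to verify that this resolution is genuine---that the double point has nonzero action, so that $h$ truly separates the two sheets---while in the complex hyperbolic route the corresponding delicate point is verifying horizontality of the ideal boundary of a totally real totally geodesic subspace and arranging it to miss the point at infinity.
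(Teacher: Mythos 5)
Your proposal is correct and follows essentially the same route as the paper: reduce the statement to producing a smooth horizontal embedding of $\Sph^n$ into $\Heis^n$ (the paper's Theorem \ref{T-BE1} shows such an embedding is automatically bi-Lipschitz), and then construct one either as a Legendrian lift of the Whitney-type Lagrangian immersion $(x_0,x')\mapsto(x_1,x_0x_1,\ldots,x_n,x_0x_n)$ restricted to $\Sph^n$ --- where the paper verifies explicitly that the primitive $\tau=\tfrac{2}{3}x_0^3-2x_0+C$ separates the two preimages of the double point, the one computation you assert rather than carry out --- or as the horizontal sphere $\partial B\cap\R^{n+1}$ in the boundary of the unit ball of $\C^{n+1}$ transported by a unitary rotation and the Cayley transform, which is exactly your complex hyperbolic picture. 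The only minor divergence is your lower bi-Lipschitz bound, obtained from the compact-set estimate $|p-q|\le C\,d_{cc}(p,q)$ together with the Euclidean bi-Lipschitzness of a smooth embedding of a compact manifold; this is a valid and somewhat simpler shortcut than the paper's more general intrinsic comparison of $d_{\rm ext}$ with $d_{\rm int}$, which is proved for arbitrary sub-Riemannian targets via coordinate charts.
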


As observed by Balogh and F\"assler \cite{BaloghF} and Wenger and
Young \cite{WengerY}, the embedding from Theorem \ref{T2} admits no
Lipschitz extension from $\B^{n+1}$ to $\Heis^n$. Thus
$$
\pi_n^\lip(\Heis^n)\ne 0.
$$
To show that the mapping whose construction was described above cannot be
approximated by Lipschitz mappings, we will use a result of Ambrosio and
Kirchheim \cite{ambrosiok} and Magnani \cite{MagnaniThesis},
\cite{MagnaniRigidity} on the pure unrectifiability of $\Heis^n$. In the
proof of density in Theorem \ref{T1.5}(b), we will use recent results of
Wenger and Young \cite{WengerY}. Theorem~\ref{T2} is also related to
recent work on the Lipschitz extension problem with sub-Riemannian
target \cite{BaloghF}, \cite{RigotW} and on the construction of
Legendrian submanifolds of contact manifolds \cite{sullivan}.

\

Another issue is understanding what it means for a sequence of Sobolev
mappings $f_k\in W^{1,p}(M,\Heis^n)$ to converge to $f\in
W^{1,p}(M,\Heis^n)$.

The Heisenberg group $\Heis^n$ is homeomorphic to $\R^{2n+1}$ and
the identity mapping from $\Heis^n$ to $\R^{2n+1}$ is locally
Lipschitz. Hence if $f\in W^{1,p}(M,\Heis^n)$ is bounded, then
also $f\in W^{1,p}(M,\R^{2n+1})$. More generally, if $f\in W^{1,p}(M,\Heis^n)$
is not necessarily bounded, then still $f$ as a mapping into $\R^{2n+1}$ is
absolutely continuous on almost all lines.
Moreover the directional derivatives of $f$ are horizontal vectors
and hence for
$f\in W^{1,p}(\Omega,\Heis^n)$, $\Omega\subset\R^m$
we can define
\begin{equation}
\label{nabla-heis}
|\nabla f|_\Heis =
\left( \sum_{k=1}^m\left|\frac{\partial f}{\partial x_k}\right|_\Heis^2\right)^{1/2}\, ,
\end{equation}
where $|v|_\Heis$ stands for the length of the horizontal vector with respect to the
given metric in the horizontal distribution. See Section~\ref{SR} for
definitions.

\begin{theorem}
\label{T4}
Let $\Omega$ be a bounded domain in $\R^m$.
Suppose that
$f_k,f\in W^{1,p}(\Omega,\Heis^n)$, $k=1,2,\ldots$, $1\leq p<\infty$,
and $f_k\to f$ in $W^{1,p}(\Omega,\Heis^n)$. Then
$\int_{\{f_k-f \not\in Z\}} \left( |\nabla f_k|_\Heis^p+|\nabla
  f|_\Heis^p \right) \to 0$ as $k\to\infty$, where $Z$ denotes the
center of $\Heis^n$.
\end{theorem}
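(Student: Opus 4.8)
The plan is to reduce the statement to the single assertion that $\mu(A_k)\to 0$, where $A_k:=\{x:f_k(x)-f(x)\notin Z\}$ and $\mu$ is the finite Borel measure on $\Omega$ with $d\mu=|\nabla f|_\Heis^p\,d\mathcal{L}^m$, and then to prove that assertion by exploiting the rigidity of the (non‑intrinsic) $\ell^\infty$‑metric.

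\emph{Step 1 (reduction).} Write $\Heis^n=\R^{2n+1}$ in coordinates $(a,b,c)$, $a,b\in\R^n$, $c\in\R$, so that $Z=\{(0,0,c)\}$ and the horizontal projection $\pi(a,b,c)=(a,b)$ is $1$‑Lipschitz from $(\Heis^n,d_{cc})$ to $\R^{2n}$. Any $g\in W^{1,p}(\Omega,\Heis^n)$, viewed in $\R^{2n+1}$, is absolutely continuous on almost every line with a.e.\ horizontal partial derivatives, and horizontality forces $\partl{c}{x_j}=\tfrac12(\langle a,\partl{b}{x_j}\rangle-\langle b,\partl{a}{x_j}\rangle)$, whence $|\partl{g}{x_j}|_\Heis=|\partl{(\pi g)}{x_j}|$ and $|\nabla g|_\Heis=|\nabla(\pi g)|$ a.e. First I would record that for the Kuratowski embedding $\iota\colon\Heis^n\hookrightarrow\ell^\infty$, $\iota g=(d_{cc}(g(\cdot),q_i)-c_i)_i$ with $\{q_i\}$ dense, one has $\|\partl{(\iota g)}{x_j}(x)\|_{\ell^\infty}=|\partl{g}{x_j}(x)|_\Heis$ a.e.: the $i$‑th component's $x_j$‑derivative is $\langle\nabla_\Heis d_{cc}(\cdot,q_i)|_{g(x)},\partl{g}{x_j}(x)\rangle$, of absolute value $\le|\partl{g}{x_j}(x)|_\Heis$ by the eikonal identity and, by density of $\{q_i\}$, of supremum exactly $|\partl{g}{x_j}(x)|_\Heis$. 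Hence $|\nabla(\iota g)|=|\nabla g|_\Heis$ a.e., so $f_k\to f$ in $W^{1,p}(\Omega,\Heis^n)$ gives $\bigl||\nabla f_k|_\Heis-|\nabla f|_\Heis\bigr|\le|\nabla(\iota f_k-\iota f)|\to 0$ in $L^p$, in particular $|\nabla f_k|_\Heis^p\to|\nabla f|_\Heis^p$ in $L^1$. On $\Omega\setminus A_k=\{\pi f_k=\pi f\}$ one has $\nabla(\pi f_k)=\nabla(\pi f)$ a.e., hence $|\nabla f_k|_\Heis=|\nabla f|_\Heis$ a.e.\ there; subtracting $\int_\Omega|\nabla f|_\Heis^p$ from $\int_\Omega|\nabla f_k|_\Heis^p$ then yields $\int_{A_k}(|\nabla f_k|_\Heis^p-|\nabla f|_\Heis^p)\to 0$, so
\[
\int_{A_k}\bigl(|\nabla f_k|_\Heis^p+|\nabla f|_\Heis^p\bigr)=2\mu(A_k)+o(1).
\]
Thus it suffices to show $\mu(A_k)\to 0$ (the case of unbounded $f_k,f$ being reduced to bounded maps by truncation).

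\emph{Step 2 ($\mu(A_k)\to 0$).} Since $\mu$ is finite and concentrated on $S:=\{|\nabla f|_\Heis>0\}$, by dominated convergence it is enough that every subsequence of $(\chi_{A_k})$ have a further subsequence converging to $0$ for $\mu$‑a.e.\ $x$; equivalently, that for a.e.\ $x\in S$ one has $\pi f_k(x)=\pi f(x)$ for all large $k$ along the subsequence. Fix a subsequence and refine it so that for a.e.\ $x$: $f_k(x)\to f(x)$, $\nabla(\iota f_k)(x)\to\nabla(\iota f)(x)$ in $\ell^\infty$, and $|\partl{f_k}{x_j}(x)|_\Heis\to|\partl{f}{x_j}(x)|_\Heis$ for all $j$. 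Suppose, toward a contradiction, that on a set of positive $\mu$‑measure of such $x$ one has $\pi f_k(x)\ne\pi f(x)$ for infinitely many $k$; fix such an $x$, pick $j$ with $v:=\partl{f}{x_j}(x)\ne 0$, and restrict to those $k$. Left‑translating by $f(x)^{-1}$ (an isometry of $(\Heis^n,d_{cc})$ preserving $|\cdot|_\Heis$) we may take $f(x)=0$; put $g_k:=f_k(x)\to 0$, so $\pi g_k=\pi f_k(x)-\pi f(x)\ne 0$ as $\pi$ is a homomorphism. Using the left‑invariant frame to identify each horizontal fibre with $\R^{2n}$, the $i$‑th coordinate of $\partl{(\iota f_k)}{x_j}(x)-\partl{(\iota f)}{x_j}(x)$ is $\langle\mathbf g_i(g_k),\mathbf v_k\rangle-\langle\mathbf g_i(0),v\rangle$, where $\mathbf g_i(g):=dL_{g^{-1}}\nabla_\Heis d_{cc}(\cdot,q_i)|_g$ (of length $\le 1$) and $\mathbf v_k$ is $\partl{f_k}{x_j}(x)$ trivialized; after a further subsequence $\mathbf v_k\to\mathbf w$ with $|\mathbf w|=|v|_\Heis>0$, and since $|\langle\mathbf g_i(g_k),\mathbf v_k-\mathbf w\rangle|\le|\mathbf v_k-\mathbf w|\to 0$ uniformly in $i$, it suffices to bound $\sup_i|\langle\mathbf g_i(g_k),\mathbf w\rangle-\langle\mathbf g_i(0),v\rangle|$ below by a positive constant for all $k$. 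Here I would choose a suitable $q$ — which, by density of $\{q_i\}$ and openness of the differentiability of $d_{cc}(\cdot,q)$ at $0$ and at $g_k$, may be taken from $\{q_i\}$ — so that the displacement of the base point from $0$ to $g_k$, or the direction mismatch $\mathbf w\ne v$, forces the minimizing geodesics to $q$ to leave in substantially different directions: if $\mathbf w\ne v$, take $q=t(\mathbf w-v)/|\mathbf w-v|$ with $d_{cc}(0,g_k)\ll t\ll 1$, so that $\mathbf g_i(0)$ and $\mathbf g_i(g_k)$ are both close to $-(\mathbf w-v)/|\mathbf w-v|$ and the expression is $\approx-|\mathbf w-v|$; if $\mathbf w=v$, take $q=tv$ with $t\ll|\pi g_k|$ (so $\mathbf g_i(0)=-v$ exactly) and, in the configurations where $v$ nearly aligns with the geodesic from $g_k$ back to $0$, instead take $q$ near the cut locus of $0$ at scale comparable to $d_{cc}(0,g_k)$, using the instability of the cut locus (an instability related to the failure of inversion $g\mapsto g^{-1}$ on $\Heis^n$ to be locally Lipschitz for $d_{cc}$). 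In each case, pairing with $v\ne 0$ and letting $k\to\infty$ gives $\limsup_k\|\partl{(\iota f_k)}{x_j}(x)-\partl{(\iota f)}{x_j}(x)\|_{\ell^\infty}>0$, contradicting $\nabla(\iota f_k)(x)\to\nabla(\iota f)(x)$. Hence $\chi_{A_k}\to 0$ $\mu$‑a.e., so $\mu(A_k)\to 0$, and combined with Step 1 this finishes the proof.

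\emph{Main obstacle.} Step 1 is bookkeeping once one has the identity $\|\partl{(\iota g)}{x_j}\|_{\ell^\infty}=|\partl{g}{x_j}|_\Heis$, and the subsequence extraction and final dominated convergence in Step 2 are routine. The hard part is the quantitative sub‑Riemannian fact used in Step 2: an arbitrarily small horizontal‑coordinate discrepancy between the base points $f_k(x)$ and $f(x)$ must be detected, with a lower bound independent of $k$, by the $\partl{}{x_j}$‑derivatives of the distance functions $d_{cc}(\cdot,q)$ along a fixed nonzero horizontal direction. Establishing this requires the explicit structure of short geodesics in $\Heis^n$ — in particular the fan of geodesics through central points and the sensitive dependence of a geodesic's initial direction on its endpoint near the cut locus — together with a case analysis on the relative position of $v$, $\pi g_k$ and $g_k$. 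This is precisely where the non‑Abelian geometry is indispensable: the Euclidean analogue of the theorem is false (for $g_k=g+k^{-1}h\to g$ in $W^{1,p}(\Omega,\R^N)$ the set $\{g_k\ne g\}$ need not shrink and $\int_{\{g_k\ne g\}}|\nabla g|^p$ need not vanish), so Theorem \ref{T4} can hold only because convergence in the $\ell^\infty$‑metric rigidifies the horizontal part of a convergent sequence wherever $|\nabla f|_\Heis>0$.
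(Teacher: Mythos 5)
Your Step 1 reduction (the identity $\Vert\partial_j(\iota g)\Vert_{\ell^\infty}=|\partial_j g|_\Heis$ a.e., equality of horizontal gradients on $\{f_k-f\in Z\}$, and hence reduction to $\mu(A_k)\to0$ with $d\mu=|\nabla f|_\Heis^p\,dx$) is a legitimate alternative to the paper's bookkeeping, but the decisive step is asserted rather than proved. What you need at the end of Step 2 is exactly the paper's key one-dimensional lemma: for two points $p,q\in\Heis^n$ with $q^{-1}*p\notin Z$ and horizontal vectors $u$ at $p$, $w$ at $q$, one has $\sup_i\bigl|\langle\nabla_\Heis d_{q_i}(p),u\rangle_\Heis-\langle\nabla_\Heis d_{q_i}(q),w\rangle_\Heis\bigr|\ge\max\{|u|_\Heis,|w|_\Heis\}$, applied with $p=f(x)$, $q=f_k(x)$, and crucially with a lower bound that does not degenerate as $f_k(x)\to f(x)$. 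The paper proves this by a two-reference-point trick that your single-$q$ strategy misses: take the geodesic $\gamma$ through $f(x)$ with $\gamma'(0)=u$ and pick $q_i\approx\gamma(\delta)$, $q_j\approx\gamma(-\delta)$; the derivatives at $f(x)$ are then $\approx\mp|u|_\Heis$, while, since $d_{cc}$ is smooth off the center fiber and $f(x)^{-1}*f_k(x)\notin Z$, the two derivatives at $f_k(x)$ (paired with $w$) can be made to differ by less than $\eps$ by taking both reference points sufficiently close to $f(x)$; hence one of the two indices detects a difference $\ge|u|_\Heis-2\eps$. No case analysis on the limit direction $\mathbf w$, and no knowledge of where the minimizing geodesic from $f_k(x)$ to $f(x)$ points, is needed.

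By contrast, your sketch of this estimate breaks down precisely in the hard configuration you identify ($\mathbf w=v$ with the minimizing direction at $g_k$ nearly opposite to $v$), where it reduces to the phrase ``take $q$ near the cut locus of $0$ at scale comparable to $d_{cc}(0,g_k)$, using the instability of the cut locus.'' That is not an argument: you would have to exhibit, uniformly over all such configurations and all $k$, a reference point $q$ with $q^{-1}*0\notin Z$ and $q^{-1}*g_k\notin Z$ (so the chain rule applies at both base points) for which $\langle\nabla_\Heis d_q(g_k)-\nabla_\Heis d_q(0),v\rangle_\Heis$ is bounded away from zero, and this requires quantitative control of the fan of Heisenberg geodesics near the vertical axis which you neither state nor prove. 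Two smaller points: the ``$\ge$'' half of your Step 1 identity $\Vert\partial_j(\iota g)\Vert_{\ell^\infty}=|\partial_j g|_\Heis$ already needs the same geodesic/continuity argument (``by density'' alone does not give that the supremum is attained against admissible reference points), and your goal in Step 2 (that $\pi f_k(x)=\pi f(x)$ eventually, $\mu$-a.e., along subsequences) is stronger than necessary: once the displayed estimate is available one simply integrates it, as the paper does, to get $\int_{\{f_k-f\notin Z\}}(|\nabla f_k|_\Heis+|\nabla f|_\Heis)^p\le C\int_\Omega|\nabla(\bar f_k-\bar f)|_{\ell^\infty}^p\to0$, bypassing your reduction entirely. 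So the architecture of your proposal is workable, but the central sub-Riemannian inequality is left unproved and the mechanism you propose for it in the critical case is not viable as described.
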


Clearly the theorem generalizes to the case in which $\Omega$ is
replaced by a compact manifold.

The condition is surprisingly strong. In particular it shows that $f_k$
{\em must} differ from $f$ by an element of the center $Z$ on a large
set. This phenomenon is quite unlike the case of manifold or Euclidean
targets. Similar phenomena have been previously observed in
\cite{hajlaszIsometric}.

Since bounded functions in $W^{1,p}(M,\Heis^n)$ belong also to
$W^{1,p}(M,\R^{2n+1})$ the proof of Theorem~\ref{T4} gives

\begin{corollary}\label{T5}
Let $M$ be a compact Riemannian manifold.
Suppose that $f_k,f\in W^{1,p}(M,\Heis^n)$, $k=1,2,\ldots,$
are uniformly bounded (i.e. the ranges of all the mappings are
contained in a fixed bounded subset of $\Heis^n$). If $f_k\to f$ in
$W^{1,p}(M,\Heis^n)$, then $f_k\to f$ in $W^{1,p}(M,\R^{2n+1})$.
\end{corollary}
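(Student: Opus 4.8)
The plan is to deduce this directly from Theorem~\ref{T4} together with the observation that uniform boundedness in $\Heis^n$ forces the center-valued discrepancies between $f_k$ and $f$ to be uniformly bounded in the Euclidean sense. First I would fix the standard coordinates $(x,y,t)\in\R^{2n}\times\R\cong\R^{2n+1}$ on $\Heis^n$ and recall that, since the mappings are uniformly bounded in $\Heis^n$ (hence in the Carnot--Carath\'eodory metric), their images all lie in a fixed Euclidean ball $B\subset\R^{2n+1}$; in particular the sequence $f_k$ and the limit $f$ are uniformly bounded in $L^\infty(M,\R^{2n+1})$, so it suffices to prove $\nabla f_k\to\nabla f$ in $L^p(M,\R^{2n+1})$.

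Next I would split $M$ according to whether $f_k-f$ (the difference taken in the group, or equivalently---up to a harmless error controlled below---in $\R^{2n+1}$) lies in the center $Z=\{0\}\times\{0\}\times\R$ or not. On the set $A_k=\{f_k-f\notin Z\}$, Theorem~\ref{T4} gives $\int_{A_k}(|\nabla f_k|_\Heis^p+|\nabla f|_\Heis^p)\to 0$. Because the identity map $\Heis^n\to\R^{2n+1}$ is locally Lipschitz and all the mappings take values in the fixed bounded set $B$, on $A_k$ we control the full Euclidean gradient by the horizontal gradient: $|\nabla f_k|\le C_B|\nabla f_k|_\Heis$ a.e. (and likewise for $f$), where $C_B$ depends only on $B$. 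Hence $\int_{A_k}(|\nabla f_k|^p+|\nabla f|^p)\to 0$ as well. On the complementary set $M\setminus A_k=\{f_k-f\in Z\}$, the first $2n$ coordinates of $f_k$ and $f$ agree almost everywhere, so the horizontal parts of their gradients agree there; the only possible discrepancy is in the $t$-coordinate, but by the Sobolev convergence $f_k\to f$ the $t$-components converge and, since $f_k-f$ is center-valued here, the mismatch in the vertical derivative is itself controlled by $\|f_k-f\|_{W^{1,p}}\to0$. Assembling the two pieces yields $\|\nabla f_k-\nabla f\|_{L^p(M,\R^{2n+1})}\to0$, which combined with the uniform $L^p$ (indeed $L^\infty$) convergence of the functions themselves gives $f_k\to f$ in $W^{1,p}(M,\R^{2n+1})$.

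The main obstacle I anticipate is the careful bookkeeping on the set $M\setminus A_k$ where $f_k$ and $f$ differ by a (nonconstant, measurable) center-valued function: one must verify that this vertical discrepancy, together with its derivative, genuinely goes to zero, and that subtracting "in the group" versus "in $\R^{2n+1}$" does not introduce an error that fails to vanish. This is where one uses that convergence $f_k\to f$ in $W^{1,p}(M,\Heis^n)$---defined via the isometric embedding into $\ell^\infty$---already implies $\ell^\infty$-norm convergence, hence coordinatewise convergence of the $\R^{2n+1}$-representatives, so the residual terms are all dominated by quantities tending to $0$. Everything else is routine once the local Lipschitz comparison $|\cdot|\le C_B|\cdot|_\Heis$ on bounded sets and the partition into $A_k$ and its complement are in place.
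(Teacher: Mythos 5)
Your overall decomposition is the same as the paper's: split $M$ into $A_k=\{f_k-f\notin Z\}$ and its complement, use Theorem~\ref{T4} on $A_k$ together with the comparability of $|\cdot|$ and $|\cdot|_\Heis$ for horizontal vectors over a bounded set (this is exactly how the paper handles $A_k$, and your use of uniform boundedness there is correct). The gap is in your treatment of the set $S_k=\{f_k-f\in Z\}$. You correctly note that the $z$-components of $f_k$ and $f$ agree there, so their gradients agree a.e.\ on $S_k$ (this is the standard fact recorded in Lemma~\ref{SM-T6}), but you then dispose of the $t$-component by asserting that ``the mismatch in the vertical derivative is controlled by $\|f_k-f\|_{W^{1,p}}\to0$.'' That is not justified: the only convergence you have is in $W^{1,p}(M,\Heis^n)$, i.e.\ of the Kuratowski embeddings $\bar f_k\to\bar f$ in $W^{1,p}(M,\ell^\infty)$, and this does \emph{not} give a Lipschitz-type bound on the Euclidean derivative of $t_{f_k}-t_f$ on $S_k$ --- along the center the Carnot--Carath\'eodory distance is comparable to the \emph{square root} of the Euclidean distance, and the gradient estimates extracted from the $\ell^\infty$ norm (Lemma~\ref{mDestimate}) only yield information off the set $\{f_k-f\in Z\}$. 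Reading it as the $W^{1,p}(M,\R^{2n+1})$ norm instead would be circular, since that is what you are trying to prove.

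The correct ingredient, which is the heart of the paper's proof, is Lemma~\ref{lemma6-6}: on $S_k$ one has $\nabla f_k=\nabla f$ a.e., so the contribution of $S_k$ is exactly zero, with no limiting argument needed. The paper proves this by noting that the a.e.\ defined directional derivatives of $f_k$ and $f$ are horizontal vectors, so if they differed at a point of $S_k$ their difference could not lie in $Z$, forcing that point to be isolated in $S_k$ along a coordinate line; Fubini then shows the exceptional set is null. Alternatively, you could complete your own line of reasoning by invoking the contact equation \eqref{CE}: a.e.\ on $S_k$ the $z$-values and the $z$-gradients of $f_k$ and $f$ coincide, and the contact equation then forces $\nabla t_{f_k}=\nabla t_f$ there. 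Either way, some form of this horizontality argument must replace the norm estimate you proposed; without it the proof does not close.
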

Theorem~\ref{T4} shows, however, that the converse implication is
not true. Indeed, if $f_k,f\in W^{1,p}(M,\Heis^n)$ and $f_k\to f$ in
$W^{1,p}(M,\R^{2n+1})$, then it is very rarely true that $f_k\to f$ in
$W^{1,p}(M,\Heis^n)$.

\

The paper is organized as follows. In Section~\ref{SR} we recall the
definition and basic properties of sub-Riemannian manifolds and the
Heisenberg group. In Section~\ref{BE} we provide two different proofs
of Theorem~\ref{T2}. We find it important to present two different
approaches as they refer to different geometric structures underlying
$\Heis^n$. The first approach is based on the interpretation of
$\Heis^n$ as a conformal image of the boundary of the unit ball in
$\C^{n+1}$, punctured at one point, while the second is based on ideas
from symplectic geometry. In Section~\ref{LH} we discuss Lipschitz
homotopy groups. We do not prove any deep results there, but we think
that Lipschitz homotopy groups will eventually play an important role
in geometric analysis and geometric topology and we would like to
advertise the subject. In Section~\ref{SM} we define the class of
Sobolev mappings into metric spaces and in particular into the
Heisenberg group. We follow the presentation given in \cite{hajlaszt}.
In Section~\ref{P45} we prove Theorem~\ref{T4} and Corollary~\ref{T5},
and show that the class of Sobolev mappings into the Heisenberg group
defined in our paper agrees with that defined by Capogna and Lin
\cite{capognal}. Section~\ref{P13} contains the proofs of
Proposition~\ref{T1} and Theorem~\ref{T1.5}. The final
Section~\ref{sec:grushin} contains a variant of Theorem \ref{T1.5}
where the target space is replaced by the sub-Riemannian Grushin
plane.

\

\paragraph{\bf Acknowledgements.} We thank the referee for a careful
reading of the paper and for helpful remarks. In particular, we are
grateful to the referee for insisting that we include more detail in
the proof of Theorem \ref{LHT4}; this request led us to articulate
stronger conclusions regarding the structure of the Lipschitz homotopy
group $\pi_n^\lip(\Heis^n)$ than we had originally deduced. See Remark
\ref{ExtraRemark} for details.

\section{Sub-Riemannian geometry}
\label{SR}

\subsection{The Heisenberg group}

We represent the {\em Heisenberg group}  as the space
$\Heis^n=\C^n\times\R=\R^{2n+1}$ equipped with the group law
$$
(z,t)*(z',t')=\left(z+z',t+t'+2 \Imag \sum_{j=1}^n z_j
  \overline{z_j'}\right).
$$
The Heisenberg group $\Heis^n$ is a Lie group. A basis of left
invariant vector fields is given by
\begin{equation}\label{XY}
X_j=\frac{\partial}{\partial x_j} + 2y_j\frac{\partial}{\partial t},\
Y_j=\frac{\partial}{\partial y_j}-2x_j\frac{\partial}{\partial t},\qquad
j=1,\ldots,n,
\end{equation}
and $T=\frac{\partial}{\partial t}$.
Here and henceforth we use the notation
$$
(z,t) = (z_1,\ldots,z_n,t) = (x_1,y_1,\ldots,x_n,y_n,t).
$$
Note that $[X_j,Y_j]=-4T$, $j=1,\ldots,n$, while all other Lie
brackets of pairs of vectors taken from \eqref{XY} are equal to zero.
The Heisenberg group is equipped with the
{\em horizontal distribution} $H\Heis^n$, which is defined at every
point $p\in\Heis^n$ by
$$
H_p\Heis^n={\rm span}\, \{ X_1(p),\ldots,X_n(p),Y_1(p),\ldots,Y_n(p)\}.
$$
The distribution $H\Heis^n$ is equipped with a left invariant metric $\boldg$
such that the vectors 
$$
X_1(p),\ldots,X_n(p),Y_1(p),\ldots,Y_n(p)
$$
are orthonormal at every point $p\in\Heis^n$. We denote by $o=(0,0)$
the identity element in $\Heis^n$. A family of anisotropic dilations
$(\delta_r)_{r>0}$ on $\Heis^n$ is defined by
\begin{equation}\label{delta}
\delta_r(z,t) = (rz,r^2t), \qquad r>0.
\end{equation}

An absolutely continuous curve $\gamma:[a,b]\to\Heis^n$ is called {\em
horizontal} if $\gamma'(s)\in H_{\gamma(s)}\Heis^n$ for almost every $s$. The
Heisenberg group $\Heis^n$ is equipped with the {\em Carnot-Carath\'eodory
  metric} $d_{cc}$ which is defined as the infimum of the lengths of
horizontal curves connecting two given points. The length of the curve is
computed with respect to the metric $\boldg$ on $H\Heis^n$.
Let us remark in passing at this point that we will denote by $|\cdot|_\Heis$
the norm on the horizontal bundle induced by the metric $\boldg$, i.e.,
$$
|v|_\Heis = \boldg_p(v,v)^{1/2} \qquad \mbox{if $v\in H_p\Heis^n$.}
$$

It is well known that any two points in $\Heis^n$ can be connected by a
horizontal curve and hence $d_{cc}$ is a true metric. Actually, $d_{cc}$ is
topologically equivalent to the Euclidean metric. Moreover, for any compact
set $K$ there is a constant $C\geq 1$ such that
\begin{equation}
\label{SReq1}
C^{-1}|p-q|\leq d_{cc}(p,q)\leq C|p-q|^{1/2}
\end{equation}
for all $p,q\in K$.
In what follows $\Heis^n$ will always be regarded as the metric space
$(\Heis^n,d_{cc})$. It follows from \eqref{SReq1} that the identity mapping
from $\Heis^n$ to $\R^{2n+1}$ is locally Lipschitz, but its inverse is only
locally H\"older continuous with exponent $1/2$. The Hausdorff dimension of
$\Heis^n$ equals $2n+2$ since
$$
\cH^{2n+2}_{cc}(B_{cc}(p,r))=Cr^{2n+2}
$$
for all $p\in \Heis^n$ and $r>0$. Here $\cH^{2n+2}_{cc}$ stands for the
$(2n+2)$-dimensional Hausdorff measure with respect to $d_{cc}$ and
$B_{cc}(p,r)$ denotes a ball with respect to the Carnot-Carath\'eodory
metric. In fact, these Hausdorff measures are invariant with respect to left
translation and scale correctly under the anisotropic dilations $\delta_r$
defined above.

The variational problem which defines the Carnot-Carath\'eodory metric
always admits a solution: $(\Heis^n,d_{cc})$ is a geodesic
metric space. The geodesics can be written down explicitly in
parametric form. See, for example, the books by Montgomery
\cite{montgomery}, Bella{\"\i}che \cite{bellaiche},
Capogna--Danielli--Pauls--Tyson \cite{cdpt} or the paper of Marenich
\cite{marenich} for details. In fact, sub-Riemannian geodesics in
$\Heis^1$ are precisely the horizontal lifts of circular arcs in
$\R^2$ solving Dido's isoperimetric problem.

As a result, there is a rather complicated implicit formula for $d_{cc}$. We
will not need this formula in this paper. There are other metrics on
$\Heis^n$ which are bi-Lipschitz equivalent to $d_{cc}$, for instance,
the {\it Kor\'anyi metric} $d_K$ defined by
\begin{equation}\label{dKor}
d_K(p,q) = \Vert q^{-1} * p\Vert_K,
\end{equation}
where $\Vert (z,t)\Vert_K = (|z|^4+t^2)^{1/4}$.

On the Heisenberg group there is a natural horizontal gradient
\begin{equation}\label{nablaH}
\nabla_\Heis u = \sum_{j=1}^n (X_ju)X_j+(Y_ju)Y_j
\end{equation}
whose length with respect to the metric $\boldg$ on $H\Heis^n$ equals
$$
|\nabla_\Heis u|_\Heis = \sqrt{\sum_{j=1}^n |X_j u|^2+|Y_ju|^2}.
$$
Let $\Omega \subset \Heis^n$ be a domain. A function $u:\Omega\to\R$ is said to be
{\it continuously horizontally
differentiable} at a point $p\in\Omega$ if $X_ju$ and $Y_ju$ are
continuous at $p$ for all $1\le j\le n$.
We denote the class of functions on $\Omega$ which are $k$ times continuously
horizontally differentiable at each point of $\Omega$ by $C^k_\Heis(\Omega)$.

The center of the Heisenberg group is the vertical axis ($t$-axis)
$$
Z=\{(z,t)\in\Heis^n:\, z=0\}.
$$
It easily follows from the group law that, for $p,q\in\Heis^n$, we have
\begin{equation}\label{ZZ}
q^{-1}*p\in Z \quad \mbox{if and only if} \quad p-q\in Z.
\end{equation}
For $q\in \Heis^n$, let $d_q:\Heis^n\to\R$ be the function
$$
d_q(p) = d_{cc}(p,q).
$$
As with all distance functions on metric spaces, $d_q$ is
$1$-Lipschitz. By the Pansu--Rademacher differentiation theorem
\cite{pansu}, $d_q$ is horizontally differentiable at
almost every point of $\Heis^n$. However, a stronger result holds.

\begin{lemma}
\label{SR-T1}
For each $q\in\Heis^n$, $d_q$ is in
$C^\infty(\Heis^n \setminus \{ p : q^{-1} * p \in Z \})$.
\end{lemma}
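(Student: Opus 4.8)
The plan is to exhibit $d_q$ explicitly near a point $p$ with $z(q^{-1}*p)\neq 0$ and verify smoothness from that formula. By left-invariance of both $d_{cc}$ and the differential structure (the vector fields $X_j,Y_j$ are left-invariant), it suffices to treat the case $q=o$, so we must show that $d_o = d_{cc}(\cdot,o)$ is $C^\infty$ on $\Heis^n\setminus Z$. First I would recall the explicit description of Carnot--Carath\'eodory geodesics in $\Heis^n$ emanating from the origin, as found in the references cited in the excerpt (Montgomery, Bella\"iche, Marenich, or \cite{cdpt}): a unit-speed geodesic from $o$ is determined by an initial horizontal covector, and, after using the rotational symmetry of $\Heis^n$ about the $t$-axis to normalize, it is governed by a single parameter $\varphi$ (the ``vertical momentum''). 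Writing the endpoint $(z,t)$ of the geodesic of length $r$ in terms of $(r,\varphi)$ and the angular variables, one obtains parametric equations of the schematic form $|z| = r\,(\text{something in }\varphi)$ and $t = r^2\,(\text{something in }\varphi)$, which is exactly the behavior dictated by the dilations $\delta_r$ in \eqref{delta}.

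Next I would set up the problem as an implicit function computation. The quantity $d_o(z,t)=r$ is recovered by inverting these parametric relations: away from $Z$, i.e.\ when $|z|>0$, one shows that the map $(r,\varphi)\mapsto(|z|,t)$ (on the relevant range of $\varphi$, namely $|\varphi|<\pi$ for length-minimizers, the unique minimizing stratum) is a smooth diffeomorphism onto its image, by checking that its Jacobian is nonvanishing there. The Jacobian computation is the technical heart: one must verify that the partial derivatives of the geodesic endpoint functions with respect to $(r,\varphi)$ are linearly independent precisely when $|z|\neq 0$, and that the degeneracy (where geodesics from $o$ focus, or where multiple minimizers appear — the cut locus) is contained in $Z$. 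This is where the hypothesis $z\neq 0$ gets used in an essential way. Granting this, the inverse function theorem produces $r=r(|z|,t)$ and $\varphi=\varphi(|z|,t)$ as $C^\infty$ functions of $(|z|,t)$ on $\Heis^n\setminus Z$; since $|z|=(x_1^2+y_1^2+\cdots+x_n^2+y_n^2)^{1/2}$ is itself smooth off $\{z=0\}=Z$, the composition $d_o(z,t)=r(|z|,t)$ is $C^\infty$ on $\Heis^n\setminus Z$. Transferring back by left translation gives the statement for general $q$, and one notes in passing that $C^\infty$ in the Euclidean sense implies in particular continuous horizontal differentiability of every order, i.e.\ $d_q\in C^\infty_\Heis$ as well.

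The main obstacle I anticipate is the Jacobian/cut-locus analysis: one has to be careful that the explicit geodesic formulas are being used on the correct parameter range (so that $r$ really equals the distance and not merely the length of some geodesic), and to argue cleanly that the first conjugate point along each geodesic from $o$, as well as the cut locus, lies on the $t$-axis. A clean way to organize this is to exploit the dilation structure: because $\delta_r$ scales $d_{cc}$ by $r$, it suffices to understand the unit sphere $\{d_o=1\}$, and then smoothness of $d_o$ on $\Heis^n\setminus Z$ reduces to showing that this sphere is a smooth hypersurface transverse to the dilation orbits away from its two poles $(0,\pm 1)\in Z$. Either way, the essential input is the classical explicit integration of the geodesic equations in $\Heis^n$, and the only real work is the elementary — if slightly tedious — verification that the parametrization degenerates exactly on $Z$. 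I would present the argument in the $\Heis^1$ case in detail (the general case differs only by carrying along the extra orthonormal angular directions in $\C^n$, which contribute nothing to the rank count) and remark that the higher-dimensional case is identical.
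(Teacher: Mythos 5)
Your plan is correct and follows essentially the same route as the paper: the paper does not prove Lemma \ref{SR-T1} itself but defers to Monti \cite{monti} (for $\Heis^1$) and Ambrosio--Rigot \cite{ambrosior} (for $\Heis^n$), whose arguments are exactly what you outline --- reduce to $q=o$ by left invariance, use the rotational symmetry and the explicit geodesic parametrization to write $(|z|,t)$ in terms of $(r,\varphi)$, check the nonvanishing of the Jacobian and that the cut locus lies in $Z$, and invert by the inverse function theorem. Your observation that the inverse function theorem with smooth data yields $C^\infty$ (not merely $C^1$) is precisely the paper's added remark that the $C^1$ statements in those references upgrade to the smoothness asserted in the lemma.
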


Here the $C^\infty$ regularity refers to the underlying Euclidean structure
on $\R^{2n+1}$, but note that this is equivalent to $C^\infty$ regularity in
horizontal directions only. Indeed, if $g \in C^k_\Heis(\Omega)$ for
some $k\ge 1$, then it is easy to see that $g\in C^{\lfloor k/2\rfloor}(\Omega)$,
hence $C^\infty(\Omega) = C^\infty_H(\Omega)$.

Lemma \ref{SR-T1} was explicitly proved by Monti \cite{monti} in the case of
$\Heis^1$, and by Ambrosio-Rigot \cite{ambrosior} in the case of $\Heis^n$.
Note that Monti and Ambrosio-Rigot only state that $d_q$ is Euclidean $C^1$,
however, the proof easily extends to yield the improved regularity asserted in
Lemma~\ref{SR-T1}.

The preceding result together with the chain rule imply that if
$q_0\in\Heis^n$ and $f:(a,b)\to\Heis^n$ is a horizontal curve,
differentiable at $s_0$, and such that $q_0^{-1}*f(s_0)\not\in Z$, then
the function
$$
u(s)=d_{cc}(f(s),q_0)
$$
is differentiable at $s_0$ and
\begin{equation}
\label{monti-chain-rule}
u'(s_0) = \left\langle\nabla_\Heis d_{q_0}(f(s_0)),f'(s_0)\right\rangle_\Heis.
\end{equation}
Here, $\langle\cdot,\cdot\rangle_\Heis$ denotes the fixed and given metric defined on the
horizontal distribution of $\Heis^n$.

Monti also proved that the Carnot-Carath\'eodory distance function satisfies
the eikonal equation. See Theorem 3.8 in \cite{monti}. Expressed in the above
language, this reads

\begin{lemma}[Monti]
\label{eikonal}
For $p,q\in\Heis^n$ such that $q^{-1} * p \not \in Z$,
$|\nabla_\Heis d_q(p)|_\Heis = 1$.
\end{lemma}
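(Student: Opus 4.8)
The plan is to derive the eikonal equation $|\nabla_\Heis d_q(p)|_\Heis = 1$ from two facts already available: the smoothness statement of Lemma \ref{SR-T1}, which tells us that $d_q$ is genuinely differentiable (in the Euclidean, hence horizontal, sense) at every point $p$ with $q^{-1}*p\notin Z$, and the metric characterization of $d_q$ as a Carnot--Carath\'eodory distance together with the chain rule \eqref{monti-chain-rule}. The key geometric input is that $(\Heis^n, d_{cc})$ is a geodesic metric space in which horizontal curves realize distances, so one can exploit unit-speed geodesics through $p$.

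First I would establish the upper bound $|\nabla_\Heis d_q(p)|_\Heis \le 1$. Fix a unit horizontal vector $v\in H_p\Heis^n$ and let $f:(-\eps,\eps)\to\Heis^n$ be a horizontal curve with $f(0)=p$, $f'(0)=v$, parametrized so that $|f'(s)|_\Heis \equiv 1$ (e.g.\ a short sub-Riemannian geodesic emanating from $p$ in the direction $v$). Since $q^{-1}*p\notin Z$ and the set $\{r: q^{-1}*r\in Z\}$ is closed, we may shrink $\eps$ so that $q^{-1}*f(s)\notin Z$ for all $s$; then \eqref{monti-chain-rule} applies and $s\mapsto u(s)=d_{cc}(f(s),q)$ is differentiable at $0$ with $u'(0)=\langle \nabla_\Heis d_q(p), v\rangle_\Heis$. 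On the other hand, because $f$ is a unit-speed horizontal curve, the triangle inequality gives $|u(s)-u(0)| = |d_{cc}(f(s),q)-d_{cc}(p,q)| \le d_{cc}(f(s),p) \le \length(f|_{[0,s]}) = |s|$, so $|u'(0)|\le 1$. Taking the supremum over unit horizontal $v$ yields $|\nabla_\Heis d_q(p)|_\Heis \le 1$. (Here one uses that the gradient norm equals $\sup_{|v|_\Heis=1}\langle\nabla_\Heis d_q(p),v\rangle_\Heis$, which holds for the metric $\langle\cdot,\cdot\rangle_\Heis$ on the horizontal distribution.)

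For the reverse inequality $|\nabla_\Heis d_q(p)|_\Heis \ge 1$, I would run the argument along a distance-minimizing geodesic. Let $\gamma:[0,L]\to\Heis^n$ be a unit-speed horizontal geodesic from $q$ to $p$, so $L=d_{cc}(p,q)$ and $d_{cc}(\gamma(s),q)=s$ for all $s\in[0,L]$. Write $u(s)=d_{cc}(\gamma(s),q)=s$; since $q^{-1}*\gamma(s)\notin Z$ for $s$ near $L$ (the excluded set is closed and does not contain $p=\gamma(L)$), \eqref{monti-chain-rule} gives, for $s$ slightly less than $L$, $1 = u'(s) = \langle \nabla_\Heis d_q(\gamma(s)), \gamma'(s)\rangle_\Heis \le |\nabla_\Heis d_q(\gamma(s))|_\Heis\, |\gamma'(s)|_\Heis = |\nabla_\Heis d_q(\gamma(s))|_\Heis$ by Cauchy--Schwarz. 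Letting $s\to L$ and using that $p\mapsto |\nabla_\Heis d_q(p)|_\Heis$ is continuous near $p$ (again by Lemma \ref{SR-T1}, which gives $C^\infty$, hence $C^1$, regularity of $d_q$ off the exceptional set), we obtain $|\nabla_\Heis d_q(p)|_\Heis \ge 1$. Combining the two bounds completes the proof.

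The main obstacle I anticipate is the technical bookkeeping needed to ensure differentiability at the relevant points: one must check that the geodesic (and the chosen direction-$v$ curve) stays in the good region $\{r: q^{-1}*r\notin Z\}$ on a one-sided neighborhood of the endpoint, and that a unit-speed horizontal geodesic from $q$ to $p$ actually exists and has the property $d_{cc}(\gamma(s),q)=s$ — both of which follow from the stated facts that $\Heis^n$ is a geodesic space, that the exceptional set is closed, and that Lemma \ref{SR-T1} (via \eqref{monti-chain-rule}) delivers the chain rule. Everything else is the standard pair of inequalities one expects for the norm of the gradient of a distance function, adapted to the horizontal setting.
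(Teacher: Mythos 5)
Your proof is correct and follows essentially the same route as the paper: the upper bound comes from the $1$-Lipschitz property of $d_q$ (which you spell out via directional derivatives along unit-speed horizontal curves), and the lower bound from differentiating $d_q$ along a unit-speed minimizing geodesic from $q$ to $p$, using Lemma \ref{SR-T1} and the chain rule \eqref{monti-chain-rule}. The only cosmetic difference is that the paper differentiates $d_q(\gamma(t))=t$ directly at the endpoint where $\gamma$ passes through $p$, whereas you differentiate at interior points of the geodesic and then let $s\to L$ using the continuity of $\nabla_\Heis d_q$ near $p$; both versions are legitimate.
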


This lemma is actually an easy consequence of Lemma~\ref{SR-T1}. Indeed, since the function
$d_q$ is $1$-Lipschitz, $|\nabla_\Heis d_q(p)|\leq 1$. On the other hand
if $\gamma(t)$ is a geodesic parametrized by arc-length connecting $q$ to $p$
and passing through $p$ at $t=t_0$, then $d_q(\gamma(t))=t$ and hence
$$
1= \left.\frac{d}{dt}\right|_{t=t_0} d_q(\gamma(t)) =
\left\langle\nabla_\Heis d_q(p),\gamma(t_0)\right\rangle_\Heis
\leq |\nabla_\Heis d_q(p)|_\Heis\, .
$$
\begin{remark}
\label{when-attained}
The above argument (due to Monti) shows that $|\nabla_\Heis d_q(p)|$
is attained as the directional derivative in a geodesic direction. We will
need this fact in the proof of Lemma~\ref{1Destimate}.
\end{remark}

\subsection{Geometric measure theory in the Heisenberg group}

The notion of rectifiability is fundamental in geometric measure theory. A
(countably) $k$-rectifiable set is one which can be well approximated, in a
Lipschitz sense, by subsets of $\R^k$ up to a set of Hausdorff $k$-measure
zero. Dual to this is the notion of unrectifiable set. A purely
$k$-unrectifiable set is one which contains no subset of positive Hausdorff
$k$-measure which is the Lipschitz image of a set in $\R^k$. For subsets of
Euclidean space, there is a nice dichotomy between these notions. We refer to
the book of Mattila \cite[Chapter 15]{Mattila} for details.

The notions can be extended to general metric spaces.

\begin{definition}
A metric space $(X,d)$ is {\it countably $k$-rectifiable} if there exists a
countable family of subsets $A_j \subset \R^k$ and Lipschitz maps $f_j:A_j\to
X$ so that $\cH^k(X \setminus \bigcup_j f_j(A_j)) = 0$.

A metric space $(X,d)$ is {\it purely $k$-unrectifiable} for some integer
$k\ge 1$, if $\cH^k(f(A))=0$ for all sets $A\subset\R^k$ and all
Lipschitz maps $f:A\to X$.
\end{definition}

Here $\cH^k$ denotes $k$-dimensional Hausdorff measure in $(X,d)$.

It turns out that rectifiability, defined as above, is of limited use
in sub-Riemannian spaces. One indication of this fact is the following
theorem of Ambrosio--Kirchheim and Magnani.
See \cite[Theorem 7.2]{ambrosiok}, \cite[Proposition
4.4.2]{MagnaniThesis} and \cite[Theorem 1.1]{MagnaniRigidity} and
compare \cite[Proposition 1 and Theorem 3]{BaloghF}.

\begin{theorem}[Ambrosio--Kirchheim, $n=1$; Magnani, arbitrary
  $n$]\label{AKM} For all $k\ge n+1$, the Heisenberg group $\Heis^n$
  is purely $k$-unrectifiable.
\end{theorem}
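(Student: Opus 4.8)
The goal is to prove that $\Heis^n$ is purely $k$-unrectifiable for all $k \ge n+1$; that is, if $A\subset\R^k$ and $f:A\to\Heis^n$ is Lipschitz, then $\cH^k_{cc}(f(A))=0$. The plan is to reduce to a differentiability statement and exploit the fact that the only $k$-dimensional \emph{abelian} (commutative) subgroup directions available in the Lie algebra of $\Heis^n$ for $k \ge n+1$ must contain a nontrivial vertical component, which is the ``bad'' direction where distances collapse. First I would invoke the metric differentiability / Rademacher-type theorem of Pansu (or its extension by Ambrosio--Kirchheim for Lipschitz maps into metric spaces): a Lipschitz map $f:A\to\Heis^n$ is, at $\cH^k$-a.e.\ density point $a\in A$, ``metrically differentiable'' in the sense that the blow-ups $\delta_{1/r}(f(a)^{-1}*f(a+r\,\cdot\,))$ converge to a homomorphism $L:\R^k\to\Heis^n$ that is Lipschitz with respect to $d_{cc}$ on the target.

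The second step is the algebraic heart: a $d_{cc}$-Lipschitz homomorphism $L:\R^k\to\Heis^n$ must have image contained in a \emph{horizontal} subgroup, i.e.\ the image of $L$ lifts to an abelian subalgebra of $\fh^n$ lying inside the horizontal distribution $H_o\Heis^n=\spa\{X_1,\dots,X_n,Y_1,\dots,Y_n\}$. The point is that if the image of $L$ had a component along the center $Z$, then along that direction $L$ would be only $1/2$-Hölder (by \eqref{SReq1}), contradicting Lipschitz continuity; so $\mathrm{Image}(L)$ is a horizontal homomorphic image of $\R^k$. But the symplectic form $\omega(v,w) = \Imag\sum z_j\overline{w_j}$ controlling the bracket $[X_j,Y_j]=-4T$ is \emph{nondegenerate} on $H_o\Heis^n$, so any subspace $V\subset H_o\Heis^n$ on which $\omega$ vanishes identically (which is forced, since the image subgroup is abelian and a nonzero bracket would push us into the center) is isotropic, hence has dimension at most $n$. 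Therefore for $k\ge n+1$ the differential $L$ has rank at most $n<k$, so it is not injective and its image has $\cH^k_{cc}$-measure zero.

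The third step packages this into the measure estimate. At $\cH^k$-a.e.\ point $a$, the approximate metric derivative of $f$ is degenerate (rank $\le n$ in the sense above), so by the area formula for Lipschitz maps into metric spaces (Kirchheim's metric area formula, \cite{ambrosiok}), the Jacobian of $f$ vanishes a.e., whence $\int_A J_f\,d\cH^k = 0$ and consequently $\cH^k_{cc}(f(A)) \le \int_A J_f\,d\cH^k = 0$. Combined with $\sigma$-subadditivity this gives the claim for countable families, and in particular $\Heis^n$ is purely $k$-unrectifiable. One should also handle the elementary reduction that it suffices to treat $k=n+1$ (a purely $(n+1)$-unrectifiable space is automatically purely $k$-unrectifiable for $k>n+1$, since a $k$-dimensional Lipschitz image of positive $\cH^k$-measure would slice into $(n+1)$-dimensional Lipschitz images of positive measure by a Fubini/coarea argument).

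**Main obstacle.** The delicate point is the metric differentiation step together with the passage from ``the metric differential is rank-deficient a.e.'' to ``$J_f=0$ a.e.'' One must use the correct notion of metric derivative for maps into the \emph{sub-Riemannian} Heisenberg group — not merely into $(\Heis^n,|\cdot|)$ as a subset of $\R^{2n+1}$ — and verify that the blow-up limits really are group homomorphisms respecting the dilation structure $\delta_r$; this is exactly the content of Pansu's theorem, but its combination with Kirchheim's metric area formula in this setting requires care, and is where citing \cite{ambrosiok}, \cite{MagnaniThesis}, \cite{MagnaniRigidity} does the real work. The purely algebraic isotropic-subspace bound, by contrast, is short once the reduction to horizontal homomorphisms is in place.
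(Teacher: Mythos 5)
The paper itself offers no proof of Theorem~\ref{AKM}: it is imported as a known theorem, with citations to Ambrosio--Kirchheim \cite{ambrosiok} and Magnani \cite{MagnaniThesis}, \cite{MagnaniRigidity}, so there is no in-paper argument to compare yours against. Your outline is essentially the argument of those references, and it is sound as a plan: (i) almost everywhere Pansu-type differentiability of a Lipschitz map $f:A\to\Heis^n$ at density points of a measurable set $A\subset\R^k$, with blow-ups converging to a homogeneous homomorphism $L:\R^k\to\Heis^n$; (ii) the algebraic bound $\rank L\le n$; (iii) Kirchheim's metric differential and area formula to convert a.e.\ degeneracy into $\cH^k_{cc}(f(A))=0$. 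Two points deserve tightening. First, since the pair $(\R^k,\Heis^n)$ fails the Lipschitz extension property for $k\ge n+1$ (in this paper that failure is itself deduced from Theorem~\ref{AKM}), you cannot reduce to open domains by extending $f$; the version of Pansu's theorem for maps defined only on measurable subsets is genuinely needed, and that is exactly what Magnani supplies --- so, as you acknowledge, your citations carry the real weight there. Second, the horizontality of $L$ is cleaner through the homomorphism structure than through the informal claim that a central component forces $1/2$-H\"older behavior: writing $L=(z,t)$, commutativity of $\R^k$ forces the symplectic form to vanish on $z(\R^k)$, hence $t$ is additive and, being continuous, linear; the Lipschitz bound together with $d_{cc}((z,t),o)\ge c\,|t|^{1/2}$ gives $|t(x)|\le C|x|^2$, so $t\equiv 0$. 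Then $z(\R^k)$ is an isotropic subspace of $\C^n$, so $\rank L\le n<k$, the metric differential is a degenerate seminorm a.e., and the area formula of \cite{ambrosiok} finishes the proof. Finally, the reduction to $k=n+1$ is unnecessary --- the isotropy bound handles every $k\ge n+1$ at once --- and, as you state it, the slicing step would need Kirchheim's decomposition into bi-Lipschitz pieces rather than bare Fubini, so it is simplest to drop it.
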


In other words, $\cH^{k}_{cc}(g(F))=0$ whenever ${g}$ is a Lipschitz map
from an subset $F \subset \R^k$ into $\Heis^n$ and $k\ge n+1$. Here
$\cH^k_{cc}$ stands for the $k$-dimensional Hausdorff measure in the metric
space $(\Heis^n,d_{cc})$.

One version of the Lipschitz extension problem asks for which pairs of
metric spaces $X$ and $Y$ it holds true that every partially defined
Lipschitz map from a subset of $X$ into $Y$ extends to a Lipschitz map
of all of $X$ into $Y$. We say that the pair $(X,Y)$ has the {\it
Lipschitz extension property} if there exists a constant $C\ge 1$ so
that every $L$-Lipschitz map $f:A\to Y$, $A\subset X$, has a
$CL$-Lipschitz extension $F:X\to Y$.

With some additional work, it follows from the pure
$(n+1)$-unrectifiability of $\Heis^n$ that
the pair $(\R^{n+1},\Heis^n)$ does not have the Lipschitz extension
property.
This was proved by Balogh and F\"assler \cite{BaloghF}, see also
Proposition~\ref{BaloghGeneralization} below. On the
other hand, we note the following
theorem of Gromov \cite{gromov-cc}.
Gromov's proof uses the deep machinery of microflexibility; a new proof
which avoids the use of this machinery (and extends the result to a more
general class of Carnot groups) has recently been given by
Wenger and Young \cite[Theorem~1.1]{WengerY}.

\begin{theorem}[Gromov, Wenger--Young]\label{WY}
Let $M$ be either a compact Riemannian $k$-manifold, with or without
boundary, or $M=\R^k$, where $k \le n$. Then the pair $(M,\Heis^n)$
has the Lipschitz extension property.
\end{theorem}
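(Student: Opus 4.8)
\emph{Proof strategy.} The plan is to reduce the extension property to a connectivity statement about $\Heis^n$ and then to establish that connectivity statement using the contact geometry of the target. First I would recall that a metric space $Y$ is \emph{Lipschitz $m$-connected} if there is a constant $\gamma_m\ge 1$ such that every $L$-Lipschitz map $\Sph^m\to Y$ extends to a $\gamma_m L$-Lipschitz map $\B^{m+1}\to Y$. A compact Riemannian $k$-manifold, with or without boundary, and $\R^k$ itself all have Nagata (Assouad--Nagata) dimension $k$; by the Lipschitz extension theorem of Lang and Schlichenmaier it then suffices to show that $\Heis^n$ is Lipschitz $m$-connected for $0\le m\le k-1$, hence --- since $k\le n$ --- for $0\le m\le n-1$. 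If one prefers to avoid the language of Nagata dimension, the same reduction can be run by hand: decompose $M\setminus A$ into Whitney simplices of diameter comparable to their distance to $A$ and extend the given map over the skeleta one dimension at a time, filling each $(m+1)$-simplex of the Whitney complex using Lipschitz $m$-connectivity of $\Heis^n$. The dilations $\delta_r$, which scale $d_{cc}$ by the factor $r$, render the connectivity constants scale invariant, and this is exactly what forces the final extension constant to depend only on $M$ and $n$, not on $A$ or on the map.

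Next I would translate Lipschitz $m$-connectivity of $\Heis^n$ into an isotropic filling problem in $\R^{2n}$. Fix $0\le m\le n-1$ and an $L$-Lipschitz map $g\colon\Sph^m\to\Heis^n$; composing with the dilation $\delta_{1/L}$ and a left translation we may assume $L=1$ and $g(\Sph^m)\subset B_{cc}(o,C)$. Let $\pi\colon\Heis^n=\C^n\times\R\to\C^n=\R^{2n}$ be the projection. A Lipschitz map $F=(u,v)\colon\B^{m+1}\to\R^{2n}\times\R$ is horizontal precisely when $dv$ equals the $u$-pullback of the Liouville-type $1$-form $\lambda$ on $\R^{2n}$ with $d\lambda=-4\,\omega_0$, where $\omega_0$ is the standard symplectic form; on the simply connected ball such a primitive $v$ exists as soon as $u^*\omega_0=0$, i.e.\ as soon as $u$ is \emph{isotropic}. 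Since horizontality of $g$ forces $g_0:=\pi\circ g$ to be isotropic, the problem becomes: given the isotropic Lipschitz map $g_0\colon\Sph^m\to\R^{2n}$ (together with the prescribed boundary values of the primitive), construct a Lipschitz \emph{isotropic} extension $\bar g_0\colon\B^{m+1}\to\R^{2n}$ with controlled Lipschitz constant and matching primitive; lifting $\bar g_0$ back up through the contact form then produces the desired horizontal filling of $g$, and the base case $m=0$ is immediate since $(\Heis^n,d_{cc})$ is geodesic, hence $1$-quasiconvex.

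The heart of the matter, and the step I expect to be the real obstacle, is solving this isotropic filling problem; this is where Gromov's microflexibility \cite{gromov-cc}, or the explicit combinatorial construction of Wenger and Young \cite{WengerY}, does its work. The hypothesis $m+1\le n$ is essential here: an isotropic submanifold of $\R^{2n}$ has dimension at most $n$, so an $(m+1)$-dimensional isotropic disk can exist only when $m+1\le n$, and in that range isotropic maps obey an h-principle (subcritical for $m+1<n$, Lagrangian in the extremal case $m+1=n$). Concretely I would first cone $g_0$ off to an arbitrary smooth Lipschitz extension of controlled size --- this is not isotropic --- and then deform it, relative to the boundary, to an isotropic map; microflexibility of the underlying differential relation guarantees this can be carried out with quantitative control, while Wenger and Young replace the soft argument by an explicit Lipschitz homotopy assembled from folding maps at dyadic scales. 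The Lipschitz constants must then be tracked through this homotopy and through the lift of the previous step, but scale invariance reduces everything to a single compact model problem in each dimension, so finitely many constants $\gamma_0,\dots,\gamma_{n-1}$ suffice. Feeding the resulting Lipschitz $m$-connectivity of $\Heis^n$ ($0\le m\le n-1$) back into the first step yields the Lipschitz extension property for the pair $(M,\Heis^n)$. Steps one and two are, respectively, a quantitative version of a classical obstruction-theoretic extension argument and a formal passage to symplectic coordinates; it is the flexibility theory of isotropic (and, at $m=n-1$, Lagrangian) maps together with the bookkeeping of metric distortion that constitutes the genuine difficulty.
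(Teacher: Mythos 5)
The paper does not actually prove this statement: Theorem \ref{WY} is quoted as an external result, attributed to Gromov (via microflexibility) and to Wenger--Young, whose paper \cite{WengerY} supplies a new proof; so there is no internal argument to compare yours against. Measured as a standalone proof, your proposal has a genuine gap precisely at the point you yourself flag as ``the heart of the matter.'' The outer layer --- Whitney/Lang--Schlichenmaier reduction of the Lipschitz extension property for a source of Nagata dimension $k$ to Lipschitz $m$-connectivity of the target for $m\le k-1$, and the translation of horizontal filling into an isotropic filling problem in $\R^{2n}$ via the contact form --- is standard and correctly set up. But the quantitative Lipschitz $m$-connectivity of $\Heis^n$ for $0\le m\le n-1$ (equivalently, the controlled isotropic filling with matching primitive) is exactly the substantive content of the theorem beyond that reduction, and you do not prove it; you defer it to Gromov's microflexibility or to Wenger--Young's explicit construction, i.e.\ to the very results whose theorem you are asked to establish. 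So what you have written is an accurate reconstruction of the architecture of the known proofs, not a proof.

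Two smaller points. First, your assertion that ``an $(m+1)$-dimensional isotropic disk can exist only when $m+1\le n$'' is imprecise for Lipschitz (or merely smooth, non-immersive) maps: horizontal maps of a disk of any dimension exist trivially (constants), and the correct statement is that the rank of the differential is at most $n$ (cf.\ Lemma \ref{LHT7} of the paper), so that the flexibility/filling machinery operates in the range $m+1\le n$ while filling genuinely fails at $m=n$, as witnessed in this paper by $\pi_n^{\lip}(\Heis^n)\neq 0$. Second, in the lifting step you should be explicit that the boundary values of the primitive $v$ prescribed by $g$ are compatible with the primitive produced from the isotropic extension (this is where horizontality of $g$ enters, fixing the period/constant), and that a Lipschitz bound on the projection of a horizontal map controls the Carnot--Carath\'eodory Lipschitz constant of the lift; both are true but need to be said, since the $t$-coordinate itself is only H\"older relative to the Euclidean data.
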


\subsection{Sub-Riemannian manifolds}\label{SRM}

Let $M$ be a smooth, connected manifold equipped with a distribution
$HM \subset TM$. We allow the possibility that $HM$ has nonconstant
rank, i.e., the function $p\mapsto\dim H_pM$ is not constant. (For an
example, see Section~\ref{sec:grushin}.)

For $i\ge 1$, let $H^i_pM$ be the subspace of $T_pM$ spanned by the values at
$p$ of all vector fields obtained as iterated commutators of length at most
$i$ of sections of $HM$. Thus
$$
H^2_pM = (HM\oplus[HM,HM])_p,
$$
$$
H^3_pM = (H^2M\oplus[HM,H^2M])_p,
$$
and so on. For fixed $p$, we obtain a flag of subspaces
$$
(0) =: H_p^0M \subset H_pM \subset H^2_pM \subset H^3_pM \subset
\cdots \subset T_p M.
$$

\begin{definition}
The pair $(M,HM)$ is said to satisfy the {\it bracket-generating property} if
there exists an integer $s<\infty$ so that $H_p^sM=T_pM$ for all $p\in M$. In
this case, we call $HM$ the {\it horizontal distribution}, $H_pM$ the {\it
horizontal tangent space} at $p$, and we call the minimal $s$ satisfying the
condition the {\it step} of the distribution.
\end{definition}

\begin{definition}
Let $(M,HM)$ satisfy the bracket-generating property, and let
$\boldg=(\boldg_p)$ be a smoothly varying family of inner products defined on
the horizontal tangent bundle $HM$. The triple $(M,HM,\boldg)$ is called a
{\it sub-Riemannian manifold}. Its {\it step} is the step of the distribution.
\end{definition}

A sub-Riemannian manifold $(M,HM,\boldg)$ is {\it regular} if the function
$$
p \mapsto (\dim H_pM,\dim H_p^2M,\dim H_p^3M,\cdots,\dim H_p^sM=\dim M)
$$
is constant on $M$. For example, the Heisenberg group $(\Heis^n,H\Heis^n,g)$
is a regular sub-Riemannian manifold of step two. More generally all Carnot
groups are regular sub-Riemannian manifolds. The Grushin plane (see
Section~\ref{sec:grushin}) is a non-regular sub-Riemannian manifold of
step two.

As we did for $\Heis^n$, we can define a Carnot-Carath\'eodory metric on any
sub-Riemannian manifold. An absolutely continuous curve $\gamma$ is called
{\it horizontal} if $\gamma'(s) \in H_{\gamma'(s)}M$ for a.e.\ $s$. The length
of a horizontal curve $\gamma$ is computed with respect to the metric $\boldg$ on
the horizontal bundle. Define a distance function $d_{cc}$ on $M$ by
infimizing the lengths of horizontal curves joining two given points. The
fundamental theorem of sub-Riemannian geometry is the Chow--Rashevsky
theorem. See, e.g., \cite[Theorem 2.2]{montgomery}.

\begin{theorem}[Chow, Rashevsky]
\label{Chow-Rashevsky}
Let $(M,HM)$ be bracket-generating. Then any two points in $M$ can be
connected by a horizontal curve. Consequently, on any sub-Riemannian manifold
$(M,HM,\boldg)$, $d_{cc}$ is a metric.
\end{theorem}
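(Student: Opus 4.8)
The plan is to establish the connectivity assertion first; the statement that $d_{cc}$ is then a genuine metric is a routine consequence. Fix $p\in M$ and let $\mathcal{O}_p\subset M$ be the set of points that can be joined to $p$ by a horizontal curve. Since $HM$ is a sub-bundle of $TM$, reversing the parametrization of a horizontal curve yields a horizontal curve, and a finite concatenation of horizontal curves is again horizontal; hence ``joinable by a horizontal curve'' is an equivalence relation and $M$ is partitioned into the orbits $\mathcal{O}_p$. If every orbit is open, then every orbit is also closed, since its complement is a union of other (open) orbits; connectedness of $M$ then forces $\mathcal{O}_p=M$. So the entire problem reduces to showing that for each $q\in M$ the set of points reachable from $q$ by a finite concatenation of horizontal curves is a neighborhood of $q$.

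The local engine is the commutator trick. Given smooth vector fields $X,Y$ near $q$ with flows $\phi^X_t,\phi^Y_t$, form the commutator flow $c^{X,Y}_t=\phi^Y_{-t}\circ\phi^X_{-t}\circ\phi^Y_{t}\circ\phi^X_{t}$. A Taylor expansion of the flows (equivalently the Baker--Campbell--Hausdorff formula) gives, in any coordinate chart, $c^{X,Y}_t(q)=q+t^2[X,Y](q)+O(t^3)$, so the reparametrized curve $s\mapsto c^{X,Y}_{\sqrt s}(q)$, $s\ge 0$, is $C^1$ at $s=0$ with one-sided derivative $[X,Y](q)$; interchanging $X$ and $Y$ produces $-[X,Y](q)$. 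If $X,Y$ are \emph{horizontal}, each of the four legs of $c^{X,Y}_t$ is a horizontal curve, so $c^{X,Y}_t(q)$ is reachable from $q$. Iterating this construction over nested commutators, for each iterated bracket $Z$ of length $\le s$ of local sections of $HM$ one obtains a family $\sigma\mapsto\psi^Z_\sigma$ of finite concatenations of horizontal curves with $\psi^Z_0=\id$, defined and continuous for $\sigma$ near $0$, with one-sided derivative $\pm Z(q)$ at $\sigma=0$ after the appropriate reparametrization (only Hölder continuous; for a length-$\ell$ bracket the natural choice is $\sigma=t^{\ell}$, and these reparametrizations must be tracked carefully through the iteration).

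Now invoke the bracket-generating hypothesis: choose iterated brackets $Z_1,\dots,Z_m$ of sections of $HM$, with $m=\dim M$, whose values at $q$ form a basis of $T_qM$. Define, for $(\sigma_1,\dots,\sigma_m)$ near $0\in\R^m$,
$$F(\sigma_1,\dots,\sigma_m)=\psi^{Z_m}_{\sigma_m}\circ\cdots\circ\psi^{Z_1}_{\sigma_1}(q).$$
Every point of the image of $F$ is reachable from $q$ by a concatenation of horizontal curves. The map $F$ is continuous with $F(0)=q$, and the estimates above give $F(\sigma)=q+\sum_i\sigma_i Z_i(q)+o(|\sigma|)$ in a chart; since $\sigma\mapsto\sum_i\sigma_iZ_i(q)$ is a linear isomorphism, a Brouwer-degree (non-smooth open mapping) argument shows that $F$ carries every sufficiently small neighborhood of $0$ onto a neighborhood of $q$. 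Hence $q$ lies in the interior of its own reachable set, every orbit is open, and therefore $\mathcal{O}_p=M$.

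Finally, connectivity gives $d_{cc}(p,q)<\infty$ for all $p,q$; symmetry follows from reversibility and the triangle inequality from concatenation of horizontal curves. For positivity, fix a background Riemannian metric on $M$: on a compact neighborhood of any point, $|\cdot|_{\boldg}$ dominates a fixed multiple of the Riemannian norm on $HM$, so the $\boldg$-length of a horizontal curve dominates a multiple of its Riemannian length, whence $d_{cc}(p,q)\ge c\,d_{\mathrm{Riem}}(p,q)$ locally, and a chaining argument promotes this to all of $M$; thus $d_{cc}(p,q)=0$ implies $p=q$. The main obstacle is the middle of the argument: making the commutator trick quantitatively precise for iterated brackets — proving that the nested commutator curves are $C^1$ after the Hölder reparametrizations with the asserted tangent vectors, and that $F$ is a genuine small perturbation of a linear isomorphism — together with the non-smooth open-mapping step; the rest is bookkeeping.
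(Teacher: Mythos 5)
The paper itself does not prove Theorem~\ref{Chow-Rashevsky}: it is quoted as the classical Chow--Rashevsky theorem, with the standard references (\cite{montgomery}, \cite{gromov-cc}, \cite{bellaiche}, \cite{NSW}) standing in for a proof, so your attempt can only be compared with the literature rather than with an argument in the text. Your strategy is exactly the standard one: partition $M$ into orbits of the ``joinable by horizontal curves'' relation, prove each orbit is open via iterated commutators of horizontal flows together with a topological (Brouwer-degree) open-mapping step, and conclude by connectedness; the reduction of the metric axioms to finiteness plus a local comparison of $d_{cc}$ with a background Riemannian distance is likewise the usual bookkeeping. Nothing in this outline is wrong, and each step you actually carry out (reversibility, concatenation, openness implies closedness, the first-order commutator expansion $c^{X,Y}_t(q)=q+t^2[X,Y](q)+O(t^3)$) is correct.

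However, as submitted this is a plan rather than a proof, and what is missing is precisely the analytic core of the theorem---as you yourself concede in your final sentence. Concretely, two claims are asserted but not established: (i) that for every iterated bracket $Z$ of length $\ell\le s$ of sections of $HM$ there is a family $\psi^Z_\sigma$ of concatenations of horizontal flow segments with $\psi^Z_\sigma(x)=x+\sigma Z(x)+o(\sigma)$ after the reparametrization $\sigma=t^{\ell}$, with the error \emph{uniform in the base point} $x$ (this uniformity is indispensable when you compose the $\psi^{Z_i}_{\sigma_i}$, since the later maps are applied at points that have already moved) and with both signs $\pm Z(q)$ realizable; and (ii) the consequent expansion $F(\sigma)=q+\sum_i\sigma_i Z_i(q)+o(|\sigma|)$, which is the sole hypothesis on which the degree argument runs. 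Establishing (i)--(ii) is the nested Taylor/Baker--Campbell--Hausdorff computation for commutators of flows carried through the H\"older reparametrizations, and that computation \emph{is} Chow--Rashevsky; declaring it ``the main obstacle'' and the rest ``bookkeeping'' leaves the proof genuinely incomplete at its only hard point. A secondary remark: for positivity of $d_{cc}$ you invoke a local bound $d_{cc}\ge c\,d_{\mathrm{Riem}}$, i.e.\ $|v|_{\boldg}\ge c\,|v|_{\mathrm{Riem}}$ for horizontal $v$ over a compact set; this is essentially the left-hand inequality in \eqref{SReq2}, but the paper states that estimate only for \emph{regular} sub-Riemannian manifolds, whereas the theorem is asserted for distributions of possibly nonconstant rank (e.g.\ the Grushin plane), so this comparison should be justified in your framework rather than taken for granted---it is easy, but it is a statement, not a tautology.
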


An estimate similar to \eqref{SReq1} holds. Let $M=(M,HM,\boldg)$ be a regular
sub-Riemannian manifold of step $s$. Let $\tilde{\boldg}$ be any Riemannian
metric on $M$. Then
for each compact $K\subset M$ there exists a constant $C$ so that the estimates
\begin{equation}
\label{SReq2}
d_{\tilde{\boldg}}(p,q) \le d_{cc}(p,q)\le Cd_{\tilde{\boldg}}(p,q)^{1/s}
\end{equation}
hold for all $p,q\in K$. See, for example, Nagel--Stein--Wainger \cite{NSW},
Gromov \cite{gromov-cc}, or Montgomery \cite[Theorem 2.10]{montgomery}.

The next notion is essential in the following section.

\begin{definition}
Let $(M,HM,\boldg)$ be a sub-Riemannian manifold, and let $N$ be a smooth
manifold. A {\it horizontal embedding} of $N$ into $M$ is a smooth embedding
$\phi:N\to M$ such that $d\phi:TN\to HM$.
\end{definition}

For instance, horizontal embeddings of intervals into $M$ are precisely
non self-intersecting horizontal curves.

\section{Horizontal bi-Lipschitz embeddings}\label{BE}

In this section we will construct bi-Lipschitz embeddings of the
sphere $\Sph^n$ into the Heisenberg group $\Heis^n$. According to
the following theorem, it suffices to construct horizontal
embeddings of $\Sph^n$. We will describe two different such
embeddings.

\begin{theorem}
\label{T-BE1}
Let $(M,HM,\boldg)$ be a sub-Riemannian manifold, let $N$ be a compact
manifold, and let $\phi$ be a smooth horizontal embedding of $N$
into $M$. Let $d_{\rm ext}$ be the restriction of the CC metric on $M$ to
$\phi(N)$ and let $d_{\rm int}$ be the Riemannian metric on $\phi(N)$
inherited from $\boldg$.
Then $d_{\rm ext}$ is bi-Lipschitz equivalent to
$d_{\rm int}$. More precisely, there exists $K\geq 1$ so that
$$
K^{-1}d_{\rm int}\leq d_{\rm ext}\leq d_{\rm int}
$$
on $\phi(N)$. Furthermore, any Riemannian metric on $\phi(N)$ is bi-Lipschitz equivalent
to $d_{\rm ext}$.
\end{theorem}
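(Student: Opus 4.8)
The plan is to prove the two nontrivial inequalities $d_{\rm ext}\le d_{\rm int}$ and $K^{-1}d_{\rm int}\le d_{\rm ext}$ separately, the first being essentially immediate and the second being the heart of the matter; the final sentence about arbitrary Riemannian metrics will then follow from compactness.

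For the upper bound $d_{\rm ext}\le d_{\rm int}$: given $p,q\in N$, any smooth curve $\gamma$ in $N$ joining them has image $\phi\circ\gamma$ a curve in $M$ whose velocity $d\phi(\gamma')$ lies in $HM$ because $\phi$ is a horizontal embedding, so $\phi\circ\gamma$ is an admissible competitor for $d_{cc}$ on $M$. Since $d_{\rm int}$ is by definition the Riemannian distance on $\phi(N)$ induced by the pullback of $\boldg$, the $\boldg$-length of $\phi\circ\gamma$ equals the $d_{\rm int}$-length of $\gamma$; infimizing over $\gamma$ gives $d_{cc}(\phi(p),\phi(q))\le d_{\rm int}(\phi(p),\phi(q))$, i.e.\ $d_{\rm ext}\le d_{\rm int}$. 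Note this does not even use compactness of $N$.

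For the lower bound, the key point is a local argument near the diagonal combined with a compactness/connectedness globalization. First I would fix a point $x_0\in N$ and work in a coordinate chart for $M$ around $\phi(x_0)$ adapted so that $H M$ is spanned by coordinate-like vector fields; the crucial local fact is that, because $\phi$ is an \emph{embedding} and $\phi(N)$ is a submanifold tangent to the horizontal distribution, the extrinsic CC distance controls the intrinsic Riemannian distance from below in a neighborhood: there is $r_0>0$ and $c>0$ with $d_{\rm int}(\phi(x),\phi(y))\le c\, d_{\rm ext}(\phi(x),\phi(y))$ whenever $x,y$ lie in the $\phi^{-1}$-preimage of the $r_0$-ball. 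To see this one may argue by contradiction using a blow-up: if it failed there would be sequences $x_k,y_k\to x_0$ with $d_{\rm ext}$ much smaller than $d_{\rm int}$; rescaling by $d_{\rm int}(\phi(x_k),\phi(y_k))^{-1}$ and passing to a limit, the rescaled submanifold converges to an affine subspace $V$ through the origin, tangent to the (rescaled, hence constant) horizontal space, and the rescaled CC metric on the ambient space converges to a left-invariant CC metric on the relevant Carnot group; the rescaled endpoints stay at intrinsic distance $1$ but extrinsic distance $0$, contradicting that on $V$—a flat horizontal subspace—the ambient CC metric restricts to (a constant multiple of) the Euclidean metric, hence is bi-Lipschitz to the intrinsic one with a uniform constant. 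Then, to globalize: cover the compact diagonal $\{(x,x):x\in N\}\subset N\times N$ by finitely many such neighborhoods, obtaining a single $r_0$ and $c$ working for all pairs within extrinsic distance $r_0$; for pairs at extrinsic distance $\ge r_0$, since $N$ is compact and connected and $d_{\rm int}$ is a genuine metric, $d_{\rm int}$ is bounded above by $\diam_{d_{\rm int}}(N)=:D<\infty$, so trivially $d_{\rm int}\le D \le (D/r_0) d_{\rm ext}$. Taking $K=\max(c, D/r_0)$ completes the estimate $K^{-1}d_{\rm int}\le d_{\rm ext}$.

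The main obstacle is making the local lower bound rigorous: one needs that near a point where a submanifold is tangent to a bracket-generating distribution, the CC metric of the ambient space, restricted to that submanifold, is comparable to the submanifold's own Riemannian distance. The clean way is the blow-up argument sketched above (using that the tangent cone of a regular sub-Riemannian manifold at a point is a Carnot group, and that on a horizontal linear subspace of a Carnot group the CC and Euclidean metrics agree up to the given inner product); alternatively, one can avoid tangent-cone machinery entirely by noting that any horizontal curve in $M$ staying in a small neighborhood of $\phi(x_0)$ and joining two points of $\phi(N)$ can, after projecting appropriately, be compared to a curve in $\phi(N)$ of comparable length—this is where the embedding hypothesis (as opposed to mere immersion) and the smoothness of $\phi$ are used, via the implicit function theorem to write $\phi(N)$ locally as a graph. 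Finally, the last sentence of the theorem follows because on the compact space $\phi(N)$ any two Riemannian metrics are bi-Lipschitz equivalent (their ratio is continuous and positive on the compact unit tangent bundle), and we have just shown $d_{\rm ext}$ is bi-Lipschitz to the particular metric $d_{\rm int}$.
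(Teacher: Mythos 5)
Your overall skeleton is the same as the paper's: the inequality $d_{\rm ext}\le d_{\rm int}$ by using curves in $\phi(N)$ as competitors, reduction of the reverse inequality to a local estimate near the diagonal, disposal of pairs with $d_{\rm ext}\ge\eps$ by compactness, and the final claim from the bi-Lipschitz equivalence of any two Riemannian metrics on a compact manifold. The genuine gap is in your proof of the key local estimate. Your main route is a blow-up to a tangent Carnot group, and you yourself flag that this uses regularity of the sub-Riemannian structure; but the theorem is stated for sub-Riemannian manifolds in the paper's sense, where $HM$ may have nonconstant rank and the structure may be non-regular (the Grushin plane is the paper's own example), and at non-regular points the tangent cone need not be a Carnot group, so the step ``the rescaled CC metric converges to a left-invariant CC metric on the relevant Carnot group'' is simply unavailable in the stated generality. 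Even in the regular case the sketch omits the actual work: you dilate at varying base points $x_k\to x_0$, so you need convergence of the rescaled CC distances to the nilpotent approximation uniformly in the base point (a Mitchell--Bella\"iche type theorem), and you need the rescaled copies of $\phi(N)$ to converge to the horizontal tangent plane in a topology strong enough that the \emph{intrinsic} distances of the marked points converge to $1$; under the anisotropic dilations this rests on the fact that horizontality of $\phi(N)$ at every point forces the nonhorizontal displacement to be of higher order than quadratic in the horizontal displacement, which you never verify. These steps are exactly the heart of the matter, so as written the local lower bound is not proved.

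For comparison, the paper's local argument is elementary and covers the non-regular case. One chooses cubic coordinates $\varphi_i:\cU_i\to(-3,3)^{\dim M}$ in which $N$ appears as the slice $\{x_{k+1}=\cdots=x_{\dim M}=0\}$, observes by compactness that on $\varphi_i^{-1}([-2,2]^{\dim M})$ the form $\boldg$ on \emph{horizontal} vectors is uniformly comparable with the Euclidean norm of $d\varphi_i(v)$, and arranges (via a quantity $\eps_0$ comparing the inner and outer cubes, together with the Lebesgue number of the covering) that any horizontal curve of nearly minimal length between nearby points of $N$ stays inside the chart. Then every competitor has length at least $K^{-1/2}|\varphi_i(p)-\varphi_i(q)|$, while the straight segment inside the coordinate slice is a horizontal curve in $N$ of length at most $K^{1/2}|\varphi_i(p)-\varphi_i(q)|$, giving $d_{\rm int}\le K\,d_{\rm ext}$ locally with no tangent cones, no blow-ups, and no regularity hypothesis. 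Your second, undeveloped suggestion (write $\phi(N)$ locally as a graph and compare a horizontal curve to a curve in $\phi(N)$) is closer in spirit to this, but it is only mentioned, not carried out; note also that the paper never projects the competing curve onto $N$ -- it compares both $d_{\rm ext}$ and $d_{\rm int}$ to the Euclidean coordinate distance, which is what makes the argument so short.
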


\begin{proof}
Since any two Riemannian metrics on a compact manifold are bi-Lipschitz
equivalent, the last part of the theorem immediately follows from the
bi-Lipschitz equivalence of $d_{\rm int}$ and $d_{\rm ext}$.

The inequality $d_{\rm ext}\leq d_{\rm int}$ follows immediately from the
definition of the two metrics. It remains to prove the other inequality.

Without loss of generality, we may assume that $N$ is a subset of $M$ and
$\phi$ is the inclusion map. Fix $\eps>0$ and consider the set
\begin{equation}\label{PQ}
\{(p,q) \in N\times N : d_{\rm ext}(p,q)\ge \eps\}.
\end{equation}
Define
$$
K_1=\sup\, \frac{d_{\rm int}(p,q)}{d_{\rm ext}(p,q)} \, ,
$$
where the supremum is taken over the set in \eqref{PQ}. Observe that
$$
K_1 \le \frac{\diam N}{\eps}
$$
is finite.

This reduces us to a local problem. We have to show that there exists
$K\geq 1$ such that
$$
K^{-1}\, d_{\rm int}(p,q)\leq d_{\rm ext}(p,q)
$$
provided $d_{\rm ext}(p,q)<\eps$ and $p\neq q$. The constant $\eps$ will be
fixed later.

Let $\dim M=n$ and $\dim N=k$. In a neighborhood $\cU$ of each point of $N$
there is a cubic coordinate system $\vi:\cU\to (-3,3)^n$ on $M$ with
coordinate functions $x_1,\ldots,x_n$ such that
$$
\cU\cap N =\{ x_{k+1}=0,\ldots,x_n=0\}\, .
$$
Since $N$ is compact, there is a finite family of coordinate systems
$(\cU_i,\vi_i)$, $i=1,2,\ldots,m$ as above, such that
\begin{equation}
\label{E-BE1}
N=\bigcup_{i=1}^m \vi^{-1}_i((-1,1)^n)\cap N\, .
\end{equation}
Let
$$
\eps_i=\inf \left\{ d_{cc}(r,s):\,
r\in \vi_i^{-1}([-1,1]^n),\ s\in M\setminus \vi_i^{-1}((-2,2)^n)\right\}\, ,
$$
and 
$$
\eps_0=\min \{ \eps_1,\ldots,\eps_m\}.
$$
Clearly $\eps_0>0$.

Let $p,q\in \vi^{-1}_i((-1,1)^n)\cap N$ be such that $d_{\rm ext}
(p,q)<\eps_0$. Let $\gamma$ be a horizontal curve connecting $p$ and $q$ of
length $\ell(\gamma)<2d_{\rm ext}(p,q)$. (Here length is computed with respect
to the sub-Riemannian metric $\boldg$ on $HM$.) Then $\gamma$ is contained in
$\vi^{-1}_i((-2,2)^n)$. Indeed, any horizontal curve connecting $p$ and $q$
and {\em not} contained in $\vi_i^{-1}((-2,2)^n)$ has length
$$
\ell(\gamma)\geq 2\eps_i\geq 2\eps_0>2 d_{\rm ext}(p,q)\, .
$$
Let $\delta>0$ be the Lebesgue number of the covering \eqref{E-BE1} with
respect to the metric $d_{\rm ext}$ and let $\eps=\min\{\eps_0,\delta\}$. Then
any two points $p,q\in N$ with $d_{\rm ext}(p,q)<\eps$ satisfy $p,q\in
\vi_i^{-1}((-1,1)^n)\cap N$ for some $i=1,2,\ldots,m$, furthermore,
any horizontal curve $\gamma$ connecting $p$ and $q$ of length
$\ell(\gamma)<2d_{\rm ext}(p,q)$ is contained in
$\vi^{-1}_i((-2,2)^n)$.

A simple compactness argument shows that there is a constant $K\geq 1$ such
that for any $i=1,2,\ldots,m$, any $x\in [-2,2]^n$ and any vector $v\in
H_{\vi_i^{-1}(x)}M$,
\begin{equation}\label{E-BE2}
K^{-1}\Vert d\vi_i(v)\Vert^2\leq
\boldg_{\vi_i^{-1}(x)}(v,v)\leq
K\Vert d\vi_i(v)\Vert^2\, .
\end{equation}
Here $\Vert\cdot\Vert$ denotes Euclidean length of a vector in $\R^n$.

As explained above, $d_{\rm ext}(p,q)$ is the infimum of the lengths
of horizontal curves connecting $p$ and $q$ and contained in
$\vi_i^{-1}((-2,2)^n)$. Denote by $\ell_{\R^n}(\vi_i\circ\gamma)$ the
Euclidean length of the curve $\vi_i\circ\gamma$ contained in $(-2,2)^n$. It
follows from \eqref{E-BE2} that
\begin{equation}
\label{E-BE3}
K^{-1/2}|\vi_i(p)-\vi_i(q)| \leq
K^{-1/2}\ell_{\R^n}(\vi_i\circ\gamma) \leq
\ell(\gamma) \leq
K^{1/2} \ell_{\R^n}(\vi_i\circ\gamma)\, ,
\end{equation}
so
\begin{equation}
\label{E-BE4}
d_{\rm ext}(p,q)\geq K^{-1/2}|\vi_i(p)-\vi_i(q)| \, .
\end{equation}
The line segment $\sigma$ joining $\vi_i(p)$ to $\vi_i(q)$ is contained in
the slice $\{ x_{k+1}=0,\ldots, x_n=0\}$ and hence
$\gamma=\vi_i^{-1}\circ\sigma$ is a (horizontal) curve in $N$.
Now \eqref{E-BE3} implies
\begin{equation}
\label{E-BE5}
d_{\rm int}(p,q) \leq \ell(\gamma) \leq
K^{1/2}\ell_{\R^n}(\sigma)=
K^{1/2}\vert\vi_i(p)-\vi_i(q)\vert\, .
\end{equation}
Finally \eqref{E-BE4} and \eqref{E-BE5} together yield 
$K^{-1}\, d_{\rm int}(p,q) \leq d_{\rm ext}(p,q)$ provided $d_{\rm
  ext}(p,q)<\eps$. The proof is complete.
\end{proof}

Next, we state a slightly stronger version of Theorem~\ref{T2}.

\begin{theorem}
\label{T-BE2}
For any $n\geq 1$, there is a horizontal and hence bi-Lipschitz embedding
$\phi:\Sph^n\to \Heis^n$.
\end{theorem}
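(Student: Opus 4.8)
The plan is to construct, for each $n\ge 1$, an explicit smooth horizontal embedding $\phi:\Sph^n\to\Heis^n$; by Theorem~\ref{T-BE1} any such embedding is automatically bi-Lipschitz onto its image (with respect to $d_{cc}$), so the bi-Lipschitz conclusion comes for free once horizontality and embedding are established. The key point is therefore to exhibit a smooth map $\Sph^n\to\Heis^n=\C^n\times\R$ that is (i) injective with injective differential, and (ii) everywhere tangent to the horizontal distribution $H\Heis^n=\ker(dt - 2\sum_j(x_j\,dy_j - y_j\,dx_j))$, i.e., the pullback of the contact form $\alpha = dt + 2\sum_j(y_j\,dx_j - x_j\,dy_j)$ vanishes.

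As the excerpt announces, I would give this construction via one of the two promised routes. The symplectic route seems most transparent to sketch: the horizontality condition $\phi^*\alpha = 0$ says that $\phi(\Sph^n)$ is a Legendrian submanifold of the contact manifold $(\R^{2n+1},\alpha)$, and a standard way to produce Legendrian submanifolds is to start from a Lagrangian submanifold $L$ of $(\R^{2n},\omega)$, $\omega = d(\sum_j(y_j\,dx_j - x_j\,dy_j))/(-2)$ or a suitable normalization, on which the Liouville-type $1$-form $\lambda$ with $d\lambda = \omega$ is exact, say $\lambda|_L = df$; then $p\mapsto(p, -f(p))\in\R^{2n}\times\R$ is Legendrian. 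So I would look for a Lagrangian embedding of $\Sph^n$ into $\R^{2n}$ on which the relevant primitive $1$-form is exact. For $n=1$ this is just an embedded circle in $\R^2$ bounding zero signed area — e.g. a figure-eight is Lagrangian but not embedded; instead one uses a genuinely embedded curve traversed so that the enclosed signed areas cancel, or more simply one uses that a round circle has nonzero area and adjusts by a Darboux/Moser argument. For general $n$ one can use the standard Lagrangian embedding of $\Sph^n$ coming from the Whitney sphere or, better, from viewing $\Sph^n\subset\R^{n+1}$ and mapping $x\mapsto (x, \text{something})$; the cleanest is to build $\phi$ inductively or to use an explicit formula and simply verify $\phi^*\alpha=0$ by a direct computation. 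Alternatively, following the complex-hyperbolic hint, one realizes $\Heis^n$ as the boundary $\partial\B^{n+1}_{\C}$ minus a point (via the Cayley transform / Siegel domain model), notes that the standard CR structure makes $\partial\B^{n+1}_{\C}$ contact with horizontal distribution the maximal complex subbundle, and then takes a totally real $n$-sphere inside $\partial\B^{n+1}_\C$ — for instance the intersection of the sphere with a totally real linear subspace $\R^{n+1}\subset\C^{n+1}$, which is an $\Sph^n$ that is Legendrian for the CR contact structure — and pushes it through the Cayley transform, choosing the puncture point off the sphere.

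The main obstacle, and the part requiring real care, is verifying the horizontality condition $\phi^*\alpha \equiv 0$ together with injectivity of $d\phi$ simultaneously — these pull in opposite directions, since a highly "flat" map is easily horizontal but risks degenerate differential, while a generic embedding is nondegenerate but not horizontal. In the symplectic picture this is exactly the statement that one needs an \emph{embedded} Lagrangian $\Sph^n$ (not merely an immersed one, like the Whitney sphere which has a double point) with exact primitive; resolving the tension between "embedded" and "exact/Legendrian" is the crux. I expect the argument to handle $n=1$ by hand (an explicit horizontal loop in $\Heis^1$ that is a closed embedded curve, e.g. a suitably parametrized ellipse-type lift whose area integrals cancel), and then either to give an explicit higher-dimensional formula whose horizontality is checked by direct differentiation, or to invoke an h-principle / flexibility statement for Legendrian embeddings in the subcritical-dimension regime to upgrade a topological embedding of $\Sph^n$ into $\Heis^n$ to a horizontal one. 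Once the smooth horizontal embedding $\phi$ is in hand, Theorem~\ref{T-BE1} applied with $M=\Heis^n$, $N=\Sph^n$ finishes the proof, giving $K^{-1}d_{\rm int}\le d_{\rm ext}\le d_{\rm int}$ and hence a bi-Lipschitz embedding of the round $\Sph^n$ into $(\Heis^n,d_{cc})$.
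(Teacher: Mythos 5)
Your overall strategy is the right one, and your ``alternative'' sketch via several complex variables is essentially the paper's first proof: the real sphere $\Sph^n=\partial B\cap\R^{n+1}$ inside the unit sphere of $\C^{n+1}$ is tangent to the maximal complex subbundle (a one-line computation, Lemma~\ref{T-BE3}), a unitary rotation moves it off the singular point of the Cayley transform, and since the Cayley transform is a biholomorphism its differential is $\C$-linear and carries $H\partial B$ to $H\partial D$; projecting the Siegel boundary to $\Heis^n$ then gives the explicit horizontal embedding \eqref{specific-phi}, and Theorem~\ref{T-BE1} upgrades it to bi-Lipschitz exactly as you say. Had you committed to carrying that route out, you would be done.

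The route you actually propose as primary, however, has a genuine flaw. You set yourself the task of finding an \emph{embedded} Lagrangian $\Sph^n\subset\R^{2n}$ on which the primitive $1$-form is exact, and you correctly flag the tension between ``embedded'' and ``exact'' as the crux --- but that tension cannot be resolved in the form you state: no such object exists. For $n=1$ an embedded closed curve in $\R^2$ encloses nonzero signed area, so $\int\beta\neq 0$ and $f^*\beta$ is never exact; no Darboux/Moser adjustment can remove this, since the enclosed area is precisely the symplectic invariant obstructing exactness, and ``an embedded curve traversed so that the signed areas cancel'' is impossible for a Jordan curve. For $n\ge 2$, Gromov's theorem rules out closed exact Lagrangian embeddings in $(\R^{2n},\omega)$ altogether. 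The missing idea --- which is how the paper's second construction (Example~\ref{E1}, following Ekholm--Etnyre--Sullivan) works --- is that one does \emph{not} need the Lagrangian projection $f:\Sph^n\to\R^{2n}$ to be an embedding: it suffices that $f$ be an exact Lagrangian \emph{immersion}, because the Legendrian lift $F=(f,\tau)$ can still be an embedding provided the primitive $\tau$ takes distinct values at the preimages of each double point. Concretely, the Whitney-type immersion $f(x_0,x')=(x_1,x_0x_1,\ldots,x_n,x_0x_n)|_{\Sph^n}$ has a single double point $f(\pm1,0,\ldots,0)=0$, satisfies $f^*\beta=d(x_0-x_0^3/3)$, and the lift with $\tau=\tfrac23x_0^3-2x_0+C$ separates the two preimages since $\tau(1,0,\ldots,0)\neq\tau(-1,0,\ldots,0)$; horizontality is then immediate from $F^*\alpha=d\tau+2f^*\beta=0$. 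Your fallback of invoking an h-principle for Legendrian embeddings of stably parallelizable manifolds (Gromov) would also suffice, but it is a far heavier tool and is not carried out in your sketch; as written, the symplectic plan would stall at a nonexistent embedded exact Lagrangian sphere.
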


In the next two subsections we present two different proofs of
Theorem~\ref{T-BE2}.

\subsection{Horizontal embeddings of spheres via several complex variables}\label{subsec:ComplexEmbedding}

The proof is split into several rather independent steps.

\subsubsection{CR structure}
The complex structure of $\C^{n+1}$ induces an operator $\cJ:\R^{2n+2}\to
\R^{2n+2}$ defined by
$$
\cJ[x_1,y_1,\ldots, x_{n+1},y_{n+1}]=
[-y_1,x_1,\ldots,-y_{n+1},x_{n+1}]\, .
$$
If $T_z\C^{n+1}$ is the {\em real} tangent space to $\C^{n+1}$ at $z$, then we
have the tangent operator $\cJ:T_z\C^{n+1}\to T_z\C^{n+1}$ given by
$$
\cJ\left(\sum_{j=1}^{n+1}\left(
a_j\frac{\partial}{\partial x_j}+b_j\frac{\partial}{\partial
  y_j}\right)\right) = \sum_{j=1}^{n+1}
\left( -b_j\frac{\partial}{\partial x_j}+a_j\frac{\partial}{\partial
    y_j}\right)\, .
$$
Let $\Omega\subset\C^{n+1}$ be a domain with smooth boundary. For
$z\in\partial\Omega$ let $T_z\partial\Omega$ be the real tangent space. We
define
$$
H_z\partial\Omega= T_z\partial\Omega\cap \cJ T_z\partial\Omega\, .
$$
This is the maximal complex subspace of $T_z\partial\Omega$, but we will
regard it as a real space. It easily follows from a dimension argument that
the real dimension of $H_z\partial\Omega$ is $2n$. We will call
$H\partial\Omega$ the {\em horizontal distribution} in the tangent bundle
$T\partial\Omega$. It is also known as the CR structure, but we will not refer
to CR structures in what follows.

\subsubsection{The Siegel domain.}
The Siegel domain $D$ is the set of all points $z=(z_1,\ldots,z_{n+1})\in\C^{n+1}$ such that $\Imag \, z_{n+1}>\sum_{j=1}^n |z_j|^2$. Identifying $(z_1,\ldots,z_{n+1}) \in \C^{n+1}$ with
$(x_1,y_1,\ldots,x_{n+1},y_{n+1}) \in \R^{2n+2}$, we observe that the
boundary $\partial D$ is defined by the equation $r = 0$, where
$r(z)=\sum_{j=1}^n (x_j^2+y_j^2)-y_{n+1}$. Hence
$$
v=\sum_{j=1}^{n+1}\left( a_j\frac{\partial}{\partial x_j} +
b_j\frac{\partial}{\partial y_j}\right)
$$
is in $T_z\partial D$ for some $z=(x_1,y_1,\ldots,x_{n+1},y_{n+1})\in\partial
D$ if $dr_z(v)=0$, which in turn is equivalent to
$b_{n+1} = 2\sum_{j=1}^n ( x_j a_j + y_j b_j)$.
Now $v\in H_z\partial D$ if also
$$
\cJ v = \sum_{j=1}^{n+1} \left(-b_j\frac{\partial}{\partial x_j} +
a_j\frac{\partial}{\partial y_j}\right)
$$
is in $T_z\partial D$, i.e., $a_{n+1}= 2\sum_{j=1}^n( -x_j b_j + y_j a_j)$.
Therefore the horizontal distribution $H\partial D$ is generated by the vector fields
$$
\widetilde{X}_j(z) =
\frac{\partial}{\partial x_j} + 2y_j\frac{\partial}{\partial x_{n+1}}
+2x_j\frac{\partial}{\partial y_{n+1}}\, ,
$$
and
$$
\widetilde{Y}_j(z) =
\frac{\partial}{\partial y_j} - 2x_j\frac{\partial}{\partial x_{n+1}}
+2y_j\frac{\partial}{\partial y_{n+1}}\, ,
$$
for $j=1,2,\ldots,n$.

\subsubsection{The Heisenberg group}
Let $\pi:\C^{n+1}\supset \partial D\to \C^n\times\R$ be the projection
onto the first $2n+1$ coordinates:
\begin{equation}\label{the-defn-of-pi}
\pi(z_1,\ldots,z_{n+1}) = (x_1,y_1,\ldots, x_{n+1})\, .
\end{equation}
Then $X_j=d\pi_z(\widetilde{X}_j)
= \partial/\partial x_j + 2y_j\,\partial/\partial x_{n+1}$ and
$Y_j=d\pi_z(\widetilde{Y}_j) = \partial/\partial
y_j-2x_j\,\partial/\partial x_{n+1}$ generate the horizontal
distribution in $\Heis^n$. The map $\pi$ is a diffeomorphism of
$\partial D$ onto $\C^n\times\R$ which maps $H\partial D$ onto
$H\Heis^n$. In this way we identify the boundary of the Siegel domain
with the Heisenberg group.

If $\psi:\Sph^n\to\partial D$ is a smooth and horizontal (with respect
to $H\partial D$) embedding of the sphere, then
$\pi\circ\psi:\Sph^n\to\Heis^n$ is a smooth and horizontal embedding
into the Heisenberg group. Thus to prove Theorem~\ref{T-BE2} it
remains to construct a smooth and horizontal embedding
$\psi:\Sph^n\to\partial D$.

\subsubsection{The unit ball}
Let $B=\{ z\in \C^{n+1}:\, \sum_{j=1}^{n+1} |z_j|^2<1\}$ be the unit ball in
$\C^{n+1}$. Define the horizontal distribution $H\partial B$ as before. The
boundary $\partial B$ is given by $r=0$, where $r(z)=\sum_{j=1}^{n+1}
(x_j^2+y_j^2)-1$, hence
$$
v=\sum_{j=1}^{n+1}\left( a_j\frac{\partial}{\partial x_j} +
b_j\frac{\partial}{\partial y_j}\right)
$$
is in $T_z\partial B$ if $dr_z(v)=0$, i.e.\ $\sum_{j=1}^{n+1} (a_j
x_j + b_j y_j)=0$.

Let $\R^{n+1}$ be the real subspace of $\C^{n+1}$ generated by the
coordinates $x_1,\ldots, x_{n+1}$. Then $\Sph^n=\partial
B\cap\R^{n+1}$ is the standard unit sphere.

\begin{lemma}
\label{T-BE3}
For any $z\in \Sph^n$, $T_z\Sph^n\subset H_z\partial B$, i.e. the sphere
$\Sph^n$ is horizontally embedded into $\partial B$.
\end{lemma}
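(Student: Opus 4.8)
The plan is to verify the inclusion $T_z\Sph^n\subset H_z\partial B$ pointwise by a direct linear-algebra computation, using the two defining conditions for the horizontal distribution $H\partial B$ that were derived just above. Fix $z\in\Sph^n=\partial B\cap\R^{n+1}$, so that $z=(x_1,0,x_2,0,\ldots,x_{n+1},0)$ with $\sum_{j=1}^{n+1}x_j^2=1$ and all $y_j=0$. First I would describe $T_z\Sph^n$: a tangent vector to $\Sph^n$ at $z$ is a real vector $v=\sum_{j=1}^{n+1}a_j\,\partial/\partial x_j$ with $\sum_{j=1}^{n+1}a_jx_j=0$ and no $\partial/\partial y_j$ components (since $\Sph^n$ lies in the $x$-subspace). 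Thus $v$ has $b_j=0$ for all $j$ and satisfies the single linear constraint $\sum a_jx_j=0$.

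Next I would check the two membership conditions. Recall that $v\in T_z\partial B$ iff $\sum_{j=1}^{n+1}(a_jx_j+b_jy_j)=0$, and $v\in H_z\partial B$ iff additionally $\cJ v\in T_z\partial B$. For our $v$, since $b_j=0$ and $y_j=0$, the first condition reads $\sum a_jx_j=0$, which holds because $v$ is tangent to $\Sph^n$. For the second condition, compute $\cJ v=\sum_{j=1}^{n+1}a_j\,\partial/\partial y_j$ (the complex structure sends $\partial/\partial x_j$ to $\partial/\partial y_j$); applying the membership criterion $dr_z(\cdot)=0$ to $\cJ v$ gives the requirement $\sum_{j=1}^{n+1}(0\cdot x_j+a_j y_j)=0$, i.e.\ $\sum a_j y_j=0$, which is automatic since every $y_j=0$ on $\Sph^n$. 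Hence $v\in H_z\partial B$, proving the inclusion; since $\dim T_z\Sph^n=n$ and the distribution has rank $2n$, the embedding of $\Sph^n$ into $\partial B$ is horizontal.

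There is no real obstacle here; the statement is essentially a bookkeeping check that the two CR-type constraints cutting out $H\partial B$ are automatically satisfied on the totally real slice $\R^{n+1}$, because both constraints involve the $y$-coordinates, which vanish identically on $\Sph^n$. The only point requiring a word of care is the correct sign/index convention for $\cJ$ acting on $\partial/\partial x_j$ versus $\partial/\partial y_j$, but with the convention fixed in the excerpt ($\cJ[x_1,y_1,\ldots]=[-y_1,x_1,\ldots]$) one has $\cJ(\partial/\partial x_j)=\partial/\partial y_j$ and $\cJ(\partial/\partial y_j)=-\partial/\partial x_j$, and the computation above goes through verbatim.
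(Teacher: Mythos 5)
Your proposal is correct and is essentially the paper's own argument: both verify directly that a sphere-tangent vector $v=\sum_j a_j\,\partial/\partial x_j$ with $\sum_j a_jx_j=0$ lies in $T_z\partial B$, and that $\cJ v=\sum_j a_j\,\partial/\partial y_j$ also lies in $T_z\partial B$ because the defining constraint reduces to $\sum_j a_jy_j=0$, which holds automatically since $y_j=0$ on $\Sph^n$. No gaps; the extra remarks on dimensions and the $\cJ$ convention are harmless additions.
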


\begin{proof}
As before, $v=\sum_{j=1}^{n+1} a_j\frac{\partial}{\partial x_j} \in
T_z \Sph^n \subset T_z\partial B$ if $\sum_{j=1}^{n+1} a_j x_j=0$. It
remains to show that $\cJ v\in T_{z}\partial B$. We have $\cJ v
=\sum_{j=1}^{n+1} a_j\frac{\partial}{\partial y_j}$. Since $y_j=0$ for
$j=1,\ldots,n+1$ we have $\sum_{j=1}^{n+1} a_j y_j=0$, so $\cJ v\in
T_z\partial B$.
\end{proof}

Note that the {\em south pole}
$p_0=(z_1,\ldots,z_n,z_{n+1})=(0,0,\ldots,-1)$ belongs to $\Sph^n$.
Let $R\in U(n+1)$ be a $\C$-linear rotation of $\C^{n+1}$ so that
$p_0 \not\in R(\Sph^n)$. Since $R$ maps $\C$-linear subspaces of
$T_p\C^{n+1}$ onto $\C$-linear subspaces of $T_{R(p)}\C^{n+1}$, it
preserves the horizontal distribution $H\partial B$. Thus $R(\Sph^n)$
is a horizontally embedded sphere in $\partial B$ which does not
contain the south pole.

\subsubsection{The Cayley transform}
The unit ball $B$ and the Siegel domain $D$ are biholomorphically equivalent
via the {\em Cayley transform} $C:B\to D$,
$$
C(z_1,\ldots,z_{n+1})=
\left(\frac{z_1}{1+z_{n+1}},\ldots, \frac{z_n}{1+z_{n+1}},
\boldi\left(\frac{1-z_{n+1}}{1+z_{n+1}}\right)\right)\, .
$$
The mapping $C$ extends to the boundaries. It has one singularity on the
boundary. Namely, it maps the south pole $p_0$ into a point at infinity. Since
$dC$ is $\C$-linear, it maps $H\partial B$ onto $H\partial D$, except at
$p_0$. Hence $C(R(\Sph^n))$ is a horizontally embedded sphere into $\partial
D$. If we use the explicit rotation $R\in U(n+1)$ given by
$R(z_1,\ldots,z_{n+1})=(z_1,\ldots,z_n,\boldi z_{n+1})$ and the map
$\pi$ defined in \eqref{the-defn-of-pi}, then we obtain the following
map $\phi=\pi\circ C \circ R$ from $\Sph^n$ to~$\Heis^n$:
\begin{equation}\label{specific-phi}
\phi(x_1,\ldots,x_{n+1}) = \left( \frac{x_1}{1+\boldi x_{n+1}},
  \ldots, \frac{x_n}{1+\boldi x_{n+1}}, \Imag \left( \frac{\boldi
      x_{n+1} - 1}{1 + \boldi x_{n+1}} \right) \right),
\end{equation}
for $(x_1,\ldots,x_{n+1}) \in \Sph^{n}$.

This completes the first construction for Theorem~\ref{T-BE2}.

\subsection{Horizontal embeddings of spheres via symplectic geometry}\label{subsec:symplectic-construction}

In this section we indicate another construction of a smooth horizontal
embedding of $\Sph^n$ into $\Heis^n$ arising from symplectic geometry. This
example has previously been considered by Eckholm, Etnyre and Sullivan
\cite[Example 3.1]{sullivan} and Balogh and F\"assler \cite[Section
4]{BaloghF}.

We begin by recalling some background on Lagrangian and Legendrian
embeddings. Consider the Euclidean space $\R^{2n}$ of dimension $2n$. We
denote points in $\R^{2n}$ by $(x_1,y_1,\ldots,x_n,y_n)$. Let $\omega =
\sum_{j=1}^n dx_j \wedge dy_j$ denote the standard symplectic form. We note
that $\omega$ is exact, since $\omega = \frac12 d\beta$, where $\beta = \sum_j
(x_j\,dy_j - y_j\,dx_j)$.

A smooth mapping $f$ from an $m$-dimensional manifold $M$ into $\R^{2n}$ is
called {\it Lagrangian} if $f^*\omega = 0$. If $f=(g_1,h_1,\ldots,g_n,h_n)$,
this condition reads
\begin{equation}
\label{lagrangian}
\sum_{j=1}^n dg_j \wedge dh_j = 0.
\end{equation}
In local coordinates $(u_1,\ldots,u_m)$ on $M$, \eqref{lagrangian} reads
$$
\sum_{\substack{k,\ell=1 \\ k<\ell}}^m
\left( \sum_{j=1}^n
\frac{\partial g_j}{\partial u_k}
\frac{\partial h_j}{\partial u_\ell} -
\frac{\partial g_j}{\partial u_\ell}
\frac{\partial h_j}{\partial u_k} \right) du_k \wedge du_\ell = 0.
$$
Now consider Euclidean space $\R^{2n+1}$ of dimension $2n+1$. We denote points
in $\R^{2n+1}$ by $(x_1,y_1,\ldots,x_n,y_n,t)$. We introduce the {\em
  contact form}
\begin{equation}\label{the-alpha-form}
\alpha = dt + 2 \sum_j (x_j \, dy_j - y_j \, dx_j) = dt + 2 \beta.
\end{equation}
Observe that $d\alpha = 4 \omega$.

A map $F:M\to\R^{2n+1}$ is called {\it Legendrian} if $F^*\alpha = 0$. If
$F=(f,\tau)$ this condition reads
\begin{equation}
\label{exactness}
d\tau + 2 f^*\beta = 0.
\end{equation}
In this case, we say that $F:M\to\R^{2n+1}$ is a {\it Legendrian lift} of
$f:M\to\R^{2n}$.

Observe that if $F$ is Legendrian, then $f$ is necessarily Lagrangian, because
$$
f^*\omega=\frac{1}{2}f^*(d\beta)=-\frac{1}{4}d^2\tau = 0.
$$

The following lemma is an immediate consequence of \eqref{exactness}.

\begin{lemma}
\label{lifting-lemma}
Let $f:M\to\R^{2n}$ be a smooth mapping.
Then there exists a Legendrian lift $F$ of $f$ if and
only if the $1$-form $f^*\beta$ is exact.
\end{lemma}

\begin{remark}
\label{Legendrian-horizontal-remark}
Identify $\R^{2n+1}$ with the Heisenberg group $\Heis^n$. The kernel of
$\alpha$ at a point $p$ is the horizontal subspace $H_p\Heis^n$. A map
$F:M\to\R^{2n+1}$ is Legendrian if and only if it is horizontal, as a map to
$\Heis^n$.
\end{remark}

We now recall Example 3.1 from \cite{sullivan}.

\begin{example}
\label{E1}
Consider the mapping $\tilde{f}:\R^{n+1}\to\R^{2n}$ given by
$$
\tilde{f}(x_0,x') = \tilde{f}(x_0,x_1,\ldots,x_n) =
(x_1,x_0x_1,\ldots,x_n,x_0x_n)\, ,
$$
i.e. $\tilde{f}=(g_1,h_1,\ldots,g_n,h_n)$, where
$g_j=x_j$, $h_j=x_0x_j$. Here $x'=(x_1,\ldots,x_n) \in \R^n$. The rank of the
derivative $d\tilde{f}$ equals $n+1$ everywhere except on the line $x'=0$. If we
restrict $\tilde{f}$ to the unit sphere $\Sph^n$, then, clearly, the rank of
the derivative of
\begin{equation}\label{definition-of-f}
f=\tilde{f}|_{\Sph^n}:\Sph^n\to\R^{2n}
\end{equation}
equals $n$ at all points different than $(\pm1,0,\ldots,0)$. However, one can
easily check that also at the points $(\pm1,0,\ldots,0)$ the derivative of
$\tilde{f}$ restricted to the tangent space of $\Sph^n$ has rank~$n$.

This is to say that the map $f:\Sph^n\to\R^{2n}$ is an immersion.
It is not an embedding since $f(\pm1,0,\ldots,0)=(0,\ldots,0)$.
However, $f$ becomes an embedding when restricted to
$\Sph^n\setminus \{ (-1,0,\ldots,0),(1,0,\ldots,0)\}$. Note that
$$
\tilde{f}^*\beta=
\sum_{j=1}^n\left(g_jdh_j-h_jdg_j\right) =
\left(\sum_{j=1}^n x_j^2\right)\, dx_0.
$$
The form $\tilde{f}^*\beta$ is not exact, not even closed. On the other hand,
the form
$$
(1-x_0^2)dx_0=d\left(x_0-\frac{x_0^3}{3}\right)
$$
is exact on $\R^{n+1}$ and coincides with $\tilde{f}^*\beta$ on $\Sph^n$,
since $1-x_0^2=\sum_{j=1}^n x_j^2$ on $\Sph^n$. If $\iota:\Sph^n\to\R^{n+1}$
is the identity map, then $f=\tilde{f}\circ\iota$ and
$$
f^*\beta=(\tilde{f}\circ\iota)^*\beta=
\iota^*(\tilde{f}^*\beta) =
\iota^*\left(d\left(x_0-\frac{x_0^3}{3}\right)\right)=
d\left(\left(x_0-\frac{x_0^3}{3}\right)\circ\iota\right)\, .
$$
Hence the form $f^*\beta$ is exact and thus the mapping $f:\Sph^n\to\R^{2n}$
has a Legendrian lift
\begin{equation}\label{definition-of-F}
F:\Sph^n\to\R^{2n+1}.
\end{equation}
If $F=(f,\tau)$, then \eqref{exactness} gives
$\tau=\frac{2}{3}x_0^3-2x_0+C$. The Legendrian lift $F$ is a smooth embedding,
since $\tau(-1,0,\ldots,0)\neq \tau(1,0,\ldots,0)$. Therefore $F$ is a
horizontal embedding of $\Sph^n$ into $\Heis^n$ and thus bi-Lipschitz.
\end{example}

This completes the second construction for Theorem~\ref{T-BE2}.

\begin{remark}\label{BF-remark}
Balogh and F\"assler \cite{BaloghF} show that the map $F:\Sph^n\to\Heis^n$
given in \eqref{definition-of-F} has no Lipschitz extension
$\tilde{F}:\B^{n+1}\to\Heis^n$. In fact, they show that any continuous
extension $\hat{f}:\B^{n+1}\to\R^{2n}$ of the map $f$ from
\eqref{definition-of-f} has the property that $\hat{f}(\B^{n+1})$ has
positive $(n+1)$-dimensional (Euclidean) Hausdorff measure $\cH^{n+1}$. If
$\tilde{F}$ were a Lipschitz extension of $F$,
and $\hat{f}$ its projection onto $\R^{2n}$,
then we would have $\cH^{n+1}_{cc}(\tilde{F}(\B^{n+1})) \ge
\cH^{n+1}(\hat{f}(\B^{n+1})) > 0$, but this contradicts the pure
$(n+1)$-unrectifiability of $\Heis^n$ as asserted in the theorem of
Ambrosio--Kirchheim and Magnani (see Theorem~\ref{AKM}). We generalize
this result below in Proposition~\ref{BaloghGeneralization}.
\end{remark}

\begin{remark}\label{parallelizability-remark}
A manifold $M$ is {\it stably parallelizable} if $M\times\R$ is
parallelizable, i.e., has trivial tangent bundle. Examples of stably
parallelizable manifolds include all orientable hypersurfaces (with or
without boundary) as well as all products of spheres. According to a
theorem of Gromov \cite[p.\ 61]{gromov-pdr}, every stably
parallelizable $n$-manifold $M$ admits a Legendrian embedding into
$\R^{2n+1}$. According to Remark~\ref{Legendrian-horizontal-remark},
such an embedding is horizontal when considered as a map to $\Heis^n$.
This observation leads to numerous other horizontal embeddings of
smooth manifolds into the Heisenberg group $\Heis^n$. The case of a
product of spheres, $M=\Sph^{k_1}\times\cdots\times\Sph^{k_r}$, can be
done explicitly using a variant of the immersion in Example~\ref{E1}.
\end{remark}

\section{Horizontal and Lipschitz homotopy groups}
\label{LH}

For Riemannian manifolds $M,N$, the density of $C^\infty(M;N)$ in
$W^{1,p}(M;N)$ depends on the topology of the two manifolds. For
example, for $1 \leq p < n+1$, $C^\infty(\B^{n+1},N)$ is dense in
$W^{1,p}(\B^{n+1},N)$ if and only if $\pi_{\lfloor p \rfloor}(N) = 0$.
See Bethuel \cite{bethuel1} and Hang-Lin \cite{hangl2}.

Theorem~\ref{T1.5} seems to contradict a
similar statement for maps $\B^{n+1}\rightarrow \Heis^n$, since
$\Heis^n \cong \R^{2n+1}$ is contractible and has $\pi_i(\Heis^n)=0$
for $i\geq 1$. However, classical homotopy groups do not take into
account the metric structure of $\Heis^n$. We propose two
modifications of the classical homotopy groups which may be
appropriate for consideration of the Lipschitz density problem in
sub-Riemannian manifolds and general metric spaces.

\begin{definition}
Let $(X,x_0)$ be a pointed metric space. We define {\em Lipschitz
homotopy groups}
$$
\pi_n^{\lip}(X,x_0)
$$
in the same way as classical homotopy groups, with the exception that
both the maps and homotopies are required to be Lipschitz. We
emphasize that we make no restriction on the Lipschitz constants. In
particular, we do not require that the optimal Lipschitz constant for
a homotopy between two pointed maps $f,g:(Q^n,\partial Q^n)
\to(X,x_0)$, where $Q^n = [0,1]^n$, agree with---or even be comparable
to---that of the maps $f$ and $g$.

Unlike the case for the classical homotopy groups, it is not
immediately clear whether the Lipschitz homotopy groups
$\pi_n^\lip(X,x_0)$ can equivalently be defined via homotopy classes
of maps $(\Sph^n,s_0) \to (X,x_0)$ for a basepoint $s_0 \in \Sph^n$.
Nevertheless, one can easily check that $\pi_n^\lip(X,x_0) = 0$ if and
only if every Lipschitz map $(\Sph^n,s_0) \to (X,x_0)$ admits a
Lipschitz extension $\B^{n+1} \to X$. In fact, if $\varphi:\Sph^n \to
X$ is a Lipschitz map with no Lipschitz extension
$\tilde\varphi:\B^{n+1} \to X$ and $H : Q^n \to \Sph^n$ is Lipschitz
with $H|_{\partial Q^n} = s_0$, then  $\varphi \circ H$ defines a
nonzero element of $\pi_n^\lip(X)$. We will use this observation in
what follows to show that various Lipschitz homotopy groups of
Heisenberg groups are or are not trivial. 

If $(Y,y_0)$ is a pointed sub-Riemannian manifold, we may instead
require the maps and homotopies to be smooth and horizontal. The
resulting horizontal homotopy groups are called {\em smooth horizontal
homotopy groups}; we denote them by
$$
\pi_n^\hor(Y,y_0).
$$
In order to guarantee that the addition in $\pi_n^\hor(Y,y_0)$ results in
a smooth mapping we require that the mappings in question are constant in
a neighborhood of $\partial Q^n$. Observe, however, that
any smooth and horizontal mapping
$$
f:(Q^n,\partial Q^n)\to (Y,y_0)
$$
is horizontally homotopic to a mapping that is constant in a neighborhood
of $\partial Q^n$ inside $Q^n$.
\end{definition}

The standard verification that homotopy groups are in fact groups
applies directly to $\pi_n^\lip(X,x_0)$ and, with smoothing, to
$\pi_n^\hor(Y,y_0)$. Theorem~\ref{homotopy-group-properties} is
derived via standard arguments (see e.g.\ \cite{Hatcher}). Recall that
sub-Riemannian manifolds are horizontally connected according to the
Chow-Rashevsky theorem (Theorem~\ref{Chow-Rashevsky}). It follows from
the proof of the Chow-Rashevsky theorem that horizontal curves
connecting given two points are piecewise smooth, but one can actually
connect any two points by smooth horizontal curves, see Gromov
\cite[1.2B]{gromov-cc}. This fact is needed in part (3) of the next
result.

\begin{theorem}\label{homotopy-group-properties}
Let $X$ be a metric space with $x_0, x_1 \in X$, and $Y$ a
sub-Riemannian space with $y_0, y_1 \in Y$.
\begin{enumerate}
\item $\pi_n^\lip(X,x_0)$ and $\pi_n^\hor(Y,y_0)$ are abelian for
  $n>1$.
\item If $x_0$ and $x_1$ are joined by a rectifiable curve, then
  $\pi_n^\lip(X,x_0) \cong \pi_n^\lip(X,x_1)$.
\item For any $y_0,y_1 \in Y$, $\pi_n^\hor(Y,y_0) \cong \pi_n^\hor(Y,y_1)$.
\end{enumerate}
\end{theorem}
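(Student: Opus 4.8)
The plan is to prove each of the three assertions by adapting the classical topological arguments (as in \cite{Hatcher}), being careful at each point where the Lipschitz or horizontal constraint could cause trouble. Throughout, the guiding principle is that the constructions used in the classical theory---reparametrization, concatenation, cone constructions, conjugation by a path---can all be carried out by piecewise-linear or smooth formulas that are Lipschitz (respectively smooth) and, in the horizontal setting, preserve horizontality because they only precompose with maps of the domain sphere or cube and do not alter the target.

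\medskip

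\textbf{Part (1): abelianness for $n>1$.} I would run the standard Eckmann--Hilton / ``swapping'' argument. Given two pointed Lipschitz maps $f,g:(\Sph^n,s_0)\to(X,x_0)$, view them as maps on the $n$-cube $I^n$ sending $\partial I^n$ to $x_0$. The concatenation in the first coordinate, $f*g$, and in the second coordinate can be interchanged by an explicit homotopy that shrinks the supports of $f$ and $g$ to small disjoint sub-cubes and slides them past each other. The key observation is that every map in this homotopy is obtained from $f$ and $g$ by precomposition with a (piecewise-affine, hence Lipschitz) self-map of $I^n$ together with the constant map $x_0$ on the complement; a Lipschitz map precomposed with a Lipschitz map is Lipschitz, and the whole family depends Lipschitz-continuously on the homotopy parameter, so it is a Lipschitz homotopy. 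For $\pi_n^\hor(Y,y_0)$ the same formulas apply; here we additionally use that the relevant maps are arranged to be constant near $s_0$ (as stipulated in the definition), so concatenations remain smooth, and precomposition with a smooth self-map of the domain preserves both smoothness and horizontality since $d(f\circ\psi)=df\circ d\psi$ still lands in $HY$. So no new idea beyond the classical one is needed, only the remark that the classical homotopies are ``domain-side'' and Lipschitz/smooth.

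\medskip

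\textbf{Part (2): change of basepoint in $\pi_n^\lip$.} Let $\gamma:[0,1]\to X$ be a rectifiable curve from $x_0$ to $x_1$; after an arclength reparametrization we may take $\gamma$ to be Lipschitz. The classical isomorphism $\beta_\gamma:\pi_n(X,x_1)\to\pi_n(X,x_0)$ is defined by conjugating a representative $f:(I^n,\partial I^n)\to(X,x_1)$: shrink $f$ to the middle of the cube and use $\gamma$ on a collar of $\partial I^n$ to interpolate out to $x_0$. Writing this out, the resulting map is $f$ precomposed with a piecewise-affine self-map of $I^n$ on the inner region, and equals $\gamma(\text{affine function of the coordinates})$ on the collar; both pieces are Lipschitz and they agree (both equal $x_1$) on the overlap, so the glued map is Lipschitz. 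The standard verifications---that $\beta_\gamma$ is well defined on homotopy classes, is a homomorphism, and that $\beta_\gamma\circ\beta_{\bar\gamma}$ and $\beta_{\bar\gamma}\circ\beta_\gamma$ are the identity (using the constant homotopy of $\gamma$ followed by cancellation of $\gamma\bar\gamma$)---again only involve Lipschitz, domain-side homotopies and reparametrizations of $\gamma$, so they go through verbatim. Hence $\pi_n^\lip(X,x_0)\cong\pi_n^\lip(X,x_1)$.

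\medskip

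\textbf{Part (3): change of basepoint in $\pi_n^\hor$.} This is the one place where an extra input is genuinely required, and I expect it to be the main obstacle. The argument of Part (2) needs a curve from $y_0$ to $y_1$ along which we can interpolate, but in the horizontal category the interpolation must itself be done through horizontal maps, so the connecting curve must be a \emph{smooth horizontal} curve, not merely rectifiable. This is exactly why the excerpt recalls, just before the theorem, Gromov's refinement of Chow--Rashevsky: any two points of a sub-Riemannian manifold can be joined by a \emph{smooth} horizontal curve \cite[1.2B]{gromov-cc}. Granting such a curve $\gamma:[0,1]\to Y$ from $y_0$ to $y_1$, I would repeat the conjugation construction of Part (2) but, to keep everything smooth, first homotope the representative $f:(\Sph^n,s_0)\to(Y,y_1)$ to one constant near $s_0$ (possible by the observation in the definition), reparametrize $\gamma$ so that it and all its derivatives vanish to infinite order at the endpoints, and build the conjugated map so that on the collar it is $\gamma$ composed with a smooth bump-type function of the cube coordinates. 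Because $\gamma$ is horizontal, the collar part has derivative landing in $HY$; because $f$ is horizontal, so is the precomposition on the inner part; the infinite-order vanishing makes the two pieces fit together smoothly. The well-definedness, homomorphism property, and invertibility of $\beta_\gamma$ then follow as before, now using smooth horizontal homotopies and the fact that $\gamma*\bar\gamma$ is smoothly horizontally null-homotopic rel endpoints (again appealing to Gromov's smooth horizontal connectivity to smooth out the corner). This establishes $\pi_n^\hor(Y,y_0)\cong\pi_n^\hor(Y,y_1)$ and completes the proof.
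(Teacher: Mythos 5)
Your proposal is correct and matches the paper's approach: the paper itself only says the theorem "is derived via standard arguments (see e.g.\ Hatcher)," with the single nonstandard ingredient being exactly the one you isolated for part (3), namely Gromov's refinement of Chow--Rashevsky that any two points of a sub-Riemannian manifold are joined by a \emph{smooth} horizontal curve. Your fleshing out of the domain-side reparametrization arguments (and the remark that maps are taken constant near the basepoint so concatenations stay smooth) is consistent with what the paper intends.
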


If $X$ is rectifiably connected, we may refer to $\pi_n^\lip(X)$
without reference to basepoint as it is well-defined up to
isomorphism. Likewise, $\pi_n^\hor(Y)$ is well-defined up to
isomorphism.

\begin{theorem}\label{LHT3}
Let $Y$ be a Riemannian manifold. Then $\pi_n(Y)=\pi_n^\lip(Y) =
\pi_n^\hor(Y)$ for any $n\ge1$.
\end{theorem}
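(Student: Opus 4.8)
The plan is to prove the two equalities $\pi_n(Y)=\pi_n^\lip(Y)$ and $\pi_n^\lip(Y)=\pi_n^\hor(Y)$ separately, noting that for a Riemannian manifold the horizontal distribution is all of $TY$, so ``horizontal'' is no constraint and the only difference among the three notions is the regularity demanded of the representing maps and homotopies: continuous, Lipschitz, or smooth. Since $Y$ is a (finite-dimensional) Riemannian manifold, the Carnot--Carath\'eodory metric coincides with the ordinary Riemannian distance, which is locally bi-Lipschitz to the Euclidean metric in any chart; in particular a map from a compact manifold into $Y$ is Lipschitz with respect to $d_{cc}$ if and only if it is locally Lipschitz in charts, and ``smooth and horizontal'' just means ``smooth''. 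The heart of the matter is therefore a smoothing/approximation statement: every continuous map $(\Sph^n,s_0)\to(Y,y_0)$ is homotopic rel $s_0$ to a smooth one, and any two smooth maps that are continuously homotopic are smoothly homotopic; moreover the homotopies can be taken smooth (hence Lipschitz). This is the classical Whitney approximation theorem, which gives a well-defined bijection $\pi_n^{\rm smooth}(Y)\cong\pi_n(Y)$, and the same statement applied to homotopies shows it is a group isomorphism. See e.g.\ \cite{Hatcher} or standard references on differential topology.

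First I would handle $\pi_n^\hor(Y)=\pi_n^\lip(Y)$. There is an obvious forgetful homomorphism $\pi_n^\hor(Y)\to\pi_n^\lip(Y)$ (a smooth map is Lipschitz on the compact sphere, and a smooth homotopy is Lipschitz), so it remains to see it is surjective and injective. Surjectivity: given a Lipschitz map $f:(\Sph^n,s_0)\to(Y,y_0)$, mollify it (in local charts, using a partition of unity, or more cleanly by composing with a smooth retraction of a tubular neighborhood of $Y$ in some $\R^N$ after a Nash-type embedding) to obtain a smooth map $\tilde f$ that is $C^0$-close to $f$; since $C^0$-close maps into a manifold are homotopic via the straight-line (geodesic) homotopy, and that homotopy can itself be taken smooth and kept fixed at $s_0$ after a further reparametrization making $\tilde f$ constant near $s_0$, we get $[\tilde f]=[f]$ in $\pi_n^\lip$. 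Injectivity is the same argument applied one dimension up: a Lipschitz homotopy between two smooth maps can be smoothed rel endpoints. Then $\pi_n(Y)=\pi_n^\lip(Y)$ follows identically, replacing ``Lipschitz'' by ``continuous'' everywhere; here one uses that a smooth map is continuous and that continuous maps into $Y$ admit smooth approximations (Whitney), with the homotopies controlled in the same way.

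The step I expect to be the main (though still routine) obstacle is the basepoint bookkeeping: one must ensure that all approximations and homotopies are performed rel $s_0$, and, on the horizontal side, that the smoothed representatives can be taken constant in a neighborhood of $s_0$ as required in the definition of $\pi_n^\hor$. This is handled by first using the observation already recorded in the excerpt---any smooth (or here, any continuous, after an initial smoothing) map $(\Sph^n,s_0)\to(Y,y_0)$ is homotopic to one constant near $s_0$---and then doing the mollification on the complement of a slightly smaller neighborhood of $s_0$, which leaves the map untouched near the basepoint. A secondary point to state carefully is that the group operations (concatenation of maps) are preserved under the identifications, but this is immediate since concatenation of smooth maps that are constant near $s_0$ is again smooth, and the comparison bijections are evidently compatible with concatenation. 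Once these points are in place, the three groups are canonically isomorphic, and since $\pi_n^\lip(Y)$ and $\pi_n^\hor(Y)$ are basepoint-independent by Theorem~\ref{homotopy-group-properties} (and $\pi_n(Y)$ is by the path-connectedness of $Y$), the statement holds for any $n\ge 1$.
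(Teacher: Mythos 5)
Your proposal is correct and follows essentially the same route as the paper, whose entire proof is the one-line observation that any continuous map $\Sph^n\to Y$ is homotopic to a smooth one and that continuously homotopic smooth maps are smoothly homotopic; you simply spell out the Whitney-approximation, mollification, and basepoint details that the paper leaves implicit. No discrepancy in approach or substance.
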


Indeed, any continuous map $f:Q^n\to Y$ is homotopic to a smooth one
and smooth maps $f,g:Q^n\to Y$ that are continuously homotopic are
smoothly homotopic.

Here is another situation where the Lipschitz and classical homotopy
groups agree.

\begin{example}\label{Miller-example}
By a result of Cannon--Conner--Zastrow \cite{Zastrow}, any continuous
map from $\Sph^n$, $n\geq 2$ into $\R^2$ is homotopic to a constant map within its
image. Put in other words, every planar set $X$ is {\it aspherical};
its homotopy groups $\pi_n(X)$ are trivial for all $n\ge 2$. According
to an unpublished result of Jason Miller
\cite[Remark 2.14]{hei:lipschitzlectures}, the same conclusion holds
for Lipschitz homotopy groups: $\pi_n^\lip(X,x_0) = 0 $ for all $n\ge
2$ and $x_0 \in X$ if $X\subset\R^2$.
\end{example}

However, Lipschitz homotopy groups can differ substantially from their
classical counterparts.

\begin{example}
Let $X$ be a metric space in which no two distinct points can
be connected by a rectifiable curve. Then $\pi_n^\lip(X,x_0)=0$ for
any $x_0\in X$ and $n\geq 1$. Indeed, any Lipschitz map from
$\Sph^n$ to $X$ must be constant. For example, if $X$ is a standard
closed von Koch snowflake, then $\pi_1^\lip(X,x_0)=0$. On the other
hand $\pi_1(X,x_0)=\Z$, since $X$ is homeomorphic to $\Sph^1$. Note
that there is no contradiction with the previous example, as both
$\pi_n^\lip(X,x_0)$ and $\pi_n(X,x_0)$ vanish for this (planar) set
$X$ whenever $n\ge 2$.
\end{example}

In the original preprint version of this paper, we posed the following
question. Wenger and Young \cite{wenger-young} answered Question
\ref{QQ} in the negative.

\begin{question}\label{QQ}
For a sub-Riemannian manifold $Y$, the inclusion $C^\infty(\Sph^n, Y)
\hookrightarrow \lip(\Sph^n,Y)$ induces a homomorphism $\pi_n^\hor(Y)
\rightarrow \pi_n^\lip(Y)$. Is it an isomorphism?
\end{question}

We now show that smooth horizontal embeddings of $\Sph^n$ into
$\Heis^n$ are not Lipschitz null-homotopic. We conclude that
$\pi_n^\lip(\Heis^n)$ and $\pi_n^\hor(\Heis^n)$ are nontrivial, even
though $\pi_k(\Heis^n)=0$ for $k\ge 1$.

\begin{proposition}\label{BaloghGeneralization}
Let $\phi:\Sph^n \hookrightarrow \Heis^n$ be any smooth horizontal
embedding. Then $\phi$ cannot be extended to a Lipschitz map
$\tilde\phi: \B^{n+1} \rightarrow \Heis^n$.
\end{proposition}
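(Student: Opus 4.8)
The plan is to argue by contradiction using the pure $(n+1)$-unrectifiability of $\Heis^n$ (Theorem~\ref{AKM}), exactly in the spirit of Remark~\ref{BF-remark}. Suppose $\tilde\phi:\B^{n+1}\to\Heis^n$ is a Lipschitz extension of the smooth horizontal embedding $\phi:\Sph^n\hookrightarrow\Heis^n$. Then $\tilde\phi(\B^{n+1})$ is a Lipschitz image of a subset of $\R^{n+1}$, so by Theorem~\ref{AKM} we have $\cH^{n+1}_{cc}(\tilde\phi(\B^{n+1}))=0$. The goal is to contradict this by showing that the image of $\tilde\phi$ must in fact carry positive $(n+1)$-dimensional Hausdorff measure, because its boundary values $\phi(\Sph^n)$ are ``$n$-dimensional and nontrivially linked'' in a way that forces any filling to be genuinely $(n+1)$-dimensional.

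The key step is therefore a topological/measure-theoretic lower bound: any continuous map $\Psi:\B^{n+1}\to\R^{2n+1}$ whose restriction to $\Sph^n$ equals $\phi$ has image of positive $\cH^{n+1}$ (Euclidean Hausdorff measure). First I would push $\phi$ forward: since $\phi:\Sph^n\to\Heis^n=\R^{2n+1}$ is a smooth embedding, $\phi(\Sph^n)$ is a smoothly embedded $n$-sphere in $\R^{2n+1}$, and I claim it is not null-homotopic in $\R^{2n+1}\setminus\{q\}$ for a suitable point $q$ lying ``inside'' it. Concretely, I would choose a small normal disk: pick $x\in\Sph^n$ and let $q=\phi(x)+\epsilon\nu$ where $\nu$ is a unit normal vector to $\phi(\Sph^n)$ at $\phi(x)$ that is transverse to the image; for small $\epsilon$, the linking number of $\phi(\Sph^n)$ with a small $n$-sphere around the $(n+1)$-dimensional affine normal slice — or more robustly, a degree/intersection-number argument — shows $\phi|_{\Sph^n}$ represents a nonzero class in $\pi_n(\R^{2n+1}\setminus\ell)\cong\pi_n(\Sph^n)\cong\Z$ for an appropriately chosen affine $n$-plane $\ell$ complementary to $T_{\phi(x)}\phi(\Sph^n)$. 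Since $\phi|_{\Sph^n}$ extended over $\B^{n+1}$ by $\Psi$ would have to be null-homotopic in the complement of any set it misses, $\Psi(\B^{n+1})$ must meet every such affine $n$-plane through a whole neighborhood; an elementary coarea/Fubini argument over the family of parallel $n$-planes then yields $\cH^{n+1}(\Psi(\B^{n+1}))>0$. This is precisely the mechanism Balogh and F\"assler use for their specific $f$; here I would isolate the part of their argument that depends only on $\phi$ being a smooth embedding of $\Sph^n$ and not on its explicit formula.

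With that lemma in hand, the proof concludes quickly: apply it to $\Psi=\tilde\phi$ viewed as a map into $\R^{2n+1}$ (legitimate since $\tilde\phi$ is Lipschitz into $(\Heis^n,d_{cc})$, hence continuous into $\R^{2n+1}$ by \eqref{SReq1}), obtaining $\cH^{n+1}(\tilde\phi(\B^{n+1}))>0$; then the first inequality in Remark~\ref{BF-remark}, namely $\cH^{n+1}_{cc}(\tilde\phi(\B^{n+1}))\ge \cH^{n+1}(\tilde\phi(\B^{n+1}))>0$ (which follows from the left inequality in \eqref{SReq1} since the identity $\Heis^n\to\R^{2n+1}$ is locally $1$-Lipschitz up to a constant, so Euclidean Hausdorff measure is bounded above by a multiple of CC Hausdorff measure), contradicts the vanishing forced by Theorem~\ref{AKM}. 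The main obstacle is the topological lower bound in the previous paragraph: making precise the claim that a smoothly embedded $\Sph^n$ in $\R^{2n+1}$ ``encloses'' points in a linking-number sense so that any continuous filling sweeps out positive $(n+1)$-measure. I expect the cleanest route is to use the tubular neighborhood theorem to produce a nowhere-zero normal field along $\phi(\Sph^n)$ (available since $\phi(\Sph^n)$ is an orientable hypersurface-type submanifold of trivial normal bundle rank $n+1$, or at least along a small disk), and then run a degree argument on the ``radial'' map from $\tilde\phi(\B^{n+1})$ to a normal $n$-sphere; alternatively one can invoke the Balogh--F\"assler result essentially verbatim after checking their hypotheses only use smoothness of the embedding.
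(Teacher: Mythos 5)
Your overall skeleton coincides with the paper's: assume a Lipschitz extension $\tilde\phi$ exists, use Theorem~\ref{AKM} plus the local Lipschitz continuity of the identity $\Heis^n\to\R^{2n+1}$ (i.e.\ \eqref{SReq1}) to get $\cH^{n+1}(\tilde\phi(\B^{n+1}))=0$, and contradict this with a purely Euclidean lower bound for fillings of the embedded sphere; your last step (positivity of Euclidean $\cH^{n+1}$ forces positivity of $\cH^{n+1}_{cc}$, up to constants) and your Fubini-type step (if every plane of an $(n+1)$-parameter family of parallel affine $n$-planes filling an open set must be hit, then the $1$-Lipschitz projection of the image contains that open set, so the image has positive $\cH^{n+1}$) are both sound. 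The gap is in the crucial Euclidean lemma itself. First, the point-linking version is vacuous: $\R^{2n+1}\setminus\{q\}\simeq\Sph^{2n}$ and $\pi_n(\Sph^{2n})=0$ for $n\ge1$, so no point $q$ obstructs filling. Second, the replacement you actually need -- that an \emph{arbitrary} smooth embedded $\Sph^n\subset\R^{2n+1}$ has nonzero linking number with some affine $n$-plane $\ell$, equivalently that for some linear projection $\pi:\R^{2n+1}\to\R^{n+1}$ the map $\pi\circ\phi$ has nonzero degree over some point off its image -- is asserted ("a degree/intersection-number argument") but never proved, and it is not a formality: for smooth maps $\Sph^n\to\R^{n+1}$ the degree can vanish identically (e.g.\ the folded sphere $x\mapsto(x_1,\dots,x_n,x_{n+1}^2)$, whose image is an embedded disk), so the embedding hypothesis must enter in an essential way, e.g.\ via a genuine generic-projection/general-position argument which you do not supply. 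Invoking Balogh--F\"assler "essentially verbatim" does not close this either: their argument is tied to the explicit map into $\R^{2n}$ from the symplectic construction, and the paper states precisely that, lacking an explicit formula for $\phi$, a more general argument is required.

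The paper avoids this topological issue altogether by exploiting that only \emph{Lipschitz} fillings need to be excluded. It reduces to Proposition~\ref{positive-measure} and proves it by pushing forward the volume form of $\Sph^n$ to a smooth $n$-form $\omega$ on $\R^{2n+1}$ supported near $\psi(\Sph^n)$, applying the Stokes-type Lemma~\ref{stokes} for Lipschitz maps to get $\int_{\B^{n+1}}\tilde{\psi}^*(d\omega)=\int_{\Sph^n}\psi^*\omega=\cH^n(\Sph^n)>0$, and then using the area formula: the nonvanishing integral forces $d\tilde\psi$ to have rank $n+1$ on a set of positive measure, hence $\cH^{n+1}(\tilde\psi(\B^{n+1}))>0$. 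If you want to pursue your route, the statement you must actually establish is the existence of a projection with somewhere-nonzero degree for every smooth embedding $\phi$; as written, that is the missing idea, and without it the proof does not go through.
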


\begin{proof}
Balogh and F\"assler \cite{BaloghF} established the conclusion for the
specific map $\phi: \Sph^n \rightarrow \Heis^n$ described in
Subsection~\ref{subsec:symplectic-construction}.
Since in our situation we do not have an explicit representation of $\phi$,
the argument has to be more general.

Assume by way of contradiction that $\tilde{\phi}:\B^{n+1}\to\Heis^n$ is a
Lipschitz extension of $\phi$. According to the theorem of Ambrosio-Kirchheim
and Magnani,
$\cH^{n+1}_{cc}(\tilde{\phi}(\B^{n+1}))=0$. Since the identity map from
$\Heis^n$ to $\R^{2n+1}$ is locally Lipschitz we also have that the ($n+1$)-dimensional
Euclidean Hausdorff measure of $\tilde{\phi}(\B^{n+1})$ is zero.
Therefore it suffices to prove

\begin{proposition}\label{positive-measure}
If $\psi:\Sph^n\to\R^k$, $k\geq n+1$ is a smooth embedding and
$\tilde{\psi}:\B^{n+1}\to\R^k$ is a Lipschitz extension of $\psi$, then
$\cH^{n+1}(\tilde{\psi}(\B^{n+1}))>0$.
\end{proposition}

\begin{proof}
We will need a version of the Stokes theorem for Lipschitz maps.

\begin{lemma}\label{stokes}
If $g:\B^{n+1}\to\R^\ell$, $\ell\ge n$, is Lipschitz and
$\omega$ is a smooth $n$-form on $\R^\ell$, then
$$
\int_{\partial\B^{n+1}} g^*\omega = \int_{\B^{n+1}} g^*(d\omega)\, .
$$
\end{lemma}

\begin{proof}
Extend ${g}$ to $\tilde{g}$ on $\B^{n+1}(2)$ by letting
$$
\tilde{g}(x) = g\left(\frac{x}{|x|}\right)
\quad
\mbox{when $|x|\geq 1$.}
$$
Thus $\tilde{g}$ is a radial extension of ${g}$ from $\partial \B^{n+1}$
to $\B^{n+1}(2)\setminus \B^{n+1}(1)$ and $\tilde{g}=g$ on $\B^{n+1}(1)$.
Since $\tilde{g}$ on $\B^{n+1}(2)\setminus \B^{n+1}(1)$ is constant
in radial directions, the rank of $d\tilde{g}$ is at most $n$ on
$\B^{n+1}(2)\setminus \B^{n+1}(1)$. Thus for any $1<r<2$
$$
\int_{\B^{n+1}(r)\setminus \B^{n+1}(1)} \tilde{g}^*(d\omega) = 0\, ,
$$
so
$$
\int_{\B^{n+1}(r)} \tilde{g}^*(d\omega) = \int_{\B^{n+1}(1)}g^*(d\omega)\, .
$$
The mapping $\tilde{g}$ is Lipschitz continuous and hence
in any Sobolev space $W^{1,p}$. Fix $n+1<p<\infty$ and let $h_i$ be a standard
approximation of $\tilde{g}$ constructed with the help of convolution. Then
$h_i\to\tilde{g}$ in $W^{1,p}(\B^{n+1}(2))$. Since $h_i$ is smooth,
for any $0<r<2$ we have
$$
\int_{\B^{n+1}(r)} h_i^*(d\omega) =
\int_{\Sph^n(r)} h_i^*\omega
$$
by the classical Stokes' theorem. Since $h_i\to\tilde{g}$ in $W^{1,p}$,
applying Fubini's theorem we may select a subsequence (still denoted
by $h_i$) such that
$$
h_i|_{\Sph^n(r)}\to \tilde{g}|_{\Sph^n(r)}
\quad
\mbox{in $W^{1,p}(\Sph^n(r))$ for a.e. $0<r<2$,}
$$
see \cite[pp.\ 189-190]{hajlaszSobolev}.
Fix such $r$ in the interval $1<r<2$. Then H\"older's inequality
yields
$$
\int_{\B^{n+1}(r)} h_i^*(d\omega) \to
\int_{\B^{n+1}(r)} \tilde{g}^*(d\omega) =
\int_{\B^{n+1}(1)} g^*(d\omega).
$$
On the other hand, another application of H\"older's inequality yields
$$
\int_{\B^{n+1}(r)} h_i^*(d\omega) =
\int_{\Sph^n(r)} h_i^*\omega \to
\int_{\Sph^n(r)} \tilde{g}^*\omega =
\int_{\Sph^n(1)} g^*\omega,
$$
where the last equality follows from the fact that $\tilde{g}$
on $\Sph^n(r)$ is the rescaling of ${g}$ on $\Sph^n(1)$. Hence
$$
\int_{\B^{n+1}(1)} g^*(d\omega) = \int_{\Sph^n(1)} g^*\omega
$$
and the lemma follows.
\end{proof}

Now we complete the proof of Proposition \ref{positive-measure}. Let
$\omega$ be the pullback by $\psi^{-1}$ of the volume form of $\Sph^n$
to $\psi(\Sph^n)\subset\R^k$. We can assume that $\omega$ is a smooth
form. Indeed, we can extend it smoothly from $\psi(\Sph^n)$ to $\R^k$
using local extensions in coordinate systems and a partition of unity.
The extended form may vanish outside a small neighborhood of
$\psi(\Sph^n)$, but it does not matter. According to Lemma
\ref{stokes} we have
$$
\int_{\B^{n+1}} \tilde{\psi}^*(d\omega) =
\int_{\Sph^n} \psi^*\omega = \cH^n(\Sph^n)>0.
$$
This implies that the rank of $d\tilde{\psi}$ is $n+1$ on a set of positive measure
and hence
$$
|J_{\tilde{\psi}}| =
\sqrt{\det(d\tilde{\psi})^T(d\tilde{\psi})}>0
$$
on a set of positive measure. According to the area formula \cite{EG}
$$
\int_{\tilde{\psi}(\B^{n+1})}
\cH^0(\tilde{\psi}^{-1}(y))\, d\cH^{n+1}(y) =
\int_{\B^{n+1}} |J_{\tilde{\psi}}|>0
$$
where $\cH^0$ stands for the counting measure. This, however,
implies that the set $\tilde{\psi}(\B^{n+1})$ has positive
($n+1$)-dimensional measure. This completes the proof of
Proposition~\ref{positive-measure}.
\end{proof}

In view of the preceding discussion, the proof of
Proposition~\ref{BaloghGeneralization} is complete.
\end{proof}

\begin{remark}\label{ExtraRemark}
A modification of the proof yields the following stronger
version of Proposition \ref{BaloghGeneralization}: {\it Let
$\psi_1,\ldots,\psi_N:\Sph^n \hookrightarrow \Heis^n$ be smooth
horizontal embeddings such that 
\begin{equation}\label{ExtraRemarkEquation}
\psi_i(\Sph^n) \setminus \bigcup_{j\ne i} \psi_j(\Sph^n) \ne \emptyset
\qquad \mbox{ for some $i=1,\ldots,N$.}
\end{equation}
Then $\psi_1,\ldots,\psi_N$ are independent as elements of
$\pi_n^\lip(\Heis^n)$, i.e., no nontrivial relations among these
embeddings exist.} Indeed, suppose that some finite linear combination
$\sum_j m_j \psi_j$ was equal to zero in $\pi_n^\lip(\Heis^n)$. Since
Lipschitz homotopy groups were defined via mappings from the pair
$(Q^n,\partial Q^n)$, we interpret this assumption as
providing a Lipschitz map $\tilde\psi$ from $Q^{n+1}$ to $\Heis^n$
which is constant on all but one face of $Q^{n+1}$ and which is given
by the concatenation of $m_1$ copies of $\psi_1$, $m_2$ copies of
$\psi_2$, \dots, $m_N$ copies of $\psi_N$ on the remaining face. Let
$\psi:\partial Q^{n+1} \to \Heis^n$ denote the restriction of
$\tilde\psi$ to the boundary.

Fix $i$ so that \eqref{ExtraRemarkEquation} holds. Then $\psi_i^{-1}
\left( \psi_i(\Sph^n) \setminus \bigcup_{j\ne i} \psi_j(\Sph^n)
\right)$ has nonempty interior $V$ in $\Sph^n$ (since $\psi_i$ is an
embedding). Let $\omega$ be the pullback by $\psi^{-1}$ of a smooth
$n$-form supported in $V$ such that $\int_V \psi^*\omega > 0$.
The remainder of the proof proceeds as before.

Note in particular that if $N=2$, then the negation of
\eqref{ExtraRemarkEquation} implies that $\psi_1(\Sph^n) =
\psi_2(\Sph^n)$. Thus two embedded spheres are dependent as elements
of $\pi_n^\lip(\Heis^n)$ if and only if their images coincide, in
which case they agree up to a reparameterization of $\Sph^n$.
\end{remark}

\begin{theorem}\label{LHT4}
\begin{enumerate}
\item If $1\leq k<n$, then $\pi_k^\lip(\Heis^n)=0$.
\item $\pi_n^\lip(\Heis^n)$ is uncountably generated for $n>0$.
\end{enumerate}
\end{theorem}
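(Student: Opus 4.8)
\noindent\emph{Proof plan.}

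For part (1) I would argue directly from the Lipschitz extension theorem. When $1\le k<n$ we have $k+1\le n$, so by Theorem~\ref{WY} the pair $(\B^{k+1},\Heis^n)$ has the Lipschitz extension property; hence every Lipschitz map $g:\Sph^k=\partial\B^{k+1}\to\Heis^n$ extends to a Lipschitz map $\tilde g:\B^{k+1}\to\Heis^n$. Viewing $\B^{k+1}$ as the cone on $\Sph^k$, the formula $H(s,u)=\tilde g(us)$ is a Lipschitz homotopy from $g$ to the constant map $\tilde g(0)$; since $(\Heis^n,d_{cc})$ is a geodesic space, this free null-homotopy can be promoted to a based one by the classical change-of-basepoint construction, performed along a geodesic so as to stay Lipschitz (cf.\ \cite{Hatcher}). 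Thus $\pi_k^\lip(\Heis^n)=0$.

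For part (2) the plan is to manufacture invariants that, unlike integration of globally closed forms, do \emph{not} vanish on horizontally embedded spheres. For a smooth $n$-form $\omega$ on $\R^{2n+1}$, set $\Phi_\omega(g)=\int_{\Sph^n}g^*\omega$ for Lipschitz $g:\Sph^n\to\Heis^n\subset\R^{2n+1}$. I claim $\Phi_\omega$ is constant on Lipschitz homotopy classes. If $H:\Sph^n\times[0,1]\to\Heis^n$ is a Lipschitz homotopy, then $\Sph^n\times[0,1]$ is an $(n+1)$-manifold, so (up to a finite decomposition into charts) $H(\Sph^n\times[0,1])$ is a Lipschitz image of a subset of $\R^{n+1}$, whence $\cH^{n+1}_{cc}(H(\Sph^n\times[0,1]))=0$ by the Ambrosio--Kirchheim--Magnani theorem (Theorem~\ref{AKM}); since the identity $\Heis^n\to\R^{2n+1}$ is locally Lipschitz, the Euclidean $\cH^{n+1}$ of this image also vanishes, so the area formula forces $|J_H|=0$ a.e., i.e.\ $\rank dH\le n$ a.e., and therefore $H^*\eta=0$ a.e.\ for every $(n+1)$-form $\eta$ on $\R^{2n+1}$. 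Combined with the Lipschitz version of Stokes' theorem (Lemma~\ref{stokes}, applied with source $\Sph^n\times[0,1]$), if $H$ joins $g_0$ to $g_1$ then
\[
\Phi_\omega(g_1)-\Phi_\omega(g_0)=\int_{\partial(\Sph^n\times[0,1])}H^*\omega=\int_{\Sph^n\times[0,1]}H^*(d\omega)=0 .
\]
So $\Phi_\omega$ is well defined on $\pi_n^\lip(\Heis^n)$ (in fact a homomorphism to $\R$ by additivity under concatenation), and it vanishes on the constant class. Taking $\omega$ to agree on $\phi(\Sph^n)$ with its Riemannian volume form (as in Proposition~\ref{positive-measure}), with $\phi$ the smooth horizontal embedding of Theorem~\ref{T-BE2}, already gives $\Phi_\omega(\phi)=\cH^n(\phi(\Sph^n))>0$, recovering $\pi_n^\lip(\Heis^n)\ne 0$.

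To get uncountably many classes I would, for each $S\subseteq\N$, build a Lipschitz map $f_S:(\Sph^n,s_0)\to(\Heis^n,o)$ by inserting rescaled, translated copies of $\phi$ into disjoint shrinking balls. Normalise $\phi$ so $\phi(s_0)=o$, set $D=\diam_{cc}\phi(\Sph^n)$, fix pairwise disjoint closed balls $\bar B_k\subset\Sph^n\setminus\{s_0\}$ accumulating at a point $s_*\ne s_0$, with slightly larger pairwise disjoint collar balls $\bar B_k'\supset\bar B_k$, fix distinct $q_k\in\Heis^n$ with $d_{cc}(q_k,o)=4^{-k}$, and choose scales $\eps_k>0$ so small that the balls $B_{cc}(q_k,2D\eps_k)$ are pairwise disjoint, disjoint from $o$, and small relative to $\dist(\bar B_k',s_*)$. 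For $k\in S$ put $f_S=L_{q_k}\circ\delta_{\eps_k}\circ\phi\circ c_k$ on $\bar B_k$, where $L_{q_k}$ is left translation by $q_k$ and $c_k:\bar B_k\to\Sph^n$ is an orientation-preserving degree-one collapse sending $\partial B_k$ to $s_0$ (Lipschitz with constant $\asymp(\diam B_k)^{-1}$); on the collar $\bar B_k'\setminus B_k$ interpolate from the boundary value $q_k$ to $o$ by a map depending only on its radial coordinate; and set $f_S\equiv o$ elsewhere. The size conditions make $f_S$ Lipschitz with constant independent of $S$. Let $\omega_k$ be a smooth $n$-form on $\R^{2n+1}$ supported in $B_{cc}(q_k,2D\eps_k)$ and restricting on $L_{q_k}\delta_{\eps_k}\phi(\Sph^n)$ to its volume form. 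Then $\Phi_{\omega_k}(f_S)=\cH^n(L_{q_k}\delta_{\eps_k}\phi(\Sph^n))>0$ when $k\in S$ (since $c_k$ has degree one), while $\Phi_{\omega_k}(f_S)=0$ when $k\notin S$, because the only parts of $f_S$ that could meet $\operatorname{supp}\omega_k$ are the other pieces (excluded by disjointness), the collars (contributing $0$: the collar map factors through a $1$-manifold when $n\ge 2$, and the two collar arcs cancel by orientation when $n=1$), and the constant part. By homotopy invariance of each $\Phi_{\omega_k}$, $[f_S]=[f_{S'}]$ forces $S=S'$, so $S\mapsto[f_S]$ embeds $2^\N$ into $\pi_n^\lip(\Heis^n)$; a group of cardinality at least $2^{\aleph_0}$ is not countably generated.

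The main obstacle is the homotopy invariance of $\Phi_\omega$ --- the fact that an \emph{arbitrary} Lipschitz homotopy $H:\Sph^n\times[0,1]\to\Heis^n$ has a.e.\ degenerate differential. This is exactly where pure $(n+1)$-unrectifiability of $\Heis^n$ enters (through the area formula), and it is the one non-formal ingredient, the same mechanism that underlies Proposition~\ref{BaloghGeneralization}. The remaining points are routine but need care: arranging the scales and collar widths so that $f_S$ is genuinely Lipschitz across the accumulation point $s_*$ and across the collars; verifying that the collar contributions to $\Phi_{\omega_k}$ vanish (the rank drop for $n\ge 2$, the orientation cancellation for $n=1$); and the basepoint bookkeeping in part (1), which uses only the geodesic connectedness of $\Heis^n$.
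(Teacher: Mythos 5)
Your part (1) is the paper's argument: the Wenger--Young extension theorem (Theorem \ref{WY}) applied to $\Sph^k\subset\B^{k+1}$ with $k+1\le n$, plus the (easy, omitted in the paper) basepoint bookkeeping; note you can even get a based null-homotopy directly by sliding $\B^{k+1}$ linearly onto $s_0\in\Sph^k$ and composing with $\tilde g$, so no change-of-basepoint step is needed. Part (2), however, takes a genuinely different route. The paper proves nontriviality via non-extendability (Proposition \ref{BaloghGeneralization}) and then asserts uncountable generation by taking the dilated embeddings $\delta_r\circ\phi$ and claiming, with only a pointer to ``a modification of the proof of Proposition \ref{BaloghGeneralization}'', that they satisfy no nontrivial relations. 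You instead package the same non-formal mechanism (pure $(n+1)$-unrectifiability of Theorem \ref{AKM}, the area formula, and Lipschitz Stokes) into genuine invariants $\Phi_\omega([g])=\int_{\Sph^n}g^*\omega$ on $\pi_n^\lip(\Heis^n)$, and then separate a continuum of classes $\{[f_S]\}_{S\subseteq\N}$ built by inserting shrinking translated-dilated copies of $\phi$; uncountable generation then follows from cardinality alone. This buys a complete and checkable proof of the uncountability claim where the paper is sketchy (and your $\Phi_\omega(\phi)>0$ also recovers $\pi_n^\lip(\Heis^n)\ne0$ without passing through the extension formulation), at the cost of a more elaborate construction than the paper's one-line family $\delta_r\circ\phi$. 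Two points need (routine) attention: Lemma \ref{stokes} is stated for source $\B^{n+1}$, so you must rerun its mollification/Fubini proof on the cylinder $\Sph^n\times[0,1]$ (equivalently an annulus, extending $H$ radially as in the paper's own proof) rather than cite it verbatim; and in the construction of $f_S$ the choices must be made in a consistent order --- you need $\eps_k\lesssim\diam B_k$, collar width $\gtrsim d_{cc}(q_k,o)$, and $d_{cc}(q_k,o)+D\eps_k\lesssim\dist(\bar B_k',s_*)$, so $d_{cc}(q_k,o)$ should be chosen after, and comparable to, $\dist(\bar B_k',s_*)$ rather than fixed at $4^{-k}$ in advance. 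Your treatment of the collar contributions (rank-one factorization for $n\ge2$, orientation cancellation for $n=1$) is exactly the right care point and is correct.
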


\begin{proof}
First, we prove (1). Theorem~\ref{WY} immediately implies that every
Lipschitz map $f:\Sph^k\to\Heis^n$ admits a Lipschitz extension
$\tilde{f}:\B^{k+1}\to\Heis^n$ provided $1\leq k<n$. Hence
$\pi_k^\lip(\Heis^n)=0$.

There exists a bi-Lipschitz embedding $\phi:\Sph^n\to\Heis^n$ (see
Theorem~\ref{T-BE2}). Proposition~\ref{BaloghGeneralization}
guarantees that $\phi$ admits no Lipschitz extension
$\tilde{\phi}:\B^{n+1}\to\Heis^n$. Hence $\pi_n^\lip(\Heis^n)\neq 0$.
To see that $\pi_n^\lip(\Heis^n,x_0)$ has uncountably many generators
we need to show that there are uncountably many Lipschitz mappings
from $\Sph^n$ to $\Heis^n$ which map a fixed point on $\Sph^n$ to
$x_0$ and whose homotopy classes in $\pi_n^{\lip}(\Heis^n,x_0)$ have
no nontrivial relations. Applying a translation if necessary, we may
assume that
$$
\phi(-1,0,\ldots,0) = x_0 = (0,\ldots,0) \in \Heis^n.
$$
An uncountable family of maps as above is then obtained by considering
the maps $\phi_r:\Sph^n\to\Heis^n$ given by $\phi_r(x) =
\delta_r(\phi(x))$ for $r>0$, where $\delta_r:\Heis^n\to\Heis^n$ is
the dilation given in \eqref{delta}. The fact that there are no
relations between these mappings follows from the discussion in Remark
\ref{ExtraRemark}.
This completes the proof of (2).
\end{proof}

When $n>1$, we can also show that the higher homotopy groups of spheres arise
as subgroups of the horizontal homotopy groups of $\Heis^n$.

\begin{theorem}
\label{LHT5}
For $k\ge n\ge 2$, $\pi_k^H(\Heis^n)$ is nontrivial whenever $\pi_k(\Sph^n)$ is nontrivial.
\end{theorem}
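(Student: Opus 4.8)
The plan is to show that the smooth horizontal embedding $\phi\colon\Sph^n\hookrightarrow\Heis^n$ constructed in Theorem~\ref{T-BE2} induces an \emph{injective} homomorphism
$$
\phi_*\colon\pi_k(\Sph^n)=\pi_k^H(\Sph^n)\longrightarrow\pi_k^H(\Heis^n),\qquad \phi_*[g]=[\phi\circ g],
$$
where the identification $\pi_k^H(\Sph^n)=\pi_k(\Sph^n)$ is Theorem~\ref{LHT3}. This map is well defined: for smooth $g\colon\Sph^k\to\Sph^n$ the composition $\phi\circ g$ is smooth and horizontal, since $d(\phi\circ g)=d\phi\circ dg$ takes values in $d\phi(T\Sph^n)\subset H\Heis^n$, and a smooth homotopy of maps $\Sph^k\to\Sph^n$ composes with $\phi$ to a smooth horizontal homotopy. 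Injectivity of $\phi_*$ immediately gives the theorem, and realizes $\pi_k(\Sph^n)$ as a subgroup of $\pi_k^H(\Heis^n)$, as advertised above.

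To prove injectivity I would argue by contradiction. Suppose $g\colon\Sph^k\to\Sph^n$ is smooth with $[g]\neq 0$ in $\pi_k(\Sph^n)$, while $\phi\circ g$ bounds a smooth horizontal homotopy $F\colon\Sph^k\times[0,1]\to\Heis^n$ with $F(\cdot,0)=\phi\circ g$ and $F(\cdot,1)$ constant. The key observation is that horizontality of the \emph{whole} map $F$ (not merely of its time slices) is extremely restrictive. Indeed $F^*\alpha=0$ for the contact form $\alpha$ of \eqref{the-alpha-form}, so $F^*(d\alpha)=0$; since $d\alpha=4\omega$ is symplectic on the $2n$-dimensional space $H_{F(x)}\Heis^n$, the image of each differential $dF_x$ is isotropic and hence $\rank dF_x\le n$ for every $x$. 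Because $F$ is smooth, a standard fact about maps of bounded rank shows that the compact set $\Sigma:=F(\Sph^k\times[0,1])$ has Hausdorff dimension at most $n$. By contrast $N:=\phi(\Sph^n)$ is a smoothly embedded submanifold of dimension exactly $n$.

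Regard $\Heis^n$ as $\R^{2n+1}$ and compactify to $\Sph^{2n+1}$. Let $A_0$ be a small meridian sphere of $N$, that is, the boundary of a normal $(n+1)$-disk fiber of $N$ over some point of $N$; then $A_0$ is an \emph{unknotted} $n$-sphere in $\Sph^{2n+1}$, it is disjoint from $N$, and the class $[N]$ in $H_n(\Sph^{2n+1}\setminus A_0)$ is a generator. Since $A_0$ is a smooth $n$-manifold and $\dim_{\cH}(\Sigma\cup N)\le n$, the ``bad'' set of translation vectors
$$
(\Sigma\cup N)-A_0=\{\,s-a:\ s\in\Sigma\cup N,\ a\in A_0\,\}
$$
has Hausdorff dimension at most $2n<2n+1$, hence Lebesgue measure zero in $\R^{2n+1}$; translating $A_0$ by a small generic vector therefore yields an unknotted $n$-sphere $A\subset\Heis^n\setminus(\Sigma\cup N)$ which is still a meridian of $N$. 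Because $A$ is unknotted, $\Sph^{2n+1}\setminus A$ deformation retracts onto an $n$-sphere, so it has the homology of $\Sph^n$ and, since $n\ge 2$, is simply connected; the inclusion $j\colon N\hookrightarrow\Sph^{2n+1}\setminus A$ carries the fundamental class of $N$ to a generator of $H_n(\Sph^{2n+1}\setminus A)$, hence is a homology isomorphism between simply connected spaces, hence a homotopy equivalence by Whitehead's theorem. In particular $j_*\colon\pi_k(N)\to\pi_k(\Sph^{2n+1}\setminus A)$ is an isomorphism.

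Now observe that $\Sigma\subset\Heis^n\setminus A\subset\Sph^{2n+1}\setminus A$, so $F$ (with its codomain corestricted) is a null-homotopy of $j\circ\phi\circ g$ inside $\Sph^{2n+1}\setminus A$; thus $j_*\phi_*[g]=0$, and since $j_*$ is an isomorphism and $\phi\colon\Sph^n\to N$ is a diffeomorphism, $[g]=0$ in $\pi_k(\Sph^n)$, contradicting the choice of $g$. The main obstacle is the geometric step of the second paragraph together with the perturbation of the third: one must extract from the horizontality of the homotopy $F$ that its image is genuinely $n$-dimensional (Hausdorff dimension $\le n$), and then verify that an $n$-sphere linking $N$ can really be pushed off this $n$-dimensional image inside a $(2n+1)$-dimensional ambient space. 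Granting this, the rest --- unknottedness of a meridian, the linking computation on $H_n$, and Whitehead's theorem --- is routine.
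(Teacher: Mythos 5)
Your argument is correct, but it takes a genuinely different route from the paper's. Both proofs rest on the same key fact (Lemma \ref{LHT7}: a smooth horizontal map into $\Heis^n$ has rank at most $n$, since the image of its differential is isotropic for $d\alpha$), but they convert it into a contradiction differently. The paper exploits a special feature of the particular embedding \eqref{specific-phi}: the projection $\pi:\C^n\times\R\to\Real(\C^n)\times\R\cong\R^{n+1}$ is injective on $\phi(\Sph^n)$ with image (up to diffeomorphism) the round sphere, so one projects the horizontal extension $\tilde f:\B^{k+1}\to\Heis^n$ to $\R^{n+1}$, uses the rank bound and Sard to see that the projected image misses a point of the enclosed ball, and radially projects from that point to extend $f:\Sph^k\to\Sph^n$ to $\B^{k+1}$, contradicting $[f]\ne0$. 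You instead stay in $\R^{2n+1}$: the rank bound together with the Federer--Sard theorem for smooth bounded-rank maps (this is the ``standard fact'' you invoke; note it genuinely needs $C^\infty$, the $C^1$ version being false) gives $\dim_{\cH}\Sigma\le n$, and then a generic small translate $A$ of a meridian sphere of $N=\phi(\Sph^n)$, Alexander duality and linking numbers, and the Whitehead theorem show that a null-homotopy avoiding $A$ forces $[g]=0$. Your route buys generality: it applies verbatim to \emph{any} smooth horizontal embedding of $\Sph^n$ and in fact shows $\phi_*:\pi_k(\Sph^n)\to\pi_k^\hor(\Heis^n)$ is injective for every such $\phi$, whereas the paper's proof is tied to the specific complex-geometric embedding; the price is heavier input (Federer--Sard, Alexander duality, Whitehead). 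Two small tightenings: you do not actually need unknottedness of $A$ --- Alexander duality gives $H_*(\Sph^{2n+1}\setminus A)\cong H_*(\Sph^n)$ and simple connectivity of the complement follows by general position from the codimension $n+1\ge 3$ --- and the claim that the translate is ``still a meridian'' should be justified by choosing the translation vector of norm less than $\dist(A_0,N)$, so that the translation path stays off $N$ and the linking number $\pm1$ is preserved.
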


We use the following lemma.

\begin{lemma}\label{LHT7}
Let $f: \R^m \rightarrow \Heis^n$ be smooth and horizontal. Then $df$ has rank at most $n$.
\end{lemma}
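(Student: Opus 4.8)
The plan is to exploit the contact structure on $\Heis^n$ directly. Recall from Remark \ref{Legendrian-horizontal-remark} that, under the identification $\Heis^n = \R^{2n+1}$ with points $(x_1,y_1,\ldots,x_n,y_n,t)$, the horizontal subspace $H_p\Heis^n$ is precisely the kernel of the contact form $\alpha = dt + 2\sum_j (x_j\,dy_j - y_j\,dx_j)$, and that $d\alpha = 4\omega$ where $\omega = \sum_j dx_j\wedge dy_j$ is the standard symplectic form on $\R^{2n}$. A smooth map $f:\R^m\to\Heis^n$ is horizontal exactly when $f^*\alpha = 0$, i.e.\ when $f$ is Legendrian. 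The key algebraic fact I would use is that $\omega^n = n!\,dx_1\wedge dy_1\wedge\cdots\wedge dx_n\wedge dy_n$ is a volume form on $\R^{2n}$, so $\omega$ is a nondegenerate $2$-form; consequently any subspace of $\R^{2n}$ on which $\omega$ vanishes identically (an isotropic subspace) has dimension at most $n$.

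First I would observe that, writing $f = (\bar f, \tau)$ with $\bar f:\R^m\to\R^{2n}$ the projection to the symplectic factor, the condition $f^*\alpha = 0$ gives $d\tau + 2\,\bar f^{\,*}\beta = 0$ (this is precisely \eqref{exactness}), and applying $d$ yields $\bar f^{\,*}\omega = \tfrac14\bar f^{\,*}(d\beta) = -\tfrac14 d^2\tau = 0$; so $\bar f$ is Lagrangian, exactly as noted in the paragraph preceding Lemma \ref{lifting-lemma}. Next, at an arbitrary point $u\in\R^m$, consider the image $V := df_u(T_u\R^m) \subset H_{f(u)}\Heis^n$. Since $H_{f(u)}\Heis^n = \ker\alpha_{f(u)}$ and $d\alpha = 4\omega$ restricts to (a copy of) the standard symplectic form on this $2n$-dimensional contact hyperplane, the relation $f^*(d\alpha) = 0$ forces $d\alpha$ to vanish on $V\times V$; that is, $V$ is an isotropic subspace of the symplectic vector space $(H_{f(u)}\Heis^n, d\alpha|_{H})$. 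Equivalently, $d\bar f_u(T_u\R^m)$ is isotropic for $\omega$ on $\R^{2n}$.

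Then the rank bound follows immediately: an isotropic subspace of a $2n$-dimensional symplectic vector space has dimension at most $n$, so $\rank df_u = \dim V \le n$. The only point requiring a small argument is the identification of $d\alpha$ restricted to the contact hyperplane with a genuine symplectic form — but this is standard for contact forms (the restriction of $d\alpha$ to $\ker\alpha$ is nondegenerate precisely because $\alpha\wedge(d\alpha)^n\ne 0$, which here is the computation $\alpha\wedge(4\omega)^n = 4^n n!\,dt\wedge dx_1\wedge dy_1\wedge\cdots\wedge dx_n\wedge dy_n \ne 0$), and one can also see it concretely from the explicit frame $X_j, Y_j$ of \eqref{XY} together with $d\alpha(X_j,Y_j)$ being the only nonvanishing pairings. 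I do not anticipate a serious obstacle here; the one thing to be careful about is to phrase the isotropy at the level of $df_u$ rather than only $\bar f$, which is why I pass through the contact hyperplane rather than arguing solely with $\omega$ on $\R^{2n}$ — though both routes work since the projection $\Heis^n\to\R^{2n}$ restricts to an isomorphism on each horizontal subspace.
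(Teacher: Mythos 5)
Your proof is correct. It rests on the same essential fact as the paper's argument --- that the image of $df_x$ is an isotropic subspace of the $2n$-dimensional symplectic horizontal space $H_{f(x)}\Heis^n$, hence has dimension at most $n$ --- but you establish isotropy by a different mechanism. The paper extends a basis realizing the rank to constant (hence commuting) vector fields on $\R^m$, pushes them forward, and argues that the vanishing brackets force the span of $df_x(v_1),\ldots,df_x(v_k)$ to be isotropic (``Lagrangian'' in the paper's wording) for the bracket pairing on the horizontal distribution; you instead use the naturality of the exterior derivative, $f^*(d\alpha)=d(f^*\alpha)=0$, to conclude directly that $d\alpha$ vanishes on $df_u(T_u\R^m)\times df_u(T_u\R^m)$ inside the contact hyperplane $\ker\alpha$. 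These are dual formulations of the same geometric obstruction (related by the identity $d\alpha(X,Y)=X\alpha(Y)-Y\alpha(X)-\alpha([X,Y])$), but your route is arguably cleaner: the paper's step ``$[df(\xi_i),df(\xi_j)]=0$'' needs a word of interpretation, since pushforwards of vector fields under a general smooth map are only defined via $f$-relatedness, whereas the pullback computation $d(f^*\alpha)=0$ is immediate and pointwise. Your verification that $d\alpha$ restricts nondegenerately to $\ker\alpha$ (via $\alpha\wedge(d\alpha)^n\ne 0$, or the explicit frame $X_j,Y_j$) is exactly the standard fact needed. One trivial slip: from $\omega=\tfrac12 d\beta$ the first equality in your chain should read $\bar f^{\,*}\omega=\tfrac12\bar f^{\,*}(d\beta)$ rather than $\tfrac14$, but since the expression is a multiple of $d^2\tau=0$ this has no effect; moreover that preliminary observation (that $\bar f$ is Lagrangian) is not even needed for your final argument through the contact hyperplane.
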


\begin{proof}
Without loss of generality assume $f$ is not constant. Suppose
that $\rank df_{x_0} = k$ is maximized for some $x_0 \in \R^m$. Since
$x\mapsto\rank df_x$ is lower semicontinuous, $\rank df_x = k$ for all
$x$ in some neighborhood $U$ of $x_0$.  We may choose a smooth
$k$-dimensional submanifold $N\subset U$ so that $df|_N$ is injective
and hence $f|_N:N\to\Heis^n$ is a smooth immersion. Restricting to a
submanifold $N'\subset N$ if necessary, we obtain a smooth
embedding $f|_{N'}:N'\to\Heis^n$. Then $f(N')$ is a smooth embedded
$k$-dimensional horizontal submanifold in $\Heis^n$, and so $k\le n$.
\end{proof}

\begin{proof}[Proof of Theorem~\ref{LHT5}]
In the following argument we use the specific map $\phi$ constructed
in Subsection~\ref{subsec:ComplexEmbedding}; see \eqref{specific-phi}
for a precise formula.

It is clear that the standard projection
\begin{equation}\label{standard-projection}
\Heis^n \cong \C^n \times \R \rightarrow \Re(\C^n)\times \R \cong
\R^{n+1}
\end{equation}
is injective on the image of $\phi$. Furthermore, up to a
diffeomorphism of $\R^{n+1}$ we may assume that $\pi\circ\phi(\Sph^n)$
is the unit sphere $\Sph^n \subset \R^{n+1}$.

Suppose that $f: \Sph^k \rightarrow \Sph^n$ is homotopically
nontrivial but that $\phi \circ f$ can be extended to a smooth
horizontal map $\tilde f: \B^{k+1} \rightarrow \Heis^n$. Projecting to
$\R^{n+1}$ as in \eqref{standard-projection}, write $\tilde F = \pi
\circ \tilde f$.

By Sard's Theorem, $\tilde F$ has full rank at the preimage of a.e.\
point. Lemma~\ref{LHT7} implies that the image of $\tilde F$ has
measure zero, so there exists a point in $\B^{n+1}$ which is not in
the image of $\tilde F$. Projecting radially from this point to
$\Sph^n = \partial \B^{n+1}$, we get (up to the diffeomorphism $\pi
\circ \phi$) an extension of $f$ to $B^{k+1}$. This contradicts the
nontriviality of $f$.

The preceding argument is summarized in the following diagram:
\begin{diagram}
\Sph^k  &\rTo^{f} & \Sph^n & \rInto^\phi & \Heis^n & \cong & \C^n
\times \R \\
\dInto  &         &        & \ruTo(4,2)^{\tilde f} & \dTo^{\pi} & & \\
B^{k+1} & &\rTo(0,4)^{\tilde F} & &\R^{n+1} &\cong
& \,\,\,\,\Real(\C^n)\times \R \\
&&&&\dTo\\
&&&&\Sph^n
\end{diagram}
We conclude that $\phi \circ f$ represents a nontrivial element in
$\pi_k^\hor(\Heis^n)$ whenever $f$ represents a nontrivial element in
$\pi_k(\Sph^n)$.
\end{proof}

\begin{remark}
Every dilation of $\phi\circ f$ also represents a nontrivial element
in $\pi_k^\hor(\Heis^n)$, and all such elements are pairwise not
Lipschitz homotopic. It follows that the horizontal homotopy groups of
$\Heis^n$ are also uncountably infinitely generated for $k \ge n$ so
that $\pi_k(\Sph^n)$ is nontrivial.
\end{remark}

Allcock \cite{allcock} proved a quadratic isoperimetric inequality in
the symplectic space $\R^{2n}$, $n\ge 2$. His result implies, in
particular, that every closed loop enclosing zero symplectic area can
be filled by an isotropic spanning disc. Lifting this result into the
Heisenberg group $\Heis^n$, $n\ge 2$, and translating it into the
language of Lipschitz homotopy groups leads to the conclusion
$$
\pi_1^\hor(\Heis^n)=0 \qquad \mbox{for $n\ge 2$.}
$$
Compare Theorem \ref{LHT4} (1). The following question remains open.

\begin{question}
Is $\pi_k^\hor(\Heis^n)=0$ for $1 \leq k<n$ and $n\ge 3$?
\end{question}

\begin{question}
Is $\pi_k^\hor(\Heis^1)=0$ for $k>1$?
\end{question}

\begin{question}\label{QQQ}
Is the homomorphism $\pi_k(\Sph^n) \rightarrow \pi_k^\lip(\Heis^n)$
induced by a bi-Lipschitz embedding $\phi:\Sph^n\to\Heis^n$ injective?
\end{question}

\begin{remark}\label{hst-remark}
Following preparation of this paper, Wenger and Young
\cite[Theorem 1]{wenger-young} surprisingly answered Question
\ref{QQQ} in the negative for certain $k$ and $n$. More
precisely, for $n+2\le k<2n-1$, they proved that every composition of
Lipschitz maps $\Sph^k\to\Sph^n\to\Heis^n$ extends to a Lipschitz map
$\B^{k+1}\to\Heis^n$. Applying this result to any homotopically
nontrivial map $\Sph^k\to\Sph^n$ composed with one of the bi-Lipschitz
embeddings $\Sph^n\to\Heis^n$ discussed above yields the indicated
counterexample to the result proposed in Question \ref{QQQ}. Wenger
and Young also proved, see \cite[Corollary 4]{wenger-young}, that
$\pi_k^\lip(\Heis^1)=0$ for $k\ge 2$.

For $n=2d$, the preceding result of Wenger--Young is valid in the
range $2d+2\le k\le 4d-2$. In \cite{hajlasz-schikorra-tyson}, two of
the authors (joint with Schikorra) employ a new version of the Hopf
invariant for low rank Lipschitz maps to obtain nontriviality of the
Lipschitz homotopy group $\pi_{4d-1}^\lip(\Heis^{2d})$, with
corresponding consequences for the Lipschitz nondensity in suitable
Sobolev spaces. The paper \cite{hajlasz-schikorra-tyson} also contains
an alternate proof of the nontriviality of $\pi_n^\lip(\Heis^n)$.
\end{remark}

\section{Sobolev mappings into metric spaces}\label{SM}
Every separable metric space $(X,d)$ admits an isometric embedding into a
Banach space. For example, given a dense subset $\{ x_i\}_{i=1}^\infty\subset
X$ and $x_0\in X$, the map
\begin{equation}\label{kuratowski}
\kappa:X\to\ell^\infty,
\quad
\kappa(x)=\left( d(x,x_i)-d(x_0,x_i)\right)_{i=1}^\infty
\end{equation}
is an example of such an isometric embedding. It is called the
{\em Kuratowski embedding}. An important property of $\ell^\infty$ is
that it is dual to a separable Banach space: $\ell^\infty
=(\ell^1)^*$. Thus every separable metric space can be isometrically
embedded into a Banach space which is dual to a separable Banach space.

Given an isometric embedding of $X$ into a Banach space $V$ (not necessarily
dual to a separable Banach space), and an open set $\Omega\subset\R^n$, we
define the Sobolev space of mappings $W^{1,p}(\Omega,X)$ as follows:
\begin{equation}\label{w1px}
W^{1,p}(\Omega,X)=\{ f\in W^{1,p}(\Omega,V):\, f(x)\in X\ \mbox{a.e.}\}.
\end{equation}
The vector valued Sobolev space $W^{1,p}(\Omega,V)$ is a well known object
and can be defined using the notion of weak derivatives. This requires the
notion of the Bochner integral.

If $V$ is any Banach space
and $A\subset\R^n$ is (Lebesgue) measurable, we say that $f\in L^p(A,V)$ if
\begin{enumerate}
\item[(1)] $f$ is {\em essentially separably valued}: $f(A\setminus
Z)$ is a separable subset of $V$ for some set $Z$ of Lebesgue measure zero,
\item[(2)] $f$ is {\em weakly measurable}: for every $v^*\in V^*$ with
  $\Vert v^*\Vert\leq 1$, $\langle v^*,f\rangle$ is measurable, and
\item[(3)] $\Vert f\Vert\in L^p(A)$.
\end{enumerate}
If $f\in L^1(A,V)$ we define the integral
$$
\int_A f(x)\, dx
$$
as an element of $V$ in the Bochner sense, see \cite[Chapter~5,
Sections~4-5]{yosida}, \cite{diestelu}. The Bochner integral has two important
properties:
$$
\left \langle v^*, \int_A f(x) \, dx \right \rangle = \int_A \langle
v^*, f(x) \rangle \, dx
$$
for every $v\in V^*$, and
$$
\left\Vert \int_A f(x) \, dx \right\Vert \le \int_A \Vert f(x)\Vert \, dx.
$$
We can now define the vector valued Sobolev space as follows. Let
$\Omega\subset\R^n$ be an open set and $V$ any Banach space (not necessarily
dual to a separable space). The Sobolev space $W^{1,p}(\Omega,V)$,
$1\leq p<\infty$, is defined as the class of all functions  $f\in
L^p(\Omega,V)$ such that for $i=1,2,\ldots,n$ there is $f_i\in
L^p(\Omega,V)$ such that
$$
\int_\Omega\frac{\partial\vi}{\partial x_i}\, f = - \int_\Omega \vi f_i
$$
for every $\vi\in C_0^\infty(\Omega)$, where the integrals are taken in the
sense of Bochner. Note that the integrands are supported on compact subsets of
$\Omega$. We denote $f_i=\partial f/\partial x_i$ and call these functions
{\em weak partial derivatives} of $f$. We also write $\nabla f =(\partial
f/\partial x_1,\ldots, \partial f/\partial x_n)$ and
\begin{equation}\label{gradient}
|\nabla f|=
\left( \sum_{i=1}^n \left\Vert\frac{\partial f}{\partial
      x_i}\right\Vert^2\right)^{1/2}\, .
\end{equation}
Sometimes we will write $|\nabla f|_V$ to emphasize the Banach space
with respect to which we compute the length of the gradient.
The space $W^{1,p}(\Omega,V)$ is equipped with the norm
$$
\Vert f\Vert_{1,p} = \left(\int_\Omega \Vert f\Vert^p\right)^{1/p} +
\left(\int_\Omega |\nabla f|^p\right)^{1/p}\, .
$$
It is easy to prove that $W^{1,p}(\Omega,V)$ is a Banach space. Using local
coordinate systems we can also easily define the Sobolev space $W^{1,p}(M,V)$
and hence $W^{1,p}(M,X)$ for any compact Riemannian manifold $M$.

Let $\kappa:(\Heis^n,d_{cc})\to\ell^\infty$ be the Kuratowski embedding used to
define the space $W^{1,p}(M,\Heis^n)$. Then by definition
$f\in W^{1,p}(M,\Heis^n)$ if and only if
$\bar{f}=\kappa\circ f\in W^{1,p}(M,\ell^\infty)$ and
$$
\Vert f\Vert_{1,p} =
\left( \int_M \Vert \bar{f}\Vert_\infty^p\right)^{1/p} +
\left( \int_M |\nabla\bar{f}|_{\ell^\infty}^p\right)^{1/p}\, .
$$
If $M=\Omega\subset\R^m$ is a bounded domain and
$f\in W^{1,p}(\Omega,\Heis^n)$, then
$$
\Vert f\Vert_{1,p} =
\left( \int_\Omega \Vert \bar{f}\Vert_\infty^p\right)^{1/p} +
\left( \int_\Omega |\nabla f|^p_\Heis\right)^{1/p}\, ,
$$
see Proposition~\ref{estimate}.

While the Sobolev space $W^{1,p}(M,V)$ can be defined for any Banach space
$V$, it has particularly nice properties if $V$ is dual to a separable Banach
space. Namely one can prove the following result. See e.g. \cite{hajlaszt}.

\begin{proposition}\label{SM-T1}
Let $\Omega\subset\R^n$ be an open set and let $V$ be dual to a separable
Banach space $Y$. Then $f\in W^{1,p}(\Omega,V)$ if and only if $f\in
L^p(\Omega,V)$ and the following two conditions hold:
\begin{enumerate}
\item[(i)] for every $v^*\in V^*$, we have $\langle v^*,f\rangle\in
  W^{1,p}(\Omega)$, and
\item[(ii)] there is a nonnegative function $g\in L^p(\Omega)$ such that
\begin{equation}
\label{majorant}
|\nabla \langle v^*,f\rangle|\leq g
\quad
\mbox{a.e.}
\end{equation}
for every $v^*\in V^*$ with $\Vert v^*\Vert\leq 1$.
\end{enumerate}
Moreover,
$$
\Vert f\Vert_p + \inf \Vert g\Vert_p\leq
\Vert f\Vert_{1,p}\leq   \Vert f\Vert_p + \sqrt{n}\inf \Vert g\Vert_p,
$$
where the infimum is over the class of all ${g}$ that satisfy \eqref{majorant}.
\end{proposition}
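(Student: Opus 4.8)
The plan is to prove both implications, and the norm estimate, by reducing to scalar functions via the duality pairing with the separable predual $Y$. Fix a countable $\mathbb{Q}$-linear dense subset $D\subset Y$ and enumerate it as $\{y_j\}_{j=1}^\infty$; the $y_j$ with $\|y_j\|_Y\le 1$ are dense in the unit ball $B_Y$, so $\|v\|_V=\sup\{\langle y_j,v\rangle:\|y_j\|_Y\le1\}$ for every $v\in V=Y^*$, and in particular $\{y_j\}$ separates points of $V$. For the implication $f\in W^{1,p}(\Omega,V)\Rightarrow$(i),(ii): for $v^*\in V^*$ the scalar functions $\langle v^*,f\rangle$ and $\langle v^*,\partial_i f\rangle$ lie in $L^p(\Omega)$, being dominated by $\|v^*\|\,\|f\|$ and $\|v^*\|\,\|\partial_i f\|$ respectively, and since the Bochner integral commutes with $v^*$, for every $\varphi\in C_0^\infty(\Omega)$
$$
\int_\Omega (\partial_i\varphi)\,\langle v^*,f\rangle
=\Big\langle v^*,\int_\Omega (\partial_i\varphi)\,f\Big\rangle
=-\Big\langle v^*,\int_\Omega \varphi\,\partial_i f\Big\rangle
=-\int_\Omega \varphi\,\langle v^*,\partial_i f\rangle .
$$
Hence $\langle v^*,f\rangle\in W^{1,p}(\Omega)$ with $\partial_i\langle v^*,f\rangle=\langle v^*,\partial_i f\rangle$, and Cauchy--Schwarz gives $|\nabla\langle v^*,f\rangle|\le\|v^*\|\,|\nabla f|$; so (i) and (ii) hold with $g=|\nabla f|$. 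Since this $g$ satisfies \eqref{majorant} and $\|g\|_p=\||\nabla f|\|_p=\|f\|_{1,p}-\|f\|_p$, the left-hand inequality follows.

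For the converse, assume $f\in L^p(\Omega,V)$ satisfies (i) and (ii) with majorant $g$, and put $u_j=\langle y_j,f\rangle\in W^{1,p}(\Omega)$; by (ii), $|\partial_i u_j|\le|\nabla u_j|\le\|y_j\|_Y\,g$ a.e. Discarding a single null set---using the $\mathbb{Q}$-linearity of $D$, linearity of $v\mapsto\langle v,f(\cdot)\rangle$, and uniqueness of weak derivatives---we may assume that for every $x$ the map $y_j\mapsto\partial_i u_j(x)$ is $\mathbb{Q}$-linear and $g(x)$-Lipschitz in $\|\cdot\|_Y$ on $D$, hence extends uniquely to an element $f_i(x)\in Y^*=V$ with $\|f_i(x)\|_V\le g(x)$. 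Granting that $f_i$ is Bochner measurable (discussed below), it belongs to $L^p(\Omega,V)$ and is the $i$th weak derivative of $f$: for $\varphi\in C_0^\infty(\Omega)$ the elements $\int_\Omega(\partial_i\varphi)\,f$ and $-\int_\Omega\varphi\,f_i$ of $V$ pair with each $y_j$ to give the common value $\int_\Omega(\partial_i\varphi)\,u_j=-\int_\Omega\varphi\,\partial_i u_j$, hence agree because $\{y_j\}$ separates $V$. Therefore $f\in W^{1,p}(\Omega,V)$; and since $\|f_i(x)\|_V\le g(x)$ for each $i$, $|\nabla f|\le\sqrt{n}\,g$ a.e., so $\|f\|_{1,p}\le\|f\|_p+\sqrt{n}\,\|g\|_p$, and taking the infimum over admissible $g$ gives the right-hand inequality.

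The step demanding the most care---the main obstacle---is the Bochner measurability of the pointwise-defined $f_i$, which is what is needed to place $f_i$ in $L^p(\Omega,V)$. Weak-$*$ measurability is automatic, since each $x\mapsto\langle y_j,f_i(x)\rangle=\partial_i u_j(x)$ is measurable and $\{y_j\}$ is norming, so by the Pettis measurability theorem it suffices to show $f_i$ is essentially separably valued. I would obtain this by mollification: $f^\eps:=\rho_\eps*f\in C^\infty(\Omega_\eps,V)$ takes values in the separable subspace $V_0:=\overline{\spa}(\text{essential range of }f)$, its classical derivatives satisfy $\langle y_j,\partial_i f^\eps\rangle=(\partial_i u_j)*\rho_\eps\to\partial_i u_j=\langle y_j,f_i\rangle$ in $L^p_{\mathrm{loc}}$ with $\|\partial_i f^\eps\|_V\le g*\rho_\eps$, and consequently $\partial_i f^\eps(x)\to f_i(x)$ in the weak-$*$ sense for a.e.\ $x$, exhibiting $f_i$ a.e.\ as a limit of the $V_0$-valued maps $\partial_i f^\eps$. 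Upgrading this to the conclusion that $f_i$ is (essentially) $V_0$-valued---equivalently, controlling the weak-$*$ limit in norm---is the delicate point, and it is precisely here that the separability of $Y$ is used in an essential way, via a weak-compactness argument for $\{\partial_i f^\eps\}_{\eps>0}$ in $L^p_{\mathrm{loc}}(\Omega,V)$. (In the situation relevant to this paper, where $f=\kappa\circ u$ is the composition of a Sobolev map $u$ into $\Heis^n$ with the Kuratowski embedding $\kappa$, the essential separability of $\partial_i f$ can instead be checked directly from the explicit form of $\kappa$.) With this in hand, $f_i\in L^p(\Omega,V)$ and the proof is complete.
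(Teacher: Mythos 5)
The paper itself does not prove Proposition~\ref{SM-T1} (it defers to \cite{hajlaszt}), so your argument has to stand on its own. Your forward implication is correct, and your converse is, up to the last step, the standard construction: restrict to a countable $\mathbb{Q}$-linear dense set $D=\{y_j\}\subset Y$, discard a single null set so that $y_j\mapsto \partial_i\langle y_j,f\rangle(x)$ is $\mathbb{Q}$-linear and $g(x)$-Lipschitz, extend it to an element $f_i(x)\in Y^*=V$ with $\Vert f_i(x)\Vert\le g(x)$, and identify $\int_\Omega(\partial_i\varphi)f$ with $-\int_\Omega\varphi f_i$ by pairing with the separating family $\{y_j\}$; the norm estimates then follow as you say. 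The genuine gap is exactly the step you flag and postpone: to place $f_i$ in $L^p(\Omega,V)$ in the Bochner sense you must show it is essentially separably valued, and your proposed route (mollification plus ``a weak-compactness argument for $\{\partial_i f^\eps\}$ in $L^p_{\rm loc}(\Omega,V)$'') is only a gesture, not a proof.

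Moreover, that step cannot be completed, because the claim is false in general. Take $\Omega=(0,1)$, $V=\ell^\infty=(\ell^1)^*$, let $\{q_j\}$ enumerate the rationals in $(0,1)$ and let $f(t)=\bigl(|t-q_j|-q_j\bigr)_{j=1}^\infty$ be the Kuratowski embedding. Then $f$ is an isometric, hence Lipschitz, map, so (i) and (ii) hold with $g\equiv 1$; but pairing the integration-by-parts identity with the coordinate functionals $e_j\in\ell^1$ forces any candidate derivative to satisfy $\langle e_j,f_1(t)\rangle=\mathrm{sgn}(t-q_j)$ for a.e.\ $t$ and all $j$, so there is a full-measure set on which $\Vert f_1(s)-f_1(t)\Vert_\infty=2$ whenever $s\ne t$: the essential range is uncountable and $2$-separated, hence nonseparable, and $f_1$ is not Bochner measurable. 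This also shows why your rescue attempt must fail: a weak$^*$ limit of maps with values in the separable subspace $V_0$ need not lie in $V_0$ (norm-closed subspaces of $\ell^\infty$ are not weak$^*$-closed), and bounded sets in $L^p(\Omega,\ell^\infty)$ are not weakly compact. The proposition is meant—and is proved in \cite{hajlaszt}, and used later in this paper (see the weak$^*$ derivative in Section~\ref{P45} and formula \eqref{fs})—with the partial derivatives $f_i$ understood in the weak$^*$ (Gelfand) sense: $f_i$ weak$^*$-measurable with $\Vert f_i\Vert\in L^p(\Omega)$, and the integration-by-parts identity required after pairing with elements of the predual $Y$. Under that reading your construction is already complete and needs no separability of the range: weak$^*$-measurability of $f_i$ follows from measurability of $\langle y_j,f_i\rangle=\partial_i\langle y_j,f\rangle$ on the dense set $D$ together with the bound $\Vert f_i\Vert\le g$. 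So the correct repair is to delete the mollification/compactness step rather than to try to strengthen it.
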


This result easily leads to the following characterization.
See \cite[Theorem~3.17]{HKST} and \cite{hajlaszt}.

\begin{proposition}\label{SM-T2}
Let $X$ be a separable metric space that is isometrically embedded into a
Banach space that is dual to a separable Banach space. Let $M$ be a compact
Riemannian manifold and $1\leq p<\infty$. Then $f$ is in $W^{1,p}(M,X)$ if and
only if $d(x_0,f)\in W^{1,p}(M)$ for every $x_0\in X$ and there is a
nonnegative function $g\in L^p(M)$ such that $|\nabla d(x_0,f)|\leq g$
a.e.\ for each~$x_0$.
\end{proposition}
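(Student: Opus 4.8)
The plan is to derive both implications from the characterization of the vector-valued Sobolev space in Proposition~\ref{SM-T1}, applied to the fixed isometric embedding $X\hookrightarrow V$, where $V$ is dual to a separable Banach space $Y$, together with one elementary stability fact: if $u_i\in W^{1,p}(\Omega)$ satisfy $|\nabla u_i|\le g$ a.e.\ for a common $g\in L^p(\Omega)$ and $u:=\sup_i u_i$ lies in $L^p(\Omega)$, then $u\in W^{1,p}(\Omega)$ with $|\nabla u|\le g$ a.e. This is standard: the finite maxima $w_N=\max(u_1,\dots,u_N)$ belong to $W^{1,p}(\Omega)$, satisfy $|\nabla w_N|\le g$, and increase to $u$, so $w_N\to u$ in $L^p(\Omega)$; since the gradients $\nabla w_N$ all lie in the set $\{h:|h|\le g\}$, which is weakly compact in $L^p$ for $1<p<\infty$ and uniformly integrable in $L^1$ for $p=1$, a subsequence of $\nabla w_N$ converges weakly, necessarily to $\nabla u$, and the limit still satisfies $|\nabla u|\le g$. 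By a routine localization using a finite atlas — and the comparability of the Riemannian and Euclidean gradients on each chart — it suffices to prove the equivalence with $M$ replaced by a bounded domain $\Omega\subset\R^m$.

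For the forward implication, suppose $f\in W^{1,p}(\Omega,X)\subset W^{1,p}(\Omega,V)$. Proposition~\ref{SM-T1} yields a nonnegative $g\in L^p(\Omega)$ with $\langle v^*,f\rangle\in W^{1,p}(\Omega)$ and $|\nabla\langle v^*,f\rangle|\le g$ a.e.\ for every $v^*\in V^*$, $\|v^*\|\le 1$. Choose a countable dense subset $\{y_i\}$ of the unit ball of the predual $Y$ and regard each $y_i$ as a norm-one element of $V^*=Y^{**}$; then $\|w\|_V=\sup_i\langle y_i,w\rangle$ for every $w\in V$. Since $X\hookrightarrow V$ is isometric, for any $x_0\in X$ and a.e.\ $z$,
$$
d(x_0,f(z))=\|x_0-f(z)\|_V=\sup_i\bigl(\langle y_i,x_0\rangle-\langle y_i,f(z)\rangle\bigr).
$$
Each $z\mapsto\langle y_i,x_0\rangle-\langle y_i,f(z)\rangle$ is a constant minus a $W^{1,p}$ function and has gradient bounded by $g$, while $d(x_0,f)\le\|x_0\|_V+\|f\|_V\in L^p(\Omega)$; the stability fact then gives $d(x_0,f)\in W^{1,p}(\Omega)$ with $|\nabla d(x_0,f)|\le g$ a.e. As $g$ is independent of $x_0$, the intrinsic conditions hold.

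For the converse, assume $d(x_0,f)\in W^{1,p}(\Omega)$ for every $x_0\in X$, with $|\nabla d(x_0,f)|\le g$ a.e.\ for a fixed $g\in L^p(\Omega)$. First, $f$ is strongly measurable into $V$: it is essentially separably valued because $X$ is separable, and measurability of $z\mapsto d(x_i,f(z))$ for a countable dense $\{x_i\}\subset X$ makes $f$ Borel into $X\subset V$, so Pettis's theorem applies; moreover $\|f(z)\|_V\le\|x_0\|_V+d(x_0,f(z))\in L^p(\Omega)$, so $f\in L^p(\Omega,V)$. It remains to verify conditions (i) and (ii) of Proposition~\ref{SM-T1}. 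Fix $v^*\in V^*$ and set $h=\langle v^*,f\rangle\in L^p(\Omega)$, $u_i=d(x_i,f)\in W^{1,p}(\Omega)$. By Fubini and the absolute continuity of each $u_i$ on lines, for a.e.\ line $\ell$ parallel to a coordinate axis every $u_i|_\ell$ is absolutely continuous with $|u_i'|\le g|_\ell$ a.e., and $g|_\ell$ is integrable on compact subintervals of $\ell$; hence, for a.e.\ pair $a,b$ on such a line (where also $f(a),f(b)\in X$),
$$
d(f(a),f(b))=\sup_i|u_i(a)-u_i(b)|\le\int_{[a,b]}g|_\ell,
$$
the first equality coming from the density of $\{x_i\}$ and the reverse triangle inequality. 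Consequently $|h(a)-h(b)|=|\langle v^*,f(a)-f(b)\rangle|\le\|v^*\|\,d(f(a),f(b))\le\|v^*\|\int_{[a,b]}g|_\ell$, so $h$ has a representative that is absolutely continuous on a.e.\ coordinate line with classical partial derivatives bounded in modulus by $\|v^*\|g$. By the ACL (Nikodym) characterization of Sobolev functions, together with $h\in L^p(\Omega)$, this forces $h\in W^{1,p}(\Omega)$ with $|\nabla h|\le\sqrt m\,\|v^*\|g$ a.e. Thus (i) holds for every $v^*\in V^*$ and (ii) holds with majorant $\sqrt m\,g\in L^p(\Omega)$; Proposition~\ref{SM-T1} then gives $f\in W^{1,p}(\Omega,V)$, i.e.\ $f\in W^{1,p}(\Omega,X)$.

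The crux is the converse: upgrading the purely metric data carried by the distance functions $d(x_i,f)$ to genuine weak differentiability of the scalar functions $\langle v^*,f\rangle$ for \emph{every} $v^*\in V^*$, not just for functionals coming from the separable predual $Y$. The mechanism is to reconstruct, line by line, the modulus of continuity of $f$ as a map into $(X,d)$ from the countable family $\{d(x_i,f)\}$, and then to push it through the $1$-Lipschitz functionals $v^*/\|v^*\|:V\to\R$; once this is in hand the ACL characterization of $W^{1,p}$ does the rest. The remaining ingredients — stability of $W^{1,p}$ under countable suprema, the duality formula $\|\cdot\|_{Y^*}=\sup_i\langle y_i,\cdot\rangle$, and Pettis measurability — are standard.
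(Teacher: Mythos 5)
Your proof is correct and follows the route the paper intends: the paper states this characterization as an easy consequence of Proposition~\ref{SM-T1}, deferring details to \cite{HKST} and \cite{hajlaszt}, and your argument is precisely that derivation, recovering the norm of $V$ from a countable dense subset of the unit ball of the separable predual, using the countable-supremum stability of $W^{1,p}$ under a common gradient majorant for the forward direction, and upgrading the metric modulus-of-continuity estimate $d(f(a),f(b))\le\int g$ along almost every coordinate line to the ACL property of $\langle v^*,f\rangle$ for every $v^*\in V^*$ in the converse. I see no gap of substance; the technical points (measurability via Pettis, choice of representatives on almost every line, and localization to charts) are handled appropriately.
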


The Sobolev spaces $W^{1,p}(M,X)$ behave well under Lipschitz
postcomposition. See Reshetnyak \cite[Corollary 1, p.\
580]{reshetnyak} for the following result in the case when $M=\Omega$
is a Euclidean domain. The general statement given here is similar.

\begin{proposition}\label{SM-T2.5}
Let $X$ and $Y$ be separable metric spaces that are isometrically
embedded into Banach spaces that are dual to separable Banach spaces.
Let $M$ be a compact Riemannian manifold and $1\leq p<\infty$. If
$F:X\to Y$ is Lipschitz, then the operation $f \mapsto F\circ f$ sends
$W^{1,p}(M,X)$ into $W^{1,p}(M,Y)$. Furthermore, if $F$ is
$L$-Lipschitz, then $|\nabla(F\circ f)| \le L |\nabla f|$ a.e.\ in
$\Omega$.
\end{proposition}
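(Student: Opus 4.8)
The plan is to reduce everything to the scalar characterization of Sobolev mappings into metric spaces given in Proposition \ref{SM-T2}. So the first step is to assume $f \in W^{1,p}(M,X)$ and show that $F\circ f \in W^{1,p}(M,Y)$ by verifying the two conditions of Proposition \ref{SM-T2} for $F\circ f$: namely, that $d_Y(y_0, F\circ f) \in W^{1,p}(M)$ for every $y_0 \in Y$, and that there is a single nonnegative $g \in L^p(M)$ controlling all the gradients $|\nabla d_Y(y_0, F\circ f)|$ simultaneously.

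To verify these, fix $y_0 \in Y$ and observe that, since $F$ is $L$-Lipschitz, the function $x \mapsto d_Y(y_0, F(x))$ is $L$-Lipschitz on $X$. The key elementary fact I would use is that an $L$-Lipschitz real-valued function $h$ on a metric space $X$ is a difference of two $1$-Lipschitz functions after rescaling: more directly, $\tfrac1L h$ is $1$-Lipschitz, so by the very definition of the metric (or by McShane extension) $\tfrac1L d_Y(y_0, F(\cdot))$ can be written as $\inf_{x'\in X}\big(\tfrac1L d_Y(y_0,F(x')) + d_X(\cdot,x')\big)$, an infimum of functions of the form $x \mapsto c + d_X(x, x')$. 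Then $d_Y(y_0, F\circ f) = L \cdot \big(\tfrac1L d_Y(y_0,F(\cdot))\big)\circ f$ is, pointwise a.e., an $L$-controlled expression in the functions $d_X(x', f)$, each of which lies in $W^{1,p}(M)$ with $|\nabla d_X(x',f)| \le g$ by hypothesis on $f$. An infimum of a (possibly uncountable) family of Sobolev functions each bounded in $L^p$ and each with gradient bounded a.e.\ by the same $g$ is again in $W^{1,p}$ with gradient bounded by $g$; this is the standard lattice argument. Hence $|\nabla d_Y(y_0, F\circ f)| \le L g$ a.e., with the bound $Lg \in L^p(M)$ independent of $y_0$, which gives both conditions and simultaneously establishes $|\nabla(F\circ f)| \le L|\nabla f|$ a.e.\ once one identifies $|\nabla(F\circ f)|$ with (a version of) $\sup_{y_0} |\nabla d_Y(y_0,F\circ f)|$ via Proposition \ref{SM-T1} applied to the Banach space into which $Y$ is embedded.

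The main obstacle is the measure-theoretic bookkeeping behind the "infimum of uncountably many Sobolev functions" step: one must pass to a countable dense subset of $X$ (using separability of $X$, which is available, and the continuity of $x' \mapsto d_X(x',f(x))$) so that the infimum is realized over a countable family, to which the classical lattice property of $W^{1,p}$ applies cleanly; the uniform gradient bound by $g$ is what makes the resulting infimum lie in $W^{1,p}$ rather than merely being measurable. Once this reduction to a countable infimum is in place, the remaining verifications are routine, and the passage from $\Omega$ to a general compact Riemannian manifold $M$ is the usual localization via a finite atlas and a partition of unity, exactly as for the unweighted vector-valued Sobolev spaces.
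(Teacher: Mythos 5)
The paper offers no internal proof of this proposition---it simply cites Reshetnyak (Corollary 1, p.~580) for the case of a Euclidean domain and remarks that the manifold case is similar---so there is nothing to compare line by line. Your first half is correct and is essentially the Reshetnyak-style argument the citation points to: reduce to the distance-function characterization of Proposition \ref{SM-T2}, write the $L$-Lipschitz function $x\mapsto d_Y(y_0,F(x))$ as $L$ times an infimum of the $1$-Lipschitz functions $c_{x'}+d_X(\cdot,x')$ (McShane), pass to a countable dense set of $x'$ using separability and continuity, and use the lattice property of $W^{1,p}$ together with the uniform majorant $g$ to conclude $d_Y(y_0,F\circ f)\in W^{1,p}(M)$ with $|\nabla d_Y(y_0,F\circ f)|\le Lg$ a.e., hence $F\circ f\in W^{1,p}(M,Y)$. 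Modulo the routine limit argument for the countable infimum (which you flag), this part stands.

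The gap is in the final step, the pointwise inequality $|\nabla(F\circ f)|\le L|\nabla f|$ a.e., where $|\nabla\cdot|$ is the Banach-space gradient \eqref{gradient}. You propose to ``identify'' $|\nabla(F\circ f)|$ with $\sup_{y_0}|\nabla d_Y(y_0,F\circ f)|$ ``via Proposition \ref{SM-T1}'', but that proposition does not provide such an identification: it concerns the scalar functions $\langle v^*,h\rangle$ for \emph{linear} functionals $v^*$, not distance functions, and it relates $|\nabla h|$ to the infimal majorant only at the level of norms, with a dimensional factor $\sqrt{n}$; used as stated it would at best give $\Vert\,|\nabla(F\circ f)|\,\Vert_p\le \sqrt{n}\,L\,\Vert\,|\nabla f|\,\Vert_p$, not the asserted pointwise bound with constant $L$. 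In addition, your argument yields the bound $Lg$ for an arbitrary admissible majorant $g$ of $f$; to obtain $L|\nabla f|$ you must also observe---and you do not---that $|\nabla f|$ itself is admissible, i.e.\ $|\nabla d_X(x_0,f)|\le|\nabla f|$ a.e., which holds because $d_X(x_0,f(\cdot))=\Vert f(\cdot)-x_0\Vert_V$ for an isometric embedding. The clean way to finish, consistent with how the paper argues in Section \ref{P45} (cf.\ Lemma \ref{estimate}), is the line-wise (ACL) estimate: once membership is known, in a chart $f$ and $F\circ f$ are absolutely continuous along almost every coordinate line with strong derivatives equal to their weak derivatives, so for a.e.\ $x$ on such a line
$$
\Vert\partial_i(F\circ f)(x)\Vert_W=\lim_{h\to 0}\frac{d_Y\bigl(F(f(x+he_i)),F(f(x))\bigr)}{|h|}
\le L\lim_{h\to 0}\frac{d_X\bigl(f(x+he_i),f(x)\bigr)}{|h|}=L\,\Vert\partial_i f(x)\Vert_V ,
$$
and summing over $i$ gives $|\nabla(F\circ f)|\le L|\nabla f|$ a.e. Replacing the appeal to Proposition \ref{SM-T1} by this estimate closes the gap.
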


If $X$ is a separable metric space that is isometrically embedded
into a Banach space $V$ that is dual to a separable Banach space,
then $W^{1,p}(M,X)\subset W^{1,p}(M,V)$. It turns out that
$W^{1,p}(M,X)$ mappings can be approximated by $\lip(M,V)$ mappings,
but the real question is whether they can be approximated by
$\lip(M,X)$ mappings.

\begin{proposition}[\cite{hajlaszIsometric}]
\label{SM-T3}
If a Banach space $V$ is dual to a separable Banach space,
$M$ is a compact Riemannian manifold, possibly with boundary,
and $f\in W^{1,p}(M,V)$, $1\leq p<\infty$, then for every
$\eps>0$ there is $g\in \lip(M,V)$ such that
$|\{x:\, f(x)\neq g(x)\}|<\eps$ and $\Vert f-g\Vert_{1,p}<\eps$.
\end{proposition}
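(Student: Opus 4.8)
The plan is to follow the classical Federer--Fleming style deformation/truncation argument, but adapted to the vector-valued setting and exploiting that $V$ is a dual space. First I would reduce to the case $M = [0,1]^m$ (or a cube), since the statement is local: using a finite atlas and a partition of unity subordinate to it, an approximation on each chart that agrees with $f$ outside a small exceptional set can be glued, the gluing being harmless because the overlaps can be absorbed into the $\eps$-budget. So assume $f \in W^{1,p}(Q,V)$ with $Q$ a cube.

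The heart of the argument is a Lipschitz truncation. Partition $Q$ into dyadic subcubes of side $\delta$. On the "good" subcubes---those on which the mean oscillation of $f$ and the local $L^p$ norm of $\nabla f$ are controlled---one replaces $f$ by a Lipschitz function (e.g. a piecewise-affine interpolant built from averages of $f$ over the subcubes and their faces, or simply a mollification at scale comparable to $\delta$), exactly as in the scalar theory; all the estimates go through verbatim because the Bochner integral satisfies $\|\int f\| \le \int \|f\|$ and Jensen's inequality, and because Proposition \ref{SM-T1} lets us handle $\nabla f$ through the scalar functions $\langle v^*, f\rangle$ with a common majorant $g \in L^p$. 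On the "bad" subcubes---whose total measure is small by a Chebyshev/maximal-function argument applied to $g$ and to the oscillation---one leaves $f$ untouched; since $g \in L^p$, the Hardy--Littlewood maximal estimate gives that the bad set has measure $< \eps$ once $\delta$ is small, and by absolute continuity of the integral $\int_{\text{bad}} (\|f\|^p + g^p)$ is also $< \eps$. One then defines $g_\eps$ to equal the Lipschitz approximant on the good cubes and $f$ on the bad ones. The resulting map is Lipschitz (the pieces match on the common faces by construction), differs from $f$ only on the bad set, and $\|f - g_\eps\|_{1,p}$ is small: on the good set because mollification converges in $W^{1,p}$, on the bad set because both terms have small $L^p$ norm there. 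Note that $g_\eps$ maps into $V$ only---no constraint that it land in any subset---so there is no obstruction from the geometry of a target, which is exactly why this proposition (unlike the main results of the paper) is true without restriction.

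The main technical point to be careful about---the step I expect to require the most attention---is ensuring that the Lipschitz pieces on adjacent good cubes, and the interface between good and bad cubes, genuinely agree on shared faces, so that the glued map is globally Lipschitz rather than merely piecewise Lipschitz with jumps. The standard device is to do the truncation one dimension at a time (good/bad decided on faces of decreasing dimension, as in Federer--Fleming), or equivalently to build the approximant from a single mollification $f * \varphi_\delta$ and cut it off against $f$ using a Lipschitz partition of unity adapted to the bad set; the overlap regions then contribute a controlled amount to $|\nabla g_\eps|$ via the product rule, $|\nabla(\eta (f*\varphi_\delta) + (1-\eta) f)| \lesssim |\nabla f| + |\nabla(f*\varphi_\delta)| + \delta^{-1}\|f - f*\varphi_\delta\|$, and the last term is integrable-small by Poincaré on the overlap cubes. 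Everything else---measurability, the Bochner integral identities, the use of Proposition \ref{SM-T1} to reduce vector-valued estimates to scalar ones with a common majorant---is routine bookkeeping. I would cite \cite{hajlaszIsometric} for the full details, as the statement is taken from there.
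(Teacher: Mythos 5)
There is a genuine gap, and it lies in the overall architecture of your construction rather than in the gluing details you flag as delicate. You replace $f$ by a mollification (or piecewise-affine interpolant) on the \emph{good} cubes and keep $f$ itself on the \emph{bad} cubes. This fails both conclusions of Proposition \ref{SM-T3} simultaneously. First, a mollification of $f$ differs from $f$ almost everywhere on the good set, which occupies most of $M$; so your claim that the resulting map ``differs from $f$ only on the bad set'' is false, and the Lusin-type requirement $|\{x: f(x)\neq g(x)\}|<\eps$ is violated. Second, on the bad cubes your map coincides with $f$, which is merely a Sobolev map there (the bad cubes are precisely where $f$ is badly behaved), so the glued map is not Lipschitz no matter how carefully the interfaces are matched. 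In short, the mollification-based good/bad cube scheme is suited to the weaker statement that $\lip(M,V)$ is dense in $W^{1,p}(M,V)$ in norm, but it cannot produce an approximant that \emph{agrees} with $f$ off a set of measure $\eps$, which is the whole point of this proposition.

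The correct architecture is the opposite one, and it is what the paper does: keep $f$ unchanged on a large set where it is already Lipschitz, and modify it only on the small complement. Concretely, after first reducing to bounded $f$ by composing with the $2$-Lipschitz radial retraction $\pi_R$ onto a ball in $V$ (a step your proposal omits but which is needed for the $L^p$ part of the convergence), one uses the pointwise estimate of Lemma \ref{SM-T5},
$\Vert f(x)-f(y)\Vert\leq C\,d(x,y)\bigl(\cM|\nabla f|(x)+\cM|\nabla f|(y)\bigr)$ a.e., to see that $f$ restricted to $E_t=\{\cM|\nabla f|\le t\}$ is $2Ct$-Lipschitz; one then extends $f|_{E_t}$ to a $C't$-Lipschitz map $f_t:M\to V$ by a Whitney decomposition of $M\setminus E_t$ with ball averages of $f$ (meaningful for $V$-valued maps via the Bochner integral). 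Since $f_t=f$ on $E_t$, the exceptional set is contained in $M\setminus E_t$, whose measure tends to $0$ by the weak-type maximal estimate, and $t^p|M\setminus E_t|\to 0$ together with Lemma \ref{SM-T6} (equality of gradients on the coincidence set) gives $f_t\to f$ in $W^{1,p}(M,V)$. Your scalarization remarks via Proposition \ref{SM-T1} and a common majorant are in the right spirit, but they do not repair the main defect: no mollification-based replacement on the good set can meet the condition $|\{f\neq g\}|<\eps$.
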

The proof of this result is similar to the proof of
Lemma~13 in \cite{hajlaszMathAnn}, so it was stated in
\cite{hajlaszIsometric} without proof. On the other hand there are
some technical differences and since later on we will need
arguments used in the proof, we decided to provide details.

Denote the volume measure on $M$ by $\mu$. For $h \in L^1(\mu)$ we
define the {\em Hardy-Littlewood maximal operator} by
$$
\cM h(x) = \sup_{r>0}\frac{1}{\mu(\B(x,r))}\int_{\B(x,r)}|h|\, d\mu.
$$
The following result is well known, see \cite{acerbif}, \cite{hajlaszMetric},
\cite[Theorems~3.2, 3.3]{hajlaszk}.

\begin{lemma}\label{SM-T4}
Given a compact Riemannian manifold $M$, possibly with boundary, and $1\leq
p<\infty$, there is a constant $C=C(M,p)$ such that
\begin{equation}
\label{SMeq1}
|u(x)-u(y)|\leq Cd(x,y)\left(\cM|\nabla u|(x)+\cM|\nabla u|(y)\right)
\quad
\mbox{a.e.\ $x,y\in M$}
\end{equation}
for all $u\in W^{1,p}(M)$.
\end{lemma}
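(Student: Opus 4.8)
The plan is to obtain \eqref{SMeq1} from the $(1,1)$-Poincar\'e inequality on metric balls together with a telescoping estimate over dyadic balls, exactly as in the Euclidean model case treated in \cite{acerbif}, \cite{hajlaszMetric}, \cite{hajlaszk}. First I would record the two geometric facts that a compact Riemannian manifold $M$ (with or without boundary) supplies. Covering $M$ by finitely many coordinate charts in which the Riemannian metric is comparable with the Euclidean one and the volume measure $\mu$ is comparable with Lebesgue measure, one gets: (i) $\mu$ is doubling, $\mu(B(x,2r))\le C_d\,\mu(B(x,r))$ for all $x\in M$ and all $0<r\le\diam M$; and (ii) there are $R_0>0$ and $C_P\ge1$ such that
$$
\frac{1}{\mu(B(x,r))}\int_{B(x,r)}|u-u_{B(x,r)}|\,d\mu\;\le\; C_P\,r\,\frac{1}{\mu(B(x,r))}\int_{B(x,r)}|\nabla u|\,d\mu
$$
for every $u\in W^{1,p}(M)$ and every ball with $r\le R_0$, where $u_{B(x,r)}=\mu(B(x,r))^{-1}\int_{B(x,r)}u\,d\mu$. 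Inside a chart this is the classical Poincar\'e inequality on Euclidean balls; for balls meeting $\partial M$ it follows because such a ball corresponds to a half-ball, which is a John domain (alternatively one may invoke a bounded Sobolev extension operator for $M$). Finally, since balls of radius $\ge\diam M$ coincide with $M$, one has $\cM|\nabla u|(x)\ge \mu(M)^{-1}\int_M|\nabla u|\,d\mu$ for every $x\in M$.

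The core of the argument is the chaining estimate at Lebesgue points. Fix Lebesgue points $x,y$ of $u$ and set $r=d(x,y)$; assume first $r\le R_0/3$. With $B_k=B(x,2^{-k}r)$, doubling together with the Poincar\'e inequality gives
$$
|u_{B_{k+1}}-u_{B_k}|\;\le\;\frac{\mu(B_k)}{\mu(B_{k+1})}\cdot\frac{1}{\mu(B_k)}\int_{B_k}|u-u_{B_k}|\,d\mu\;\le\;C_dC_P\,2^{-k}r\,\cM|\nabla u|(x),
$$
so, since $u_{B_k}\to u(x)$ at the Lebesgue point $x$, summing the geometric series yields $|u(x)-u_{B(x,r)}|\le Cr\,\cM|\nabla u|(x)$, and symmetrically $|u(y)-u_{B(y,r)}|\le Cr\,\cM|\nabla u|(y)$. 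Since $B(x,r)\cup B(y,r)\subset B(y,2r)$ and $2r\le R_0$, comparing $u_{B(x,r)}$ and $u_{B(y,r)}$ to $u_{B(y,2r)}$ (again via doubling and the Poincar\'e inequality on $B(y,2r)$) gives $|u_{B(x,r)}-u_{B(y,2r)}|+|u_{B(y,r)}-u_{B(y,2r)}|\le Cr\,\cM|\nabla u|(y)$. Combining the four estimates through the triangle inequality establishes \eqref{SMeq1} when $d(x,y)\le R_0/3$.

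For $d(x,y)>R_0/3$ I would insert the global average $u_M=\mu(M)^{-1}\int_M u\,d\mu$ and write, with $\delta=R_0/3$, the bound $|u(x)-u(y)|\le|u(x)-u_{B(x,\delta)}|+|u_{B(x,\delta)}-u_M|+|u_M-u_{B(y,\delta)}|+|u_{B(y,\delta)}-u(y)|$. The first and last terms are $\le C\delta\,\cM|\nabla u|(x)$ and $\le C\delta\,\cM|\nabla u|(y)$ by the previous paragraph, while $|u_{B(x,\delta)}-u_M|\le \mu(M)\,\mu(B(x,\delta))^{-1}\,\mu(M)^{-1}\int_M|u-u_M|\,d\mu$, and the global Poincar\'e inequality on the compact connected manifold $M$ (applied componentwise if $M$ is disconnected) bounds $\mu(M)^{-1}\int_M|u-u_M|\,d\mu$ by $C_M\,\mu(M)^{-1}\int_M|\nabla u|\,d\mu\le C_M\,\cM|\nabla u|(x)$, and likewise for $y$. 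Hence $|u(x)-u(y)|\le C(\cM|\nabla u|(x)+\cM|\nabla u|(y))$, and since $\delta< d(x,y)\le\diam M$ this is at most $(C/\delta)\,d(x,y)(\cM|\nabla u|(x)+\cM|\nabla u|(y))$. As Lebesgue points of $u$ form a set of full measure, this proves the lemma. The step I expect to require the most care is the uniform Poincar\'e inequality (ii), especially for balls meeting $\partial M$: here one genuinely uses compactness of $M$, finiteness of the atlas, and the John-domain (or extension-operator) property of half-balls; everything else is bookkeeping with doubling and geometric series.
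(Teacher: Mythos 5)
The paper gives no proof of this lemma---it simply cites Acerbi--Fusco and Haj\l asz--Koskela---and your argument (telescoping over dyadic balls using doubling and the local $(1,1)$-Poincar\'e inequality at Lebesgue points, comparing the two chains through $B(y,2r)$, and handling large distances via the global Poincar\'e inequality together with $\cM|\nabla u|(x)\ge \mu(M)^{-1}\int_M|\nabla u|\,d\mu$) is exactly the standard proof behind those citations, and it is correct. One caveat: your parenthetical about applying the global Poincar\'e inequality ``componentwise if $M$ is disconnected'' does not salvage the disconnected case---the lemma itself fails there, since a locally constant function taking different values on two components has $\nabla u=0$ while the left-hand side is positive---so connectedness of $M$ must be assumed, as is implicit in the statement.
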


More precisely, there is a set $F\subset M$ of measure zero such that
\eqref{SMeq1} holds for all $x,y\in M\setminus F$.

Let $V=Y^*$, where $Y$ is a separable Banach space and
let $u\in W^{1,p}(M,V)$. For every $v^*\in Y\subset Y^{**}=V^*$ with
$\Vert v^*\Vert_Y\leq 1$, $x\mapsto\langle v^*,u(x)\rangle\in W^{1,p}(M)$ and
$|\nabla \langle v^*,u\rangle|\leq |\nabla u|$. Hence
\begin{equation}
\label{SMeq2}
|\langle v^*, u(x)-u(y)\rangle| \leq
Cd(x,y)\left(\cM|\nabla u|(x)+\cM|\nabla u|(u)\right)
\quad
\mbox{a.e.\ $x,y$.}
\end{equation}
Note that the implicit exceptional set in \eqref{SMeq2} depends a priori on
$v^*$. However, let $D\subset Y$ be a countable and dense subset of the unit
ball in $Y$. Since \eqref{SMeq2} holds a.e.\ for every $v^*\in D$ and $D$ is
countable, there is a set $F\subset M$ of measure zero such that \eqref{SMeq2}
holds for all $v^*\in D$ and all $x,y\in M\setminus F$. Then
\begin{equation*}
\Vert u(x)-u(y)\Vert
 = \sup_{v^*\in D} |\langle v^*, u(x)-u(y)\rangle|
 \leq Cd(x,y)\left(\cM|\nabla u|(x)+\cM|\nabla u|(y)\right)
\end{equation*}
for all such $x,y$ and in particular, for a.e.\ $x,y$.

We proved the following result. See also \cite{HKST}.

\begin{lemma}
\label{SM-T5}
Let $M$ and $V$ be as in Proposition~\ref{SM-T3} and let $1\leq
p<\infty$. Then there is a constant $C=C(M,p)$ such that if $u\in
W^{1,p}(M,V)$, then
$$
\Vert u(x)-u(y)\Vert\leq
Cd(x,y)\left(\cM|\nabla u|(x)+\cM|\nabla u|(y)\right)
\quad
\mbox{a.e.\ $x,y$.}
$$
\end{lemma}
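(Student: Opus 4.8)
The plan is to bootstrap the scalar Haj\l{}asz-type inequality of Lemma~\ref{SM-T4} to the Banach-valued setting by testing $u$ against a countable family of functionals and exploiting that $V=Y^*$ is a dual space; indeed this is essentially the computation carried out in the paragraphs preceding the statement, and I would organize it as follows.

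First I would fix a countable set $D\subset Y$ dense in the closed unit ball of $Y$. For each $v^*\in D$ the scalar function $x\mapsto\langle v^*,u(x)\rangle$ lies in $W^{1,p}(M)$, with $i$-th weak partial derivative $\langle v^*,\partial u/\partial x_i\rangle$; since $\Vert v^*\Vert\le 1$ this gives $|\nabla\langle v^*,u\rangle|\le|\nabla u|$ a.e.\ (cf.\ Proposition~\ref{SM-T1}). Applying Lemma~\ref{SM-T4} to $\langle v^*,u\rangle$ and using monotonicity of $\cM$, I would obtain a null set $F_{v^*}\subset M$ and the constant $C=C(M,p)$ from Lemma~\ref{SM-T4} such that
$$
|\langle v^*,u(x)-u(y)\rangle|\le Cd(x,y)\bigl(\cM|\nabla u|(x)+\cM|\nabla u|(y)\bigr)
\qquad\text{for all }x,y\in M\setminus F_{v^*}.
$$

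Next I would set $F=\bigcup_{v^*\in D}F_{v^*}$, which is null because $D$ is countable, so that the displayed bound holds for every $v^*\in D$ simultaneously whenever $x,y\in M\setminus F$. Since $V=Y^*$ and $D$ is dense in the unit ball of $Y$, one has the norming identity $\Vert w\Vert=\sup_{v^*\in D}|\langle v^*,w\rangle|$ for every $w\in V$; taking $w=u(x)-u(y)$ and supremizing over $v^*\in D$ then yields
$$
\Vert u(x)-u(y)\Vert\le Cd(x,y)\bigl(\cM|\nabla u|(x)+\cM|\nabla u|(y)\bigr)
\qquad\text{for all }x,y\in M\setminus F,
$$
which is the assertion (in particular it holds a.e.).

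The only step that needs genuine care---and the one I would flag as the main obstacle---is the passage from the a.e.\ inequality for each individual functional to a single inequality valid off one null set: the scalar estimate for $\langle v^*,u\rangle$ a priori fails on an exceptional set depending on $v^*$, so one cannot take the supremum over the whole (uncountable) unit ball of $Y$ directly. Separability of $Y$ is exactly what makes this work, reducing the supremum to the countable set $D$ and hence the exceptional set to the countable union $F$. The remaining ingredients---monotonicity of the maximal operator and the dual-space norming identity, which uses only that $V=Y^*$ together with continuity of $v^*\mapsto\langle v^*,w\rangle$---are routine.
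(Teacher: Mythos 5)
Your proposal is correct and follows essentially the same route as the paper: apply the scalar estimate of Lemma~\ref{SM-T4} to $\langle v^*,u\rangle$ for each $v^*$ in a countable dense subset $D$ of the unit ball of $Y$, collect the countably many exceptional null sets into one, and recover $\Vert u(x)-u(y)\Vert$ via the norming identity $\Vert w\Vert=\sup_{v^*\in D}|\langle v^*,w\rangle|$ valid since $V=Y^*$. You also correctly flag the dependence of the exceptional set on $v^*$ as the point requiring care, which is exactly the issue the paper addresses by the same countability argument.
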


We will also need the following fact, which is Lemma~3.1 in
\cite{hajlaszIsometric}.

\begin{lemma}
\label{SM-T6}
Let $M$ and $V$ be as in Proposition~\ref{SM-T3} and $1\leq p<\infty$. If
$u_1,u_2\in W^{1,p}(M,V)$ and $u_1=u_2$ on a measurable set $E\subset M$, then
$\nabla u_1=\nabla u_2$ a.e. on $E$.
\end{lemma}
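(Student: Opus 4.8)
The plan is to reduce at once to the homogeneous statement: putting $u=u_1-u_2\in W^{1,p}(M,V)$, we have $u=0$ a.e.\ on $E$, and since $\nabla u=\nabla u_1-\nabla u_2$ it suffices to prove that $\nabla u=0$ a.e.\ on $E$.

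The engine is the classical scalar fact: if $w\in W^{1,p}(M)$ is real valued and $w=0$ a.e.\ on a measurable set $E$, then $\nabla w=0$ a.e.\ on $E$. I would recall its short proof (or simply cite it): working in a coordinate chart, $w$ is absolutely continuous on almost every line parallel to a coordinate axis, with a.e.-derivative equal to the corresponding weak partial derivative; on such a line the zero set of the restriction of $w$ has the property that almost every one of its points is a density point, and an absolutely continuous function has vanishing derivative at density points of its zero set; Fubini then yields $\partial w/\partial x_i=0$ a.e.\ on $E$ for every $i$.

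To pass from $V$-valued functions to this scalar statement, let $Y$ be a separable Banach space with $V=Y^*$, which exists by hypothesis. For $v^*\in Y$, applying the functional $\langle v^*,\cdot\rangle$ to the Bochner identity $\int_\Omega(\partial\vi/\partial x_i)\,u=-\int_\Omega\vi\,(\partial u/\partial x_i)$ (read in a chart) and using that $v^*$ commutes with the Bochner integral shows that $\langle v^*,u\rangle\in W^{1,p}(M)$ with $\partial\langle v^*,u\rangle/\partial x_i=\langle v^*,\partial u/\partial x_i\rangle$. Since $u=0$ a.e.\ on $E$, the same holds for $\langle v^*,u\rangle$, so the scalar fact gives $\langle v^*,\partial u/\partial x_i\rangle=0$ a.e.\ on $E$ for every $i$.

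The last step is to make the exceptional null set independent of $v^*$, exactly the device used in the proof of Proposition~\ref{SM-T3} and in Lemma~\ref{SM-T5}. Fix a countable dense subset $D$ of the closed unit ball of $Y$; the union over the countably many pairs $(v^*,i)$ with $v^*\in D$ of the sets where $\langle v^*,\partial u/\partial x_i\rangle\ne 0$ meets $E$ in a null set $F$. For $x\in E\setminus F$ we then have $\langle v^*,\partial u/\partial x_i(x)\rangle=0$ for all $v^*\in D$ and all $i$, and since $D$ is dense in the unit ball of the predual $Y$ we get $\Norm{\partial u/\partial x_i(x)}=\sup_{v^*\in D}\langle v^*,\partial u/\partial x_i(x)\rangle=0$. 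Hence $\nabla u=0$ a.e.\ on $E$, as desired. I do not anticipate a genuine obstacle here; the only real point is the uniformization of the null sets in $v^*$, which is precisely where the assumption that $V$ be dual to a separable space enters.
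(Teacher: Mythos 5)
Your proof is correct. Note, however, that the paper does not prove Lemma \ref{SM-T6} at all: it is imported verbatim as Lemma~3.1 of \cite{hajlaszIsometric}, so there is no in-paper argument to compare against. Your argument is the natural one and is exactly in the spirit of the surrounding text: reduce to $u=u_1-u_2$ vanishing a.e.\ on $E$, use the classical scalar fact that a real-valued Sobolev function has vanishing gradient a.e.\ on its zero set, transfer it to $V$-valued maps by testing against elements $v^*$ of the separable predual $Y\subset Y^{**}=V^*$ (which commute with the Bochner integral, so $\partial\langle v^*,u\rangle/\partial x_i=\langle v^*,\partial u/\partial x_i\rangle$), and then uniformize the exceptional null sets over a countable dense subset $D$ of the unit ball of $Y$ --- the same device the paper uses to pass from \eqref{SMeq2} to Lemma~\ref{SM-T5}, and the only place where duality to a separable space is needed. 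Two cosmetic remarks: in the last step you should take $\sup_{v^*\in D}|\langle v^*,\partial u/\partial x_i(x)\rangle|$ (or choose $D$ symmetric) to recover the dual norm, and the passage to coordinate charts is harmless since the conclusion is purely local and a.e.\ in nature.
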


\begin{proof}[Proof of Proposition~\ref{SM-T3}]
Let $f\in W^{1,p}(M,V)$. We want to prove that $f$ can be approximated
by Lipschitz mappings as stated in Proposition~\ref{SM-T3}. Recall that
$$
\Vert f(x)-f(y)\Vert \leq
Cd(x,y)\left(\cM|\nabla f|(x)+\cM|\nabla f|(y)\right)
\quad
\mbox{a.e.\ $x,y$}
$$
by Lemma~\ref{SM-T5}. Let
$$
E_t=\{ x\in M:\, \cM |\nabla f|(x)\leq t\}.
$$
The set $E_t$ is compact and weak type estimates for the maximal operator
$\cM$ give
\begin{equation}
\label{SMeq3}
t^p\mu(M\setminus E_t)\to 0
\quad
\mbox{as $t\to\infty$.}
\end{equation}
Note that the mapping
$$
f|_{E_t}:E_t\to V
$$
is Lipschitz continuous with Lipschitz constant $2Ct$. Hence the function
$f|_{E_t}$ admits a $C't$-Lipschitz extension $f_t:M\to V$.
Indeed, there is a Whitney decomposition of $M\setminus E_t$ into balls and
subordinated Lipschitz partition of unity. This is to say there is a constant
$C\geq 1$ depending on the Riemannian structure of $M$ only and a sequence $\{
x_i\}_{i\in I}$ of points in $M\setminus E_t$ such that with
$r_i=\dist(x_i,E_t)/10$ we have
\begin{enumerate}
\item[(a)] $\bigcup_{i\in I} B(x_i,r_i)= M\setminus E_t$;
\item[(b)] $B(x_i,5r_i)\subset M\setminus E_t$ for all $i\in I$;
\item[(c)] for every $i\in I$ and all $x\in B(x_i,5r_i)$ we have
$5 r_i \leq \dist (x,E_t) \leq 15 r_i$;
\item[(d)] no point of $M\setminus E_t$ belongs to more than $C$ balls
$\{B(x_i,5r_i)\}_{i\in I}$;
\item[(e)] there is a family of Lipschitz continuous functions
$\{\vi_i\}_{i\in I}$ such that ${\rm supp}\, \vi_i\subset B(x_i,2r_i)$,
$0\leq \vi_i\leq 1$, $\sum_{i\in I}\vi_i= 1$ and the Lipschitz constant of
$\vi_i$ is bounded by $Cr_i^{-1}$.
\end{enumerate}
Now it is a routine verification to show that the function
$$
f_t(x)=
\left\{
\begin{array}{ccc}
f(x)    & \mbox{if $x\in  E_t$,}\\ \\
\sum_{i\in I}\left(\frac{1}{\mu(B_i)}\int_{B_i} f\, d\mu\right)\,
\vi_i(x) &   \mbox{if $x\not\in E_t$,}
\end{array}
\right.
$$
is the desired $C't$-Lipschitz extension of $f|_{E_t}$. For details, see for
example \cite[pp.\ 446--448]{hajlaszGAFA}, where the argument is presented in
a slightly different, but related setting.

Observe that if $t\geq 1$ is sufficiently large, then the norm of $f_t$ is
bounded by $Ct$. Indeed, let $x_0\in E_1$. Then for any $x\in M$ we have
$$
\Vert f_t(x)\Vert \leq
\Vert f_t(x)-f_t(x_0)\Vert + \Vert f(x_0)|\Vert \leq
Ctd(x,x_0) +\Vert f(x_0)\Vert
$$
and it suffices to observe that $d(x,x_0)\leq {\rm diam}\, M$ and take
$t\geq \Vert f(x_0)\Vert$. Thus
\begin{eqnarray*}
\left( \int_M \Vert f-f_t\Vert^p\right)^{1/p}
& \leq &
\left( \int_{M\setminus E_t} \Vert f\Vert^p\right)^{1/p} +
\left( \int_{M\setminus E_t} \Vert f_t\Vert^p\right)^{1/p} \\
& \leq &
\left( \int_{M\setminus E_t} \Vert f\Vert^p\right)^{1/p} +
C\left( t^p\mu(M\setminus E_t)\right)^{1/p}\to 0
\end{eqnarray*}
as $t\to\infty$. We applied here the weak type estimate \eqref{SMeq3}.
For derivatives we have a similar estimate
$$
\left(\int_M |\nabla (f-f_t)|^p \right)^{1/p} \leq
\left(\int_{M\setminus E_t} |\nabla f|^p\ \right)^{1/p} +
C\left(t^p\mu(M\setminus E_t)\right)^{1/p}\to 0
$$
as $t\to\infty$. Here we applied Lemma~\ref{SM-T6} and the weak type estimate
\eqref{SMeq3}. This completes the proof.
\end{proof}

\section{Sobolev convergence for $\Heis^n$-valued maps}
\label{P45}

Theorem~\ref{T4} 
is analogous to a similar phenomenon which holds in the
Euclidean context. See \cite{hajlaszIsometric}. Its proof
is based on an estimate relating the $\mbox{weak}^*$ derivative of the
difference between two Sobolev maps, postcomposed with the Kuratowski
embedding, to the norm of the gradients of the individual mappings.

We begin by recalling the notion of $\mbox{weak}^*$ derivative.

\begin{definition}
Let $V$ be a Banach space which is dual to a separable Banach space. Let
$f:[a,b]\to V$. A vector $v\in V$ is called a {\it $\mbox{weak}^*$ derivative}
of $f$ at $s\in [a,b]$ if $v$ is the $\mbox{weak}^*$ limit of the vectors
$h^{-1}(f(s+h)-f(s))$ as $h\to 0$.
\end{definition}

For the following lemma, see Lemmas~2.8 and~2.13 of \cite{hajlaszt}.
See also Lemma~2.3 in \cite{hajlaszIsometric}.

\begin{lemma}
Let $V$ be a Banach space which is dual to a separable Banach space $Y$ and
let $f:[a,b]\to V$ be absolutely continuous. Then the limit
$$
g(s) := \lim_{h\to 0} h^{-1} \Vert f(s+h)-f(s)\Vert
$$
exists for a.e.\ $s$, and $g\in L^1([a,b])$. Furthermore, for a.e.\ $s\in
[a,b]$ there exists a unique $\mbox{weak}^*$ derivative $f'(s)$ of $f$ at
$s$, and $\Vert f'(s)\Vert \le g(s)$.
\end{lemma}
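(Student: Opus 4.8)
The plan is to transfer everything to the scalar absolutely continuous functions $s\mapsto\langle y,f(s)\rangle$, $y\in Y$, and to the scalar total variation function of $f$. First I would record the (routine) facts that for each $y\in Y$ the function $s\mapsto\langle y,f(s)\rangle$ is absolutely continuous, because $|\langle y,f(s)\rangle-\langle y,f(t)\rangle|\le\|y\|\,\|f(s)-f(t)\|$ (hence differentiable a.e.\ with derivative in $L^1$, and equal to the integral of that derivative), and that the length function $\ell(s)=V_a^s(f)$ is itself absolutely continuous and nondecreasing — this follows by subdividing finitely many disjoint subintervals and invoking the absolute continuity modulus of $f$. Thus $\ell'$ exists a.e., $\ell'\in L^1$, and $\ell(t)-\ell(s)=\int_s^t\ell'$ for $s<t$. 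Since $\|f(s+h)-f(s)\|\le|\ell(s+h)-\ell(s)|$ for every $h$, at each point where $\ell'$ exists we already get $\limsup_{h\to0}|h|^{-1}\|f(s+h)-f(s)\|\le\ell'(s)<\infty$, so the candidate density is a.e.\ finite and dominated by an $L^1$ function.

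Next, fix a countable set $\{y_n\}$ dense in the closed unit ball $B_Y$ of $Y$, so $\|v\|=\sup_n\langle y_n,v\rangle$ for every $v\in V=Y^*$, and put $h_n(s)=\frac{d}{ds}\langle y_n,f(s)\rangle$, defined for $s$ outside a common null set. From $|\langle y_n,f(s+h)-f(s)\rangle|\le\|f(s+h)-f(s)\|\le|\ell(s+h)-\ell(s)|$ one gets $|h_n|\le\ell'$ a.e., hence $\sup_n|h_n|\le\ell'$ a.e. The crucial step is the reverse inequality. For $s<t$ and any partition $s=t_0<\cdots<t_m=t$,
$$
\sum_i\|f(t_{i+1})-f(t_i)\|=\sum_i\sup_n\Bigl|\int_{t_i}^{t_{i+1}}h_n\Bigr|\le\sum_i\int_{t_i}^{t_{i+1}}\sup_n|h_n|=\int_s^t\sup_n|h_n|,
$$
and taking the supremum over partitions gives $\ell(t)-\ell(s)\le\int_s^t\sup_n|h_n|$; since also $\ell(t)-\ell(s)=\int_s^t\ell'$ for all $s<t$, Lebesgue differentiation yields $\ell'\le\sup_n|h_n|$ a.e., so $\ell'=\sup_n|h_n|$ a.e.

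Combining this with $|h|^{-1}|\langle y_n,f(s+h)-f(s)\rangle|\to|h_n(s)|$ and $|h|^{-1}\|f(s+h)-f(s)\|\le|h|^{-1}|\ell(s+h)-\ell(s)|$, one obtains at a.e.\ $s$
$$
\sup_n|h_n(s)|\le\liminf_{h\to0}\frac{\|f(s+h)-f(s)\|}{|h|}\le\limsup_{h\to0}\frac{\|f(s+h)-f(s)\|}{|h|}\le\ell'(s)=\sup_n|h_n(s)|,
$$
so the limit $g(s)$ exists for a.e.\ $s$, equals $\ell'(s)$, and lies in $L^1([a,b])$. Finally, for the weak${}^*$ derivative, fix $s$ in the full-measure set where $g(s)$ and all $h_n(s)$ exist. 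For $|h|$ small the difference quotients $q_h=h^{-1}(f(s+h)-f(s))$ are norm-bounded and lie in a fixed ball of $V=Y^*$, on which the weak${}^*$ topology is compact and metrizable because $Y$ is separable. Since $\langle y_n,q_h\rangle\to h_n(s)$ for every $n$, every weak${}^*$ subsequential limit $v$ of $\{q_h\}$ satisfies $\langle y_n,v\rangle=h_n(s)$ for all $n$; as $\{y_n\}$ is dense in $B_Y$ this determines $v$ uniquely, and a standard compactness argument (every sequence $q_{h_j}$, $h_j\to0$, has a weak${}^*$ convergent subsequence, necessarily with limit $v$) forces $q_h\to v$ weak${}^*$ as $h\to0$. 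Set $f'(s)=v$: uniqueness of the weak${}^*$ derivative is immediate from uniqueness of weak${}^*$ limits, and $\|f'(s)\|\le\liminf_{h\to0}\|q_h\|=g(s)$ by weak${}^*$ lower semicontinuity of the norm on a dual space. I expect the main technical obstacle to be the variation identity $\ell'=\sup_n|h_n|$ together with careful bookkeeping of exceptional null sets; the weak${}^*$ compactness argument is routine once metrizability on bounded subsets of $V$ is invoked.
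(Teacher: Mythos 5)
Your argument is correct and complete. Note that the paper itself does not prove this lemma: it simply cites Lemmas~2.8 and~2.13 of \cite{hajlaszt} and Lemma~2.3 of \cite{hajlaszIsometric}, so there is no internal proof to compare against line by line. What you give is a self-contained proof along the standard lines for metric/weak$^*$ differentiability of absolutely continuous curves into a dual space: (i) the variation function $\ell(s)=V_a^s(f)$ is absolutely continuous, which dominates all difference quotients by $\ell'$; (ii) the identity $\ell'=\sup_n|h_n|$ a.e., obtained by writing $\langle y_n,f(t_{i+1})-f(t_i)\rangle=\int_{t_i}^{t_{i+1}}h_n$ and summing over partitions, which squeezes the difference quotients and yields existence of $g=\ell'\in L^1$; and (iii) Banach--Alaoglu plus weak$^*$ metrizability on bounded sets of $Y^*$ (using separability of $Y$) to extract the weak$^*$ derivative, with $\Vert f'(s)\Vert\le g(s)$ from weak$^*$ lower semicontinuity of the dual norm. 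All the steps you flag as routine (absolute continuity of $\ell$, measurability and integrability of $\sup_n|h_n|$ via domination by $\ell'$, the common null set for the countable family $\{h_n\}$, and the subsequence argument upgrading subsequential weak$^*$ limits to a genuine limit as $h\to0$) are indeed routine and correctly handled; the one purely cosmetic point is that in the statement $h^{-1}$ should be read as $|h|^{-1}$ (as you do), since otherwise the two-sided limit of a signed quantity is meant only up to that normalization.
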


In the case when $V=\ell^\infty$, the $\mbox{weak}^*$ derivative can be
computed explicitly. If $f=(f^i)_{i=1}^\infty$ is absolutely continuous from
$[a,b]$ to $\ell^\infty$, then
\begin{equation}\label{fs}
f'(s) = \biggl((f^i)'(s)\biggr)_{i=1}^\infty,
\end{equation}
whenever $f'(s)$ is defined. See, for example, \cite{hajlaszIsometric}.

Let $(p_i)_{i=1}^\infty$ be a dense subset of $\Heis^n$, $p_0\in\Heis^n$ and let
$\kappa:(\Heis^n,d_{cc})\to\ell^\infty$ be the Kuratowski embedding given by
$$
\kappa(p) =
\left( d_{cc}(p,p_i)-d_{cc}(p_0,p_i)\right)_{i=1}^\infty\, .
$$
If $f:[a,b]\to\Heis^n$ is absolutely continuous, then
$\bar{f}=\kappa\circ f:[a,b]\to\ell^\infty$ is absolutely continuous and
$$
\bar{f}'(s) = \left(\frac{d}{ds} d_{cc}(f(s),p_i)\right)_{i=1}^\infty
\quad
\mbox{a.e.}
$$

The following lemma is a variation on Lemma 2.6 in \cite{hajlaszIsometric}.

\begin{lemma}
\label{1Destimate}
Let $f,g:[a,b]\to\Heis^n$ be absolutely continuous, and let
$\kappa:(\Heis^n,d_{cc}) \to \ell^\infty$ be the Kuratowski embedding. Then
$\bar{f} = \kappa \circ f$ and $\bar{g} = \kappa \circ g$ are
absolutely continuous, and the $\mbox{weak}^*$ derivative
$(\bar{f}-\bar{g})':[a,b] \to \ell^\infty$ satisfies
$$
\Vert(\bar{f}-\bar{g})'(s)\Vert_\infty \ge \max \{ |f'(s)|_\Heis,
|g'(s)|_\Heis \} \ge \tfrac12 (|f'(s)|_\Heis + |g'(s)|_\Heis)
$$
for almost every $s\in [a,b]$ such that $f(s)^{-1} * g(s) \not\in Z$.
\end{lemma}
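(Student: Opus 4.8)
Here is the plan.

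The second inequality is just $\max\{a,b\}\ge\tfrac12(a+b)$ for $a,b\ge0$, so the content is the first inequality, and since $\Vert(\bar f-\bar g)'\Vert_\infty=\Vert(\bar g-\bar f)'\Vert_\infty$, interchanging $f$ and $g$ reduces us to proving $\Vert(\bar f-\bar g)'(s_0)\Vert_\infty\ge|f'(s_0)|_\Heis$ for a.e.\ $s_0$ with $f(s_0)^{-1}*g(s_0)\notin Z$. First I would isolate a full-measure set of \emph{good} parameters $s_0$: by \eqref{SReq1} the curves $f,g$ are also absolutely continuous into $\R^{2n+1}$, hence differentiable a.e.\ with horizontal derivatives; the countably many absolutely continuous functions $s\mapsto d_{cc}(f(s),p_i)$ and $s\mapsto d_{cc}(g(s),p_i)$ are differentiable off a common null set; and by the lemma on $\mbox{weak}^*$ differentiability together with \eqref{fs}, at a.e.\ $s_0$ the $\mbox{weak}^*$ derivative $(\bar f-\bar g)'(s_0)$ exists and is computed coordinatewise, so that, writing $D_q(s_0):=\tfrac{d}{ds}\big|_{s_0}\bigl(d_{cc}(f(s),q)-d_{cc}(g(s),q)\bigr)$,
$$
\Vert(\bar f-\bar g)'(s_0)\Vert_\infty=\sup_i\bigl|D_{p_i}(s_0)\bigr| .
$$
If $f'(s_0)=0$ there is nothing to prove, so assume $v_f:=f'(s_0)/|f'(s_0)|_\Heis$ is a well-defined horizontal unit vector at $f(s_0)$.

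The crux is to produce, for each $\eps>0$, a point $q\in\Heis^n$ with $q^{-1}*f(s_0)\notin Z$ and $q^{-1}*g(s_0)\notin Z$ such that $|D_q(s_0)|\ge|f'(s_0)|_\Heis-\eps$; for such $q$, Monti's chain rule \eqref{monti-chain-rule} applies to both summands and gives $D_q(s_0)=\langle\nabla_\Heis d_q(f(s_0)),f'(s_0)\rangle_\Heis-\langle\nabla_\Heis d_q(g(s_0)),g'(s_0)\rangle_\Heis$. To pin down the first term I would use the horizontal lift $\eta$ of the Euclidean line through the $\C^n$-coordinate of $f(s_0)$ in the horizontal direction $v_f$, parametrized by arclength with $\eta(0)=f(s_0)$, $\eta'(0)=v_f$; this is a length-minimizing geodesic of $(\Heis^n,d_{cc})$ (the Dido/isoperimetric picture: horizontal lifts of straight segments realize the Carnot--Carath\'eodory distance between their endpoints). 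Set $q_\ell^{-}=\eta(-\ell)$ and $q_\ell^{+}=\eta(\ell)$ for small $\ell>0$; using $\eqref{ZZ}$ and the fact that $v_f\neq0$ is horizontal, one checks $q_\ell^{\pm}$ lie in the open set $U:=\{q: q^{-1}*f(s_0)\notin Z\text{ and }q^{-1}*g(s_0)\notin Z\}$ for small $\ell$, and $q_\ell^{\pm}\to f(s_0)$ as $\ell\to0$. By Lemma~\ref{SR-T1} the functions $d_{q_\ell^{\pm}}$ are smooth near $f(s_0)$, so (cf.\ Remark~\ref{when-attained}) $\nabla_\Heis d_{q_\ell^{\pm}}(f(s_0))$ equals the outgoing velocity at $f(s_0)$ of the minimizing geodesic from $q_\ell^{\pm}$, namely $\pm v_f$; hence $\langle\nabla_\Heis d_{q_\ell^{\pm}}(f(s_0)),f'(s_0)\rangle_\Heis=\pm|f'(s_0)|_\Heis$. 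By the joint smoothness of $(p,q)\mapsto d_{cc}(p,q)=\Vert q^{-1}*p\Vert_{cc}$ off the center set (a restatement of Lemma~\ref{SR-T1}), $\nabla_\Heis d_{q_\ell^{\pm}}(g(s_0))\to w:=\nabla_\Heis d_{f(s_0)}(g(s_0))$. Thus, with $c:=\langle w,g'(s_0)\rangle_\Heis$, we get $D_{q_\ell^{-}}(s_0)\to|f'(s_0)|_\Heis-c$ and $D_{q_\ell^{+}}(s_0)\to-|f'(s_0)|_\Heis-c$. The key observation is that the $g$-term cannot in general be killed, but its sign depends only on $c$, so one plays the two sides of $\eta$ against that sign: if $c\le0$ take $q=q_\ell^{-}$, and if $c\ge0$ take $q=q_\ell^{+}$; for $\ell$ small enough, either choice gives $|D_q(s_0)|\ge|f'(s_0)|_\Heis-\eps$.

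Finally I would transfer this to the fixed dense sequence $(p_i)$. The set $U$ is open, contains the point $q$ just built, and on $U$ the map $q\mapsto D_q(s_0)=\langle\nabla_\Heis d_q(f(s_0)),f'(s_0)\rangle_\Heis-\langle\nabla_\Heis d_q(g(s_0)),g'(s_0)\rangle_\Heis$ is continuous, again by smoothness of the distance function off the center set. By density of $(p_i)$ I may choose $p_i\in U$ with $|D_{p_i}(s_0)-D_q(s_0)|<\eps$, so that $|D_{p_i}(s_0)|>|f'(s_0)|_\Heis-2\eps$; and for $p_i\in U$, by \eqref{monti-chain-rule}, $D_{p_i}(s_0)$ is exactly the derivative at $s_0$ of the $i$-th coordinate of $\bar f-\bar g$. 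Hence $\Vert(\bar f-\bar g)'(s_0)\Vert_\infty=\sup_i|D_{p_i}(s_0)|\ge|f'(s_0)|_\Heis-2\eps$, and $\eps\to0$ finishes the proof. The main obstacle is the construction in the middle paragraph: the naive choices of $q$ (near $g(s_0)$, or far ``behind'' $f(s_0)$) fail because the contribution $\langle\nabla_\Heis d_q(g(s_0)),g'(s_0)\rangle_\Heis$ can cancel the contribution from $f$, and the resolution — using both sides of the minimizing geodesic through $f(s_0)$ according to the sign of $c$, while carefully tracking which points remain off the center set so that Lemma~\ref{SR-T1} and the chain rule \eqref{monti-chain-rule} stay available — is the only genuinely Heisenberg-specific ingredient.
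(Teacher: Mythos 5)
Your proposal is correct and follows essentially the same route as the paper's proof: reduce to the coordinatewise formula for the weak* derivative, use Monti's chain rule and the smoothness of $d_{cc}$ off the center, and play the two points $\eta(\pm\ell)$ on the straight-line geodesic through $f(s_0)$ in the direction $f'(s_0)$ against the $g$-term (the paper compares the two $g$-derivatives directly rather than passing to the limit value $c$, but this is the same maneuver), then pass to nearby points of the dense sequence $(p_i)$. The only blemish is the sign slip ``$\nabla_\Heis d_{q_\ell^{\pm}}(f(s_0))=\pm v_f$'' (it should be $\mp v_f$), which is harmless since your displayed limits and the ensuing case analysis on the sign of $c$ are consistent and correct.
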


Note that $f'(s)$ is horizontal for a.e.\ $s$ whenever $f:[a,b]\to\Heis^n$ is
absolutely continuous.
\begin{proof}
It is easy to see that
\begin{equation}
\label{fs2}
(\bar{f}-\bar{g})'(s) =
\biggl( \tfrac{d}{ds} d_{cc}(f(s),p_i) - \tfrac{d}{ds} d_{cc}(g(s),p_i)
\biggr)_{i=1}^\infty
\end{equation}
for a.e.\ $s\in [a,b]$.

Let $S$ be the set of points $s\in [a,b]$ so that $f(s)^{-1} * g(s) \not \in
Z$. Fix $s\in S$ so that both $f$ and ${g}$ are differentiable at $s$ and
\eqref{fs2} holds. Almost every $s\in S$ is of this type.
If $f'(s) = g'(s) = 0$ the inequality is obvious. Assume then, without loss of
generality that $f'(s) \ne 0$ and $|f'(s)|_\Heis \ge |g'(s)|_\Heis$.

An application of Lemma~\ref{SR-T1}, or more specifically, of
\eqref{monti-chain-rule}, yields that
\begin{equation}
\label{qqq}
\tfrac{d}{ds} d_{cc}(f(s),p_i) =
\left\langle\nabla_\Heis d_{p_i}(f(s)),f'(s)\right\rangle_\Heis
\end{equation}
whenever $i$ is such that $p_i^{-1} * f(s) \not \in Z$.

If $L_\sigma g=\sigma*g$ is the left translation on $\Heis^n$, then
$dL_{-f(s)}(f(s)):H_{f(s)}\Heis^n\to H_o\Heis^n$. Hence
$$
t\mapsto \exp\left(tdL_{-f(s)}(f(s))f'(s)\right)
$$
is a geodesic passing through $o$ at $t=0$.
Here $\exp$ denotes the exponential map from the Lie
algebra of $\Heis^n$ to the group itself.
We are also using the fact that $\exp$ is the identity map,
\cite[p.~3]{FS} and that straight lines passing through $o$ and contained in
$\C^n\times\{ 0\}\subset\C^n\times\R=\Heis^n$ are geodesics,
\cite[Theorem~1]{marenich}. Thus
$$
\gamma(t)=f(s)*\exp\left(tdL_{-f(s)}(f(s))f'(s)\right)
$$
is a geodesic such that $\gamma(0)=f(s)$ and $\gamma'(0)=f'(s)$. Given
$\delta>0$, it follows from Remark~\ref{when-attained} that 
\begin{equation*}
-|f'(s)|_\Heis
=\left.\frac{d}{dt}\right|_{t=0} d_{cc}(\gamma(t),\gamma(\delta))
=\left\langle \nabla_\Heis
  d_{\gamma(\delta)}(f(s)),f'(s)\right\rangle_\Heis
=\frac{d}{ds}d_{cc}(f(s),\gamma(\delta))\, .
\end{equation*}
Given $\eps>0$ and $\delta>0$
it follows from the continuity of the derivative of $d_q$ that we can find
$p_i$ sufficiently close to $\gamma(\delta)$ so that
$$
\tfrac{d}{ds} d_{cc}(f(s),p_i) \le -|f'(s)|_\Heis + \eps.
$$

Similarly, we can find $p_j$ sufficiently close to $\gamma(-\delta)$ so that
\begin{equation}
\label{hot-dog}
\tfrac{d}{ds} d_{cc}(f(s),p_j) \ge |f'(s)|_\Heis - \eps.
\end{equation}

Since $d_{cc}$ is $C^\infty$ away from the center $Z$ (Lemma~\ref{SR-T1})
and $f(s)^{-1} * g(s) \not \in Z$, we may choose $p_i$ and $p_j$ so close to
$f(s)$ that
$$
\left| \tfrac{d}{ds} d_{cc}(g(s),p_i) - \tfrac{d}{ds} d_{cc}(g(s),p_j) \right|
< \eps.
$$
Hence one of the following quantities is greater than or equal to
$|f'(s)|_\Heis - 2\eps$:
$$
\left| \tfrac{d}{ds} d_{cc}(f(s),p_i) - \tfrac{d}{ds} d_{cc}(g(s),p_i) \right|
$$
or
$$
\left| \tfrac{d}{ds} d_{cc}(f(s),p_j) - \tfrac{d}{ds} d_{cc}(g(s),p_j) \right|
$$
Letting $\eps\to 0$ gives
$$
\Vert(\bar{f}-\bar{g})'(s)\Vert_\infty \ge |f'(s)|_\Heis = \max \{
|f'(s)|_\Heis, |g'(s)|_\Heis \}.
$$
This completes the proof of the lemma.
\end{proof}

Our next task is to replace the one-dimensional source space in
Lemma~\ref{1Destimate} with a domain in a higher-dimensional Euclidean space.
\begin{lemma}
\label{mDestimate}
Let $\Omega$ be a bounded domain in $\R^m$, let $f,g \in
W^{1,1}(\Omega,\Heis^n)$ and let
$\kappa:(\Heis^n,d_{cc}) \to \ell^\infty$ be the Kuratowski embedding. Then
$\bar{f} = \kappa \circ f$ and $\bar{g} = \kappa \circ g$ are in
$W^{1,1}(\Omega,\ell^\infty)$ and
\begin{equation}
\label{mDe}
|\nabla (\bar{f}-\bar{g})|_{\ell^\infty}
\ge \frac{1}{4} ( |\nabla f|_\Heis + |\nabla g|_\Heis) \,
\chi_{\{f-g \not\in Z\}} \qquad \mbox{a.e.}
\end{equation}
\end{lemma}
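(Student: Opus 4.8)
The plan is to obtain \eqref{mDe} from its one–dimensional counterpart, Lemma~\ref{1Destimate}, by the usual Fubini / absolute-continuity-on-lines argument, working one coordinate direction at a time. The membership $\bar f,\bar g\in W^{1,1}(\Omega,\ell^\infty)$ is already contained in Lemma~\ref{estimate} (which gives $|\nabla\bar f|_{\ell^\infty}\le|\nabla f|_\Heis$ a.e., and likewise for $g$), so only the pointwise inequality requires proof.

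First I would set up the reduction to lines. Fix $k\in\{1,\dots,m\}$. Since $f,g\in W^{1,1}(\Omega,\Heis^n)$, regarded as maps into $\R^{2n+1}$ they are absolutely continuous on almost every line $\ell$ parallel to $e_k$, and on such a line the derivative of the restriction agrees a.e.\ with $\partial f/\partial x_k$, resp.\ $\partial g/\partial x_k$, which is a horizontal vector for a.e.\ $x$. Hence for a.e.\ such $\ell$ the curves $f|_\ell$ and $g|_\ell$ are absolutely continuous and horizontal, so they are absolutely continuous into $(\Heis^n,d_{cc})$; post-composing with the $1$-Lipschitz Kuratowski embedding, $\bar f|_\ell=\kappa\circ f|_\ell$ and $\bar g|_\ell=\kappa\circ g|_\ell$ are absolutely continuous into $\ell^\infty$, and their weak$^*$ derivatives along $\ell$ agree a.e.\ with the Sobolev partials $\partial\bar f/\partial x_k$ and $\partial\bar g/\partial x_k$. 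Now apply Lemma~\ref{1Destimate} to the pair $f|_\ell,g|_\ell$: by \eqref{ZZ}, $f|_\ell(s)^{-1}*g|_\ell(s)\notin Z$ precisely when $f(x)-g(x)\notin Z$ at the corresponding point $x$, so for a.e.\ $x\in\{f-g\notin Z\}$ lying on a.e.\ line we get
$$
\left\|\frac{\partial\bar f}{\partial x_k}(x)-\frac{\partial\bar g}{\partial x_k}(x)\right\|_\infty
\ \ge\ \max\left\{\left|\frac{\partial f}{\partial x_k}(x)\right|_\Heis,\ \left|\frac{\partial g}{\partial x_k}(x)\right|_\Heis\right\}.
$$
A Fubini argument then merges the exceptional sets — over all lines and all $k$ — into a single null subset of $\Omega$, so the displayed inequality holds for a.e.\ $x\in\{f-g\notin Z\}$ and every $k$.

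Finally I would reassemble. By the definition \eqref{gradient} of the $\ell^\infty$-gradient together with the elementary inequalities $\max\{a^2,b^2\}\ge\frac12(a^2+b^2)$ and $2(A^2+B^2)\ge(A+B)^2$,
$$
|\nabla(\bar f-\bar g)|_{\ell^\infty}^2
=\sum_{k=1}^m\Big\|\tfrac{\partial\bar f}{\partial x_k}-\tfrac{\partial\bar g}{\partial x_k}\Big\|_\infty^2
\ \ge\ \tfrac12\big(|\nabla f|_\Heis^2+|\nabla g|_\Heis^2\big)
\ \ge\ \tfrac14\big(|\nabla f|_\Heis+|\nabla g|_\Heis\big)^2
$$
on $\{f-g\notin Z\}$, which yields \eqref{mDe} (indeed with the sharper constant $\tfrac12$; the constant $\tfrac14$ is all that is used in the sequel).

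The genuinely delicate part is the measure-theoretic bookkeeping: arranging the null sets uniformly so that the one-dimensional estimate valid along a.e.\ line upgrades to an a.e.\ estimate on $\Omega$, and the (standard but not entirely trivial) identification, for a.e.\ line, of the weak$^*$ derivative of the restriction of an $\ell^\infty$-valued Sobolev map with its Sobolev partial derivative. Everything ultimately rests on the $C^\infty$ regularity of $d_{cc}$ away from the center, Lemma~\ref{SR-T1}, which is what makes Lemma~\ref{1Destimate} available in the first place.
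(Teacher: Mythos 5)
Your proposal is correct and follows essentially the same route as the paper, whose proof of Lemma~\ref{mDestimate} is exactly the one-line reduction you carry out: absolute continuity along almost all coordinate lines, identification of the weak$^*$ derivative of the restriction with the Sobolev partial derivative, and an application of Lemma~\ref{1Destimate} (with \eqref{ZZ} to translate the condition $f(s)^{-1}*g(s)\notin Z$). Your bookkeeping of the null sets and the final summation over coordinates are fine, and in fact yield the inequality with the constant $\tfrac12$, which implies the stated bound with $\tfrac14$.
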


\begin{proof}
This follows from absolute continuity along almost all lines parallel to the
coordinate axes, the definition of the $\mbox{weak}^*$ derivative
and Lemma~\ref{1Destimate}.
\end{proof}

In order to take advantage of this inequality, we need to take care of the set
where the functions $f$ and $g$ differ by a point in the center $Z$. The
following lemma accomplishes this.

\begin{lemma}
\label{lemma6-6}
Let $\Omega$, $f$, $g$ be as in Lemma~\ref{mDestimate} and let $S$ be the set
of points $p\in \Omega$ for which $f(p) - g(p) \in Z$. Then
$\nabla f=\nabla g$ almost everywhere in $S$.
\end{lemma}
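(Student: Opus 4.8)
We must show that if $f, g \in W^{1,1}(\Omega, \Heis^n)$ and $S = \{p \in \Omega : f(p) - g(p) \in Z\}$, then $\nabla f = \nabla g$ a.e.\ on $S$. The natural strategy is to pass through the Euclidean structure. Since $\Heis^n$ is homeomorphic to $\R^{2n+1}$ with locally Lipschitz identity map, $f$ and $g$ viewed as maps into $\R^{2n+1}$ are absolutely continuous on a.e.\ line parallel to the coordinate axes; thus it suffices to work one direction at a time and reduce to a one-dimensional statement. So the plan is: fix a coordinate direction $e_k$, and for a.e.\ line $L$ parallel to $e_k$, study the absolutely continuous curves $s \mapsto f(L(s))$ and $s \mapsto g(L(s))$ in $\Heis^n$, and show that at a.e.\ point of $L \cap S$ their derivatives (as curves in $\R^{2n+1}$) agree.

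\textbf{The one-dimensional core.} Reduce to the following: if $\gamma, \eta : (a,b) \to \Heis^n$ are absolutely continuous and $E = \{s : \gamma(s) - \eta(s) \in Z\}$, then $\gamma'(s) = \eta'(s)$ for a.e.\ $s \in E$, where derivatives are taken in $\R^{2n+1}$. Write $\gamma(s) = (z(s), t(s))$ and $\eta(s) = (w(s), \tau(s))$ in coordinates $\C^n \times \R$. The condition $\gamma(s) - \eta(s) \in Z$ means precisely $z(s) = w(s)$ on $E$ (the first $2n$ coordinates agree), by the definition of the center. Since $z$ and $w$ are absolutely continuous real functions and agree on $E$, the standard fact that two absolutely continuous functions have equal derivatives a.e.\ on the set where they coincide gives $z'(s) = w'(s)$ a.e.\ on $E$; that is, the horizontal (first $2n$) components of $\gamma'$ and $\eta'$ agree a.e.\ on $E$. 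It remains to handle the $t$-coordinate. Here we use that both $\gamma'$ and $\eta'$ are \emph{horizontal} vectors for a.e.\ $s$ (as is automatic for absolutely continuous curves in $\Heis^n$, noted just before Lemma~\ref{1Destimate}). The horizontality constraint for a curve through $(z,t)$ forces $\dot t = 2\sum_j (x_j \dot y_j - y_j \dot x_j)$ — equivalently, in the notation of the contact form $\alpha$, the velocity lies in $\ker \alpha$. Since $\gamma$ and $\eta$ pass through the same base point $z(s) = w(s)$ at a.e.\ $s \in E$ and have the same $\dot z = \dot w$ there, the horizontality formula expresses $t'(s)$ and $\tau'(s)$ via the \emph{same} bilinear expression in $z(s)$ and $\dot z(s)$, hence $t'(s) = \tau'(s)$ a.e.\ on $E$. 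Therefore $\gamma'(s) = \eta'(s)$ a.e.\ on $E$, completing the one-dimensional statement.

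\textbf{Assembling the multidimensional conclusion.} Fix $k \in \{1,\dots,m\}$. By Fubini, for a.e.\ line $L$ parallel to $e_k$ the restrictions $f|_L, g|_L$ are absolutely continuous into $\R^{2n+1}$ and — after discarding a further null set of lines — their a.e.-defined velocities are horizontal (this is where one invokes that $f, g \in W^{1,1}(\Omega,\Heis^n)$, so their restrictions to a.e.\ line are Sobolev maps into $\Heis^n$ with horizontal weak derivative). Apply the one-dimensional core on each such line to conclude $\partial f/\partial x_k = \partial g/\partial x_k$ a.e.\ on $S$. Doing this for every $k$ and taking the (finite) intersection of the relevant full-measure sets yields $\nabla f = \nabla g$ a.e.\ on $S$.

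\textbf{Main obstacle.} The routine parts are the Fubini/ACL bookkeeping and the elementary lemma about derivatives coinciding on coincidence sets. The one genuinely delicate point is justifying the step that pins down the $t$-derivative: one must be careful that ``$\gamma'(s)$ is horizontal a.e.'' is being used correctly for these \emph{specific} restrictions $f|_L$, $g|_L$ of Sobolev maps (not just for nice absolutely continuous curves), and that the horizontality identity $\dot t = 2\sum_j(x_j\dot y_j - y_j\dot x_j)$ can be applied pointwise a.e.\ to both curves simultaneously at points of $E$. Once one knows that $\gamma$ and $\eta$ share base point and horizontal velocity a.e.\ on $E$, the equality of the vertical velocities is forced by the contact structure, so this is really the crux.
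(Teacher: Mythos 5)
Your proof is correct, but your one-dimensional core runs differently from the paper's. Both arguments reduce to almost every line parallel to a coordinate axis, both rely on the fact that the restricted curves are absolutely continuous with horizontal velocity a.e.\ (the fact noted with Lemma \ref{1Destimate}, equivalently the contact-equation characterization in Subsection \ref{capogna-lin-remark}), and the final Fubini assembly is the same. The difference is how equality of derivatives is extracted on the coincidence set. The paper argues at the level of the full velocity vector: if at a point of $S$ both derivatives exist, are horizontal, and differ, then---because two horizontal vectors based at points sharing the same $\C^n$-coordinate cannot differ by a nonzero vertical vector---their difference has nonzero $\C^n$-part, and the first-order expansion \eqref{isolated} shows the point is isolated in $S$ along the line; the bad set is therefore countable on each line and null by Fubini. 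You instead decompose: on $S$ the first $2n$ Euclidean components of $f$ and $g$ coincide, so the classical lemma on derivatives of absolutely continuous functions over their coincidence set gives equality of the horizontal components of the derivatives a.e.\ on $S$ along the line, and horizontality then determines the $t$-derivative as the same bilinear expression in the common base point and common horizontal velocity, forcing the vertical components to agree as well. The two mechanisms rest on the same facts (a.e.\ horizontality plus accumulation of $S$ along lines), but yours is more direct, avoiding the isolation/countability bookkeeping at the cost of invoking the contact equation explicitly, while the paper needs only the linear-algebra observation that the horizontal plane meets the vertical axis trivially. One cosmetic slip: with the form $\alpha$ of \eqref{the-alpha-form}, the contact relation is $\dot t = 2\sum_j (y_j\dot x_j - x_j\dot y_j)$, the negative of what you wrote; this is immaterial, since all that matters is that $\dot t$ is a fixed function of the pair $(z,\dot z)$.
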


\begin{proof}
First, we consider the case $m=1$. Assume that $\Omega=[a,b]$ and
$f,g:[a,b]\to\Heis^n$ are absolutely continuous. Let $p\in S$ and suppose that
$f'(p)$ and $g'(p)$ exist and are horizontal, but are not equal. Then
\begin{equation}
\label{isolated}
(f(p+\eps) - g(p+\eps)) - (f(p)-g(p)) = \eps(f'(p)-g'(p)) +o(\eps).
\end{equation}
Since $f'(p)$ and $g'(p)$ are both horizontal, their difference cannot lie in
$Z$.
Hence for $\eps$ sufficiently small $f(p+\eps)-g(p+\eps)\not\in Z$,
because otherwise the left hand side of \eqref{isolated} would belong to $Z$ while the
right hand side would not.
It follows that $p$ is an isolated point of $S$. The set $E$ of such
points is at most countable.

Now we consider the general case. Assume that $f$ and $g$ are bounded
$W^{1,1}$ maps from $\Omega\subset\R^m$ to $\Heis^n$. Let $E$ denote the set
of points $p\in S$ so that $\nabla f(p)$ and $\nabla g(p)$ exist but are not
equal. For $p\in E$, there exists $i\in\{1,\ldots,m\}$ so that
$\tfrac{\partial}{\partial x_i}f(p)$ and $\tfrac{\partial}{\partial x_i}g(p)$
are not equal. Applying the argument from the previous paragraph, we find that
there exists a nontrivial line segment $L$ parallel to the $e_i$ axis and
passing through $p$, so that $L\cap S = \{p\}$. Denoting the set of such
points by $E_i$, we have $E=E_1\cup\cdots\cup E_m$. Repeated applications of
Fubini's theorem show that each of the sets $E_i$ has Lebesgue $m$-measure
zero in $\Omega$. Hence the measure of $E$ is equal to zero.
\end{proof}

\begin{proposition}
\label{estimate}
Let $\Omega$ be a domain in $\R^m$, let $f\in W^{1,1}(\Omega,\Heis^n)$
and let $\kappa:(\Heis^n,d_{cc})\to\ell^\infty$ be the Kuratowski embedding.
Then $\bar{f}=\kappa\circ f\in W^{1,1}(\Omega,\ell^\infty)$ and
$$
|\nabla \bar{f}|_{\ell^\infty} = |\nabla f|_\Heis
\quad
\mbox{a.e.}
$$
where
$|\nabla f|_\Heis$ was defined in \eqref{nabla-heis} and
on the left hand side of the inequality we have the norm defined in \eqref{gradient}.
\end{proposition}
\begin{proof}
First we consider the case $m=1$. Assume that $f:[a,b]\to\Heis^n$ is absolutely continuous.
Then at a point where $\bar{f}'(s)$ exists we have
$$
\Vert \bar{f}'(s)\Vert_\infty =
\sup_i\left|\frac{d}{ds} d_{cc}(f(s),p_i)\right|\, .
$$
Since
$$
\left|\frac{d}{ds} d_{cc}(f(s),p_i)\right|
\le
\liminf_{h\to 0} \frac{d_{cc}(f(s+h),f(s))}{|h|}
\le
\liminf_{h\to 0}\frac{1}{h}\int_s^{s+h} |f'(\tau)|_\Heis\, d\tau = |f'(s)|_\Heis
$$
for a.e. $s\in [a,b]$ we have $\Vert \bar{f}'(s)\Vert_\infty \leq
|f'(s)|_\Heis$. On the other hand inequality \eqref{hot-dog}
implies that $\Vert\bar{f}'(s)\Vert_\infty \geq |f'(s)|_\Heis$. Hence
$\Vert \bar{f}'(x)\Vert_\infty = |f'(s)|_\Heis$ a.e. The general case
$m\geq 1$ follows from the case $m=1$ and absolute continuity of $f$
along almost all lines parallel to the coordinate axes.
\end{proof}

\begin{proof}[Proof of Theorem~\ref{T4}]
Suppose that $\Omega$ and the maps $f_k,f$ are as in the statement of the theorem,
and assume that $f_k$ converges to $f$ in $W^{1,p}(\Omega,\Heis^n)$.
Using \eqref{mDe} we find
$$
\int_{\{f_k-f\not\in Z\}} (|\nabla f_k|_\Heis + |\nabla f|_\Heis)^p \le
C \int_\Omega
|\nabla(\bar{f}_k-\bar{f})|_{\ell^\infty}^p \to 0
$$
as $k\to\infty$. This completes the proof.
\end{proof}

\begin{proof}[Proof of Corollary~\ref{T5}]
By working with charts we may assume that $M=\Omega$ is a bounded
domain in $\R^m$. Assume
that $f_k$ converges to $f$ in $W^{1,p}(\Omega,\Heis^n)$.
Clearly $f_k\to f$ in $L^p(\Omega,\R^{2n+1})$.
For each $k$, let $S_k := \{ p\in \Omega : f(p) - f_k(p) \in Z \}$. By
Theorem~\ref{T4},
$$
\int_{\Omega\setminus S_k} |\nabla f_k|^p+|\nabla f|^p \leq
C \int_{\Omega\setminus S_k} |\nabla f_k|_\Heis^p + |\nabla f|_\Heis^p \to 0.
$$
The inequality follows from the fact that the ranges of all mappings $f_k,f$
are contained in a common bounded subset of $\Heis^n$ and the Heisenberg norm
$|v|_\Heis$ and the Euclidean norm $|v|$ are comparable for horizontal
vectors $v\in H_p\Heis^n$ when $p$ varies over a bounded set in $\Heis^n$.
By Lemma~\ref{lemma6-6} we have
$$
\int_{S_k} |\nabla(f_k-f)|^p = 0.
$$
Note that here we refer to the Euclidean difference and the Euclidean norm on
$\nabla(f_k-f)$. Combining these two statements completes the proof.
\end{proof}

\begin{remark}\label{koranyi-remark}
Everything discussed in this section works also for the Kor\'anyi metric $d_K$
on $\Heis^n$ (see \eqref{dKor} for the definition). The proofs are
easier, since $d_K$ is clearly smooth on the set $\{(p,q) \in
\Heis^n\times\Heis^n : p\ne q\}$. In particular, there is no need to
make any special distinction for pairs of points $p,q\in\Heis^n$ with
$p\ne q$ but $q^{-1} * p \in Z$.
\end{remark}

\subsection{The definition of Capogna and Lin}
\label{capogna-lin-remark}
We conclude this section by discussing the equivalence of the above
definition for the Sobolev space $W^{1,p}(M,\Heis^n)$ with that
of Capogna and Lin. This equivalence is already known and can be deduced,
for instance, by passing through the
definition of Korevaar and Schoen. Nevertheless, we find it instructive to give
a self-contained, direct discussion. In order to avoid issues with the
integrability of the function itself, we restrict attention to bounded
functions. Again by working in charts we may assume that $M=\Omega$ is
a Euclidean domain.

To prove the equivalence with the definition of Capogna and Lin it suffices to prove
the following result, because the condition stated in the result is equivalent to their
definition of the Heisenberg valued Sobolev space.
See Theorems 2.11 and 2.15 in \cite{capognal}.
\begin{proposition}
A bounded function
$f=(z,t)=(x_1,y_1,\ldots,x_n,y_n,t):\Omega\to\Heis^n$ lies in
$W^{1,p}(\Omega,\Heis^n)$ if and only if $f$ is an element of the
usual Sobolev space $W^{1,p}(\Omega,\R^{2n+1})$ and satisfies the
contact equation
\begin{equation}
\label{CE}
\nabla t = 2 \sum_{j=1}^n ( y_j \nabla x_j - x_j \nabla y_j)
\end{equation}
almost everywhere in $\Omega$.
\end{proposition}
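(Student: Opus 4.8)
The plan is to prove the two implications separately, each time reducing to scalar or vector-valued Sobolev functions via the absolute-continuity-on-lines (ACL) description together with the intrinsic characterization of $W^{1,p}(\Omega,\Heis^n)$ given in Proposition~\ref{SM-T2}. Throughout I would write $K=\overline{f(\Omega)}$, a compact subset of $\Heis^n$ since $f$ is bounded and $\Omega$ is bounded; because $\boldg$ is a continuous family of inner products on the horizontal bundle, there is a constant $C\ge 1$ with $C^{-1}|v|\le |v|_\Heis\le C|v|$ for every horizontal vector $v$ based at a point of $K$, where $|\cdot|$ is the Euclidean norm on $\R^{2n+1}$. Let $\kappa:(\Heis^n,d_{cc})\to\ell^\infty$ be the (isometric) Kuratowski embedding, so that $f\in W^{1,p}(\Omega,\Heis^n)$ means precisely $\bar f:=\kappa\circ f\in W^{1,p}(\Omega,\ell^\infty)$, and recall that the contact equation \eqref{CE} is equivalent to the assertion that every partial derivative $\partial_i f$ (the $i$th column of $\nabla f$) is a horizontal vector for a.e.\ point of $\Omega$.

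For the forward implication, assume $f\in W^{1,p}(\Omega,\Heis^n)$ is bounded. As observed in the introduction, local Lipschitz continuity of the identity $\Heis^n\to\R^{2n+1}$ forces $f\in W^{1,p}(\Omega,\R^{2n+1})$, so $f$ has an ACL representative and $\nabla f$ exists a.e.; it remains to prove horizontality of $\nabla f$. Applying Proposition~\ref{SM-T1} to $\bar f$, each coordinate $\bar f^j=d_{cc}(\cdot,p_j)\circ f-\mathrm{const}$ lies in $W^{1,p}(\Omega)$ and there is a single majorant $g\in L^p(\Omega)$ with $|\nabla\bar f^j|\le g$ for all $j$. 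Fixing a coordinate direction $e_i$ in $\R^m$ and using Fubini, for a.e.\ line $\ell$ parallel to $e_i$ all the functions $\bar f^j|_\ell$ are absolutely continuous with $|(\bar f^j|_\ell)'|\le g|_\ell\in L^1(\ell)$; taking the supremum over $j$ shows $\bar f|_\ell$ is absolutely continuous into $\ell^\infty$, and since $\kappa$ is isometric, $\gamma:=f|_\ell$ is absolutely continuous into $(\Heis^n,d_{cc})$. In particular $\gamma$ has finite $d_{cc}$-variation, so at a.e.\ $s$ (wherever the nondecreasing variation function is differentiable) we have $d_{cc}(\gamma(s),\gamma(s+h))=O(h)$ as $h\to0$. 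Fix such an $s$ which is also a point of Euclidean differentiability of $\gamma=(z,t)$; expanding the group law using $\gamma(s+h)=\gamma(s)+h\gamma'(s)+o(h)$ yields
$$
\gamma(s)^{-1}*\gamma(s+h)=\bigl(hz'(s)+o(h),\ hA+o(h)\bigr),
\qquad
A:=t'(s)-2\Imag\sum_{j=1}^{n}z_j(s)\overline{z_j'(s)}.
$$
Comparing $d_{cc}$ with the Kor\'anyi metric \eqref{dKor} and using $\|(w,\tau)\|_K=(|w|^4+\tau^2)^{1/4}\ge|\tau|^{1/2}$ gives $d_{cc}(\gamma(s),\gamma(s+h))\ge c\,|hA+o(h)|^{1/2}$ for some $c>0$; combined with $d_{cc}(\gamma(s),\gamma(s+h))=O(h)$, this forces $A=0$. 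But $A=0$ is exactly the condition that $\gamma'(s)=\partial_i f$ lies in the kernel of the contact form $\alpha$, i.e.\ is horizontal, so a final application of Fubini gives \eqref{CE} a.e.\ in $\Omega$.

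For the converse, assume $f\in W^{1,p}(\Omega,\R^{2n+1})$ is bounded and satisfies \eqref{CE}, and set $g:=C|\nabla f|\in L^p(\Omega)$. By Fubini, for a.e.\ line $\ell$ parallel to a coordinate axis $e_i$ the curve $\gamma:=f|_\ell$ is absolutely continuous, $\gamma'=\partial_i f$ is horizontal a.e.\ on $\ell$ by \eqref{CE}, and $|\gamma'(\tau)|_\Heis\le g|_\ell(\tau)$ with $g|_\ell\in L^1(\ell)$, since $|\partial_i f|_\Heis\le C|\partial_i f|\le C|\nabla f|$. Thus on each subinterval $\gamma$ is an admissible competitor in the variational problem defining the Carnot--Carath\'eodory distance, so
$$
d_{cc}(\gamma(s),\gamma(s'))\le\int_s^{s'}|\gamma'(\tau)|_\Heis\,d\tau\le\int_s^{s'}g|_\ell(\tau)\,d\tau,
$$
and $f|_\ell$ is absolutely continuous into $(\Heis^n,d_{cc})$. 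Fix $p_0\in\Heis^n$ and put $u:=d_{cc}(p_0,f(\cdot))$, which is bounded on $\Omega$. Since $d_{cc}(p_0,\cdot)$ is $1$-Lipschitz, the previous estimate shows $u|_\ell$ is absolutely continuous with $|(u|_\ell)'|\le g|_\ell$ a.e., for a.e.\ line $\ell$ parallel to each coordinate axis; by the ACL characterization of scalar Sobolev functions, $u\in W^{1,p}(\Omega)$ with $|\nabla u|\le\sqrt{m}\,g$ a.e. As $\sqrt{m}\,g$ is a majorant independent of $p_0$, Proposition~\ref{SM-T2} yields $f\in W^{1,p}(\Omega,\Heis^n)$.

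I expect the main obstacle to be the horizontality step in the forward implication. There the hypothesis only provides $d_{cc}$-absolute continuity of $f$ along lines, and one must extract from this a pointwise constraint on the \emph{Euclidean} differential of $f$; this is precisely where the sub-Riemannian geometry intervenes, through the inhomogeneity $\|(w,\tau)\|_{cc}\asymp|w|+|\tau|^{1/2}$ which penalizes displacement in the central direction, the square-root-versus-linear mismatch between $|\tau|^{1/2}$ and the $O(h)$ increment bound being exactly what annihilates the defect $A$. The rest is routine bookkeeping with null sets of exceptional lines (those where $f|_\ell$ fails to be absolutely continuous, where the relevant derivatives fail to exist, or where \eqref{CE} fails), handled by repeated appeals to Fubini's theorem.
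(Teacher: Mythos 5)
Your proof is correct and takes essentially the same route as the paper: both implications are reduced to the ACL property along coordinate lines, with horizontality of the line images being equivalent to the contact equation \eqref{CE} and with the comparability of Carnot--Carath\'eodory and Euclidean length along horizontal curves driving the converse. The differences are only cosmetic: you prove the horizontality step directly (via the group-law expansion and the Kor\'anyi lower bound $\Vert(w,\tau)\Vert_K\ge|\tau|^{1/2}$) where the paper cites it, and you close the converse with Proposition~\ref{SM-T2} rather than Proposition~\ref{SM-T1}.
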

\begin{proof}
To prove the ``only if'' statement, we consider a bounded function
$f\in W^{1,p}(\Omega,\Heis^n)$. Combining the fact that the identity
map from $\Heis^n$ to $\R^{2n+1}$ is Lipschitz on bounded sets with
Proposition~\ref{SM-T2.5} shows that $f\in W^{1,p}(\Omega,\R^{2n+1})$.
Moreover, the ACL property of $f$ (see, for instance, the proof of
Theorem 2.2 in \cite{hajlaszSobolev}) yields that the image of almost
every line parallel to a coordinate axis in $\Omega$, is a horizontal
curve. Consequently, the contact equation \eqref{CE} is satisfied
a.e.\ on such lines. By Fubini's theorem, \eqref{CE} holds a.e.\ in
$\Omega$.

To prove the ``if'' statement we reverse the argument. Consider a
bounded function $f \in W^{1,p}(\Omega,\R^{2n+1})$ which satisfies
\eqref{CE} almost everywhere. We first note that $f$ is ACL as a map
from $\Omega$ to $\R^{2n+1}$. From \eqref{CE}, by an application of
Fubini's theorem, we conclude that the image of almost every line
parallel to a coordinate axis in $\Omega$ is a horizontal curve. Along
horizontal curves in $\Heis^n$, the Hausdorff $1$-measures computed
with respect to the Euclidean and Carnot-Carath\'eodory metrics are
mutually absolutely continuous. Hence $f$ is ACL as a map to $\Heis^n$
and so also as a map to $\kappa(\Heis^n) \subset \ell^\infty$. It now
follows that the $\mbox{weak}^*$ partial derivatives of $f$ exist
almost everywhere and are bounded above (almost everywhere) by an
$L^p$ function. By Proposition~\ref{SM-T1}, $f$ lies in
$W^{1,p}(\Omega,\Heis^n)$.
\end{proof}

\section{Density and nondensity of Lipschitz maps in $W^{1,p}(M,\Heis^n)$}
\label{P13}

\subsection{Proof of Proposition~\ref{T1}}

Let $\psi:\Sph^n\to\Heis^n$ be a bi-Lipschitz embedding whose existence
was established in Theorem~\ref{T2}. We can also assume that $\psi$ is a
$C^\infty$ embedding as a map from $\Sph^n$ to $\R^{2n+1}$. Let
$$
f(x)=\psi\left(\frac{x}{|x|}\right)\in W^{1,p}(\B^{n+1},\Heis^n),
\quad
1\leq p<n+1.
$$
We will prove that the mapping $f$ cannot be approximated by Lipschitz mappings
$\lip(\B^{n+1},\Heis^n)$ when $n\leq p<n+1$.
To the contrary suppose that there is a sequence $g_k\in \lip(\B^{n+1},\Heis^n)$
such that
$$
g_k\to f
\quad \mbox{in $W^{1,p}(\B^{n+1},\Heis^n)$.}
$$
Clearly $f\in W^{1,p}(\B^{n+1},\R^{2n+1})$ and $g_k\in \lip(\B^{n+1},\R^{2n+1})$.
However, we cannot use Corollary~\ref{T5}, because we cannot assume that the mappings
$g_k$ are uniformly bounded.

By Theorem~\ref{AKM}, $\cH^{n+1}_{cc}(g_k(\B^{n+1})) = 0$ for all $k$.
Consequently, $\cH^{n+1}(g_k(\B^{n+1}))=0$ for all $k$ as well, since
the identity map from $\Heis^n$ to $\R^{2n+1}$ is locally Lipschitz.
(Recall that $\cH^k$ denotes the $k$-dimensional Euclidean Hausdorff
measure.)

Consequently, in order to arrive at a contradiction, it suffices to show that
\begin{equation}
\label{H0}
\cH^{n+1}(g_k(\B^{n+1}))>0 \qquad \mbox{for sufficiently large $k$.}
\end{equation}

Let $\omega$ be the pullback by $\psi^{-1}$ of the volume form from
$\partial \B^{n+1}=\Sph^n$ to $\psi(\Sph^n)$. We can assume that
$\omega$ is a smooth compactly supported $n$-form on $\R^{2n+1}$. We
used a similar construction in the proof of
Proposition~\ref{BaloghGeneralization}. 

By $\Sph^n(r)$ and $\B^{n+1}(r)$ we will denote the sphere and the ball of
radius $r$ centered at the origin. Observe that since $f|_{\Sph^n(r)}$ is just
a rescaling of the map $\psi$, then
$$
\int_{\Sph^n(r)} f^*\omega=\cH^n(\Sph^n)>0.
$$
Let $K={\rm supp}\, \omega$, let $S_k=\{x\in \B^{n+1}:\,
g_k(x)-f(x)\in Z\}$ and let $E_k=S_k\cup g_k^{-1}(K)$. We claim that
$$
(\nabla g_k)\chi_{E_k} \to \nabla f
\quad
\mbox{in $L^p(\B^{n+1})$.}
$$
Indeed, according to Lemma~\ref{lemma6-6}, $\nabla g_k=\nabla f$ a.e. in $S_k$
and hence
$$
\int_{S_k}|\nabla f-\nabla g_k|^p=0.
$$
Since the mappings $f$ and $g_k|_{g_k^{-1}(K)}$, $k=1,2,\ldots$
are uniformly bounded Theorem~\ref{T4} yields
$$
\int_{\B^{n+1}\setminus S_k} |\nabla f|^p + |\nabla g_k|^p\chi_{E_k} \leq
C\int_{\B^{n+1}\setminus S_k} |\nabla f|_\Heis^p+|\nabla g_k|_\Heis^p \to 0.
$$
Thus
\begin{equation*}
\int_{\B^{n+1}} |\nabla f-(\nabla g_k)\chi_{E_k}|^p
\leq
C\left( \int_{S_k} |\nabla f-\nabla g_k|^p +
\int_{\B^{n+1}\setminus S_k} |\nabla f|^p+|\nabla g_k|^p\chi_{E_k}\right)\to 0.
\end{equation*}
It follows from the Fubini theorem (again, see \cite[pp.\
189-190]{hajlaszSobolev}) that we may select a subsequence of $(g_k)$,
still denoted $(g_k)$, such that 
$$
(\nabla g_k)\chi_{E_k}|_{\Sph^n(r)} \to
\nabla f|_{\Sph^n(r)}
\quad
\mbox{in $L^p(\Sph^n(r))$ for a.e.\ $0<r<1$.}
$$
Fix such $r$. Since $K={\rm supp}\,\omega$,
$g_k^*\omega = 0$ in $\B^{n+1}\setminus g_k^{-1}(K)$ and hence
$$
g_k^*\omega = 0
\quad
\mbox{in $\B^{n+1}\setminus E_k$.}
$$
It follows from Lemma~\ref{stokes} that
$$
\int_{\B^{n+1}(r)} g_k^*(d\omega) =
\int_{\Sph^n(r)} g_k^*\omega =
\int_{\Sph^n(r)\cap E_k} g_k^*\omega,
$$
and an application of H\"older's inequality
(note that $g_k^*\omega,f^*\omega \in L^{p/n}(\Sph^n(r))$ and
$\tfrac{p}{n} \ge 1$ by assumption) yields
$$
\int_{\Sph^n(r)\cap E_k} g_k^*\omega
\to \int_{\Sph^n(r)} f^*\omega = \cH^n(\Sph^n)>0.
$$
Hence
$$
\int_{\B^{n+1}(r)} g_k^*(d\omega)>0
$$
for sufficiently large $k$.
Using the same argument involving the area formula as in the proof of
Proposition~\ref{BaloghGeneralization} we conclude that
$$
\cH^{n+1}(g_k(\B^{n+1}))\geq \cH^{n+1}(g_k(\B^{n+1}(r)))>0
$$
which proves \eqref{H0}. This completes the proof of Proposition~\ref{T1}.
\hfill $\Box$

\begin{proof}[Proof of Theorem~\ref{T1.5}(a)]
Suppose that $\dim M\geq n+1$. We need to prove that Lipschitz mappings
$\lip(M,\Heis^n)$ are not dense in $W^{1,p}(M,\Heis^n)$ when $n\leq
p<n+1$. This result will follow from Proposition~\ref{T1} in a
standard way; the same technique has already been used in the case of
maps into manifolds in \cite{bethuelz} and \cite{hajlaszSobolev}. For
the sake of completeness we provide the details.

It is easy to construct a smooth mapping $f:\B^{n+1}\to \Sph^n$ with two
singular points such that $f$ restricted to small spheres centered at the
singularities has degree $+1$ and $-1$ respectively and $f$ maps a
neighborhood of the boundary of the ball $\B^{n+1}$ into a point. We can model
the singularities on the radial projection mapping as in Proposition~\ref{T1}
so the mapping $f$ belongs to $W^{1,p}$. Let now $g:\B^{n+1}\times \Sph^{\dim
  M-n-1}\to \Sph^{n}$ be defined by $g(b,s)=f(b)$. We can embed the torus
$\B^{n+1}\times \Sph^{\dim M-n-1}$ into the manifold $M$ and extend the
mapping on the completion of this torus as a mapping into a point. Clearly
$g\in W^{1,p}(M,\Sph^{n})$.

Now let $\psi:\Sph^{n}\to\Heis^n$ be a smooth and horizontal (hence
bi-Lipschitz) embedding. We will prove that the mapping $\psi\circ g\in
W^{1,p}(M,\Heis^n)$ cannot be approximated by Lipschitz mappings from
$\lip(M,\Heis^n)$. By way of contradiction suppose that $u_k\in
\lip(M,\Heis^n)$ converges to $\psi\circ f$ in the norm of $W^{1,p}$. In
particular $u_k\to \psi\circ g$ in $W^{1,p}(\B^{n+1}\times \Sph^{\dim
M-n-1},\Heis^n)$. It follows from Fubini's theorem that there is a
subsequence of $(u_k)$, still denoted $(u_k)$, such that for a.e.\ 
$s \in \Sph^{\dim M - n - 1}$, $u_k$ restricted to the slice 
$\B^{n+1}\times\{s\}$ converges to the corresponding restriction of
$\psi\circ g$ in the Sobolev norm. Take such a slice and denote it
simply by $\B^{n+1}$. Hence $u_k$ restricted to a ball (of dimension
$n+1$) centered at the singularity of degree $+1$ converges to
$\psi\circ g$ restricted to the same ball. This is, however impossible
as was demonstrated in Proposition~\ref{T1}. 
\end{proof}

\begin{proof}[Proof of Theorem~\ref{T1.5}(b)]
Let $M$ be a compact Riemannian manifold of dimension at most $n$ and
let $f\in W^{1,p}(M,\Heis^n)$. Then $\bar{f}=\kappa\circ f\in 
W^{1,p}(M,\ell^\infty)$. According to Lemma~\ref{SM-T5},
$$
\Vert \bar{f}(x)-\bar{f}(y)\Vert \leq
Cd(x,y) \left(\cM|\nabla \bar{f}|(x)+\cM|\nabla \bar{f}|(y)\right)
\qquad \mbox{for a.e.\ $x,y$.}
$$
Let $E_t=\{ x\in M:\, \cM|\nabla \bar{f}|(x)\leq t\}$. The map
$\bar{f}|_{E_t}:E_t\to\ell^\infty$ is $Ct$-Lipschitz, so
$f|_{E_t}:E_t\to\Heis^n$ is also. According to Theorem~\ref{WY} there 
is a $C't$-Lipschitz map $f_t:M\to\Heis^n$ which coincides with $f$ on
$E_t$. By the same argument as in the proof of
Proposition~\ref{SM-T3}, $f_t\to f$ in $W^{1,p}(M,\Heis^n)$.
\end{proof}


\section{Sobolev maps into Grushin planes}\label{sec:grushin}

The Grushin plane $G_n$ is the sub-Riemannian manifold whose underlying space
is $M=\R^2$, with horizontal distribution $HM$ defined by the vector fields
$$
\xi_1 = \partl{}{x}, \qquad \xi_2 = x^n\partl{}{y}.
$$
We note that this distribution does not have constant rank, however, it is
bracket generating: after $n$ brackets we obtain $[\xi_1,[\xi_1,[\ldots
,\xi_2]\cdots] = n! \partl{}{y}$. We define a sub-Riemannian
(Carnot-Carath\'eodory) metric $d_{cc}$ on $M$ by declaring $\xi_1$ and
$\xi_2$ to be an orthonormal basis at each point $(x,y) \in M$ with $x\ne
0$, and declaring $\xi_1$ to be normal at each point $(0,y) \in
M$. We obtain a sub-Riemannian manifold $G_n$ of step $n+1$.

In this section we prove the following theorem.

\begin{theorem}\label{GrushinLipschitzDensity}
Equip $G_n$ with the Carnot-Carath\'eodory metric $d_{cc}$. Then Lipschitz
mappings from $\B^2$ to $G_n$ are not dense in $W^{1,p}(\B^2,G_n)$
provided $1\le p <2$.
\end{theorem}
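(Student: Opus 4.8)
\emph{Strategy.} I would argue along the lines of Proposition~\ref{T1}, except that, because the relevant ``sphere'' is here the circle $\Sph^1$, a soft topological (winding number) argument can replace the measure-theoretic one used there. Two ingredients are needed. First, a smooth horizontal embedding $\psi\colon\Sph^1\to G_n$ whose image encircles a nondegenerate segment of the singular line $L:=\{x=0\}$. Second, as a substitute for the pure $(n+1)$-unrectifiability of $\Heis^n$, the rigidity statement that every $d_{cc}$-Lipschitz map $h\colon\B^2\to G_n$ satisfies $\cH^1(h(\B^2)\cap L)=0$, where $\cH^1$ is the Euclidean $1$-dimensional Hausdorff measure.

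\emph{The embedding and the Sobolev map.} Since $\xi_1=\partial_x$ and $\xi_2=x^n\partial_y$ span the full tangent plane wherever $x\neq0$, one has $HG_n=TG_n$ off $L$; thus a smooth embedded loop in $G_n$ is horizontal exactly when its velocity is proportional to $\partial_x$ at each point where it meets $L$, and for its sub-Riemannian speed to stay bounded it should meet $L$ tangentially to order $n$ (locally of the form $y=a+c\,x^{n+1}+\cdots$). I would take an explicit simple closed curve $\psi$ in $\R^2$ of this type, meeting $L$ only at $(0,a)$ and $(0,b)$ with $a<b$, with one arc in $\{x>0\}$ and the other in $\{x<0\}$, and with the bounded Jordan domain it encloses containing a nondegenerate open segment $I\subset L$. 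By Theorem~\ref{T-BE1}, $\psi$ is bi-Lipschitz onto its image; and since its sub-Riemannian speed is bounded, the map $f(x):=\psi(x/|x|)$ satisfies $|\nabla f|_{G_n}(x)\lesssim|x|^{-1}$, hence lies in $W^{1,p}(\B^2,G_n)$ precisely when $1\le p<2$.

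\emph{Non-approximability.} Suppose $g_k\to f$ in $W^{1,p}(\B^2,G_n)$ with $g_k\in\lip(\B^2,G_n)$. Writing $\kappa\colon G_n\to\ell^\infty$ for the Kuratowski embedding, $\kappa\circ g_k\to\kappa\circ f$ in $W^{1,p}(\B^2,\ell^\infty)$; by Fubini in polar coordinates and the one-dimensional Sobolev embedding, a subsequence satisfies $\kappa\circ g_k\to\kappa\circ f$ uniformly on $\Sph^1(r)$ for a.e.\ $r\in(0,1)$, and since $\kappa$ is isometric this means $d_{cc}(g_k,f)\to0$ uniformly on $\Sph^1(r)$, hence --- the relevant values lying in a fixed compact set, on which $d_{\mathrm{Eucl}}\le C\,d_{cc}$ --- $g_k|_{\Sph^1(r)}\to f|_{\Sph^1(r)}$ uniformly in $\R^2$. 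Fix such an $r$ and any point $p\in I$. Since $f|_{\Sph^1(r)}$ is just $\psi$ reparametrised, it has winding number $1$ about $p$ and lies at positive Euclidean distance from $p$; hence for all large $k$ the loop $g_k|_{\Sph^1(r)}$ is homotopic to it in $\R^2\setminus\{p\}$ and has winding number $1$ about $p$. But if $p\notin g_k(\B^2(r))$, then $g_k|_{\Sph^1(r)}$ bounds the continuous disc $g_k|_{\B^2(r)}$ inside $\R^2\setminus\{p\}$ and has winding number $0$ --- a contradiction. Therefore $p\in g_k(\B^2(r))\subset g_k(\B^2)$ for every $p\in I$ once $k$ is large, whence $\cH^1(g_k(\B^2)\cap L)\ge\cH^1(I)>0$, contradicting the rigidity statement.

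\emph{The main obstacle.} The work is concentrated in the rigidity statement. An elementary isoperimetric estimate shows that the restriction of $d_{cc}$ to $L$ is bi-Lipschitz equivalent to $(\R,|y-y'|^{1/(n+1)})$, a snowflaked line of Hausdorff dimension $n+1$; consequently the restriction of any $d_{cc}$-Lipschitz $h$ to $h^{-1}(L)$ is $(n+1)$-H\"older as a map into $\R$, so $\dim_{\mathrm H}h(h^{-1}(L))\le 2/(n+1)$. For $n\ge2$ this exponent is $<1$ and $\cH^1(h(\B^2)\cap L)=0$ follows at once (alternatively, covering $\B^2$ by balls of radius $r$ contributes $\cH^1(h(h^{-1}(L)))\lesssim r^{-2}\cdot r^{n+1}=r^{n-1}\to0$). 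For $n=1$ the singular line sits at the critical dimension $2$ and this crude bound only yields finiteness; there one must use something specific about Carnot--Carath\'eodory--Lipschitz maps of $\B^2$ --- for instance that such an $h$ has vanishing Euclidean differential a.e.\ on $h^{-1}(L)$, moves only to order $|x-x'|^2$ in the singular direction near $L$, and (via the metric area formula, or via the finiteness of the natural $2$-dimensional area measure $|x|^{-n}\mathcal{L}^2$ on the image together with the decay of this density as $x\to0$) cannot produce a subset of $L$ of positive Euclidean $\cH^1$-mass. Establishing the $n=1$ case is the genuinely delicate point; the remaining ingredients --- the description of $HG_n$ off $L$, the boundedness of the sub-Riemannian speed of the embedded circle, and the local comparison $d_{\mathrm{Eucl}}\le C\,d_{cc}$ used above --- are routine.
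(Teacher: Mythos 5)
Your overall architecture is the same as the paper's: a bi-Lipschitz horizontal circle in the Grushin plane enclosing a nondegenerate segment of the singular line $Y=\{x=0\}$, composition with the cavitation map to get the non-approximable Sobolev map, reduction of $W^{1,p}$-convergence to uniform convergence on almost every circle $\Sph^1(r)$, a winding-number/degree argument forcing the Lipschitz approximants to cover a segment of $Y$, and a contradiction with a rigidity lemma asserting that Lipschitz images of $\B^2$ meet $Y$ in a set of zero length (Lemma~\ref{GrushinMeasureZero}). Two of your deviations are fine and even attractive: passing through the Kuratowski embedding and one-dimensional Sobolev embedding on circles lets you bypass the paper's Lemma~\ref{GrushinToEuclid}, and for $n\ge 2$ your H\"older/dimension count (a $d_{cc}$-Lipschitz map is $(n+1)$-H\"older into $(Y,d_E)$ on $h^{-1}(Y)$, so the image there has dimension at most $2/(n+1)<1$) is a genuinely simpler proof of the rigidity lemma than the paper's. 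A minor caveat: invoking Theorem~\ref{T-BE1} for a horizontal circle that touches $Y$ is not fully justified, since the comparison \eqref{E-BE2} in its proof degenerates where the distribution drops rank; the bi-Lipschitz property of your curve should be checked by hand near the two tangency points (the paper, for its part, simply asserts it for its geodesic bigon).

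The genuine gap is the rigidity lemma in the critical case $n=1$, which is precisely the case the paper works out (it proves the theorem ``for simplicity'' for $G_1$), and which your statement of the theorem must cover. You yourself flag it as ``the genuinely delicate point,'' and the ideas you gesture at do not close it. Euclidean Lipschitzness does not give a \emph{vanishing} differential a.e.\ on $h^{-1}(Y)$; and the $2$-H\"older bound for $h$ restricted to $h^{-1}(Y)$ cannot be upgraded to local constancy, because that set is merely closed (a $2$-H\"older map of a two-dimensional set can a priori cover a set of finite positive length, so the crude covering bound is genuinely inconclusive). The paper's argument uses the behavior of $h$ \emph{off} the preimage as well: if $df_x$ were nonsingular at a point $x\in h^{-1}(Y)$, moving in the direction $v=(df_x)^{-1}(\partial/\partial y)$ and using $d_{cc}\le C\,d_E^{1/2}$ (cf.\ \eqref{SReq2}) together with $d_{cc}|_Y\simeq\sqrt{d_E|_Y}$ gives $d_{cc}\bigl(h(x+tv),h(x)\bigr)\ge c\sqrt{t}-o(\sqrt{t})$, contradicting the Lipschitz bound; hence $df$ is singular a.e.\ on $h^{-1}(Y)$, and then Kirchheim's metric differentiability theorem and the metric-space area formula convert this into $\cH^2_{cc}\bigl(h(\B^2)\cap Y\bigr)=0$, which is the desired length statement since $\cH^2_{cc}$ restricted to $Y$ is a multiple of Euclidean length. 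Without this (or an equivalent) argument your proof does not treat $G_1$, so the theorem as stated is not established by the proposal.
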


For simplicity we consider only the case $G=G_1$. For $y_1>0$, $G$
contains infinitely many geodesics (local length minimizers) joining
$(0,0)$ and $(0,y_1)$. See, e.g., \cite[p.\ 275]{CalinChang}. They are
given by
\begin{equation}\label{GGF}
x_m(t) = \pm \sqrt{\frac{2y_1}{m \pi}} \sin(m \pi t), \;\;
y_m(t) = y_1\left(s-\frac{\sin(2 m \pi s)}{2 m \pi} \right)
\end{equation}
for $m\in \Z^+$. When $m=1$ we obtain a single arc rising to $(0,y_1)$
in the first quadrant and another in the second quadrant.


Concatenating these two geodesics for $y_1=1$ yields a set which is
bi-Lipschitz parameterized by $\Sph^1$. Let $\phi: \Sph^1 \rightarrow
G$ be a bi-Lipschitz mapping whose image is this union of geodesics.
Let $u_0: \B^2 \rightarrow \Sph^1$ be the cavitation map as in
\eqref{cavitation}. Since $u_0 \in W^{1,p}(\B^2,\Sph^1)$ for $p \in
[1,2)$, the map
\begin{equation}\label{F}
f = \phi \circ u_0
\end{equation}
is in $W^{1,p}(\B^2,G)$ for the same range of $p$. We will show that
$f$ cannot be approximated by Lipschitz functions from $\B^2$ to $G$.

The analog of the center $Z$ of the Heisenberg group is the vertical line
$Y:=\{(x,y) \in G:x=0\}$. The key technical result in the proof of the theorem
is the following lemma. Note that the restriction of the Hausdorff
$2$-measure in the metric $d_{cc}$ to the set $Y$ coincides (up to a
constant) with the Euclidean length measure on $Y$.

\begin{lemma}\label{GrushinMeasureZero}
Let $f: \B^2 \rightarrow G$ be a Lipschitz map. Then $\cH^2_{cc}(f(\B^2)
\cap Y)=0$.
\end{lemma}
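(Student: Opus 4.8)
The plan is to reduce the claim to a Morse--Sard-type statement for a $C^{1,1}$ function of two variables. Write $f=(f_1,f_2)$ in the coordinates $(x,y)$ on $G=\R^2$, and set $A:=f^{-1}(Y)=f_1^{-1}(0)$, a closed subset of $\B^2$. By \eqref{SReq2} (with $s=2$ and $\tilde{\boldg}$ the Euclidean metric) we have $d_{\mathrm{eucl}}\le d_{cc}$ on compact sets, so $f$ is also Euclidean-Lipschitz, say $L$-Lipschitz. Since $f(\B^2)\cap Y=\{(0,t):t\in f_2(A)\}$ and, by the remark preceding the statement, $\cH^2_{cc}$ restricted to $Y$ is comparable to Euclidean length on $Y$, it suffices to prove that $f_2(A)\subset\R$ has Lebesgue measure zero.

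The main ingredient is a contact inequality for $f$, the Grushin analogue of the contact equation \eqref{CE}. Restrict $f$ to an arbitrary segment $I$ parallel to a coordinate axis; then $\gamma:=f|_I=(\gamma_1,\gamma_2)$ is an $L$-Lipschitz, hence horizontal, curve in $(G,d_{cc})$. Writing $\gamma'=a\xi_1+b\xi_2=(a,\,b\gamma_1)$ a.e., we have $\gamma_2'=b\gamma_1$, and since the sub-Riemannian speed $\sqrt{a^2+b^2}$ is at most $L$ we get $|\gamma_2'|\le L|\gamma_1|$ a.e.\ on $I$ (this is trivial where $\gamma_1=0$, since horizontality then forces $\gamma_2'=0$). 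Carrying this out along both families of coordinate lines and applying Fubini gives
\[
|\nabla f_2|\le \sqrt{2}\,L\,|f_1|\qquad\text{a.e.\ in }\B^2.
\]
Since $f_1$ is $L$-Lipschitz and vanishes on $A$, this yields $|\nabla f_2(p)|\le L'\,\dist(p,A)$ a.e., for a suitable constant $L'$. Integrating $\nabla f_2$ along the segment $[p,q]$ for $p,q\in A$, and using $\dist\big(q+t(p-q),A\big)\le\min(t,1-t)\,|p-q|$, one obtains the quadratic flatness estimate $|f_2(p)-f_2(q)|\le \tfrac14 L'\,|p-q|^2$ for all $p,q\in A$.

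To conclude, observe that this estimate is precisely Whitney's $C^{1,1}$ jet condition for the data $(f_2|_A,\,0)$ on the closed set $A$, so $f_2|_A$ extends to a function $g\in C^{1,1}(\R^2)$ with $\nabla g\equiv 0$ on $A$. Then $f_2(A)=g(A)\subset g(\{\nabla g=0\})$, which has Lebesgue measure zero by the Morse--Sard theorem in the sharp smoothness class $C^{1,1}$ for maps $\R^2\to\R$. Combined with the first paragraph, this gives $\cH^2_{cc}(f(\B^2)\cap Y)=0$.

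The delicate step is the last one. The $C^1$ version of Morse--Sard is \emph{false} (Whitney's $C^1$ function on $\R^2$ which is non-constant on a connected set of critical points), so it is essential that the contact inequality produces a genuine quadratic modulus on $A$ -- equivalently a $C^{1,1}$ extension -- rather than merely $\nabla f_2=0$ a.e.\ on $A$; purely first-order, slice-by-slice reasoning (horizontality along lines plus the one-dimensional Sard theorem) only gives $\cH^1\big(f_2(A\cap\ell)\big)=0$ for each line $\ell$, which is insufficient, since a set meeting every coordinate line in an $\cH^1$-null set may still project onto a positive-measure set. If one prefers to avoid quoting the $C^{1,1}$ Morse--Sard theorem, it can be reproved directly in this two-dimensional setting: the interior of $A$ contributes only countably many points (on each of its components $f$ is a $d_{cc}$-Lipschitz map of a connected open set into $Y$, hence constant), while on the remainder one splits the critical set of $g$ according to whether the a.e.-defined Hessian vanishes and runs the standard covering/Fubini argument, using Egorov's theorem to make the flatness uniform on the stratum where $D^2g=0$.
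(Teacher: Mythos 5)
Your proof is correct, but it follows a genuinely different route from the paper's. The paper argues measure-theoretically: it invokes Kirchheim's metric differentiability theorem and the Ambrosio--Kirchheim area formula, reducing the lemma to showing that the metric Jacobian $\mathbf{J}_2(mdf_x)$ vanishes a.e.\ on $f^{-1}(Y)$; this is done by contradiction, since a nonsingular Euclidean differential at a point of $f^{-1}(Y)$ would give $d_{cc}(f(x+tv),f(x))\ge c\sqrt{t}-O(t)$, violating the Lipschitz bound, because $d_{cc}$ restricted to $Y$ behaves like the square root of the Euclidean distance. You instead extract a pointwise differential inequality (the Grushin contact inequality $|\nabla f_2|\le \sqrt{2}\,L\,|f_1|$ a.e.), deduce the quadratic flatness $|f_2(p)-f_2(q)|\le C|p-q|^2$ on $A=f_1^{-1}(0)$, and conclude via Whitney $C^{1,1}$ extension and the sharp Morse--Sard theorem in class $C^{1,1}$ (Bates). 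Both arguments ultimately exploit the same geometric fact (the square-root behavior of $d_{cc}$ on and near $Y$), but the paper's route stays within machinery it already cites (metric differentials, area formula), while yours trades that for the Whitney extension theorem plus a nontrivial external Sard-type theorem, which you correctly identify as indispensable: the naive covering argument from the quadratic bound only yields $\cH^1(f_2(A))<\infty$, not nullity, so Bates' theorem (or your sketched direct proof of planar $C^{1,1}$ Sard) is genuinely needed and should be cited explicitly. Two small points to patch: (i) the bound $|\gamma_2'|\le L|\gamma_1|$ at a.e.\ point of $\{\gamma_1=0\}$ should be justified by the square-root lower bound for $d_{cc}$ near $Y$ (or a density-point argument), since a.e.\ horizontality alone does not control the derivative at a prescribed point; (ii) the a.e.\ inequality $|\nabla f_2|\le L'\dist(\cdot,A)$ must be upgraded to the integral estimate along \emph{every} segment $[p,q]$, which is standard (mollify, or average over nearby parallel segments, using continuity of $\dist(\cdot,A)$ and of $f_2$) but deserves a sentence. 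Your approach does have the side benefit of producing quantitative quadratic flatness of $f_2$ on $f^{-1}(Y)$, which the paper's Jacobian argument does not record.
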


\begin{proof}
Let $X = f^{-1}(Y) \subset \B^2$. As a substitute for the horizontal
(Pansu) differential, we will use Kirchheim's metric differential
\cite{kirch}. 
It follows from Kirchheim's metric
differentiability theorem \cite[Theorem 2]{kirch} that $f$ is almost
everywhere metrically differentiable. That is, at a.e.\ point in
$\B^2$ there exists a seminorm $mdf_x := \Norm{\cdot}_x$ such that
$d(f(x),f(y)) - \Norm{y-x}_x = o(\norm{y-x}$ as $y\to x$.

For a seminorm $s$ on $\R^2$, the {\it Jacobian} is defined as 
\[\mathbf{J}_2(s) = \frac{\pi}{\cH^2(\left\{x \mst s(x) \leq 1 \right\}
  )}.\]
By the area formula \cite[Theorem 5.1]{ambrosiok},
\[\int_X \mathbf{J}_2(mdf_x) \, dx = \int_Y \cH^0(f^{-1}(y)) \,
d\cH^2_{cc}(y).\]
Hence $\cH_{cc}^2(f(\B^2) \cap Y) \leq \int_X \mathbf{J}_2(mdf_x)$ and we
claim that
$$
\mathbf{J}_2(mdf_x)=0 \qquad \mbox{for a.e.\ $x\in X$.}
$$
To see why this claim holds, note that the identity map from $G$ to $\R^2$ is
locally Lipschitz, so $f$ is still Lipschitz when considered as a map into
$\R^2$. By Rademacher's theorem, the differential $df$ exists a.e. At points
where $df$ exists, it is clear that $\mathbf{J}_2(mdf_x) = \det(df_x)$. It
remains to show that $df$ is singular for a.e.\ $x\in X$.

Suppose $df_x$ is non-singular at $x \in X$ and let $v :=
(df_x)^{-1}(\partl{}{y})$. Then $|f(x+tv)-f(x)-(0,t)|_E$ is $O(t^2)$,
whence
$$
d_{cc}(f(x+tv),f(x)+(0,t)) = O(t)
$$
as $t\to 0$, by \eqref{SReq2}. On the other hand, there exists a constant
$c>0$ so that
$$
d_{cc}(f(x), f(x)+(0,t)) = c \sqrt{t}
$$
for all $t>0$, since $x\in X$ and $d_{cc}|_Y$ is a multiple of
$\sqrt{d_E|_Y}$. We conclude that
$$
d_{cc}(f(x+tv),f(x)) \ge c\sqrt{t} - O(t)
$$
as $t\to 0$, which contradicts the assumption that $f$ is Lipschitz.

In conclusion, $df_x$ is defined a.e.\ and is singular where defined.
Using the area formula, the proof is complete.
\end{proof}

We next reduce the question of Sobolev convergence to one about Euclidean
targets.

\begin{lemma}\label{GrushinToEuclid} If $f_k \rightarrow f$ in
  $W^{1,p}(\B^2,G)$, then $f_k \rightarrow f$ in $W^{1,p}(\B^2,\R^2)$.
\end{lemma}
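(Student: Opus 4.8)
The plan is to mimic the structure of Corollary~\ref{T5}, using the analogue of Theorem~\ref{T4} for the Grushin plane in place of the Heisenberg statement. The key point is that, just as in the Heisenberg case, convergence in $W^{1,p}(\B^2,G)$ forces the gradients of $f_k$ and $f$ to agree (in the Euclidean sense) on the set where $f_k - f$ lands in the vertical line $Y$, and to have small $L^p$ norms off that set. Since all the maps here are automatically bounded (the image of $f$ is the fixed compact union of two geodesics, and one may as well truncate the $f_k$ by a Lipschitz retraction of $G$ onto a large ball, exactly as in the proof of Proposition~\ref{SM-T3}), the subtlety about unbounded maps that appears in Proposition~\ref{T1} does not arise, and the argument of Corollary~\ref{T5} goes through more or less verbatim.

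First I would record the Grushin analogue of Lemma~\ref{1Destimate} and Lemma~\ref{mDestimate}: if $\kappa:(G,d_{cc})\to\ell^\infty$ is the Kuratowski embedding and $\bar h = \kappa\circ h$, then for $h\in W^{1,1}(\B^2,G)$ one has $|\nabla\bar h|_{\ell^\infty}\le|\nabla h|_{cc}$ (the easy direction, as in Lemma~\ref{estimate}), and for $f_k,f$ one has the lower bound
$$
|\nabla(\bar f_k - \bar f)|_{\ell^\infty}\ \ge\ \tfrac14\bigl(|\nabla f_k|_{cc}+|\nabla f|_{cc}\bigr)\,\chi_{\{f_k-f\notin Y\}}\qquad\text{a.e.}
$$
This requires the Grushin distance function $d_q$ to be smooth (in fact Euclidean $C^\infty$) away from the set $\{p: p-q\in Y\}$, together with the eikonal property and the fact that the modulus of the horizontal gradient is attained along a geodesic direction — the analogues of Lemma~\ref{SR-T1}, Lemma~\ref{eikonal} and Remark~\ref{when-attained}. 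These regularity facts for the Grushin plane are standard (and the geodesics are written explicitly in \eqref{GGF}); alternatively one can bypass them and argue directly, since away from $Y$ the Grushin metric is comparable to a smooth Riemannian metric on any compact set, which suffices for the one-dimensional estimate. Integrating the pointwise inequality and using $\bar f_k\to\bar f$ in $W^{1,p}$ gives
$$
\int_{\{f_k-f\notin Y\}}\bigl(|\nabla f_k|_{cc}^p+|\nabla f|_{cc}^p\bigr)\ \longrightarrow\ 0.
$$

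Next I would pass back to Euclidean targets. On the set $S_k:=\{x\in\B^2: f_k(x)-f(x)\in Y\}$, the Grushin analogue of Lemma~\ref{lemma6-6} gives $\nabla f_k=\nabla f$ a.e., so $\int_{S_k}|\nabla(f_k-f)|^p=0$; here the proof is identical to that of Lemma~\ref{lemma6-6}, using that two horizontal tangent vectors at a point of $G$ cannot differ by a vector tangent to $Y$ unless they are equal (horizontal vectors over $\{x\ne0\}$ are transverse to $Y$, and over $x=0$ the horizontal space is the $x$-axis, which meets the tangent line to $Y$ only in $0$). Off $S_k$, the ranges of all the maps lie in a common compact subset of $G$, on which the Grushin norm $|v|_{cc}$ and the Euclidean norm $|v|$ are comparable for horizontal $v$; combined with the displayed limit this yields $\int_{\B^2\setminus S_k}\bigl(|\nabla f_k|^p+|\nabla f|^p\bigr)\to0$. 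Since also $f_k\to f$ in $L^p(\B^2,\R^2)$ (the identity $G\to\R^2$ is locally Lipschitz and the maps are bounded), adding the two pieces gives $\|\nabla(f_k-f)\|_{L^p(\B^2)}\to0$, i.e.\ $f_k\to f$ in $W^{1,p}(\B^2,\R^2)$.

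The main obstacle is establishing the lower bound for $|\nabla(\bar f_k-\bar f)|_{\ell^\infty}$ off $Y$, i.e.\ the Grushin counterpart of Lemma~\ref{1Destimate}: one must know that $d_{cc}(\cdot,q)$ is smooth away from $\{p-q\in Y\}$ and that its horizontal gradient has unit length there and is realized in a geodesic direction, so that choosing points $p_i$ near $\gamma(\pm\delta)$ along a geodesic through $f_k(s)$ produces dense-set functionals detecting $|f_k'(s)|_{cc}$. Because the Grushin distribution is not of constant rank, one should be slightly careful near the line $x=0$; but on any curve avoiding $Y$ the relevant endpoints stay in the region $x\ne0$ where the structure is Riemannian and smooth, so the Heisenberg proof adapts without essential change. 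Everything else — the truncation reduction, Fubini along coordinate lines, the comparison of norms on compacta — is routine.
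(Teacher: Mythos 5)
Your template (Grushin analogue of Theorem~\ref{T4} plus an analogue of Lemma~\ref{lemma6-6}, then comparison of norms on bounded sets) is the right one, but you have chosen the wrong exceptional set, and the lemma you invoke on it is false. You take $S_k=\{x:\ f_k(x)-f(x)\in Y\}$, the literal translation of the Heisenberg condition $f_k-f\in Z$, and justify ``$\nabla f_k=\nabla f$ a.e.\ on $S_k$'' by asserting that horizontal vectors over $\{x\ne 0\}$ are transverse to $Y$. They are not: off the singular line the Grushin distribution has full rank two (it is spanned by $\partial_x$ and $x^n\partial_y$), so $\partial_y$ is itself horizontal there. Concretely, $f(u_1,u_2)=(1,u_2)$ and $g(u_1,u_2)=(1,2u_2)$ are Lipschitz (hence Sobolev) maps into the Riemannian part of $G$ whose difference lies in the $Y$-direction at every point, yet $\nabla f\ne\nabla g$; so the analogue of Lemma~\ref{lemma6-6} for your $S_k$ fails, your decomposition gives no control of $\nabla(f_k-f)$ on $S_k$, and the argument does not close. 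The same misreading affects your claim that $d_q$ is smooth away from $\{p:\ p-q\in Y\}$: the Grushin plane has no translation structure, and the degeneracy of $d_{cc}$ (the square-root behaviour along $Y$ used in Lemma~\ref{GrushinMeasureZero}) is tied to points lying \emph{on} $Y$, not to their difference lying in $Y$.

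The paper's proof interprets ``vertically separated'' as ``both points lie on $Y$,'' i.e.\ it works with $S_k=\{x:\ f(x)\in Y \text{ and } f_k(x)\in Y\}$, and with this choice both halves of the argument are in fact easier than in $\Heis^n$. On $S_k$ the required statement is essentially trivial: along a.e.\ coordinate line the maps are absolutely continuous horizontal curves, so their $y$-derivatives vanish wherever their values lie on $Y$, while the first components vanish identically on $S_k$ and hence have zero derivative at a.e.\ (density) point of it; thus $\nabla f=\nabla f_k=0$ a.e.\ on $S_k$ and $\int_{S_k}|\nabla(f_k-f)|^p=0$. Off $S_k$ at least one of the two values lies in $G\setminus Y$, where the metric is Riemannian and smooth, so the one-dimensional estimate of Lemma~\ref{1Destimate} (picking the Kuratowski coordinates $p_i$ near points of a geodesic issuing from the value with the larger speed) goes through without any requirement that $f_k-f$ avoid the $Y$-direction; this yields the analogue of Lemma~\ref{mDestimate} and Theorem~\ref{T4} with $\chi_{\{f-f_k\notin Z\}}$ replaced by the indicator of $\{f\notin Y\text{ or }f_k\notin Y\}$, after which the comparison of $|\cdot|_\Heis$-type and Euclidean norms on bounded sets finishes the proof as in Corollary~\ref{T5}. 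Replace your exceptional set and the transversality claim accordingly; the rest of your outline then matches the intended argument.
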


\begin{proof}
The same method applies here as in the case of Heisenberg targets. The only
difference is that for $p,q \in G$, the phrase ``$p$ and $q$ are vertically
separated'' must be interpreted ``$p,q \in Y$''. Since the metric is
Riemannian outside of $Y$, it is $C^\infty$ on $G\setminus Y$.
\end{proof}

\begin{proof}[Proof of Theorem~\ref{GrushinLipschitzDensity}]
Let $f$ be the map in \eqref{F}. Suppose there exists a
sequence of Lipschitz maps $f_k:\B^2 \rightarrow G$ approximating $f$
in $W^{1,p}(\B^2,G)$. By Lemma~\ref{GrushinToEuclid}, $f_k$ converges
to $f$ in $W^{1,p}(\B^2,\R^2)$. Fix $\epsilon>0$ sufficiently small
(to be determined later) and fix $k$ sufficiently large so that
$$
\Norm{f-f_k}_{W^{1,p}(\B^2,\R^2)}<\epsilon.
$$

Let $g=f-f_k$. By Fubini's theorem, $g\vert_{S_r} \in
W^{1,p}(S_r,\R^2)$ for almost every $r \in (0,1]$, where $S_r$ denotes
the circle of radius $r$ centered at the origin. Furthermore, there
exists $r \in (1-2^{-p},1)$ so that the Sobolev norm of
$g\vert_{S_r}$ is less than $2\epsilon$. We claim that $f_k(B_r)$
intersects $Y$ in a set of positive length (and hence positive
$\cH^2_{cc}$ measure). Here $B_r$ denotes the disc of radius $r$
centered at the origin.

Since $\Vert g\Vert_{L^p(S_r)} < 2\epsilon$, there exists $x_0 \in S_r$ with
$|g(x_0)| < 2\epsilon(2\pi r)^{-1/p}$. Furthermore, $\Vert
dg\Vert_{L^p(S_r)} < 2\epsilon$. Applying the Fundamental Theorem of
Calculus along $S_r$ gives
$$
\Vert g\Vert_{L^\infty(S_r)} \le |g(x_0)| + \Vert dg\Vert_{L^1(S_r)}
\le 2\epsilon (
(2\pi r)^{-1/p} + (2\pi r)^{1-1/p} ).
$$
Since $r$ is bounded away from zero, it is clear that by choosing
$\epsilon$ sufficiently small (depending only on $p$) we may ensure
that
$$
\Vert f_k-f\Vert_{L^\infty(S_r)} = \Vert g\Vert_{L^\infty(S_r)} < \frac12.
$$
We conclude that there exists a nondegenerate interval $J$ contained
in $Y$ so that the winding number of $f_k(S_r)$ around each point of
$J$ is nonzero. By a degree theory argument similar to those used
previously, we conclude that $(0,y) \in f_k(B_r)$ for all $(0,y) \in
J$. This completes the proof of the claim. Since $f_k$ is Lipschitz,
the validity of this claim violates Lemma~\ref{GrushinMeasureZero}.
Hence the approximating sequence $f_k$ cannot exist and the proof of
Theorem~\ref{GrushinLipschitzDensity} is
complete.
\end{proof}

\begin{remark}
The proof of Theorem~\ref{GrushinLipschitzDensity} relied
on the construction of a bi-Lipschitz embedding of
$\Sph^1$ into $G$ whose image enclosed a nontrivial segment on
the $y$-axis $Y$. Such a bi-Lipschitz embedding admits no Lipschitz extension
to $\B^2$. By way of contrast, we have the following

\begin{theorem}\label{finalGrushin}
Let $\vi:\Sph^1\to G$ be an $L$-Lipschitz map with $\vi(\Sph^1)\cap Y = \emptyset$.
Then $\vi$ admits an $L$-Lipschitz extension $\tilde\vi:\B^2\to G$.
\end{theorem}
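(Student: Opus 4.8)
\emph{Overview and Step 1 (reduction to a half-plane).} The plan is to use that away from the degeneracy line $Y=\{x=0\}$ the Grushin plane is a genuine negatively curved Riemannian manifold, and to reduce the extension problem to Kirszbraun's theorem for $\mathrm{CAT}(0)$ targets. The image $\vi(\Sph^1)$ is compact, connected, and disjoint from $Y$, so it lies in one component of $G\setminus Y=\{x\neq 0\}$; by the isometry $x\mapsto -x$ of $G$ we may assume $\vi(\Sph^1)\subset H_+:=\{x>0\}$. On $H_+$ the horizontal distribution is the full tangent bundle, and $\xi_1,\xi_2$ being orthonormal amounts to the Riemannian metric $g=dx^2+x^{-2}\,dy^2$, of Gaussian curvature $-2/x^2<0$. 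I first note that $d_{cc}$ restricted to $H_+$ equals the $(H_+,g)$-Riemannian distance: since $g$ is $y$-independent, along a unit-speed $g$-geodesic the quantity $c:=x^{-2}\dot y$ is constant and $\ddot x=-c^2x\le 0$, so $x(t)$ is concave; hence a minimizing geodesic between two points of $H_+$ satisfies $x(t)\ge\min$ of its endpoint values and never reaches $Y$, while any competing horizontal curve in $G$ crossing $Y$ can be folded into $\{x\ge0\}$ by the reflection $x\mapsto -x$ with the same length.

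\emph{Step 2 (a compact $\mathrm{CAT}(0)$ region containing the image).} Since $\vi(\Sph^1)$ is bounded, fix $p_0=(M,0)$ with $M$ large enough that $\vi(\Sph^1)\subset B_{cc}(p_0,M)$ (for large $M$ one has $d_{cc}(p_0,q)\le M-x_q+|y_q|/M<M$ for $q=(x_q,y_q)$ in the fixed compact set $\vi(\Sph^1)$), set $R:=\max_{q\in\vi(\Sph^1)}d_{cc}(p_0,q)<M\le d_{cc}(p_0,Y)$, and let $K:=\bar B_{cc}(p_0,R)$. Because $(H_+,g)$ has nonpositive sectional curvature, is simply connected, and is incomplete only along $Y$, every geodesic from $p_0$ of length $<d_{cc}(p_0,Y)$ stays in $H_+$; as the Gaussian curvature is nonpositive, Gauss--Bonnet rules out geodesic bigons in $H_+$, so $\exp_{p_0}$ is a diffeomorphism on the ball of radius $d_{cc}(p_0,Y)$ in $T_{p_0}H_+$ and distance functions are convex along geodesics of $H_+$. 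Consequently each ball $B_{cc}(p_0,r)$ with $r<d_{cc}(p_0,Y)$ is geodesically convex, so $K$ is compact, contractible, geodesically convex, and carries $d_{cc}|_K$ as its intrinsic length metric. A compact geodesically convex region of a simply connected nonpositively curved Riemannian manifold, with its intrinsic metric, is a complete $\mathrm{CAT}(0)$ space by the Cartan--Hadamard theorem for metric spaces (see Bridson--Haefliger): it is complete, simply connected, and locally $\mathrm{CAT}(0)$---in the interior because it is Riemannian with curvature $\le 0$, and at each boundary point because there it is a convex subset of a small $\mathrm{CAT}(0)$ geodesic ball.

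\emph{Step 3 (Kirszbraun extension).} The map $\vi:\Sph^1\to K$ is $L$-Lipschitz, since distances in $K$ coincide with $d_{cc}$. Regard $\Sph^1$ as a subset of $\R^2$, which is flat, hence an Alexandrov space of curvature $\ge 0$. By Kirszbraun's theorem in the form proved by Lang and Schroeder---every $L$-Lipschitz map from a subset of a space of curvature $\ge\kappa$ into a complete $\mathrm{CAT}(\kappa)$ space extends to an $L$-Lipschitz map on the whole space---applied with $\kappa=0$ and target $K$, there is an $L$-Lipschitz $\bar\vi:\R^2\to K$. Then $\tilde\vi:=\bar\vi|_{\B^2}:\B^2\to K\subset G$ is the desired $L$-Lipschitz extension.

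The main obstacle is Step 2: proving that a piece of the Grushin half-plane staying at positive distance from $Y$ is genuinely $\mathrm{CAT}(0)$. The decisive points are the concavity of the $x$-coordinate along geodesics---so that geodesics keep uniformly away from $Y$---and the resulting bound $\mathrm{inj}(p_0)\ge d_{cc}(p_0,Y)$ together with the global convexity of distance functions on $H_+$; these reduce the $\mathrm{CAT}(0)$ property of closed $d_{cc}$-balls of radius below $d_{cc}(p_0,Y)$ to the metric Cartan--Hadamard theorem. (Alternatively, one can check that the metric completion $(\{x\ge 0\},d_{cc})$ of $(H_+,g)$ is locally $\mathrm{CAT}(0)$ everywhere---it is a metric cone at each point of $Y$, with an arc as link---and apply Cartan--Hadamard to this complete simply connected space directly.)
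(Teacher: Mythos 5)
Your proposal is correct and is essentially the paper's argument: both rest on the Lang--Schroeder generalization of Kirszbraun's theorem, applied after observing that $\vi(\Sph^1)$ stays in the Riemannian part of $G$ away from $Y$, whose Gauss curvature is $-2x^{-2}<0$. The only real difference is the choice of target region: the paper simply takes the slab $\{(x,y): x\ge\eps\}$ (with $\eps>0$ chosen so that it contains the image), which is complete, geodesic, and has curvature $\le -2\eps^{-2}$, and applies Lang--Schroeder with that negative curvature bound and flat source $\B^2$; this makes your Step 2 --- the compact geodesically convex CC-ball and the Cartan--Hadamard/CAT(0) verification, which is the most delicate and most heavily glossed part of your write-up --- unnecessary.
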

\end{remark}

Theorem~\ref{finalGrushin} follows from a generalized form of
Kirszbraun's theorem proved by Lang and Schroeder \cite[Theorem
A]{LangSchroeder}. For simplicity we only state a special case of the
Lang--Schroeder theorem sufficient for our purposes. See also
\cite[Theorem 1.3]{LangPavlovicSchroeder} for a related result (which
implies a weaker form of Theorem~\ref{finalGrushin} where the
Lipschitz constant of the extension may be larger than $L$).

\begin{theorem}[Lang--Schroeder]
Let $M$ and $N$ be Riemannian manifolds (possibly with boundary). Assume
that $M$ has all sectional curvatures bounded below by some $k\in\R$, while $N$ is
complete and has all sectional curvatures bounded above by the same $k$.
Then every $1$-Lipschitz map $f:A\to N$, $A\subset M$, admits a $1$-Lipschitz
extension $\tilde{f}:M\to N$.
\end{theorem}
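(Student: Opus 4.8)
The plan is to prove the Lang--Schroeder theorem in two layers: a soft reduction, via Zorn's lemma together with a compactness argument, to a finite intersection property of metric balls in $N$; and then the geometric core, a Kirszbraun-type intersection property whose proof uses the two curvature bounds in an essential way.

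\emph{Step 1: reduction to one-point extensions.} Order the set of $1$-Lipschitz maps $g\colon B\to N$ with $A\subset B\subset M$ and $g|_A=f$ by extension. A chain has an upper bound: the common extension is $1$-Lipschitz on the union of the domains, and it extends by uniform continuity (using completeness of $N$) to the closure of that union. By Zorn's lemma there is a maximal $g\colon B\to N$. If $B\ne M$, pick $p\in M\setminus B$; it suffices to produce $q\in N$ with $d_N(q,g(b))\le d_M(p,b)$ for every $b\in B$, for then $g\cup\{(p,q)\}$ contradicts maximality. Thus the whole theorem reduces to showing that $\bigcap_{b\in B}\bar B_N\bigl(g(b),d_M(p,b)\bigr)\ne\emptyset$. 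Since $N$ is a complete Riemannian manifold, Hopf--Rinow makes it a proper metric space, so each such closed ball is compact; by the finite intersection property of compact sets it is enough to treat finitely many constraints. That is, given $b_1,\dots,b_m\in B$, put $x_i:=b_i\in M$, $y_i:=g(b_i)\in N$, $r_i:=d_M(p,b_i)>0$: then $p$ certifies $\bigcap_i\bar B_M(x_i,r_i)\ne\emptyset$, and $g$ being $1$-Lipschitz gives $d_N(y_i,y_j)\le d_M(x_i,x_j)$. So everything comes down to the \emph{Kirszbraun intersection property}: if $(x_i)_{i=1}^m$ lies in a space of curvature $\ge k$, $(y_i)_{i=1}^m$ in a complete space of curvature $\le k$, $r_i>0$, $d(y_i,y_j)\le d(x_i,x_j)$ for all $i,j$, and $\bigcap_i\bar B(x_i,r_i)\ne\emptyset$, then $\bigcap_i\bar B(y_i,r_i)\ne\emptyset$.

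\emph{Step 2: the intersection property via a balanced minimizer.} Minimize $\Phi(z):=\max_i d_N(z,y_i)/r_i$ over $N$; by properness the minimum is attained at some $z_0$, with value $\rho$. If $\rho\le1$ we are done, so suppose $\rho>1$. Let $J=\{i:d_N(z_0,y_i)=\rho r_i\}$ be the active indices and set $s_i:=\rho r_i=d_N(z_0,y_i)$ for $i\in J$. Because $z_0$ minimizes $\Phi$, no tangent direction at $z_0$ makes all the functions $d_N(\cdot,y_i)$, $i\in J$, strictly decrease; by the first variation formula this means $0$ lies in the convex hull of the unit vectors $u_i\in T_{z_0}N$ pointing from $z_0$ towards $y_i$, so there are weights $\lambda_i\ge0$, $\sum_{i\in J}\lambda_i=1$, with $\sum_{i\in J}\lambda_i u_i=0$ --- ``$z_0$ is balanced''. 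In the flat model (Euclidean $M$ and $N$), writing $p_i:=y_i-z_0$ (so $u_i=p_i/s_i$) and $q_i:=x_i-x_0$ for a point $x_0\in\bigcap_i\bar B(x_i,r_i)$ (so $|q_i|\le r_i<s_i=|p_i|$), one computes
\[
0=\Bigl|\sum_{i\in J}\tfrac{\lambda_i}{s_i}p_i\Bigr|^2=\sum_{i,j}\tfrac{\lambda_i\lambda_j}{s_is_j}\langle p_i,p_j\rangle\ \ge\ \sum_{i,j}\tfrac{\lambda_i\lambda_j}{s_is_j}\langle q_i,q_j\rangle+\sum_{i}\tfrac{\lambda_i^2}{s_i^2}\bigl(s_i^2-|q_i|^2\bigr)\ >\ \Bigl|\sum_{i\in J}\tfrac{\lambda_i}{s_i}q_i\Bigr|^2\ \ge\ 0,
\]
where the first inequality uses $\langle p_i,p_j\rangle-\langle q_i,q_j\rangle\ge\tfrac12\bigl((s_i^2-|q_i|^2)+(s_j^2-|q_j|^2)\bigr)$, itself a consequence of $d(y_i,y_j)\le d(x_i,x_j)$ and the law of cosines, and the second uses $s_i^2-|q_i|^2>0$. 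This contradiction proves the flat case.

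\emph{Step 3: the curved case and the main obstacle.} The genuine content is to push the computation of Step~2 through when the sectional curvatures are merely bounded by $k$. Each inner product is replaced by the corresponding quantity read off from a comparison configuration in the model plane $M_k^2$: since $N$ has curvature $\le k$, Alexandrov's comparison gives one-sided (upper) bounds on the comparison angles of the ``star'' $(z_0;y_i)_{i\in J}$; since $M$ has curvature $\ge k$, it gives one-sided (lower) bounds on those of $(x_0;x_i)_{i\in J}$; and these are combined with the monotonicity properties of the $k$-law-of-cosines function and, for three or more active points, with Reshetnyak's majorization theorem (a geodesic polygon in a $\mathrm{CAT}(k)$ space is majorized by a convex region of $M_k^2$) to recover the same contradiction. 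When $k>0$ one uses that Bonnet--Myers forces $\operatorname{diam}M\le\pi/\sqrt k$, so all the $r_i$, and hence the relevant balls in $N$, lie in the range where $\mathrm{CAT}(k)$ comparison applies and circumcenters are unique. The hard part is exactly this bookkeeping --- selecting the right monotonicities of the model-space distance functions, verifying the half-space/barycenter condition at $z_0$ produces the needed inequality among the weighted comparison vectors, and dealing with the positive-curvature subtleties; the soft Step~1 (Zorn plus Hopf--Rinow compactness) and the first-variation argument isolating the balanced minimizer are routine by comparison. Minor points --- manifolds with boundary, and regularity of the distance function at cut points (handled by using the upper directional derivative, valid in any $\mathrm{CAT}(k)$ space) --- can be absorbed without difficulty.
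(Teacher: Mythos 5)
You should first note that the paper does not prove this statement at all: it is quoted (as a special case) from Lang and Schroeder's paper and used as a black box in the proof of Theorem \ref{finalGrushin}, so your attempt has to stand entirely on its own. Your Steps 1 and 2 are correct and are indeed the standard skeleton of all Kirszbraun-type arguments: the Zorn/one-point-extension reduction together with properness of $N$ (metric Hopf--Rinow) reduces the theorem to the finite intersection property for balls, and your balanced-minimizer computation settles that property in the flat case -- the identity $\langle p_i,p_j\rangle-\langle q_i,q_j\rangle\ge\tfrac12\bigl((s_i^2-|q_i|^2)+(s_j^2-|q_j|^2)\bigr)$ and the resulting contradiction check out.

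The genuine gap is Step 3. What you dismiss as ``bookkeeping'' is the entire content of the Lang--Schroeder theorem, and you do not carry it out: you never state, let alone prove, the comparison inequality that replaces the law of cosines; you do not analyze the balancing condition at $z_0$ in a curved target, where the distance functions $d_N(\cdot,y_i)$ are nonsmooth and the direction from $z_0$ to $y_i$ need not be unique; and you do not show how the one-sided angle comparisons coming from curvature $\ge k$ in $M$ and curvature $\le k$ in $N$ are actually combined (via Reshetnyak majorization and the monotonicity of the model cosine law) to reproduce the contradiction of Step 2 -- this is precisely what occupies the bulk of Lang--Schroeder's paper. There is also a hypothesis-level problem your sketch glosses over: the tools you invoke (Reshetnyak majorization, convexity of balls, uniqueness of circumcenters) are \emph{global} $\mathrm{CAT}(k)$ facts, and they do not follow from ``complete Riemannian with sectional curvature $\le k$''. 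Indeed, the statement as phrased fails without a global assumption: take $M=\R^2$, $N$ a flat torus ($k=0$), and a $1$-Lipschitz map of a large circle onto a homotopically nontrivial closed geodesic; it admits no continuous, hence no Lipschitz, extension to the disk. Lang--Schroeder's actual theorem assumes the global upper curvature bound (and, for $k>0$, size restrictions), and in the paper's application the target is a simply connected region of negative curvature where this is available; your argument must either add that hypothesis or explain how to reduce to it. As written, the proposal proves only the Euclidean case, and the theorem itself remains unproven.
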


\begin{proof}[Proof of Theorem~\ref{finalGrushin}]
Let $\vi:\Sph^1\to G$ be $L$-Lipschitz with $\vi(\Sph^1)\cap Y = \emptyset$.
By rescaling the metric in $\R^2$ if necessary, we may assume without loss of
generality that $L=1$.

Observe that $\vi(\Sph^1)$ is contained in $\{(x,y) \in G : |x|\ge\eps\}$ for
some $\eps>0$. In particular, $\vi(\Sph)$ lies completely within a Riemannian component
of $G$. We next compute the curvature of such a component. The Riemannian metric
in $\{(x,y):x>0\}$ is given by the length element
$ds^2=dx^2+x^{-2}dy^2$. Using Brioschi's formula
$$
K = \frac1{2\sqrt{EG}}
\left[ \left( \frac{G_x}{\sqrt{EG}} \right)_x +
\left( \frac{E_y}{\sqrt{EG}} \right)_y \right]
$$
for the (Gauss) curvature of a Riemannian metric $ds^2=Edx^2+Gdy^2$
with $E=1$ and $G=x^{-2}$, we obtain $K=-2x^{-2}$. Thus $K\le 
-2\eps^{-2}<0$ on $\{(x,y) \in G : x\ge\eps \}$. A similar computation
applies in $\{(x,y) \in G: x\le-\eps\}$.

It remains to show that the set $\{(x,y) \in G : x\ge\eps \}$, equipped with the
aforementioned Riemannian metric, is complete and geodesic. The latter assertion follows from
the explicit form \eqref{GGF} for the geodesics in $G$.
Since $\{(x,y) \in G : x\ge\eps \}$ is also locally compact, it is proper and
hence complete. The proof of Theorem~\ref{finalGrushin} is finished.
\end{proof}

\begin{question}
Does every Lipschitz map $\vi:\Sph^1\to G$ whose image encloses no segment
on the $y$-axis extend to a Lipschitz map of $\B^2$ into $G$?
\end{question}


\begin{thebibliography}{888}
%
\bibitem{acerbif} {\sc Acerbi, E., Fusco, N.}, A approximation
lemma for $W^{1,p}$ functions. {\em in} Proc.\ Sympos.\ on Material
Instabilities and Continuum Mech.\  ed.\ J. M. Ball, pp.\ 1--5, Oxford
Science Publications, 1988.
%
\bibitem{allcock}
{\sc Allcock, D.}, An isoperimetric inequality for Heisenberg groups.
{\em Geom.\ Funct.\ Anal.} {\bf 8} no.\ 2 (1998), 219--233.
%
\bibitem{ambrosiok}
{\sc Ambrosio, L., Kirchheim, B.}, Rectifiable sets in metric and
Banach spaces. {\em Math.\ Ann.} {\bf 318} (2000), 527--555.
%
\bibitem{ambrosior}
{\sc Ambrosio, L., Rigot, S.}
Optimal mass transportation in the Heisenberg group.
{\em J. Funct.\ Anal.} {\bf 208} (2004), 261--301.
%
\bibitem{BaloghF}
{\sc Balogh, Z., F\"assler, K. S.},
Rectifiability and Lipschitz extensions into the Heisenberg group.
{\em Math.\ Z.} {\bf 263} (2009), 673--683.
%
\bibitem{bellaiche}
{\sc Bella{\"\i}che, A.},
The tangent space in sub-Riemannian geometry.
in {\em Sub-Riemannian geometry}, 1--78, Progress in Mathematics, 144,
Birkh{\"a}user, Basel, 1996.
%
\bibitem{bethuel1}
{\sc Bethuel, F.}, The approximation problem for Sobolev maps
between two manifolds. {\em Acta Math.} {\bf 167} (1991), 153--206.
%
\bibitem{bethuelz}
{\sc Bethuel, F., Zheng, X. M.},
Density of smooth functions between two manifolds in Sobolev spaces.
{\em J. Funct.\ Anal.} {\bf 80} (1988), 60--75.
%
%
\bibitem{CalinChang}
{\sc Calin, O., Chang, D.-C.},
Sub-Riemannian geometry. General theory and examples.
{\em Encyclopedia of Mathematics and its Applications}, 126. Cambridge
University Press, Cambridge, 2009.
%
\bibitem{Zastrow}
{\sc Cannon, J.W., Conner, G.R., Zastrow, A.},
One-dimensional sets and planar sets are aspherical.
{\em Topology and its Applications}, {\bf 120} (2002), 23--45.
%
\bibitem{capognal}
{\sc Capogna, L., Lin, F.-H.},
Legendrian energy minimizers. I. Heisenberg group target.
{\em Calc.\ Var.\ Partial Differential Equations} {\bf 12} (2001), 145--171.
%
\bibitem{cdpt}
{\sc Capogna, L., Danielli, D., Pauls, S. D., Tyson, J. T.},
{\em An introduction to the Heisenberg group and the sub-Riemannian
isoperimetric problem.}
Progr.\ Math., 259. Birkh{\"a}user, Basel, 2007.
%
\bibitem{CK1}
{\sc Cheeger, J., Kleiner, B.}
Differentiability of Lipschitz maps from metric measure spaces to
Banach spaces with the Radon-Nikodym property.
{\em Geom.\ Funct.\ Anal.} {\bf 19} (2009), 1017--1028.
%
\bibitem{CK2}
{\sc Cheeger, J., Kleiner, B.}
Generalized differential and bi-Lipschitz nonembedding in $L^1$.
{\em C. R. Math.\ Acad.\ Sci.\ Paris} {\bf 343} (2006), 297--301.
%
%
\bibitem{diestelu}
{\sc Diestel, J.,  Uhl, J. J., Jr.},
{\em Vector measures.}
Mathematical Surveys, no.\ 15.
American Mathematical Society, Providence, R.I., 1977.
%
\bibitem{eellsl}
{\sc Eells, J., Lemaire, L.},
A report on harmonic maps.
{\em Bull.\ London Math.\ Soc.} {\bf 10} (1978), 1--68.
%
\bibitem{sullivan}
{\sc  Ekholm, T., Etnyre, J., Sullivan, M.},
Non-isotopic Legendrian submanifolds in $\R^{2n+1}$.
{\em J. Differential Geom.} 71 (2005), 85--128.
%
\bibitem{EG}
{\sc Evans, L. C., Gariepy, R. F.}
{\em Measure theory and fine properties of functions.}
Studies in Advanced Mathematics. CRC Press, Boca Raton, FL, 1992.
%
\bibitem{FS}
{\sc  Folland, G. B., Stein, E. M.}
{\em Hardy spaces on homogeneous groups.}
Mathematical Notes, 28. Princeton University Press, Princeton, N.J.;
University of Tokyo Press, Tokyo, 1982.
%
%
%
\bibitem{gromov-pdr}
{\sc  Gromov, M.}, Partial differential relations. Volume 9 of {\em
  Ergebnisse der Mathematik und ihrer Grenzgebiete (3)}.
Springer-Verlag, Berlin, 1986.
\bibitem{gromov-cc}
{\sc  Gromov, M.}, Carnot-Carath\'eodory spaces seen from within. In {\it
Sub-Riemannian geometry}, 79--323, Progress in Math., vol.\ 144, Birkh\"auser,
Basel, 1996.
%
\bibitem{hajlaszApproximation}
{\sc Haj\l{}asz, P.},
Approximation of Sobolev mappings.
{\em Nonlinear Anal.} {\bf 22} (1994), 1579--1591.
%
\bibitem{hajlaszMetric}
{\sc Haj\l asz, P.},
Sobolev spaces on an arbitrary metric space.
{\em Potential Analysis} {\bf 5} (1996), 403--415.
%
\bibitem{hajlaszGAFA}
{\sc Haj\l{}asz, P.},
Sobolev mappings: Lipschitz density is not a bi-Lipschitz invariant of the
target. {\em Geom.\ Funct.\ Anal.} {\bf 17} (2007), 435--467.
%
\bibitem{hajlaszMathAnn}
{\sc Haj\l{}asz, P.}, Density of Lipschitz mappings in the class of
Sobolev mappings between metric spaces.
{\em Math.\ Ann.} {\bf 343} (2009), 801--823.
%
\bibitem{hajlaszIsometric}
{\sc Haj\l{}asz, P.},
Sobolev mappings: Lipschitz density is not an isometric invariant
of the target.
{\em Int.\ Math.\ Res.\ Not. IMRN} {\bf 2011}, no. 12, 2794--2809.
%
\bibitem{hajlaszSobolev}
{\sc Haj\l{}asz, P.},
Sobolev mappings between manifolds and metric spaces.
In: {\em Sobolev spaces in mathematics. I}, pp.\ 185--222,
Int. Math. Ser. (N.Y.), {\bf 8}, Springer, New York, 2009.
%
\bibitem{hajlaszk}
{\sc Haj\l{}asz, P., Koskela, P.},
Sobolev met Poincar\'e. {\em Memoirs Amer.\ Math.\ Soc.} {\bf 688} (2000),
1--101.
%
\bibitem{hajlasz-schikorra-tyson}
{\sc Haj\l{}asz, P., Schikorra, A., Tyson, J. T.},
Homotopy groups of spheres and Lipschitz homotopy groups of the
Heisenberg groups. {\em Geom.\ Funct.\ Anal.}, {\bf 24} (2014),
245--268.
%
\bibitem{hajlaszt}
{\sc Haj\l{}asz, P., Tyson, J. T.},
Sobolev Peano cubes, {\em Michigan Math.\ J.} {\bf 56} (2008), 687--702.
%
%
\bibitem{hangl2}
{\sc Hang, F., Lin, F.},
Topology of Sobolev mappings II.
{\em Acta Math.} {\bf 191} (2003), 55--107.
%
\bibitem{Hatcher}
{\sc Hatcher, A.},
{\em Algebraic Topology.}
Cambridge University Press, 2002.
%
\bibitem{hei:lipschitzlectures}
{\sc Heinonen, J.},
Lectures on Lipschitz analysis.
{\em University of Jyv\"askyl\"a Department of Mathematics and
  Statistics Report} {\bf 100}, University of Jyv\"askyl\"a,
Jyv\"askyl\"a, 2005.
%
\bibitem{HKST}
{\sc Heinonen, J., Koskela, P., Shanmugalingam, N., Tyson, J. T.},
Sobolev classes of Banach space-valued functions and quasiconformal
mappings. {\em J. Anal.\ Math.} {\bf 85} (2001), 87--139.
%
\bibitem{kirch}
{\sc Kirchheim, B.},
Rectifiable metric spaces: local structure and regularity of the Hausdorff
measure,
{\em Proc.\ Amer.\ Math.\ Soc.} {\bf 121} (1994), no.\ 1, 113--123.
%
%
\bibitem{LangPavlovicSchroeder}
{\sc Lang, U., Pavlovi\'c, B., Schroeder, V.},
Extensions of Lipschitz maps into Hadamard spaces. {\em Geom.\ Funct.\ Anal.} {\bf 10} (2000), 1527--1553.
%
\bibitem{LangSchroeder}
{\sc Lang, U., Schroeder, V.},
Kirszbraun's theorem and metric spaces of bounded curvature. {\em Geom.\ Funct.\ Anal.} {\bf 7} (1997), 535--560.
%
\bibitem{MagnaniThesis}
{\sc Magnani, V.},
{\em Elements of geometric measure theory on sub-Riemannian groups.}
Ph.D. thesis, Scuola Normale Superiore, Pisa, 2002.
%
\bibitem{MagnaniRigidity}
{\sc Magnani, V.},
Unrectifiability and rigidity in stratified groups.
{\em Arch.\ Math.\ (Basel)} {\bf 83} (2004), 568--576.
%
\bibitem{marenich}
{\sc Marenich, V.}
Geodesics in Heisenberg groups.
{\em Geom.\ Dedicata} {\bf 66} (1997), 175--185.
%
\bibitem{Mattila}
{\sc Mattila, P.},
{\em Geometry of Sets and Measures in Euclidean Spaces.}
Cambridge Studies in advanced mathematics, 44. Cambridge University Press,
Cambridge, UK, 1995.
%
%
\bibitem{montgomery}
{\sc Montgomery, R.},
{\em A tour of subriemannian geometries, their geodesics and applications.}
Mathematical Surveys and Monographs, 91. American Mathematical Society,
Providence, RI, 2002.
%
\bibitem{monti}
{\sc Monti, R.},
Some properties of Carnot-Carath\'eodory balls in the Heisenberg
group. {\em Atti Accad.\ Naz.\ Lincei Cl.\ Sci.\ Fis.\ Mat.\ Natur.\ Rend.\
  Lincei (9) Mat.\ Appl.} {\bf 11} (2000), no.\ 3, 155--167.
%
\bibitem{NSW}
{\sc Nagel, A., Stein, E. M., Wainger, S.},
Balls and metrics defined by vector fields. I. Basic properties.
{\em Acta Math.} {\bf 155} (1985), no.\ 1-2, 103--147.

\bibitem{pansu}
{\sc Pansu, P.},
M\'etriques de Carnot-Carath\'eodory et quasiisom\'etries des espaces
sym\'etriques de rang un.
{\em Ann.\ of Math. (2)} {\bf 129} (1989), no.\ 1, 1--60.
%
\bibitem{reshetnyak}
{\sc Reshetnyak, Yu. G.}, Sobolev classes of functions with values
in a metric space. (Russian) {\em Sibirsk.\ Mat.\ Zh.} {\bf 38} (1997),
657--675, translation in {\em Siberian Math.\ J.} {\bf 38} (1997), 567--583.
%
\bibitem{RigotW}
{\sc Rigot, S., Wenger, S.},
Lipschitz non-extension theorems into jet space Carnot groups.
{\em Internat.\ Math.\ Res.\ Notices} {\bf 18} (2010), 3633--3648.
%
\bibitem{schoenu1} {\sc Schoen, R., Uhlenbeck, K.}, Boundary
regularity and the Dirichlet problem for harmonic maps. {\em J.
  Differential Geom.}, {\bf 18} (1983), 253--268.
%
\bibitem{schoenu2} {\sc Schoen, R., Uhlenbeck, K.}, Approximation
theorems for Sobolev mappings, unpublished.
%
%
\bibitem{WengerY}
{\sc Wenger, S., Young, R.},
Lipschitz extensions into jet space Carnot groups.
{\em Math.\ Res.\ Letters} {\bf 17} no.\ 6 (2010), 1137--1149.
%
\bibitem{wenger-young}
{\sc Wenger, S., Young, R.},
Lipschitz homotopy groups of the Heisenberg groups. {\em Geom.\
  Funct.\ Anal.}, to appear.
%
\bibitem{yosida}
{\sc Yosida, K.},
{\em Functional analysis.}
Reprint of the sixth (1980) edition.
Classics in Mathematics. Springer-Verlag, Berlin, 1995.
\end{thebibliography}
\end{document}